\newif\ifshow 
\numberwithin{equation}{subsection}
\theoremstyle{plain}
\newtheorem*{theorem*}{Theorem}
\newtheorem{theorem}[equation]{Theorem}
 \newtheorem{corollary}[equation]{Corollary}
\newtheorem{conjecture}[equation]{Conjecture}
\newtheorem{lemma}[equation]{Lemma}
\newtheorem{proposition}[equation]{Proposition}
\theoremstyle{definition}
\newtheorem{defn}[equation]{Definition}
\theoremstyle{remark}
\newtheorem{remark}[equation]{Remark}
\newtheorem*{remark*}{Remark}
\newcommand{\kappaconstant}{\gamma}
\def\R{\mathbb R}
\def\C{\mathbb C}
\def\Z{\mathbb Z}
\def\R{\mathbb R}
\def\C{\mathbb C}
\def\Z{\mathbb Z}
\def\BK{\boldsymbol{K}}
\def\BG{\boldsymbol{G}}
\def\g{\boldsymbol{\mathfrak{g}}}
\def\k{\boldsymbol{\mathfrak{k}}}
\def\s{\boldsymbol{\mathfrak{s}}}
\def\O{\mathcal{O}}
\def\F{\mathcal{F}}
\def\U{\mathcal{U}}
\def\H{\mathcal{H}}
\def\Wr{\operatorname{Wr}}
\def\wr{\operatorname{wr}}
\def\Cent{\operatorname{Centralizer}}
\def\Spec{\operatorname{Spec}}
\def\domain{\operatorname{dom}}
\def\fin{\mathrm{fin}}
\def\an{\mathit{hol}}
\def\Sol{\operatorname{Sol}}
\def\RegSol{\operatorname{RegSol}}
\def\SingSol{\operatorname{SingSol}}
\def\PhysSol{\operatorname{PhysSol}}
\def\RegSolFam{\mathcal{R}\mathit{eg}\mathcal{S}\mathit{ol}}
\def\SingSolFam{\mathcal{S}\mathit{ing}\mathcal{S}\mathit{ol}}
\def\KodSolFam{\mathcal{K}\mathit{odaira}\mathcal{S}\mathit{ol}}
\def\eigv{\lambda}
\def\action{\alpha}
\def\Sym{\mathit{Sym}}
\def\pr{\partial_r}
\begin{document}

\title[Families of Symmetries and the Hydrogen Atom]{Families of Symmetries and the Hydrogen Atom}

 \author{Nigel Higson}
 \address{Department of Mathematics, Penn State University, University Park, PA 16802, USA.}
\email{higson@psu.edu}
 \author{Eyal Subag}
 \address{Department of Mathematics, Bar Ilan University, Ramat-Gan, 5290002, Israel.}
\email{eyal.subag@biu.ac.il}

 \begin{abstract}
\noindent  
We study a new type of symmetry  for the hydrogen atom involving algebraic families of groups parametrized by the energy value in the time-independent Schr\"odinger equation. We construct an algebraic family of Harish-Chandra modules from the solutions of the  Schr\"odinger equation, and we characterize this family. We show that the subspaces of physical states may  be obtained from our algebraic family using a Jantzen filtration, and we relate our algebraic  methods with   spectral theory and  scattering theory  using the limiting absorption principle.
\end{abstract} 
\maketitle

\section{Introduction} 
It has long been known that, besides its \emph{visible} rotational symmetries, the  Schr\"{o}\-dinger equation for the hydrogen atom  possesses  \textit{hidden symmetries} that  explain the degeneracy of  energy eigen\-spaces, and help determine  the eigenvalues and eigenfunctions \cite{Pauli, Fock:1935vv,Bargmann}. These larger  symmetry groups vary from energy to energy, although the groups attached to all positive energies, and separately those attached to all negative energies,  may be---and usually are---identified with one another.

In this paper we shall develop a different approach. We shall organize the various symmetry groups into  an \emph{algebraic family}, and analyze the consequences.  We shall determine the physical states of the hydrogen atom algebraically, using   the Jantzen filtration.  We shall explain how our algebraic account is parallel to early analytic work in scattering theory by  Heisenberg \cite{Heisenberg43a,Heisenberg43b,Heisenberg44} and Kodaira \cite{Kodaira49}.  In fact we shall directly connect the limiting absorption principle in scattering theory to algebraic families.   Along the way we shall prove a classification result for representations of the kinds of algebraic families introduced here,  give a complete analysis of the Hilbert space spectral decomposition of the Schr\"odinger operator from the point of view of representation theory, and prove a variety of other results.

In physics, one seeks solutions of the time-independent Schr\"o\-dinger equation for the hydrogen atom,
$H \psi = E \psi $, 
with definite total angular momentum. One further imposes the condition that $\psi$ be square-integrable, or almost square integrable.  The linear span of all such $\psi$, as the angular momentum varies but the energy $E$ is fixed, is   what we shall call in this paper the \emph{physical solution space}, and denote by $\PhysSol(E)$.  From the physics  point of view, the spectrum of the Schr\"odinger operator is the set of all real $E$ for which the physical solution space is nonzero.  In simplified units it has the form
\begin{equation*}
	\operatorname{Spec}(H)= \Bigl \{\, - \frac{1}{\,\,n^2}   : n=1,2,\ldots \,  \Bigr\}\, \sqcup \, [0,\infty) .
\end{equation*}
 This is one of the fundamental formulas in non-relativistic quantum mechanics. 

The orthogonal group  $K{=}O(3)$ acts on each of the spaces $\PhysSol(E)$, because the Schr\"o\-dinger operator $H$ is $O(3)$-invariant.  But $\PhysSol(E)$ is also acted on by a   $6$-dimen\-sional Lie algebra
\[ 
\mathfrak{g}_E   \cong
\begin{cases}
\mathfrak{so}(3,1) & E > 0\\
 \mathfrak{so}(3)\ltimes \R^3 & E =0\\
\mathfrak{so}(4) & E < 0.
\end{cases} 
\]
This was  first discovered by  Pauli \cite{Pauli} in the case   $E<0$, and then   developed further by  Fock  \cite{Fock:1935vv},   Bargmann  \cite{Bargmann}  and many others.

Instead of making  the identifications in the display above, we shall view the Lie algebras $\mathfrak{g}_E$ as (the real forms of) some of the fibers of  an algebraic family of Lie algebras defined over the complex affine line.  The space of algebraic sections of this family is a finite-dimensional Lie algebra $\g$ over the algebra $\O$ of complex polynomial 
functions on the line, and it is   concretely  realized as  differential operators on $\R^3$.  Putting it together  with the constant family $\BK$ whose  fiber  is the visible symmetry group $O(3)$, we obtain an algebraic family $(\g,\BK)$ of Harish-Chandra pairs \cite{Ber2016}.

We shall study  the solutions of the Schr\"odinger equation using  representation theory for the family $(\g,\BK)$. Basic  ODE theory provides  \emph{regular} solutions that  extend to entire functions of the radial coordinate. 
It is a simple matter to realize the spaces $\RegSol(E)$ of regular solutions as the fibers of an $\O$-module $\RegSolFam$. The visible symmetry group $K$ acts on $\RegSolFam$, with each isotypical part being a free and finitely generated $\O$-module.  Our launching point is the following result, which is by no means evident:

\begin{theorem*}[See Section~\ref{sec-solutions}]
The module $\RegSolFam$ carries the structure of an algebraic family of Harish-Chandra modules \textup{(}as in \cite{Ber2016}\textup{)} for   $(\g,\BK)$.
 \end{theorem*}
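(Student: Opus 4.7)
The plan is to verify directly, using the definition in \cite{Ber2016}, that $\RegSolFam$ meets the axioms of an algebraic family of Harish-Chandra modules for $(\g,\BK)$. The first step is to put an explicit $\O$-module structure on $\RegSolFam$. Separating variables, write any candidate solution as $\psi = \sum_{\ell,m} Y_{\ell,m}(\theta,\phi) R_{\ell,m}(r;E)$; for each $\ell$ the eigenvalue equation reduces to a second-order radial ODE whose coefficients are polynomial in $E$, with regular singularity at $r=0$ of indicial roots $\ell$ and $-\ell-1$. Frobenius analysis picks out a unique normalized regular radial solution $R_\ell(r;E)$, entire in $r$ with Taylor coefficients that are polynomials in $E$. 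This displays $\RegSolFam$ as the free $\O$-module on the basis $\{Y_{\ell,m}R_\ell\}$.

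Next I define the actions and establish closure. The constant family $\BK$ acts on smooth functions on $\R^3$ by rotations; this commutes with $H$, preserves regularity, and is $\O$-linear. By Section~\ref{sec-algebraic-family}, $\g$ is realized as a Lie algebra over $\O$ of differential operators on $\R^3_0$ that centralize $H$. Hence for any local section $D$ of $\g$ and any section $\psi$ of $\RegSolFam$, one has $(H-E)D\psi = D(H-E)\psi = 0$, so $D\psi$ is a solution fiberwise, and the map $\psi\mapsto D\psi$ is automatically $\O$-linear.

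The essential technical point is that $D\psi$ is again \emph{regular}, despite the fact that the Runge-Lenz-type generators in $\g$ carry $r^{-1}$ factors. I would establish this by combining two observations. First, a direct computation of $D(Y_{\ell,m}R_\ell)$ in the basis of Step~1 shows that only coefficients of order at most $r^{-1}$ appear. Second, a Frobenius argument at $r=0$: any potential singular contribution to the $\ell'$-component of $D\psi$ would have to be a local solution of the radial ODE at angular momentum $\ell'$, but the singular branch behaves like $r^{-\ell'-1}$, which is incompatible with the observed order $\geq -1$ coming from $D$ (given the vector-operator selection rule $\ell'=\ell\pm 1$). Hence the singular contribution must vanish, and $D\psi$ lies in the regular branch.

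Finally, the Harish-Chandra compatibility axioms follow from the concrete construction. The $\BK$-isotypic decomposition of $\RegSolFam$ is the spherical harmonic decomposition; each isotypic component is a free $\O$-module of finite rank, so every section is $\BK$-finite. The differential of the $\BK$-action agrees with the action of the Lie subalgebra $\k\subset\g$, since the rotation generators in $\g$ are the angular momentum operators by construction. Equivariance $k\cdot(D\psi) = (\mathrm{Ad}(k)D)(k\cdot\psi)$ holds because $O(3)_\C$ acts on differential operators by conjugation and $\g$ is stable under this action. The main obstacle is the regularity-preservation step; the remaining axioms are essentially formal once the explicit $\O$-basis of Step~1 is in hand.
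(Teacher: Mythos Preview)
Your argument establishes that each Runge--Lenz operator maps $\RegSol(E)$ into itself for every fixed $E$, which is the content of Proposition~\ref{prop-regsol-is-rep}. But that is not the theorem: you must show that $R\psi$ lies in $\RegSolFam$, i.e.\ that when $R\psi$ is expanded in your $\O$-basis $\{Y_{\ell,m}R_\ell\}$, the coefficients are \emph{polynomials} in $E$ rather than arbitrary functions. You never address this, and you explicitly identify regularity as ``the main obstacle,'' which suggests you have conflated the two statements. A priori the action of $R$ could introduce transcendental dependence on $E$ through the higher Taylor coefficients of $R_\ell(r;E)$.

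Your approach is in fact completable. Since the angular pieces $C,D,E$ of the Runge--Lenz operator (Lemma~\ref{lem-polar-form-of-R}) are independent of $E$, the Laurent coefficients in $r$ of each $\ell'$-component of $R(Y_{\ell,m}R_\ell)$ are polynomials in $E$. Once you know this component equals $c(E)\,R_{\ell'}(r;E)$, the scalar $c(E)$ is simply the coefficient of $r^{\ell'}$, hence polynomial. This is essentially the ``explicit computation'' route the paper sketches in Section~\ref{subsec-explicit-formulas} and calls ``surprisingly intricate.'' The paper's main proof is structurally different: it first handles only the $\ell'=\ell{-}1$ component directly, then introduces the auxiliary holomorphic family $\RegSolFam_{\an}$, and bootstraps via generic irreducibility of the fibers and an integral-closure argument (if $p\cdot\varphi\in\RegSolFam$ for some nonzero polynomial $p$ and $\varphi\in\RegSolFam_{\an}$, then $\varphi\in\RegSolFam$). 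That route trades the explicit Taylor-coefficient bookkeeping for representation-theoretic input from Section~\ref{Classification}.

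A minor point on your regularity argument: the bound ``order $\geq -1$'' is too coarse to rule out the singular branch when $\ell'=\ell-1=0$, since the singular solution there behaves like $r^{-1}$. You should use the sharper bound ``order $\geq \ell-1$'' coming from $R_\ell\sim r^\ell$; this handles all cases except the $\ell=0$ starting point, which is then covered by the selection rule $\ell'=1$.
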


We shall also define a family of spaces of \emph{singular solutions} (meaning only \emph{not regular}, rather than actually singular in any particular way) by taking  the quotient  of  {all} solutions by the regular solutions:
\[
\SingSol (E) = \Sol (E) / \RegSol (E)\qquad (E\in \C).
\]
These spaces, too, are the fibers of an algebraic family of Harish-Chandra modules, which we shall denote by  $\SingSolFam$.  

\begin{theorem*}
[See Section~\ref{sec-solutions}]
The Wronskian bilinear form \textup{(}from elementary differential equations\textup{)} defines a twisted-equivariant and nondegenerate pairing 
\[
\Wr \colon \SingSolFam \times \RegSolFam \longrightarrow \O.
\]
 \end{theorem*}
 
It follows that the Wronskian induces an isomorphism  from  the $\theta$-twisted dual of the family $\RegSolFam$ to the family $\SingSolFam $ (the type of twisting that is involved here is explained in Section~\ref{subsec-twisted-dual-families}). We shall also prove the following result, using representation theory techniques and without reference to the Wronskian or differential equations:

\begin{theorem*}
[See Section~\ref{sec-phys-sol-jantzen}]
There is an essentially unique, generically invertible, algebraic intertwining operator from the family $\SingSolFam$ to the family $\RegSolFam$.
\end{theorem*}

Now the space of all regular    solutions of the Schr\"odinger equation is infinite-dimensional for \emph{every} energy $E$, real or complex. It  is often much larger than the space of all physical solutions, since for some values of $E$ there are no physical solutions at all, while for others there is only a finite-dimensional space of physical solutions. One can therefore ask how the physical solutions can be extracted from the  family $\RegSolFam$.  

Our answer is that  the Jantzen filtration technique from representation theory, used as in \cite{Ber2017},  not only isolates the physical solutions among the regular solutions, but it also determines unitary inner products on the physical solution spaces.  The technique applies whenever one is given a generically invertible, algebraic family of intertwining operators acting between a family and its dual, as is the case here thanks to the previous two theorems.

\begin{theorem*}[See Section~\ref{sec-phys-sol-jantzen}]
The physical spectrum of hydrogen atom coincides with the set of all energies $E {\in} \C$ for which the fiber  $\RegSol(E)$  has  a nonzero infinitesimally   unitary Jantzen quotient. This quotient is unique, and it identifies with $\PhysSol(E )$   as a  unitary $(\g|_E ,K)$-module. 
\end{theorem*}
 
This theorem, which spotlights the role that  the algebraic family as a whole plays in determining the spectrum, calls to mind similar  features of scattering theory, and it is to these analytic issues that we turn next.

The Schr\"odinger operator for the hydrogen atom has a natural self-adjoint extension on the Hilbert space of square-integrable functions on $\R^3$. We shall give a complete analysis of  the  spectral decomposition of the Schr\"odinger operator from the perspective of unitary representation theory, and show that the spectral subspaces are the irreducible unitary representations obtained by completing the physical solution spaces. We shall also compute the spectral measure. See Section 9.  But here we shall focus on a single issue that relates the Jantzen technique to scattering theory.

Fix a value $E$ that is \emph{not} in the spectrum of the self-adjoint Schr\"o\-dinger operator $H$, and define  
\[
\Sol_0 (E),  \Sol_\infty (E)\subseteq \Sol(E)
\]
to be the spaces of all those ($K$-finite) solutions of the time-independent Schr\"o\-dinger equation  whose restrictions to neighborhoods of $0$, respectively $\infty$, coincide with the restrictions of functions in the self-adjoint domain of $H$.  As a result of the assumption that $E$ is not in the self-adjoint spectrum of $H$, these are complementary within the space  of all solutions: 
\[
\Sol (E) =   \Sol_{0} (E) \oplus \Sol_{\infty}(E).
\]
Moreover it follows from a simple analysis of the self-adjoint domain of $H$ that 
\[
\Sol(E)_0 = \RegSol(E).
\]
Now, let us say that a family  $\{ V_E\} _{E \notin \Spec (H)}$  of   eigenfunctions in the spaces $\Sol_\infty (E)$ is of \emph{Kodaira type} if its Wronskian pairing with any algebraic section of $\RegSolFam$  is   polynomial in $E$.  Such a family  determines, and is determined by, an algebraic section of the family $\SingSolFam$. 

The following is a variation on a result used by Kodaira \cite{Kodaira49} to confirm  Heisenberg's discovery \cite{Heisenberg43a,Heisenberg43b,Heisenberg44} that  the negative spectrum of the Schr\"o\-dinger operator is determined by scattering data associated to the positive spectrum. Kodaira's result is an instance of the   \emph{limiting absorption principle} in scattering theory.
 
 \begin{theorem*}[See Section~\ref{sec-resolvents}]
If  $\{ V_E\} _{E \notin \Spec (H)}$ is a Kodaira-type  family  in the spaces $\Sol_{\infty}(E)$,  then  for every $E>0$  the limits 
 \[
 \lim_{\varepsilon\to 0+}  V_{E+i\varepsilon}
 \quad \text{and} \quad 
 \lim_{\varepsilon\to 0+}  V_{E-i\varepsilon}
 \]
 both exist.  
 \end{theorem*}
 
Now,  our result relating  the Jantzen filtration to the physical spectrum of the hydrogen atom is very much in the same spirit as Heisenberg's discovery.  The following theorem makes an explicit connection between algebraic families and scattering theory. 

 \begin{theorem*}[See Section~\ref{sec-resolvents}]
The difference of the limits in the theorem above is a member of  $\RegSol (E)$. After adjusting by a Wronskian    factor, the difference varies algebraically with $E$, and the  morphism
\[
\mathcal{A}\colon \SingSolFam \longrightarrow \RegSolFam .
\]
that is determined in this way is the unique generically invertible intertwining operator between  algebraic families of Harish-Chandra modules for $(\g,\BK)$ \textup{(}from which the physical states of the hydrogen atom may be obtained using the Jantzen technique\textup{)}.
\end{theorem*}

We believe that this unexpected bridge back to algebra  from analysis makes it apparent that the approach to hidden symmetries  via algebraic families, particularly algebraic families of solutions, merits close attention.

\subsection*{Acknowledgments}   The late Joseph Birman  drew the authors' attention to the hydrogen atom system, and suggested that scattering phenomena there should be investigated further. The authors  are  also     grateful to  Joseph Bernstein for many useful remarks.

\section{The family of Harish-Chandra pairs  for the hydrogen atom}
\label{sec-algebraic-family}
In this section we shall associate      to the  Schr\"{o}dinger operator \eqref{eq-schrodinger-op}  an   algebraic family of Harish-Chandra pairs that organizes the various hidden symmetries  of the Schr\"odinger equation.

\subsection{Rescaled Schr\"odinger operator}
The time-inde\-pendent   Schr\"{o}dinger equation for the hydrogen atom  is  the eigenvalue equation
\begin{equation}
\label{eq-schrodinger-eqn}
 H\psi =E \psi ,
 \end{equation}
 where  $\psi$ is a function on $\R^3$, $E$ is a real number (the energy), and $H$ is the Schr\"od\-inger operator 
\begin{equation}
\label{eq-schrodinger-op}
H=-\frac{\hbar^2}{2\mu}\triangle-\frac{e^2}{r}.
\end{equation}
Here  $\hbar$ is the reduced Planck's constant, $\mu$ is the reduced mass, $e$ is the electron charge and $r$ is the distance in $\R^3$ from the origin. 
To streamline formulas  we shall work not with the operator $H$ in \eqref{eq-schrodinger-op} but with the   \emph{rescaled Schr\"odinger  operator} 
\begin{equation}
\label{eq-T-rescales-H}
T = \frac{2\mu}{\,\hbar^2} H .
\end{equation}
It follows from the definition above that
$
T = - \Delta -  {2 \kappaconstant }/{r}$ ,
where 
\begin{equation}
\label{eq-kappa}
\kappaconstant  = \frac{ \mu e^2}{\hbar^2} .
\end{equation}
We shall use $T$ and $\kappaconstant $ throughout the paper (we might have chosen units so that $\kappaconstant  {=} 1$, but we resisted the temptation to do so).   

To familiarize the reader   with the new notation, we note that the computation of the spectrum of $H$ mentioned in the introduction is equivalent to the formula
\[
	\operatorname{Spec}(T)= \Bigl \{\, - \frac{\kappaconstant ^2}{ n^2}   : n=1,2,\ldots \,  \Bigr\}\, \sqcup \, [0,\infty) .
\]

\subsection{The infinitesimal hidden symmetries}\label{cent} 
In this subsection we shall consider $T$ as an element in 
the  algebra of  linear partial differential operators on  
the space  $\R_0^3 = \R^3 \setminus \{ 0\}$ 
 with smooth function coefficients.  We shall denote by $\Cent(T)$ the centralizer of the rescaled Schr\"odinger operator in this algebra.   

Since $T$ is rotation-invariant, the most evident elements in $\Cent(T)$ are the  infinitesimal rotation operators 
\begin{equation}
\label{eq-l-ops}  
 L_1 =x_2\partial_3-x_3 \partial_2 , \quad   
 L_2 =x_3\partial_1-x_1\partial_3 \quad \text{and} \quad  
 L_3=x_1\partial_2-x_2\partial_1  .
 \end{equation}
But in addition, the centralizer includes  the so-called Runge-Lenz operators
\begin{equation}
\label{eq-r-ops}
\begin{aligned}
R_1 &  =\sqrt{-1} \Bigl(L_3\partial_{2}-L_2\partial_{3}-\partial_1+  \frac{\kappaconstant  x_1}{ r} \Bigr), 
\\ 
 R_2 &  =\sqrt{-1} \Bigl(L_1\partial_{3}-L_3 \partial_{1} -\partial_2+ \frac{\kappaconstant  x_2}{ r} \Bigr),
\\
 R_3&  =\sqrt{-1} \Bigl(L_2\partial_{1}-L_1\partial_{2}-\partial_3+  \frac{\kappaconstant  x_3}{r}\Bigr),
 \end{aligned}
 \end{equation}
 which Pauli obtained by quantizing the similarly-named    quantities in the classical Kepler problem (see for example \cite[Chs.\ 1\&2]{GiulleminSternberg90} or \cite[Ch.\ 18]{Hall2013}).

\begin{defn}
\label{def-g}
We shall denote by $\O$ the subalgebra  of $\Cent(T)$  consisting of polynomials in $T$.  It is an embedded copy of the  algebra of polynomial functions on the line.  In addition, we shall denote by $\g$ the $\O $-linear span of the operators 
\[
\{L_1,L_2,L_3\}\quad \text{and} \quad \{R_1,R_2,R_3\}
\]
within $\Cent(T)$.
\end{defn}

 \begin{lemma}
\label{lem-Lie-structure-on-g}
The space $\g$ is   a free $\O $-module on the elements in Definition~\ref{def-g}, and a  Lie algebra over $\O $ under the commutator bracket.
 \end{lemma}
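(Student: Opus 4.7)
The plan has two parts, corresponding to the two assertions: the six operators are $\O$-linearly independent inside $\Cent(T)$, and their $\O$-linear span is closed under the commutator bracket.

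For $\O$-linear independence, the approach I would take is to compare principal symbols of differential operators. Since $T = -\Delta - 2\kappaconstant/r$ has principal symbol $-|\xi|^2$, a polynomial $p(T)$ of degree $m$ is an order-$2m$ differential operator whose principal symbol is $(-|\xi|^2)^m$ times the leading scalar of $p$. The three rotation operators $L_i$ are of order $1$ with $\C$-linearly independent principal symbols $x_j \xi_k - x_k \xi_j$, and the three Runge-Lenz operators $R_i$ are of order $2$ with principal symbols $\sqrt{-1}\,(x_i|\xi|^2 - \xi_i(x\cdot\xi))$, which are also $\C$-linearly independent (as one checks by specializing $\xi$ to each standard basis vector). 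In a relation $\sum_i p_i(T) L_i + \sum_i q_i(T) R_i = 0$ the summands $p_i(T) L_i$ contribute only in odd total orders and the summands $q_i(T) R_i$ only in even orders, so the two types of terms vanish separately. Reading off the principal symbol at each order extracts a single coefficient of each $p_i$ or $q_i$, and the linear-independence statements above force all of those coefficients to vanish.

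For closure under the bracket, three families of commutators must be checked. The brackets $[L_i, L_j]$ are the standard $\mathfrak{so}(3)$ relations and produce $\C$-linear combinations of the $L_k$. The brackets $[L_i, R_j]$ express that $(R_1, R_2, R_3)$ transforms as a vector under the rotations generated by the $L_i$, and produce $\C$-linear combinations of the $R_k$; the verification reduces to the known commutators of $L_i$ with $\partial_j$ and with $x_j/r$ and is a short direct computation. The decisive case is
\[
[R_i, R_j] \;=\; -\epsilon_{ijk}\, T \, L_k,
\]
and the appearance of the factor $T$ is precisely what forces the coefficient ring to be $\O$ rather than $\C$. I expect this to be the main obstacle. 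Verifying it requires an organized expansion of $R_i R_j$, use of the standard commutation identities among $x_i$, $\partial_j$ and $r^{-1}$ (including $\Delta(1/r)=0$ on $\R^3_0$), and the observation that the Coulomb term $-2\kappaconstant/r$ recombines with $-\Delta$ to produce exactly $T$. The bookkeeping is classical and is dictated by the quantization of the corresponding Poisson bracket identity in the classical Kepler problem.

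Given these bracket identities, $\O$-bilinearity of the commutator on $\g$ is automatic: $T$ centralizes every generator of $\g$ by construction, and hence so do all polynomials in $T$. The Jacobi identity is inherited from the ambient associative algebra of differential operators on $\R^3_0$. Combined with the three bracket computations above, these observations give $\g$ the structure of a Lie algebra over $\O$.
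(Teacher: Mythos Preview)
Your proposal is correct and follows essentially the same approach as the paper: the paper also verifies closure under the bracket by citing the three commutation relations \eqref{eq-lr-relations} (note the paper's sign is $[R_i,R_j]=+\varepsilon_{ijk}TL_k$, a harmless convention difference), and proves freeness by the same principal-symbol argument, observing that the operators $T^kL_i$ have exact order $2k{+}1$ while the $T^kR_i$ have exact order $2k{+}2$, so the two types separate by parity and are $\C$-independent at each fixed order. Your write-up is somewhat more explicit about the symbol computations and about why $\O$-bilinearity and the Jacobi identity are automatic, but the underlying argument is the same.
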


 \begin{proof}
 By a direct calculation,
\begin{equation}
 \label{eq-lr-relations}
 \begin{aligned}
[L_i,L_j] & =-\varepsilon_{ijk} L_k \\ 
[R_i,R_j]& =\phantom{-}\varepsilon_{ijk} T L_k  \\ [L_i,R_j]& =-\varepsilon_{ijk}R_k ,
\end{aligned}
\end{equation}
 where $\varepsilon_{ijk} $ is  the Levi-Civita symbol (which is $0$ if an index is repeated, and the sign of the permutation $(i,j,k)$ otherwise). This shows that $\g$ is a Lie algebra.  Freeness can be checked by examining leading-order terms (that is, the principal symbols) of the operators $T^kL_i $ and $T^kR_i$.  The former have order exactly $2k{+}1$, and for each $k$ are linearly independent over $\C$; the latter have order exactly $2k{+}2$, and again for each $k$ they are linearly independent over $\C$.  The lemma follows from this.\end{proof}

Because  $\g$  is finitely generated and free as a module over $\O $, and also a Lie algebra over $\O $, it may be viewed as (the Lie algebra of global sections of)   an algebraic family of complex  Lie algebras over  $\C$, the maximal ideal spectrum of $\O$.    The individual fibers of the family are the complex Lie algebras 
\[
\g\vert _ \lambda = \g \,/\, I_\lambda \cdot \g,
\]
where $\lambda \in \C$ and where $I_\lambda \subseteq  \O $ is the ideal generated by $T-\lambda$ (in the text we shall denote typical points on the line by $\lambda$ rather than by $E$ as we did in the introduction).   

To better understand these fibers,  let us give a concrete realization of the family $\g$ as an algebraic family of matrix Lie algebras. Define a morphism from the $\O $-module $\g$ into the $4{\times}4$-matrices over $\O $ by
\begin{equation}
\label{eq-R-and-L-matrices}
\begin{gathered}
L_1 \mapsto 
\left[\begin{smallmatrix}
\,0\, & \,0\, & \,0\, & \,0\,\\
0  &0 & {-}1 &  0 \\
 0 &  1 & 0 & 0 \\
  0 &  0 & 0  &0
\end{smallmatrix}\right] ,
\quad 
L_2\mapsto 
\left[\begin{smallmatrix}
\,0\, & \,0\, & \,-1\, & \,0\,\\
 0 &0 & 0 &  0 \\
 1 &  0 & 0 & 0 \\
  0 &  0 & 0  &0
\end{smallmatrix}\right]
\quad\text{and} \quad 
L_3 \mapsto 
\left[\begin{smallmatrix}
\,0\, & -1 & \,0\, & \,0\,\\
1 &0 & 0 &  0 \\
 0 &  0  & 0 & 0 \\
  0 &  0 & 0  &0
\end{smallmatrix}\right]
\\
R_1 \mapsto 
\left[\begin{smallmatrix}
\,0\, & \,0\, & \,0\, & \phantom{.}1\phantom{.}\\
0 &0 & 0 &  \,0\, \\
 0 &  0 & 0 & 0 \\
  \phantom{.}T\phantom{.} &  0 & 0  &0
\end{smallmatrix}\right] ,
\quad 
R_2\mapsto 
\left[\begin{smallmatrix}
\,0\, & \,0\, & \,0\, & \,0\,\\
 0 &0 & 0 &  -1 \\
 0 &  0 & 0 & 0 \\
  0 &  -T & 0  &0
\end{smallmatrix}\right]
\quad\text{and} \quad 
R_3 \mapsto 
\left[\begin{smallmatrix}
\,0\, & \,0\, & \,0\, & \,0\,\\
0 &0 & 0 &  0 \\
 0 &  0 & 0 & \phantom{.}1\phantom{.} \\
  0 &  0 & T &0
\end{smallmatrix}\right] .
\end{gathered}
\end{equation}
The image matrices are linearly indendent over $\O $, so the morphism is injective. The explicit computations in the proof of Lemma~\ref{lem-Lie-structure-on-g} show that it is also a Lie algebra homomorphism.

The image of $\g$  is the Lie algebra of all matrices over $\O $ of the form
\[ 
\begin{bmatrix}
0 & e(T) & f(T) & p(T)\\
-e(T)&0 & g(T)& q(T)\\
 -f(T)&  - g(T)& 0 &r(T)\\
 Tp(T)& Tq(T)& Tr(T) &0
\end{bmatrix},
\]
where $e$, $f$, $g$, $p$, $q$ and  $r$ are polynomial functions.  The fiber Lie algebra $\g\vert _\lambda  $ is therefore isomorphic to  Lie algebra of all matrices of the same form,  but with $T$ replaced by $ \lambda \in \C $. 

When $\lambda {=}0$, the fiber  is the semidirect product $\mathfrak{so}(3,\C)\ltimes \C^3$.
When $\lambda  {\ne} 0$,  it  is the complex Lie algebra of infinitesimal symmetries of the symmetric bilinear form 
\[
S_ \lambda = \operatorname{diag}(1,1,1, -  \lambda).
\]  
If we fix a complex number $\mu$ such that 
\begin{equation}
\label{eq-def-mu}
\mu^2 \lambda = -1
\end{equation}
 and if we define 
\begin{equation}
\label{eq-def-lambda}
L_j^\pm = \tfrac 12 \left ( L_j \pm \mu R_j\right ) ,
\end{equation}
then  
\[
[ L _i^\pm, L _j^\pm] = -\varepsilon _{ijk} L ^\pm _k ,
\]
and moreover 
\[
[ L_i^+ , L _j ^-] = 0 .
\]
So  the elements $L^\pm _j$  determine an isomorphism of complex Lie algebras 
\begin{equation}
\label{eq-splitting-gz}
\g\vert_\lambda  \cong \mathfrak{so}(3) \times \mathfrak{so}(3) 
\end{equation}
when $\lambda {\ne} 0$. The isomorphism depends on the choice of $\mu$; the two possible choices lead to isomorphisms that differ from one another by the flip automorphism of the product Lie algebra.  Note also that under the isomorphism, the subalgebra of $\g\vert _ \lambda $ spanned by the elements $L_j$ is mapped to the diagonal subalgebra of the product in \eqref{eq-splitting-gz}.

From the embedding \eqref{eq-R-and-L-matrices} we see that  $\g$ is the family of Lie algebras associated to an algebraic family $\BG$ of   groups (that is, a smooth group scheme over the complex line; see \cite[Sec.\ 2.2]{Ber2016}), namely  
\begin{equation}
\label{eq-BG-def}
\BG = \bigl \{ \, (W, \lambda) \in GL(4,\C){\times} \C :  \det(W) = 1 \,\, \text{and} \,\, W  S_ \lambda W^t = S_ \lambda\bigr \}  .
\end{equation}
The fiber groups are isomorphic to 
\[
 \begin{cases}  
 O(3,\C)\ltimes \C^3 & \lambda =0 \\
SO(4,\C) &  \lambda\ne 0  .
 \end{cases}
 \]  
Here the action of $O(3,\C)$ on $\C^3$ in the semidirect product is given by 
\begin{equation}
\label{eq-O3-action}
A\colon v \longmapsto \det(A)^{-1} Av .
\end{equation}
This is, of course, not quite the standard matrix action  on $\C^3$. But it  and the corresponding action of the compact group $O(3)$ on $\R^3$ will reappear throughout the paper.

\subsection{The visible symmetries and Harish-Chandra pairs }
The constant family of groups 
\[
\BK = O(3,\C) {\times} \C
\]
may be viewed as a subfamily $\BG$  via the embedding 
\[
A \longmapsto \begin{bmatrix} A & 0 \\ 0 & \det(A)^{-1} \end{bmatrix} .
\]
The family $\BK$ acts on the family $\g$  via the isomorphism \eqref{eq-R-and-L-matrices} and the adjoint action. This   makes
$(\g,\BK)$ into an algebraic family of Harish-Chandra pairs in the sense of \cite{Ber2016,Ber2017}.

Note that in terms of differential operators, the action of $\BK$ on $\g$ is induced from the action of  $O(3)$  on    $\R^3_0 $ via the formula \eqref{eq-O3-action}.

\subsection{Real structure}
\label{subsec-real-structure}

The family  $(\g,\BK)$ carries a natural \emph{real structure}, which is to say a  set of compatible conjugate-linear involutions on $\O$,    $\g$ and  $\BK$. See \cite[Sec.\ 2.5]{Ber2016} for details concerning the definition, but the formulas below should make those details clear.

First, we define a   complex-conjugate-linear involution 
\[
\sigma\colon \O\longrightarrow \O
\]
by means of the standard formula $\sigma(p)(z) = \overline{p(\overline{z})}$. Here we are viewing elements of $\O$ as polynomial functions on the line.  Thinking of them as polynomials in $T$, the involution acts by complex-conjugating all the coefficients of the polynomial.

Next, the elements  of $\g$, being linear partial differential operators on   $\R^3_0$, have formal adjoints.   The family $\g$ is stable under the formal adjoint  operation since $T$ is formally self-adjoint, while  each of the  operators $L_i$ and $R_i$  in \eqref{eq-l-ops} and \eqref{eq-r-ops} is formally skew-adjoint. Using this fact, we define  
\begin{equation}
\label{eq-real-structure-on-g}
\sigma_{\g} \colon \g\longrightarrow \g  
\end{equation}
by $ \sigma_{\g}(X)=-X^*$. 
 This preserves the Lie bracket and satisfies 
\begin{equation}
\label{eq-compatible-involutions}
\sigma_{\g} (p\cdot X) = \sigma(p)\cdot  \sigma_{\g} (X)
\end{equation}
for all $p \in \O $ and all $X \in \g$. 

Finally we define an involution  on   $\boldsymbol{K}$ by combining the involution on the base with the antiholomorphic involution 
 \[
 k \longmapsto (k^*)^{-1}
  \]
on the fiber $O(3,\C)$.
 The three  involutions so-defined constitute a real structure on the algebraic family of Harish-Chandra pairs  $(\g,\BK)$.

 \subsection{Cartan involution}
\label{subsec-cartan}
There is a second natural complex-conjugate-linear involution
\begin{equation}
\label{eq-complex-conj-inv}
\sigma'_{\g}\colon \g \longrightarrow \g ,
\end{equation}
which is defined as follows: each element of $\g$ is   a linear differential operator on $\R^3_0$, and $\sigma'_{\g}$ replaces each coefficient function in the operator by its complex conjugate.  Like the involution $\sigma_{\g}$, it is compatible with the involution on $\O$ in the sense of the formula \eqref{eq-compatible-involutions}. Moreover $\sigma_{\g}$ and $\sigma'_{\g}$ commute with one another.  So the composition 
\begin{equation}
\label{eq-cartan-inv}
\theta = \sigma_{\g}\circ \sigma'_{\g}\colon \g \longrightarrow \g
\end{equation}
is an $\O$-linear involution of $\g$.  Its fixed subalgebra is precisely the algebra $\k$ of visible symmetries, while its $-1$ eigenspace is spanned by the Runge-Lenz operators.

\begin{remark}
The   symmetries of the Schr\"odinger equation are related to our involutions as follows: 
the visible symmetries are those elements of $\g$ that are fixed by both $\sigma_{\g}$ and $\theta$, whereas the hidden symmetries are those fixed by $\sigma_{\g}$ alone.
\end{remark}

\subsection{Involutions at the group level}

 The involutions $\theta $ and $\sigma_{\g}$ assume a simple form under  the isomorphism in \eqref{eq-R-and-L-matrices} from $\g$ to a family of $4{\times}4$-matrix Lie algebras.  On the matrix fibers, $\sigma_{\g}$ is simply entry-wise complex conjugation, whereas $\theta$  is conjugation by the matrix 
 \[
 \Theta = \operatorname{diag}(1,1,1,-1).
 \]
   So these involutions lift to the family of groups $\BG$. To summarize:

  \begin{theorem}\label{ThmFamilyOfGroups}
There is an algebraic family of complex algebraic groups $\BG$ over the complex line, together  with a real structure $\sigma$ and a commuting involution $\theta$  such that

\begin{enumerate}[\rm (a)]

\item The associated algebraic family of Lie algebras is the   family $\g$. 

\item The family $\BG$ includes the constant family $\BK$ with fiber $O(3,\C)$ as a subfamily, and the real structure restricts to the standard real structure on this subfamily given by entry-wise complex conjugation.

\item The induced real structure on $\g$ is the involution  in \eqref{eq-real-structure-on-g}, and the fixed groups for the real structure over the points $\lambda\in \R$ are the  hidden symmetry  groups
 \[
\BG|_\lambda ^{\sigma}\cong \begin{cases}
SO(3,1) & \lambda >0 \\
O(3)\ltimes \R^3 & \lambda =0 \\
SO(4) & \lambda < 0.
\end{cases}
\] 

\item The fixed family for the     involution $\theta$  is $\BK$, and the induced action at the Lie algebra level is the involution in \eqref{eq-cartan-inv}. 
\qed

\end{enumerate}

\end{theorem}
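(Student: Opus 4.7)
The family $\BG$ has already been defined explicitly in \eqref{eq-BG-def}, so the plan is to verify conditions (a)--(d) for this concrete realization. First I would check that the projection $\BG \to \C$ to the last coordinate $\lambda$ is a smooth affine group scheme of relative dimension $6$. The defining equations $W S_\lambda W^t = S_\lambda$ and $\det(W) = 1$ cut out, fiber by fiber, an algebraic group isomorphic to either $SO(4,\C)$ (for $\lambda \ne 0$) or $O(3,\C) \ltimes \C^3$ (for $\lambda = 0$), both of dimension $6$; constancy of the fiber dimension together with reducedness yields smoothness of the total space over the base.

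For (a), I would compute the Lie algebra of $\BG$ by differentiating the defining equations: an element of $\operatorname{Lie}(\BG|_\lambda)$ is a matrix $X \in \mathfrak{sl}(4,\C)$ with $X S_\lambda + S_\lambda X^t = 0$. Writing $X$ in block form $\left[\begin{smallmatrix} A & v \\ u^t & d \end{smallmatrix}\right]$ with $A$ a $3{\times}3$ block, the condition reduces to $A + A^t = 0$, $u = \lambda v$, and $d = 0$. Comparing against the images of $L_i,R_i$ in \eqref{eq-R-and-L-matrices} shows that these exactly span the Lie algebra condition over $\O$, and the embedding \eqref{eq-R-and-L-matrices} therefore identifies $\g$ with $\operatorname{Lie}(\BG)$ as algebraic families of Lie algebras.

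For (b), I would verify directly that $W = \operatorname{diag}(A, \det(A)^{-1})$ satisfies both $WS_\lambda W^t = S_\lambda$ (using $AA^t = I_3$) and $\det(W) = 1$, and note that the restriction of entry-wise complex conjugation on $\BG$ is entry-wise conjugation on the $O(3,\C)$ block, which is the standard real structure on $\BK$. For (c), I would unpack the real structure at each $\lambda \in \R$. Over $\lambda < 0$, rescaling the last coordinate by $|\lambda|^{-1/2}$ carries $S_\lambda$ to $\operatorname{diag}(1,1,1,1)$ and the real fixed group becomes $SO(4)$. Over $\lambda > 0$, the analogous rescaling carries $S_\lambda$ to $\operatorname{diag}(1,1,1,-1)$ and yields $SO(3,1)$. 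The $\lambda = 0$ case requires a short direct calculation: writing $W$ in block form shows $u = 0$ and $AA^t = I_3$, so $W = \left[\begin{smallmatrix} A & v \\ 0 & \det(A)^{-1} \end{smallmatrix}\right]$, which with the group law becomes $O(3) \ltimes \R^3$.

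Finally for (d), I would observe that conjugation by $\Theta = \operatorname{diag}(1,1,1,-1)$ preserves both $S_\lambda$ and the determinant condition, so $\theta$ lifts to $\BG$. The fixed points are block-diagonal matrices $\operatorname{diag}(A,d)$ in $\BG$, and the constraints $AA^t = I_3$, $d \det(A) = 1$ identify the fixed subfamily with $\BK$, as in (b); differentiating recovers the $\O$-linear involution \eqref{eq-cartan-inv} at the Lie algebra level, since $\theta$ acts by $+1$ on the $L_i$ and by $-1$ on the $R_i$. The main obstacle will be the verification in (a) that the embedding \eqref{eq-R-and-L-matrices} of $\g$ matches $\operatorname{Lie}(\BG)$ uniformly across the base, including the degenerate fiber at $\lambda = 0$ where $S_\lambda$ drops rank but the image of $\g$ and the kernel of the infinitesimal orthogonality condition nevertheless remain free $\O$-modules of the same rank.
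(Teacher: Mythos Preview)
Your proposal is correct and follows the same route as the paper: the theorem is presented there with a \qed{} immediately after the statement, as a summary of the explicit constructions in the preceding subsections (the definition of $\BG$ in \eqref{eq-BG-def}, the matrix form of $\sigma_{\g}$ as entry-wise conjugation, and $\theta$ as conjugation by $\Theta$). You simply spell out the verifications that the paper leaves implicit; in particular your block-matrix computation of $\operatorname{Lie}(\BG)$ and the case analysis for the real fixed groups are exactly what underlies the paper's informal discussion.
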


\begin{remark}
As explained in \cite{Bar2017}, the whole family $\BG$ can actually be constructed from the  commuting involutions $\sigma$ and $\theta$ on the fiber over $\lambda {=}1$.
\end{remark}

\begin{remark}
Starting from  Fock's work \cite{Fock:1935vv}, a substantial literature has developed from  the fact that the symmetry groups described in the theorem above are isomorphic to one another for all  negative   $\lambda$, and also for all positive $\lambda$.  One remarkable development, with ramifications that continue to be explored, has been the realization of  \emph{all} the physical solutions of the time-dependent Schr\"odinger equation associated to all $\lambda$ inside a \emph{single} irreducible unitary representation of a larger group, namely the minimal representation of  (the connected component of the identity of) $O(4,2)$, called the \emph{dynamical symmetry group}.  See \cite{KobayashiOrstedI,KobayashiOrstedII,KobayashiOrstedIII} for a comprehensive mathematical treatment of the minimal representation in this case, and  see \cite{Maclay2020} or \cite{MengZhang2011,Meng2010} for recent accounts from the physics perspective.  

The approach to the symmetries of the hydrogen atom via the minimal representation of $O(4,2)$ is rather different   from our aims here,  which are to view the Schr\"odinger operator as a sort of Casimir operator for an algebraic family of groups,  and then understand how  the eigenspaces  vary algebraically as individual irreducible representations of the groups in this family.
 But it is interesting to consider possible connections between the two.  
 
To list one such possibility, away from $\lambda {=}0$, the isomorphisms in Theorem~\ref{ThmFamilyOfGroups} lead to an embedding of the family of groups studied in this paper into the constant family with fiber (the complexification of) $O(4,2)$.  The embedding becomes algebraic if one extends $\O$ by adjoining a square root of $T$.  (Another way to understand the embedding is to note that our family, away from $\lambda =0$,  is naturally a subfamily of the family of orthogonal groups preserving the forms 
\[
Q_\lambda = 
\operatorname{diag} \bigl ( 1,1,1,\lambda, -1,-\lambda^{-1} \bigr ) 
 \qquad (\lambda \ne 0) ,
\]
while conjugation by
\[
A_\lambda = 
\operatorname{diag} \bigl ( 1,1,1,\lambda^{1/2}, 1,\lambda^{-1/2} \bigr ) 
\]
transforms these orthogonal groups into the standard group $O(4,2)$.)   The embedding  of groups is compatible with the embedding of the solutions of the Schr\" od\-inger equation into the minimal representation. It therefore becomes  an interesting problem to \emph{construct} the algebraic families of solutions analyzed in this paper by starting  from the minimal representation. 
We hope to take up this issue elswhere, and are obliged to the referee for suggesting this type of investigation.
\end{remark}

\subsection{The enveloping algebra and its center} 
Denote by $\mathcal{U}(\g)$ the enveloping algebra of the $\O $-Lie algebra $\g$.  See \cite[Section 2.7]{Bourbaki60} for general information on enveloping algebras applicable to Lie algebras over commutative rings, including for instance the Poincar\'e-Birkhoff-Witt theorem, which applies to our case because $\g$ is free as a module over $\O $.

We shall be especially concerned with the center $\mathcal{Z}(\g)$ of the enveloping algebra, and we shall give here an analysis of its structure.
It follows   from the formulas \eqref{eq-lr-relations} that the   elements
\begin{equation}
\label{eq-two-casimirs}
  RL  
\quad \text{and} \quad
   T L^2 - R^2  
 \end{equation}
 belong to the center of the enveloping algebra (we are using standard abbreviated notation, so that $R^2 = R_1^2 + R_2^2 + R_3^2$, and so on).  Moreover these elements  are $O(3)$-invariant.  

\begin{proposition} 
The center of the enveloping algebra $\mathcal{U}(\g)$ is freely generated, as a commutative $\O $-algebra, by   $ RL  $
and 
 $ T L^2  - R^2$. 
 \end{proposition}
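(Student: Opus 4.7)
The plan is to identify the center $\mathcal{Z}(\g)$ with the Poisson center of $\mathrm{Sym}(\g)$ via the PBW filtration and then carry out the Poisson computation directly by classical invariant theory. First, one verifies from the commutation relations \eqref{eq-lr-relations} that $C_1:=RL$ and $C_2:=TL^2-R^2$ are central: the commutators with each $L_i$ vanish because $L$ and $R$ both transform as $\mathfrak{so}(3)$-vectors, while the commutators with each $R_j$ collapse after a short manipulation using $[R_i,R_j]=\varepsilon_{ijk}TL_k$. The algebraic independence of $C_1,C_2$ over $\O$ can be tested on the principal symbols and then on a single fiber $\lambda\neq 0$: in the $L^\pm$-coordinates of \eqref{eq-splitting-gz} they form a nondegenerate $\C$-linear combination of the standard Casimir symbols $(L^+)^2, (L^-)^2$ of $\mathfrak{so}(3)\times\mathfrak{so}(3)$, which are manifestly independent because they live in disjoint variables.

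The heart of the argument is the computation of the Poisson center of $\mathrm{Sym}(\g)=\O[L_1,\ldots,R_3]$ under the Kirillov--Kostant bracket induced from $\g$. I would do this in two stages. Weyl's first fundamental theorem for $O(3)$ identifies the subring annihilated by all the operators $\{L_i,\cdot\}$ with the polynomial $\O$-algebra $\O[L^2, LR, R^2]$. Next, one computes
\[
\{R_i,L^2\}=-2(L\times R)_i,\qquad \{R_i,LR\}=0,\qquad \{R_i,R^2\}=-2T(L\times R)_i .
\]
Writing $f=f(A,B,C)$ with $A=L^2$, $B=LR$, $C=R^2$, the system $\{R_i,f\}=0$ reduces, since $(L\times R)_i$ is not a zero divisor in the polynomial ring $\mathrm{Sym}(\g)$, to the single linear first-order PDE $\partial_A f+T\,\partial_C f=0$. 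Solving by characteristics in $\O[A,B,C]$ yields exactly the solutions $f=g(B,\,C-TA)$ with $g\in\O[x,y]$, that is, the subring $\O[LR,\,R^2-TL^2]=\O[\bar C_1,\bar C_2]$, where $\bar C_i$ denotes the principal symbol of $C_i$.

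The proof concludes with a routine PBW filtration argument. Since $\g$ is free as an $\O$-module, PBW identifies $\mathrm{gr}(\mathcal{U}(\g))$ with $\mathrm{Sym}(\g)$, and the symbol of a central element lies in the Poisson center, so $\mathrm{gr}(\mathcal{Z}(\g))\subseteq \O[\bar C_1,\bar C_2]$; the reverse containment $\mathrm{gr}(\O[C_1,C_2])=\O[\bar C_1,\bar C_2]$ holds by the algebraic independence established above. Equality of the two associated gradeds then forces $\mathcal{Z}(\g)=\O[C_1,C_2]$ by induction on filtration degree. The main obstacle is the Poisson computation: one must track the method-of-characteristics argument inside the polynomial ring of invariants $\O[A,B,C]$ correctly and verify that its solution space is generated over $\O$ by exactly $B$ and $C-TA$, with no further relations from $A,B,C$ being vector-valued invariants rather than independent scalar coordinates --- this is precisely where the algebraic independence of $L^2, LR, R^2$ over $\O$, guaranteed by Weyl's theorem, is used.
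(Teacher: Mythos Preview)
Your argument is correct, but it follows a genuinely different route from the paper's.  The paper does \emph{not} use the standard PBW filtration.  Instead it filters $\mathcal{U}(\g)$ by declaring $\operatorname{order}(L_i)=0$, $\operatorname{order}(R_i)=1$, $\operatorname{order}(T)=0$; the associated graded is then the \emph{noncommutative} algebra $\mathcal{U}(\mathfrak{g}_0)\otimes_\C\O$, where $\mathfrak{g}_0$ is the complex Euclidean Lie algebra $\mathfrak{so}(3)\ltimes\C^3$ (the relation $[R_i,R_j]=\varepsilon_{ijk}TL_k$ degenerates to $[r_i,r_j]=0$).  The paper then cites a known computation of $\mathcal{Z}(\mathfrak{g}_0)$ as $\C[r\ell,r^2]$ and finishes by the usual induction on order.

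Your approach trades the citation for a direct, self-contained computation: you pass to the commutative $\mathrm{Sym}_\O(\g)$, identify the $\mathfrak{so}(3)$-invariants via Weyl's theorem as $\O[L^2,LR,R^2]$ (a genuine polynomial ring in three variables, since two vectors in $\C^3$ have no determinantal invariant), and then solve the residual constraint $\{R_i,f\}=0$ by a change of variables.  The paper's filtration is in some sense the more thematic choice here, since it is exactly the contraction of the family $\g$ to the $\lambda=0$ fiber; your approach is more elementary and avoids needing to know the center of the Euclidean enveloping algebra in advance.  Both lead cleanly to the same associated-graded statement, after which the descent to $\mathcal{U}(\g)$ is routine.
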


\begin{proof}
View $\mathcal{U}(\g)$ as a  complex associative algebra, and equip it with the increasing filtration for which 
\begin{equation}
\label{eq-tech-filtration}
\operatorname{order}(L_i) = 0, \quad \operatorname{order}(R_i) = 1 \quad \text{and} \quad \operatorname{order}(T) = 0.
\end{equation}
The associated graded algebra is  the tensor product 
$
\mathcal{U}(\mathfrak{g}_0) \otimes_{\C} \O ,
$
where $ \mathfrak{g}_0$ is the $6$-dimensional graded complex Lie algebra with basis elements 
$\ell_1$, $\ell_2$, $\ell_3$ in degree 0 and $r_1$, $r_2$, $r_3$ in degree 1, and relations
\begin{equation}
\label{eq-tech-associated-graded}
[\ell_i,\ell_j]  =-\varepsilon_{ijk} \ell_k , \quad 
[r_i,r_j]  = 0  \quad \text{and}\quad [\ell_i,r_j] =-\varepsilon_{ijk}r_k ;
\end{equation} 
compare to formulas \eqref{eq-lr-relations} earlier.    

Now, the center of the associated graded algebra is
$
\mathcal{Z}(\mathfrak{g}_0) \otimes_{\C} \O ,
$
and by an explicit computation   $\mathcal{Z}(\mathfrak{g}_0)$ is a free commutative algebra over $\C$ on the elements $  r\ell $ and $ r^2$.  Compare  \cite[Theorem 2.1]{Gonzalez88}. 
The proposition follows  from this by a standard induction argument on the order of elements in the center of the enveloping algebra.
\end{proof}

\begin{remark} 
The reference \cite[Theorem 2.1]{Gonzalez88}  actually computes the $O(3)$-invariant part of $\mathcal{Z}(\mathfrak{g}_0)$ for the \emph{standard} action of $O(3)$ on $\C^3$, not the action \eqref{eq-O3-action}. For that action the invariant part is freely generated by $(r\ell)^2$ and $r^2$. The element $r \ell$ is not $O(3)$-invariant for the standard action; instead, it transforms under   $A{\in} O(3)$ as 
\[
  r\ell  \longmapsto \det (A)\cdot r \ell .
\]
But the computation of  $\mathcal{Z}(\mathfrak{g}_0)$  is easily reduced to the computation of the standard $O(3)$-invariant part using the formula 
\[
\mathcal{Z}(\mathfrak{g}_0) = \mathcal{Z}(\mathfrak{g}_0)^{O(3),\mathrm{standard}} \oplus  r \ell \cdot \mathcal{Z}(\mathfrak{g}_0)^{O(3),\mathrm{standard}} .
\]
\end{remark}

\subsection{Realization in differential operators} 
Because $\g$ is defined as a Lie algebra of linear differential operators   commuting with the rescaled Schr\"od\-inger operator $T$, there is a canonical morphism of $\O $-algebras
\begin{equation}
\label{eq-morphism-into-centralizer}
\action \colon \mathcal{U} (\g) \longrightarrow \Cent(T) .
\end{equation}
We shall briefly examine both the image and the kernel of this morphism.

 The algebra $\Cent (T)$ is filtered by the usual order of differential operators. The following lemma is proved by a direct computation, which we shall omit:

 \begin{lemma}
\label{lem1}
The order $2$ part of $\Cent(T)$  is  spanned by the totality of the elements 
\begin{enumerate}[\rm (i)]
\item 
${I}$  \textup{(}the identity operator\textup{)}
\item
$T$ \textup{(}the rescaled Schr\"{o}dinger operator\textup{)}  
\item $L_1$, \, $L_2$ \, and \, $L_3 $
\item $R_1$, \, $R_2$  \, and \, $R_3 $
\item $L_1^2+L_2^2+L_3^2$
\item  $L_1L_2+L_2L_1$, \, $L_1L_3+L_3L_1$, \,$L_2L_3+L_3L_2$, $L_1^2-L_2^2$ \, and \, $L_2^2-L_3^2 $. \qed
\end{enumerate}
\end{lemma}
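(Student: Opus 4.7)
The plan is to check both inclusions. In the forward direction, each of the fourteen listed operators has order at most two and commutes with $T$: $I$ and $T$ trivially, each $L_i$ and $R_i$ by membership in $\g \subseteq \Cent(T)$ via Lemma~\ref{lem-Lie-structure-on-g}, and the quadratics $L_iL_j + L_jL_i$ because $[T,L_i] = 0$. Linear independence over $\C$ is immediate: the three groups $\{I\}$, $\{L_i\}$, $\{T, R_i, L_iL_j + L_jL_i\}$ are distinguished by differential order, and within the order-two group the $L$-quadratics, $R_i$, and $T$ have principal symbols of visibly different $K$-type.

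For the reverse direction, write any $P \in \Cent(T)$ of order $\leq 2$ as $P = P_2 + P_1 + P_0$ with $P_k$ of order $k$, and analyze $[T,P] = 0$ order by order. At the top order (three), the equation reduces to $\{|\xi|^2, \sigma_2(P_2)\} = 0$, so the principal symbol $\sigma_2(P_2)$ is a Killing $2$-tensor for the Euclidean metric on $\R^3$. Such tensors are classically spanned by symmetric products of the six generating Killing $1$-tensors $\partial_1, \partial_2, \partial_3, L_1, L_2, L_3$, yielding a $21$-dimensional space that decomposes under $K = O(3)$ into six rotation quadratics, nine mixed rotation-translation quadratics, and six translation quadratics.

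The crux is to determine which of these symbols actually lift to elements of $\Cent(T)$ in the presence of the Coulomb potential. Products of rotations lift trivially, contributing $L^2$ and its five-dimensional traceless complement. Among the mixed symbols, the three antisymmetric combinations of the form $\sigma_1(L_j)\sigma_1(\partial_k) - \sigma_1(L_k)\sigma_1(\partial_j)$ lift, after the Coulombic correction $\kappaconstant x_i/r$, to the Runge-Lenz operators $R_i$; the remaining six symmetric combinations admit no lower-order correction. To see the latter, write a general Ansatz $P_2 + b_i \partial_i + c$ and expand $[T,P]$ at orders two and one. The order-two equation forces $(b_1,b_2,b_3)$ to be a Killing vector on $\R^3_0$, hence $b = \omega \times x + v$, whence $\Delta b = 0$, and then the order-one equation reduces to a system $\partial_i c = f_i(x)$ where $f_i$ is a specific combination of $x_j/r^3$-type terms; for the non-lifting symbols the $f_i$ fail the integrability condition $\partial_i f_j = \partial_j f_i$, so no function $c$ smooth on $\R^3_0$ can exist. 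The analogous analysis of the six translation quadratics leaves only the trace $\sigma_2(\Delta) = -\sum \xi_i^2$, which lifts to $-T - 2\kappaconstant/r$ and accounts (modulo a zeroth-order term absorbed in $P_0$) for the listed operator $T$.

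It remains to handle the residual $P_1 + P_0 \in \Cent(T)$. The order-one symbol of $P_1$ is a Killing $1$-tensor, hence lies in $\operatorname{span}\{L_i, \partial_i\}$; the $L_i$ commute with $T$ outright, whereas the same kind of integrability analysis shows that no translation admits a zeroth-order correction commuting with $-2\kappaconstant/r$. Hence $P_1 \in \operatorname{span}\{L_1, L_2, L_3\}$ and $P_0 \in \operatorname{span}\{I\}$. Summing all surviving contributions gives a centralizer dimension of at most $1 + 1 + 3 + 3 + 1 + 5 = 14$, matching the count of listed operators. The main obstacle is the casework at the lifting step; decomposing $\sigma_2(P_2)$ into its $K$-isotypic components reduces the Ansatz coefficients to a small number of radial unknowns, making each compatibility check a short algebraic calculation rather than a full PDE analysis.
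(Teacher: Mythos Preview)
The paper does not actually prove this lemma; the sentence preceding it reads ``The following lemma is proved by a direct computation, which we shall omit,'' and the statement carries a terminal \qed. Your proposal therefore supplies what the paper leaves out, and the strategy---pass to principal symbols, recognize the order-three vanishing as the Killing $2$-tensor condition, decompose the $21$-dimensional space of such tensors under $K$ into rotation--rotation, rotation--translation, and translation--translation pieces, and then test which symbols lift past the Coulomb term---is the standard and correct way to organize such a computation. Two small points of precision. First, at order two the commutator equation does not literally force $b$ to be a Killing vector; rather, once a representative $P_2$ with the given Killing symbol is fixed, the order-two condition determines $\sigma_1(P_1)$ up to a Killing vector, so a particular solution (coming from $\Delta K^{ij}$, which is constant for a Euclidean Killing tensor) must be absorbed before your ``hence $b=\omega\times x+v$'' conclusion applies. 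Second, the non-lifting casework you correctly identify as the crux genuinely uses $\kappaconstant\neq 0$: for the free Laplacian every Killing $2$-tensor lifts and the order-$\le 2$ centralizer is much larger, so the integrability obstruction $\partial_i f_j\neq\partial_j f_i$ must be traced specifically to the $x_j/r^3$ terms arising from $[-2\kappaconstant/r,P_2]$. With those caveats, your outline is sound and your final dimension count of $14$ matches the lemma.
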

We grouped the elements into six separate sets because  the operators in each span an irreducible representation of the group $O(3)$.

The following formulas for the action of $\mathcal{Z}(\g)$ under the morphism $\action$ will play important roles in the sequel.  For the proofs, see for example \cite[Prop.\ 18.11]{Hall2013}.

\begin{lemma} 
\label{lem-vals-of-casimirs}
The images of the central elements $RL$ and $ T L^2 -R^2  $ in \eqref{eq-two-casimirs} under the homomorphism $\action$ are given by the formulas 
\[
\action (RL  ) = 0 
\]
and
\[
\pushQED{\qed}
\action (T L^2 -R^2 ) =    T +   {\kappaconstant ^2} I .
\qedhere
\popQED
\]
\end{lemma}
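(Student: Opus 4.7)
The plan is to verify both identities by direct computation in the realization of $\g$ as differential operators on $\R^3_0$, guided by the classical Kepler-problem analogues $\vec R \cdot \vec L = 0$ and $\vec R^{\,2} = 2E \vec L^{\,2} + \kappaconstant^2$. The bulk of the work is a careful tracking of operator ordering, where I will systematically use the formula $[\partial_k, L_i] = \varepsilon_{ikm}\partial_m$ (which follows from $L_i = \varepsilon_{iab} x_a\partial_b$), the contraction identity $\varepsilon_{ipq}\varepsilon_{imq} = 2\delta_{pm}$, and the commutation relations \eqref{eq-lr-relations}. Throughout I will freely use the two trivial observations $\sum_i x_i L_i = 0$ and $\sum_i \partial_i L_i = \sum_i L_i \partial_i = 0$, both from the antisymmetry of $\varepsilon_{iab}$.

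For $\action(RL) = 0$, I would decompose $R_i = \sqrt{-1}(A_i - \partial_i + \kappaconstant x_i/r)$ with $A_i = \varepsilon_{ipq} L_p \partial_q$, and split $\sum_i R_i L_i$ into three sums. The $\partial_i L_i$ piece vanishes immediately. The $(x_i/r) L_i$ piece vanishes because $L_i$ commutes with $r$ and $\sum_i x_i L_i = 0$. For the remaining piece $\sum_i \varepsilon_{ipq} L_p \partial_q L_i$, move $\partial_q$ past $L_i$ using the commutator above; the shifted term is $\varepsilon_{ipq}\varepsilon_{iqm} L_p \partial_m = -2 \sum_p L_p \partial_p = 0$, while the principal term $\varepsilon_{ipq} L_p L_i \partial_q$ can be antisymmetrized in $(i,p)$ to $\tfrac12 \varepsilon_{ipq}[L_p, L_i]\partial_q = \tfrac12 \varepsilon_{ipq}\varepsilon_{ipk} L_k \partial_q = \sum_q L_q \partial_q = 0$.

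For the second identity, I would expand $\sum_i R_i^2$ from the definition and organize the resulting nine types of terms. The highest-order pieces come from $(L\times \nabla)^2$ and from $\partial_i^2$; the former is treated by a standard computation that uses $\sum_i (L\times\nabla)_i^2 = L^2 \Delta + (\text{lower order})$, together with the quantum identity $L^2 = -r^2 \Delta + (r{\cdot}\nabla)^2 + (r{\cdot}\nabla)$, which rearranges $TL^2 - R^2$ so that the pure fourth-order piece cancels against $-T L^2$ and only $-\Delta = T + 2\kappaconstant/r$ survives from the $\partial_i^2$ contribution. The mixed $\partial_i \cdot (x_i/r)$ terms and the Runge-Lenz cross terms contribute $-2\kappaconstant/r$, exactly cancelling the Coulomb tail of $T$, while the $\kappaconstant^2 (x_i/r)(x_i/r)$ diagonal gives $\kappaconstant^2$. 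This is the step I expect to be the main obstacle: the computation involves on the order of a dozen cross terms and one must verify that every Coulomb-potential contribution assembles into the single multiple of $1/r$ that accompanies $T$, with no residual terms.

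Finally, as a sanity check, I would reconcile signs and normalizations by testing both identities on a single explicit eigenfunction, such as the ground state $e^{-\kappaconstant r}$ at energy $T = -\kappaconstant^2$, where $\action(TL^2 - R^2) = T + \kappaconstant^2 I$ must annihilate the state. Granting this verification, the two formulas of the lemma follow.
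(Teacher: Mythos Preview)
The paper does not actually prove this lemma: it simply cites \cite[Prop.\ 18.11]{Hall2013} and moves on. Your proposal is an honest outline of the direct computation that the cited reference carries out, so in content you are doing exactly what the paper defers to Hall for. Your treatment of $\action(RL)=0$ is correct in detail; your treatment of $\action(TL^2-R^2)$ is correctly identified as the tedious part, and your organizational scheme (isolate the $(L\times\nabla)^2$ block, use $L^2=-r^2\Delta+(r\cdot\nabla)^2+(r\cdot\nabla)$, track the Coulomb cross terms) is the standard route.

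One caution on conventions: with the paper's definitions \eqref{eq-r-ops} one has $R_i=\sqrt{-1}\bigl(-\varepsilon_{ipq}L_p\partial_q-\partial_i+\kappaconstant x_i/r\bigr)$, so your $A_i$ enters with the opposite sign from what you wrote. This is harmless for $RL$ (each of the three pieces vanishes separately), but for $R^2$ it flips the sign of the $A_i\partial_i$ and $A_i(x_i/r)$ cross terms, so be careful when assembling the $1/r$ contributions. Your ground-state sanity check is a good safeguard against exactly this kind of slip.
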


\begin{remark}
It is evident from Lemma~\ref{lem1}  that the range of \eqref{eq-morphism-into-centralizer} includes the full second-order part of the centralizer.  Indeed it is asserted  in \cite{DrokinEtAl96}  that the full centralizer is generated by $\g$.  This is consistent with experience from the study of classical superintegrable systems, such as the Kepler system, where, in  $n$ dimensions there are at most  $2n{-}1$ functionally independent     conserved quantities; see for example \cite[p.11]{Miller13}. The analogous   statement for quantum superintegrable systems  is known in some cases and there are no known counterexamples; see \cite[p.12]{Miller13}.   
It also seems likely to us that the kernel of the morphism from $\mathcal{U}(\g)$ into the centralizer is fully described by the relations in Lemma~\ref{lem-vals-of-casimirs}. 
\end{remark} 

\begin{wrap}
 \begin{conjecture}
The centralizer of $T$ coincides with the  associative algebra generated by $\g$.  Moreover the kernel of the morphism  \eqref{eq-morphism-into-centralizer} is the two-sided ideal in the enveloping algebra generated by the relations 
\[
RL  = 0  \quad \text{and} \quad  TL^2 - R^2  = T  +   {\kappaconstant ^2} I .
\]
\end{conjecture}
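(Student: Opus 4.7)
The plan is to treat both halves of the conjecture simultaneously by passing to associated graded algebras under the filtration by differential-operator order on $\Cent(T)$. The morphism $\action$ becomes filtered if we declare $L_i$ to sit in order one, $R_i$ and $T$ in order two, and the associated graded target is then naturally a subalgebra of a Poisson algebra of polynomial symbols on $T^*\R^3_0$ (extended to allow the single negative power of $r$ coming from the Coulomb potential).

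The first step is to identify the associated graded of $\Cent(T)$. The principal symbol of $T$ is $|\xi|^2$, so the symbol of any element of the centralizer Poisson-commutes with $|\xi|^2$. A refinement using the subprincipal symbol and the particular form of the potential allows one to upgrade this: the associated graded of $\Cent(T)$ may be identified with the Poisson centralizer of the classical Kepler Hamiltonian $h = |\xi|^2 - 2\kappaconstant/r$ in the symbol algebra. The second step is then to invoke the classical maximal superintegrability of the Kepler problem: this Poisson centralizer is generated as a commutative algebra over $\O$ by the classical angular momenta $\ell_i$ and Runge-Lenz quantities $\rho_i$ (the symbols of $L_i$ and $R_i$), subject only to the two scalar relations $\sum_i \rho_i \ell_i = 0$ and $\sum_i \rho_i^2 - h \sum_i \ell_i^2 = h + \kappaconstant^2$, which are precisely the graded shadows of the two Casimir relations of Lemma~\ref{lem-vals-of-casimirs}. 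This matches the counting of functionally independent integrals alluded to in the Remark preceding the conjecture.

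With the graded statement in hand, both assertions follow by a double induction on order. For surjectivity, given $D \in \Cent(T)$ of order $n$, the graded result supplies $u \in \mathcal{U}(\g)$ with $\action(u)$ having the same principal symbol; then $D - \action(u)$ has order strictly less than $n$ and still commutes with $T$, and induction closes. For the kernel statement, let $J \subseteq \mathcal{U}(\g)$ be the two-sided ideal generated by the two Casimir relations. Lemma~\ref{lem-vals-of-casimirs} gives $J \subseteq \ker(\action)$, and conversely a nonzero element of minimal order in $\ker(\action)$ whose class in $\mathcal{U}(\g)/J$ is nonzero would produce a nonzero element in the kernel of the associated graded map, contradicting the graded identification.

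The main obstacle is the second step. The Coulomb potential $-2\kappaconstant/r$ is genuinely singular, so one cannot simply quote invariant theory for the free particle; the argument must track how the $\kappaconstant/r$ contributions interact with the Poisson bracket and must rule out exotic Poisson invariants with no preimage in $\mathcal{U}(\g)$. A workable route is to refine the filtration so that the Coulomb term sits strictly below the kinetic term, reduce the top-order problem to the free case (where the Poisson commutant of $|\xi|^2$ is visibly generated by angular and linear momentum), and then reintroduce the Coulomb correction order by order, using elements of $\g$ to cancel the unwanted terms. Carrying this out rigorously, together with a careful verification that no further relations appear beyond those generating $J$, is what would constitute the bulk of the work.
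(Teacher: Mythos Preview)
The paper does not prove this statement: it is recorded as a \emph{conjecture}, and in fact the entire environment is wrapped in a \texttt{wrap} block that is excluded from the compiled document via \verb|\showfalse|. The only supporting text is the preceding Remark, which cites \cite{DrokinEtAl96} for the surjectivity assertion and says merely that the kernel description ``seems likely.'' So there is no proof in the paper for your proposal to be compared against.

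As for the proposal itself, you are candid that it is a strategy rather than a proof, and that candor is warranted. Two points deserve emphasis. First, your ``first step''---identifying the associated graded of $\Cent(T)$ with the Poisson centralizer of the full Kepler Hamiltonian $h=|\xi|^2-2\kappaconstant/r$---is not automatic. The principal symbol of a commutator with $T$ only sees $|\xi|^2$, so a priori the associated graded lands in the Poisson commutant of $|\xi|^2$, not of $h$; promoting this to the commutant of $h$ requires exactly the subprincipal bookkeeping you allude to, and that argument has to be made carefully because the coefficient ring is $C^\infty(\R^3_0)$ rather than polynomials, and the potential is singular. Second, even granting the classical step, the passage from ``generated as a Poisson algebra'' to ``the only relations are the two displayed ones'' is a separate claim that your outline does not address: you would need not just generators but a presentation of the Poisson commutant, and then to show that no new relations appear upon quantization. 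Your closing paragraph correctly flags this as where the real work lies; as written, the proposal is a plausible roadmap but not a proof, which is consistent with the paper's decision to leave the statement as a conjecture.
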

\end{wrap}

\section{Families of Harish-Chandra modules}\label{Classification}

In this section, we shall collect some general information about families of Harish-Chandra modules over the family of Harish-Chandra pairs that we constructed in the previous section,  including definitions and a classification theorem.  This is in preparation for our representation-theoretic analysis of the solutions of the Schr\"odinger equation in Section~\ref{sec-solutions}.

 \subsection{Locally finite-dimensional group representations}
Let $V$ be a complex vector space that is equipped with a linear representation of the group $K=O(3,\C)$.  There is a natural evaluation map
\begin{equation}
\label{eq-locally-finite-rep}
\bigoplus_{\sigma \in \widehat K} \, W_\sigma \otimes \operatorname{Hom}_K (W_\sigma, V)\longrightarrow V ,
\end{equation}
where the sum is over the equivalence classes of irreducible (finite-dimen\-sional) linear representations of the complex algebraic group $K$, and $W_\sigma$ is a representative of the equivalence class $\sigma$.

The representation $V$ is said to be  \emph{locally finite-dimen\-sional} if the map  \eqref{eq-locally-finite-rep}  is a vector space isomorphism. 
In any case the image, which we shall denote by$V_\fin$,  is a locally finite-dimensional representation of $K$ in its own right.  The individual images $V^\sigma$  of the summands in \eqref{eq-locally-finite-rep}  are the \emph{isotypical summands} of the representation. They are the same for $V$ and $V_\fin$.

Denote by $\O(K)$ the space of regular functions on $K$. A locally finite-dimensional representation of $K$ on $V$  determines, and is determined by,  the vector space morphism
\begin{equation}
\label{eq-coaction}
V \longrightarrow\O(K)  \otimes_\C  V
\end{equation}
for which composition with evaluation at $k\in K$ (a map $\O(K)\to \C$) gives the action of $k$ on $V$. 

A dual form of \eqref{eq-coaction} may be defined as follows.  Given any locally finite-dimensional representation of $K$ on a vector space $V$, let us  denote by
\[
V^* = \operatorname{Hom}_{\C}( V, \C)_\fin
\]
the locally finite contragredient representation. 
From  the left translation action of $K$ on $\O(K)$ we obtain in this way
\[
R(K) = \O(K)^* .
\]
The morphism \eqref{eq-coaction} then corresponds to a morphism
\begin{equation}
\label{eq-hecke-action1}
R(K) \otimes _\C V \longrightarrow V .
\end{equation}
Applying this in the case where $V$ is $R(K)$ itself, we get a morphism of vector spaces
\[
R(K)\otimes_{\C} R(K) \longrightarrow R(K) .
\]
This is an associative product on $R(K)$ and \eqref{eq-hecke-action1} is a module action.\footnote{The algebra $R(K)$ is often called the \emph{Hecke algebra} of $K$. After fixing a Haar measure on $O(3)$, using which we can identify elements of $R(K)$  with functions on $O(3)$,  the product corresponds to convolution of functions on  $O(3) $. The module structure may also be described using convolution.  See \cite{KnappVogan}.} In fact a locally finite-dimensional representation of $K$ is exactly the same  as a left $R(K)$-module that is nondegenerate in the sense that $R(K)\cdot V = V$. 

Examining \eqref{eq-hecke-action1} in the case where $V$ is the direct sum of all irreducible representations of $K$, up to  equivalence, we obtain the \emph{Peter-Weyl isomorphism} of algebras 
\begin{equation}
\label{eq-peter-weyl}
R(K) \stackrel \cong \longrightarrow \bigoplus _{\sigma\in \widehat K} \operatorname{End} (W_\sigma) .
\end{equation}
One consequence of the Peter-Weyl isomorphism is  that $R(K)$ contains many central idempotents, corresponding to the identity operators on the spaces $W_\sigma$. The idempotent associated to the trivial representation will be especially important in what follows.

\begin{defn}
\label{def-triv-rep-idempotent}
We shall denote by  $e{\in} R(K)$ the central idempotent that is mapped to the identity operator on the trivial representation under \eqref{eq-peter-weyl}, and is mapped to zero in any other summand of \eqref{eq-peter-weyl}.
\end{defn}

Finally, let us establish some notation for later use.  Throughout this paper we shall be considering only representations of $K$ for which   the element $-I \in K$ acts trivially. As a result, the irreducible representations where $-I$ acts nontrivially will play no role.  The group $K$ is the direct product of its $2$-element center, generated by $-I$, and its identity component, which is of course $SO(3,\C)$.  So effectively we shall be studying representations of $SO(3,\C)$, and we shall call these the \emph{$SO(3)$-type representations} of $K$.
They may be listed as 
\[
\bigl \{ \, \sigma_\ell : \ell = 0,1,\dots \, \bigr \},
\]
 where $\sigma_\ell$ has dimension $2\ell {+}1$. We shall write $W_\ell$  in place of $W_{\sigma_\ell}$.

\subsection{Families of locally finite-dimensional representations}

By an \emph{algebraic family of representations of $K$} we shall mean a locally free, and hence in our context free, $\O$-module $\F$ that is equipped with a locally finite-dimensional representation of $K$  by $\O$-module automorphisms.   

Each of the isotypical summands of $\F$ is now a free $\O$-module, and we shall say that $\F$ is \emph{admissible} if all its isotpyical summands are finitely generated (and necessarily free) $\O$-modules.  Finally the locally finite contragredient of the family is 
\begin{equation}
\label{eq-dual-family-K-rep}
\F ^* = \operatorname{Hom}_{\O } ( \F , \O )_\fin .
\end{equation}

\subsection{Algebraic families of Harish-Chandra modules}

An   \emph{admissible algebraic family of Harish-Chandra modules} for   $(\g,\boldsymbol{K})$ is an admissible algebraic family of $K$-representations $\F$ that is equipped with an $\O$-linear action of $\g$,
\[
\g\otimes_{\O} \F \longrightarrow \F ,
\]
that is compatible with the $K$-action in the sense that the action map is $K$-equivariant, and its restriction to $\k\subseteq \g$ is the infinitesimal form of the $K$-action.

As was the case with locally finite-dimensional $K$-actions, it is possible to describe $(\g,\boldsymbol{K})$-module structures using an appropriate  associative algebra. Form the tensor product 
\[
\U(\g,\BK) = \U(\g) \otimes _{U(\mathfrak{k})} R(K) .
\]
The actions of $\g$ and $K$ on $\F$ determine a morphism
 \begin{equation}
\label{eq-hecke-action2}
\U(\g,\BK)\otimes{_\O} \F \longrightarrow \F .
\end{equation}
The $\O$-module $\U(\g,\BK)$ itself carries actions of $\g$ and $K$, the later being the tensor product of the adjoint and left-translation actions, and so we obtain a morphism
\[
\U(\g,\BK)\otimes_{\O} \U(\g,\BK)\longrightarrow \U(\g,\BK)
\]
which is an $\O$-linear associative product on $\U(\g,\BK)$.  The morphism \eqref{eq-hecke-action2} gives $\F$ the structure of a $\U(\g,\BK)$-module, and in this way algebraic families of Harish-Chandra modules for $(\g,\BK)$ correspond exactly to $\U(\g,\BK)$-modules that are free as $\O$-modules and nondegenerate in the sense that $\U(\g,\BK)\cdot \F = \F$.
For all this, see \cite{KnappVogan} (which treats the case of individual Harish-Chandra modules, but  families may be handled in the same way).

\subsection{The spherical Hecke algebra}

In this section we shall classify certain algebraic families of Harish-Chandra modules.  Later on we shall use the classification to characterize the  families that arise from the solution of the Schr\"{o}d\-inger equation (see Sections~\ref{subsec-singsolfam} and \ref{subsec-ndg-hermitian-forms}) and to prove the existence of intertwining operators between these families  (see Section~\ref{subsec-intertwining}).

\begin{defn}  
\label{defn-standard-spherical}
Let us say that an algebraic family $\F$ of Harish-Chandra modules for $(\g,\BK)$ is \emph{standard and spherical} if 
\begin{enumerate}[\rm (i)]

\item The $K$-isotypical decomposition of $\F$  consists of the full set of  $SO(3)$-type representations, each occurring with multiplicity one.

\item $\F$  is generated by its spherical vectors (that is, by  its $K$-fixed elements) in the sense that $\U(\g) \cdot \F^0 = \F$, where $\F^0$ is the isotpyical subspace for the trivial representation of $K$.

\end{enumerate}
\end{defn} 

\begin{remark}
In most situations involving families of irreducible Harish-Chandra modules, item (i) above is implied by item (ii).
\end{remark}

\begin{defn}
\label{def-value-of-casimir}
If $\F$ is standard and spherical, then the  element $TL^2 - R^2\in \mathcal{Z}(\g)$ acts on $\F^0$, and indeed on $\F$, as multiplication by some polynomial $Q\in \O$.   
We shall call  $Q$ the \emph{value} of $TL^2{ -} R^2$ on $\F$.
\end{defn}

\begin{theorem}
\label{thm-spherical-classification}
A  standard and spherical family of Harish-Chandra modules for $(\g,\BK)$ is determined up to isomorphism by the value in $\O$ of $TL^2 {-}R^2$. Moreover any element of $\O$ can arise as the value of $TL^2 {-}R^2$ in a standard and spherical family.
\end{theorem}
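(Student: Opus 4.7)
The plan is to construct, for each $Q \in \O$, a universal standard spherical family $M(Q)$ and then show that every standard spherical family with value $Q$ is isomorphic to it.

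For \emph{existence}, let $\p$ denote the Runge-Lenz subspace of $\g$, that is, the $-1$-eigenspace of the Cartan involution $\theta$ of \eqref{eq-cartan-inv}, and define the cyclic $\U(\g)$-module
\[
M(Q) := \U(\g) \big/ \bigl(\U(\g)\cdot\k + \U(\g)\cdot(TL^2 - R^2 - Q)\bigr),
\]
with distinguished generator $v_0$ equal to the class of $1$. By construction $\k v_0 = 0$ and $(TL^2 - R^2)v_0 = Q v_0$. Moreover $RL \cdot v_0 = \sum_i R_i(L_i v_0) = 0$; since $RL$ is central and $M(Q)$ is $\U(\g)$-cyclically generated by $v_0$, the element $RL$ acts as zero on all of $M(Q)$, and $\Omega := TL^2 - R^2$ acts as $Q$.

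The $K$-isotype structure and $\O$-freeness of $M(Q)$ I would establish via the filtration \eqref{eq-tech-filtration}. PBW identifies $\U(\g)/\U(\g)\k \cong S(\p)$ as filtered $K$- and $\O$-modules, and the associated graded Lie algebra becomes $\g_0 = \mathfrak{so}(3,\C) \ltimes \C^3$ with abelian $\p_0$-part, so $\mathrm{gr}\bigl(\U(\g)/\U(\g)\k\bigr) \cong S(\p_0) \otimes_\C \O$. The relation element $\Omega - Q$ has degree $2$ with leading symbol $-R^2 \leftrightarrow -r^2 \in S^2(\p_0)$. Because $r^2 = r_1^2 + r_2^2 + r_3^2$ is a nonzero element of the integral domain $S(\p_0) \otimes_\C \O$, multiplication by $r^2$ is injective there; consequently for every nonzero $m' \in \U(\g)/\U(\g)\k$ of filtered degree $k$, the element $(\Omega - Q)m'$ has filtered degree exactly $k{+}2$ with leading symbol $-r^2 \cdot \sigma(m')$. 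This forces the induced graded of the relation submodule to equal $r^2 \cdot S(\p_0)\otimes_\C \O$, and the classical $K$-equivariant harmonic decomposition $S(\p_0) = \mathrm{Harm}(\p_0) \oplus r^2 S(\p_0)$, with $\mathrm{Harm}(\p_0) \cong \bigoplus_{\ell \geq 0} W_\ell$, yields
\[
\mathrm{gr}\, M(Q) \,\cong\, \bigoplus_{\ell \geq 0} W_\ell \otimes_\C \O.
\]
Because the filtration is $K$- and $\O$-equivariant, $M(Q)$ itself has this $K$-isotype decomposition and is a free $\O$-module, hence standard and spherical with value $Q$.

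For \emph{uniqueness}, let $\F$ be any standard spherical family with value $Q$, and choose an $\O$-generator $v$ of $\F^0$, possible since $\F^0$ is free of rank one. The hypotheses give $\k v = 0$ and $(\Omega - Q)v = 0$, so the assignment $X \mapsto X v$ descends to a morphism $\phi \colon M(Q) \to \F$ of algebraic families of Harish-Chandra modules. The map $\phi$ is surjective because $\F = \U(\g)\cdot\F^0 = \U(\g)v$, and since each $K$-isotype on either side is a free $\O$-module of the same finite rank $2\ell + 1$, the $K$- and $\O$-linear surjection $\phi$ is an isomorphism. The main obstacle is the associated-graded computation: verifying that the induced graded of $(\Omega - Q)\cdot\U(\g)/\U(\g)\k$ coincides exactly with $r^2 \cdot \mathrm{gr}\,\U(\g)/\U(\g)\k$ and is not strictly larger. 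This hinges on the injectivity of multiplication by $r^2$ on the graded module, which rules out any unexpected cancellation in the leading symbols of elements of the relation submodule.
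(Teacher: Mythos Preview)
Your proof is correct and follows essentially the same route as the paper: your module $M(Q)$ coincides with the paper's $\U\H(\g,\BK)\otimes_{\H(\g,\BK)}\O_Q$ (note that $\U\H(\g,\BK)=\U(\g,\BK)\cdot e\cong\U(\g)/\U(\g)\k$), and your associated-graded computation via the filtration \eqref{eq-tech-filtration} together with the harmonic decomposition of $S(\p_0)$ is exactly the content of Propositions~\ref{prop-Hecke-algebra} and~\ref{prop-Kostant-thm}, after which both arguments finish with the same surjection-plus-rank-count. The concern you flag at the end is already handled by your own observation that multiplication by $r^2$ is injective on the integral domain $S(\p_0)\otimes_\C\O$: this forces every nonzero $(\Omega-Q)m'$ to have leading symbol exactly $-r^2\sigma(m')\neq 0$, so the induced graded of the relation submodule is precisely $r^2\cdot S(\p_0)\otimes_\C\O$ and no larger.
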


We shall prove this by means of the following auxilliary constructions.

\begin{defn}
The \emph{spherical Hecke algebra} for the family $(\g,\BK)$ is the following $\O$-algebra:
\[
\H(\g,\BK) = e\cdot  \U(\g,\BK)\cdot e.
\]
Here $e\in R(K)$ is the idempotent in Defintiion~\ref{def-triv-rep-idempotent}, and we view it as an element of $\H(\g,\BK)$ via the embedding $e\mapsto 1{\otimes}e$. Let us also define 
\[
\U\H(\g,\BK) = \U(\g,\BK)\cdot e
\]
This is a $\U(\g,\BK)$-$\H(\g,\BK)$-bimodule by left and right multiplication.
\end{defn}

\begin{proposition} 
\label{prop-Hecke-algebra}
The spherical Hecke algebra $\H(\g,\BK) $ is freely generated as an $\O$-algebra by the \textup{(}tensor\textup{)} product of  $TL^2 - R^2 \in \mathcal{Z}(\g)$ with $e\in R(K)$.
\end{proposition}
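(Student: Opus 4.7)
The plan is to identify $\H(\g,\BK)$ with the $K$-invariants of a Verma-type quotient of $\U(\g)$, and then read off the algebra structure from the PBW filtration.

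First, I would show that $\H(\g,\BK)$ is naturally isomorphic as an $\O$-algebra to $(\U(\g)/\U(\g)\cdot\k)^K$. Since $e\in R(K)$ is the projection onto the trivial $K$-isotype, the space $R(K)\cdot e = \C\cdot e$ is the trivial $\U(\k)$-module (with $\k$ acting by zero). Hence
\[
\U(\g,\BK)\cdot e \;=\; \U(\g)\otimes_{\U(\k)} R(K)\cdot e \;\cong\; \U(\g)\big/\U(\g)\cdot\k
\]
as left $\U(\g)$-modules, and multiplication by $e$ on the left picks out the $K$-invariants (under the adjoint action), which is an exact operation because $K=O(3,\C)$ is linearly reductive. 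Moreover, for $a\in\U(\g)^K$ the element $a$ commutes with $e$ inside $\U(\g,\BK)$ (as $a$ is fixed by $\mathrm{Ad}(k)$ for every $k\in K$), so the map $a\mapsto a\cdot e$ is an algebra homomorphism onto $\H(\g,\BK)$, making the above identification compatible with the multiplicative structure.

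Next, I would apply the Poincar\'e-Birkhoff-Witt theorem to the splitting $\g = \k\oplus\p$ over $\O$, where $\p$ denotes the $\O$-span of $R_1,R_2,R_3$. This gives a filtered isomorphism of $K$-modules
\[
\U(\g)\big/\U(\g)\cdot\k \;\cong\; S(\p) \;=\; \O[R_1,R_2,R_3].
\]
By the matrix realization \eqref{eq-R-and-L-matrices}, the adjoint $K$-action on $\p$ is the standard $O(3,\C)$-action on $\C^3$ twisted by $\det$; on $S^k(\p)$ this twist contributes only an overall factor $(\det A)^k$, so the $K$-invariants vanish in odd degrees and coincide with the $SO(3)$-invariants of $S^k(\C^3)$ in even degrees. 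The classical invariant theory of $O(3)$ then yields
\[
S(\p)^K \;=\; \O[R^2], \qquad R^2 := R_1^2+R_2^2+R_3^2.
\]
Because linearly reductive $K$-invariants commute with passing to the associated graded, this identifies $\mathrm{gr}\,\H(\g,\BK)$ with the polynomial algebra $\O[R^2]$.

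Finally, I would track the image of $TL^2-R^2$. It has PBW degree $2$ in $\U(\g)$, and its principal symbol in $S^2(\g)$ projects modulo $S(\g)\cdot\k$ to $-R^2 \in S^2(\p)^K$, since $L^2 = L_1^2 + L_2^2 + L_3^2 \in S(\g)\cdot\k$. Thus $(TL^2-R^2)\cdot e$ has degree $2$ in $\H(\g,\BK)$ with principal symbol $-R^2$, and its $n$-th power has principal symbol $(-1)^n R^{2n}$. These principal symbols form a free $\O$-basis of $\mathrm{gr}\,\H(\g,\BK)=\O[R^2]$, so by the standard ascending-induction argument on filtration degree the powers $\{((TL^2-R^2)\cdot e)^n : n\ge 0\}$ form a free $\O$-basis of $\H(\g,\BK)$. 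Equivalently, $(TL^2-R^2)\cdot e$ generates $\H(\g,\BK)$ freely as an $\O$-algebra.

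The only step demanding real care is the algebraic identification in the first paragraph: recognizing $\H(\g,\BK)$ not merely as an $\O$-module but as an \emph{algebra} realized through the homomorphism $\U(\g)^K \to \H(\g,\BK)$, $a\mapsto a\cdot e$. Once that bookkeeping is settled, what remains is standard PBW machinery together with the classical $O(3)$-invariant theory of a symmetric algebra.
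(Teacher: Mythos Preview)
Your proof is correct and follows essentially the same route as the paper: both reduce to the associated graded via a PBW-type filtration, identify the result with $\Sym(\s)^K$ (your $S(\p)^K$), invoke classical $O(3)$-invariant theory to obtain $\O[R^2]$, and then lift back. The only cosmetic difference is that the paper uses the weighted filtration with $\operatorname{order}(L_i)=0$, $\operatorname{order}(R_i)=1$ and a direct symmetrization map $\Sym(\s)^K\to\H(\g,\BK)$, whereas you use the standard PBW filtration together with the identification $\H(\g,\BK)\cong(\U(\g)/\U(\g)\cdot\k)^K$; both arrive at the same graded computation.
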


\begin{proposition}
\label{prop-Kostant-thm}
The space $\U\H(\g,\BK)$ is a free $\H(\g,\BK)$-module.  Moreover there is an isomorphism\[
\U\H(\g, \BK) \cong \bigoplus_{\ell \ge 0} \,\, W_\ell \otimes_{\C} \H(\g,\BK) ,
\]
of $K$-modules and $\H(\g,\BK)$-modules.
\end{proposition}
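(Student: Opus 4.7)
The plan is to identify $\U\H(\g,\BK)$ with the symmetric algebra $S(\p)$ and then apply the classical harmonic decomposition of polynomials on $\C^3$ under $O(3)$. First, I would unwind the definition: from $\U(\g,\BK) = \U(\g)\otimes_{\U(\k)} R(K)$, together with the fact that $R(K)\cdot e \cong \C_{\triv}$ as a left $\U(\k)$-module (because $e$ projects onto the trivial $K$-isotypical summand of $R(K)$, where $\k$ acts by zero), one obtains
\[
\U\H(\g,\BK) \;=\; \U(\g,\BK)\cdot e \;\cong\; \U(\g)\otimes_{\U(\k)} \C_{\triv} \;\cong\; \U(\g)\big/\U(\g)\cdot \k
\]
as $\O$-modules with compatible $K$-actions, and this quotient carries its natural right $\H(\g,\BK)$-module structure.

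Second, I would use the Cartan-type decomposition $\g = \k \oplus \p$ coming from the $\O$-linear involution $\theta$ of Section~\ref{subsec-cartan}, where $\p$ is the free $\O$-submodule spanned by the Runge-Lenz operators $R_1, R_2, R_3$. Since $\g$ is free of finite rank over $\O$, the PBW theorem applies over $\O$ and the symmetrization map yields a $K$-equivariant isomorphism $\U(\g) \cong S(\p)\otimes_\O \U(\k)$ of $\O$-modules and right $\U(\k)$-modules. Passing to the quotient by $\U(\g)\cdot \k$ then gives a $K$-equivariant isomorphism of $\O$-modules
\[
\U\H(\g,\BK) \;\cong\; S(\p).
\]

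Third, I would apply the classical $O(3)$-harmonic decomposition, tensored up to $\O$. The $\O$-module $\p$ is free of rank three and, at the level of $SO(3)$-types, carries the standard three-dimensional representation (any $\det$-twist is invisible to $SO(3)$-types, which is all we need). The classical separation-of-variables theorem for the standard action of $O(3)$ on $S(\C^3)$ therefore extends to a $K$-module isomorphism
\[
S(\p) \;\cong\; \O[R^2]\otimes_\O \bigoplus_{\ell \ge 0} W_\ell ,
\]
where $R^2 = R_1^2 + R_2^2 + R_3^2$ and $W_\ell$ is realized as the space of harmonic polynomials of degree $\ell$ in $\p$. By Proposition~\ref{prop-Hecke-algebra}, $\H(\g,\BK)$ is the free polynomial $\O$-algebra on $TL^2 - R^2$; under the chain of isomorphisms above this generator is sent to $-R^2 \in S(\p)^K = \O[R^2]$, since $L^2$ acts by zero on the trivial $K$-isotypical summand $e\cdot \U\H(\g,\BK)$. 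Thus $\H(\g,\BK) \cong \O[R^2]$ and the desired $K$-module decomposition falls out.

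The step I expect to require real care is matching the right $\H(\g,\BK)$-module structures. Concretely, one must verify that the isomorphism $\U\H(\g,\BK) \cong S(\p)$ intertwines right multiplication by the generator $TL^2 - R^2 \in \H(\g,\BK)$ with multiplication by $-R^2 \in S(\p)^K$. This is where the centrality of $TL^2 - R^2$ in $\mathcal{Z}(\g)$ is decisive: because central elements of $\U(\g)$ act identically on the left and on the right of $\U(\g)/\U(\g)\cdot\k$, the computation reduces to identifying the PBW symbol of $TL^2 - R^2$ modulo $\U(\g)\cdot\k$, which is precisely $-R^2 \in S(\p)$; any lower-order corrections lie in $\U(\g)\cdot\k$ and are absorbed in the quotient. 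The freeness of $\U\H(\g,\BK)$ over $\H(\g,\BK)$ is then immediate from the freeness of $S(\p)$ over $\O[R^2]$ on the harmonic polynomials.
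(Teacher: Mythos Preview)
Your overall strategy---identify $\U\H(\g,\BK)$ with the symmetric algebra on the Runge--Lenz generators and then invoke the harmonic decomposition---is the same as the paper's, and the first two steps are fine. The gap is in the final step, where you claim that under the symmetrization isomorphism $S(\p)\cong \U(\g)/\U(\g)\k$ the action of $TL^2-R^2$ becomes literal multiplication by $-R^2$. Your justification is that ``any lower-order corrections lie in $\U(\g)\cdot\k$ and are absorbed in the quotient,'' but this is false. A direct check already at $p=R_1$ shows
\[
\operatorname{sym}(R_1)\cdot R^2 \;-\; \operatorname{sym}(R_1\cdot R^2)\;\equiv\;\tfrac{4}{3}\,T R_1 \pmod{\U(\g)\k},
\]
using $[R_i,R_j]=\varepsilon_{ijk}TL_k$ and $[L_i,R_j]=-\varepsilon_{ijk}R_k$. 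The right-hand side is a nonzero element of the image of $S(\p)$, so the transported action of $TL^2-R^2$ on $S(\p)$ is multiplication by $-R^2$ \emph{plus a nonzero lower-degree correction}, not multiplication by $-R^2$ on the nose. Consequently your identification of $\H(\g,\BK)$ with $\O[R^2]$ acting by multiplication does not go through as stated, and the freeness conclusion is left unproved.

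The fix is exactly what the paper does: rather than attempt an exact match, filter $\U\H(\g,\BK)$ by assigning order~$1$ to the $R_i$ and order~$0$ to $L_i$ and $T$, and pass to the associated graded. There the lower-order corrections vanish, the associated graded of $\H(\g,\BK)$ is $\Sym(\s)^K$ acting by honest multiplication, and the harmonic decomposition gives freeness together with the required $K$-type structure. Freeness (and the basis of harmonics) then lifts from the associated graded to the filtered module by the standard argument. Your symmetrization map is precisely what furnishes this filtration, so the repair is small---but you do need to say ``associated graded'' rather than ``lower-order corrections are absorbed in the quotient,'' because they are not.
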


We shall prove these results in a moment, but first let us explain how they lead to a proof of Theorem~\ref{thm-spherical-classification}. 

If $\F$ is any algebraic family of Harish-Chandra modules, then the trivial isotypical summand    $e \F {=}\F^0 $ is   an $\H(\g, \BK)$-module and a  free $\O$-module.   The tensor product 
\[
\U\H(\g, \BK)\otimes _{\H(\g,\BK)} \mathcal{F}^0
\]
is a standard and spherical family, and it is determined up to isomorphism by the value of $TL^2 - R^2$, since by Proposition~\ref{prop-Hecke-algebra} this determines $\mathcal{F}^0$ as an $\H(\g,\BK)$-module, up to isomorphism. Now the formula $X\otimes f \mapsto Xf$ defines a \emph{classifying morphism} of families
\begin{equation}
\label{eq-classifying-morphism}
\U\H(\g, \BK)\otimes _{\H(\g,\BK)} \mathcal{F}^0 \longrightarrow \mathcal{F}.
\end{equation}
If $\F$ is itself standard and spherical, then \eqref{eq-classifying-morphism} is surjective, and, by counting dimensions in like isotypical summands using Proposition~\ref{prop-Kostant-thm}, we find that it is in fact an isomorphism.  So  there is at most one standard and spherical family, up to isomorphism, for any given value of $TL^2 -R^2$.

 In the reverse direction if $\mathcal{M}$ is any  $\H(\g,\BK)$-module  that is free and of rank one as an $\O$-module,  then the tensor product 
\begin{equation}
\label{eq-tensor-product-standard-fam}
\U\H(\g,\BK) \otimes_{\H(\g,\BK)} \mathcal{M}
\end{equation}
is  a standard and spherical family.  So using Proposition~\ref{prop-Hecke-algebra} again we can construct standard and spherical families with any given value of $TL^2 - R^2$.

Let us turn now to  the proofs of Propositions~\ref{prop-Hecke-algebra} and \ref{prop-Kostant-thm}.  The family $\g$ may be decomposed as a $K$-equivariant direct sum of $\O$-modules
\[
\g = \k \oplus \s , 
\]
where the summand $\s$ is freely generated as an $\O$-module by the elements   $R_1$, $R_2$ and $R_3$.  It will be convenient to denote by $\mathfrak{s}$ the $\R$-linear span of these three elements, so that 
\[
\s = \O\otimes_{\R} \mathfrak{s} .
\]
The three-dimensional real vector space $\mathfrak{s}$ carries the action \eqref{eq-O3-action} of the compact group $O(3)$.  It also carries an essentially unique invariant inner product, using which the symmetric and polynomial algebras of $\mathfrak{s}$ may be identified.

We shall use the following simple and well-known facts concerning the algebra of complex polynomial functions on  $\mathfrak{s}$.  First, the $K$-invariant polynomials are freely generated by $R^2 = R_1^2 {+}R_2^2 {+} R_3 ^2$.  Second, the full polynomial algebra is a free module over its $K$-invariant subalgebra.  To be more precise, if $\mathit{Harm}(\mathfrak{s})$ denotes the space of \emph{harmonic} polynomials on $\mathfrak{s}$, then the multiplication map
\[
\mathit{Harm}(\mathfrak{s}) \otimes \Sym(\mathfrak{s}) ^K \longrightarrow \Sym(\mathfrak{s})
\]
is a complex vector space isomorphism (this is a very special case of a much more general theorem of Kostant \cite{Kostant63,BernsteinLunts96}).

The degree $\ell$ harmonic polynomials form a copy of the  irreducible $SO(3)$-type representation of dimension $2\ell {+} 1$.  So the space $\mathit{Harm}(\mathfrak{s}) $  decomposes as a direct sum of precisely one copy of $W_\ell$ for each  $\ell=0,1,2,\dots$.

\begin{proof}[Proof of Proposition~\ref{prop-Hecke-algebra}]
Filter the algebra $\U(\g,\BK)$  by means of   the filtration \eqref{eq-tech-filtration} on the enveloping algebra and by deeming $R(K)$ to be of order zero.  The spherical Hecke algebra is a filtered subalgebra, and it suffices to show that its associated graded algebra is freely generated as an $\O$-algebra by the symbol of the order $2$ element $TL^2 - R^2$, or equivalently by the symbol of $R^2$.

The composition of the symmetrization map from the $K$-invariant part of the symmetric algebra of  $\s$ into $\U(\g)$ with mutliplication with    $e{\in} R(K)$ on both sides is  an isomorphism of filtered $\O$-modules 
\[
\Sym(\s)^K 
\stackrel\cong \longrightarrow \H(\g,\BK) .
\]
This is an algebra isomorphism at the level of associated graded algebras, because the elements in $\s\subseteq \U(\g)$ commute with one another up to elements of lower order.  Moreover the invariant part of the symmetric algebra   is freely generated by $R^2$, as we noted above. 
\end{proof}

\begin{proof}[Proof of Proposition~\ref{prop-Kostant-thm}]
Let us use the same filtration of $\mathcal{U}(\g,\BK)$ as in the previous proof, which we shall now restrict to $\mathcal{UH}(\g,\BK)$.  It suffices to show that the associated graded space is a free module over the associated graded algebra of the spherical Hecke algebra, and to compute that each $K$-type occurs in it with multiplicity one.

The associated graded space is $\Sym(\s)$, via the same identifications as in the previous lemma, while associated graded algebra of the spherical Hecke algebra is $\Sym(\s)^K$.  As we already noted, the former decomposes as  $\operatorname{Harm}(\mathfrak{s}) \otimes _{\C} \H(\g,\BK)$ as both a $K$-module and a module over the invariant functions.
\end{proof}

\subsection{Twisted dual families} 
\label{subsec-twisted-dual-families}
The same formula \eqref{eq-dual-family-K-rep} used for families of representations of $K$ defines the locally finite-dimen\-sional contragredient $\F^*$ of an algebraic family of Harish-Chandra modules.  But it will be a bit more relevant in this paper to examine the following variants of the locally finite-dimensional contragradient that involve the involutions introduced in Subsections~\ref{subsec-real-structure} and \ref{subsec-cartan}.

The \emph{$\theta$-twisted dual} $\F^{*,\theta}$ is simply $\F^*$ but with the action of $\g$ twisted by the involution $\theta$: for $\xi\in \F^*$ and $X \in \g$ we change the action of $X$ on $\xi $ to
\[
X \colon \xi \longmapsto \theta(X)\xi .
\]

The \emph{$\sigma$-twisted dual}  $\F^{*,\sigma}$  is the complex conjugate  space $\overline{\F^*}$, which we consider to be an $\O$-module via the given involution on $\O$: 
\[
p \cdot \overline{\xi} := \overline{\sigma(p)\xi} \qquad \forall p\in \O\quad  \forall  \xi \in \F .
\]
It is similarly a $\g$-module and a $K$-module via the involutions on $\g$ and $K$. See \cite[Sec. 2.4]{Ber2017} for further information.


\section{Principal Series Representations}
\label{sec-principal-series}

To provide some context for our treatment of families of Harish-Chandra modules in the next section, we shall review here the classification of the irreducible modules of a single Harish-Chandra pair 
$(\g\vert _\lambda , K)$ (see \cite[Ch.~3]{Wallach} for basic information concerning Harish-Chandra modules, which will in any case be reviewed in the families context in the next section).  We shall   focus on the case where $\lambda\ne 0$, and throughout this section $\lambda$ will be a fixed nonzero complex number.

\subsection{Parabolically induced representations}

The group $K{=}O(3)$ is disconnected, but it is the direct product of its connected identity component  $K_0{=}SO(3)$ and the two-element center of $K$.  The center acts trivially on $\g\vert _ \lambda $, and so the irreducible representations of the pair $(\g\vert _ \lambda , K)$ are partitioned into set of those where the center of $K$  acts trivially, and an otherwise identical set of irreducible representations where it    does not.  From now on we shall concentrate on the former, which correspond to irreducible $(\g\vert_ \lambda, K_0)$-modules.

We have seen that 
$(\g\vert _ \lambda , K_0) $ is isomorphic to $ (\mathfrak{so}(3,\C){\times}\mathfrak{so}(3,\C), SO(3))$, with $SO(3)$ acting diagonally.  Since the latter is the Harish-Chandra pair associated to the real reductive  group $PSL(2,\C)$,  we can determine the irreducible representations of $(\g\vert _ \lambda , K_0)$ by computing the irreducible admissible representations of $PSL(2,\C)$ up to infinitesimal equivalence.
  
The irreducible admissible representations of $PSL(2,\C)$, or more generally of $SL(2,\C)$, are well known and may be described using parabolic induction from the subgroup of upper triangular matrices.  Let $(\sigma,\varphi) \in \Z {\times} \C$. The parabolically induced representation of $SL(2,\C)$  associated to  the character
\[
\begin{bmatrix} a & 0 \\ 0 & a^{-1} \end{bmatrix} \longmapsto \operatorname{phase}(a)^\sigma |a|^{\varphi }
\]
of the diagonal subgroup is either irreducible or has a composition series with two irreducible factors, one finite-dimensional and one infinite-dimensional.  In either case, the representations, or irreducible factors, at $(\sigma,\varphi)$ and $(-\sigma,-\varphi)$ are the same.  See, for example \cite[Chap VIII, Problems 9-14]{Knapp86} for details concerning all the above.

Each infinite-dimensional, irreducible, admissible representation of the real reductive group $SL(2,\C)$   arises as  a parabolically induced representation, and does so in a unique way, except for the equivalence just mentioned. Moreover each finite-dimensional and   irreducible   representation of $SL(2,\C)$  arises as a composition factor of a unique reducible parabolically induced representation, again except for the equivalence between the factors associated to  $(\sigma,\varphi)$ and $(-\sigma,-\varphi)$. We find, in summary that the admissible dual of $SL(2,\C)$ is parametrized  by the set 
 \[
 \left ( \Z \times \C \right ) \big /\, \Z_2 ,
 \]
 where the group $\Z_2$ acts as the involution $(\sigma,\varphi) \mapsto (-\sigma,-\varphi)$.
The irreducible admissible representations of $SL(2,\C)$ that factor through the quotient  $PLS(2,C)$, and so correspond to irreducible Harish-Chandra modules for $(\g\vert_\lambda, K_0)$, are those with parameters in  the subset 
\[ 
 \left ( 2\,\Z \times \C \right ) \big /\, \Z_2 \subseteq  \left ( \Z \times \C \right ) \big /\, \Z_2 .
 \]

\subsection{Casimir elements}

In this paragraph we shall compute the the action of the central elements 
\begin{equation*}
L R \quad
\text{and}
\quad
 \lambda L^2 - R^2   
\end{equation*}
 in the enveloping algebra of $\g\vert _ \lambda $ on the principal series representation with parameters $(\sigma, \varphi)$. 

First, it  is well known that the $\mathfrak{so}(3)$ Casimir element
\begin{equation}
\label{eq-casimir-element}
\Omega  = L^2 =  L_1 ^2 + L_2 ^2 + L_3 ^2 
\end{equation}
acts as $-\frac 14 n(n+2)$ in the irreducible finite-dimensional representation of $\mathfrak{so}(3)$ of   dimension $n{+}1$ (this is the representation of highest weight $n$).  It follows from this that  if $\Omega^\pm$ are the Casimir elements associated to the generators $L _j^\pm$ in \eqref{eq-def-lambda}, then 
\[
\frac{1}{\sqrt{-\lambda} }LR = \Omega^+ - \Omega ^- = \pm \tfrac 14 \bigl ( m(m+2) - n(n+2)\bigr )
\]
and
\[
L^2 - \frac{1}{\lambda} R^2  = 2 \Omega ^+ +2  \Omega ^-
=  - \tfrac 12 \bigl ( m(m+2) + n(n+2)\bigr ) 
\]
in the representation of $\g\vert _ \lambda $ corresponding under the isomorphism \eqref{eq-splitting-gz} to the  finite-dimensional, irreducible,  tensor product representation of the direct product $\mathfrak{so}(3){\times}\mathfrak{so}(3)$ of highest weight $(m,n)$; the indeterminacy in sign corresponds to the indeterminacy in the choice of square root of $-\lambda$.

Now the above finite-dimensional representation of $(\g\vert_\lambda, K_0)$ arises as a quotient of the principal series representation with parameters
\begin{equation}
\label{eq-sigma-lambda-from-mn}
\sigma = m - n \quad \text{and} \quad  \varphi = 2 + m + n  .
\end{equation}
Since it follows from \eqref{eq-sigma-lambda-from-mn} that 
 \[
\sigma\varphi = m(m+2) - n(n+2)
\quad\text{and} \quad 
\sigma^2 + \varphi ^2 = 2\bigl ( m(m+2) + n(n+2) \bigr ) + 4 ,
\]
we find that  in the principal series representation with parameters $(\sigma,\varphi)$ as in \eqref{eq-sigma-lambda-from-mn}, the Casimir elements act as 
\begin{equation}
\label{eq-Casimirs-from-sigma-lambda}
L R = \pm  \sqrt{-\lambda }\frac{\sigma \varphi}{4}
\quad
\text{and}
\quad
 \lambda L^2 - R^2   
=  \lambda\frac{ 4 - \sigma^2 - \varphi ^2 }{4 } .
\end{equation}
Once again the indeterminacy in sign corresponds to the indeterminacy in the choice of square root, and hence the choice of isomorphism in \eqref{eq-splitting-gz}.\footnote{The outer automorphism of the pair $(\mathfrak{so}(3){\times}\mathfrak{so}(3), K_0)$ that flips the two $\mathfrak{so}(3)$-factors induces the involution  $(\sigma,\varphi)\mapsto (-\sigma, {\varphi})$ on parameters for irreducible modules.}  
 
Although we have computed the formulas  \eqref{eq-Casimirs-from-sigma-lambda} for $(\sigma,\varphi)$ satisfying \eqref{eq-sigma-lambda-from-mn} they must in fact hold for \emph{all} $(\sigma,\varphi)$ by a Zariski density argument, since both sides are regular in $\varphi$.

 \subsection{Spherical representations}
 
We are particularly interested in representations that include $K_0$-fixed vectors.  The lowest $K_0$-type in the principal series representation with parameters $(\sigma,\varphi)$ has highest weight $|\sigma|$, and so the   principal series  representations that include $K_0$-fixed vectors are precisely those for which $\sigma=0$.  Hence:
 
\begin{lemma}
The $K$-types of an irreducible infinite-dimensional representation with parameters $(0, \varphi)$ are the full set of $SO(3)$-types $\ell = 0,1,2,\dots$, and each occurs with multiplicity one. \qed
\end{lemma}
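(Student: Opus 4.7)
The plan is to apply Frobenius reciprocity in the compact picture of parabolic induction.  First I would recall that, restricted to the maximal compact subgroup $SU(2)$ of $SL(2,\C)$, the principal series $I(\sigma,\varphi)$ is isomorphic to $\operatorname{Ind}_M^{SU(2)}(\chi_\sigma)$, where $M \cong U(1)$ is the diagonal torus of $SU(2)$ and $\chi_\sigma$ is the character $\operatorname{phase}(a)^\sigma$; crucially, this restriction is independent of $\varphi$.  Specialising to $\sigma = 0$, the inducing character is trivial, so the restriction becomes $\operatorname{Ind}_M^{SU(2)}(\mathbf{1})$.

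Next, by Frobenius reciprocity the multiplicity of the irreducible $SU(2)$-representation $V(d)$ of highest weight $d$ in this restriction equals $\dim V(d)^M$.  The weights of $V(d)$ are $-d, -d+2, \ldots, d-2, d$, each with multiplicity one, so this $M$-fixed subspace has dimension $1$ if $d$ is even and $0$ otherwise.  Hence exactly the representations $V(0), V(2), V(4), \ldots$ occur, each with multiplicity one.  These are precisely the $SU(2)$-representations that descend to $PSU(2) = SO(3) = K_0$, and $V(2\ell)$ is identified under this quotient with the $SO(3)$-type $W_\ell$ of dimension $2\ell + 1$.

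When $I(0,\varphi)$ is itself irreducible---the generic case in $\varphi$---it is the irreducible infinite-dimensional representation of the lemma, and the conclusion follows at once from the preceding paragraph.  At the exceptional reducibility points ($\varphi$ a nonzero even integer) the finite-dimensional composition factor is the one containing the spherical vector, so the infinite-dimensional composition factor there does not meet the implicit spherical hypothesis of this subsection and there is nothing to prove in those cases.  The main bookkeeping point, and essentially the only non-routine step, is to keep straight the translation between the $SU(2)$ highest-weight convention used in classifying the principal series and the $SO(3)$ labelling $W_\ell$ of dimension $2\ell{+}1$ used in the paper; once these are aligned, the lemma reduces to the elementary observation that an $SU(2)$-irreducible has a one-dimensional zero weight space precisely when its highest weight is even.
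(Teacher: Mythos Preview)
The paper gives no proof: the lemma carries an immediate \qed, and the surrounding text simply appeals to the well-known structure of the $SL(2,\C)$ principal series (with a reference to Knapp).  Your Frobenius reciprocity argument in the compact picture is exactly the standard justification for this fact, and it is correct.  So you are supplying a proof where the paper offers none.

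One small remark on your final paragraph.  In the paper's parametrization of the admissible dual by $(\Z\times\C)/\Z_2$, an irreducible \emph{infinite-dimensional} representation assigned the parameter $(0,\varphi)$ is by construction the full principal series $I(0,\varphi)$, which must therefore be irreducible; the parameters of the reducible principal series are reserved for their finite-dimensional composition factors.  Hence the hypothesis of the lemma already rules out the reducible case, and your computation of the $K$-types of the full induced representation settles the matter without the additional case analysis.  Your observation that at reducible points the spherical vector lies in the finite-dimensional factor is correct and consistent with this, but it is not needed for the lemma as stated.
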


We shall again use the term \emph{standard} for representations with the above $K_0$-isotypical structure. 

We are also particularly interested in those representations in which the Casimir element $\lambda L^2 - R^2$ acts as the scalar $\lambda+\kappaconstant ^2$; compare Lemma~\ref{lem-vals-of-casimirs}.

 \begin{lemma}
\label{lem-parameters-for-our-reps}
Let $V$ be a standard representation of  $(\g\vert_\lambda , K_0)$. If the Casimir element 
$
 \lambda  L^2 - R^2 $ acts on $V$ as the scalar $  \lambda + \kappaconstant ^2 $,   then the irreducible quotient of $V$ including the $K_0$-type $\ell{=}0$ has classification parameters $\sigma = 0$ and $
\varphi^2 = - {4 \kappaconstant ^2} / {\lambda} $.
\end{lemma}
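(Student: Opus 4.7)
The plan is to identify the irreducible quotient of $V$ containing the spherical $K_0$-type and then read off its parameters directly from the scalar by which the central Casimir $\lambda L^2 - R^2$ acts on $V$.

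First, I would produce the irreducible quotient. Because $V$ is standard, each $K_0$-type occurs in $V$ with multiplicity one; in particular the $\ell{=}0$ isotypic subspace is one-dimensional. A routine Zorn's lemma argument yields a maximal $(\g\vert_\lambda, K_0)$-submodule $V'\subseteq V$ whose $\ell{=}0$ component vanishes, so that $W = V/V'$ is an irreducible $(\g\vert_\lambda, K_0)$-module which does contain the spherical $K_0$-type. By the classification of irreducible admissible $(\g\vert_\lambda, K_0)$-modules reviewed in this section, $W$ is (infinitesimally equivalent to) an irreducible subquotient of a principal series representation with parameters $(\sigma,\varphi)$, and the lowest $K_0$-type of such a representation has highest weight $|\sigma|$. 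Since $W$ is spherical, this forces $\sigma = 0$, so $W$ has parameters $(0, \varphi)$ for some $\varphi \in \C$.

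Second, I would compute $\varphi$ using the Casimir. Since $\lambda L^2 - R^2$ lies in the center of $\mathcal{U}(\g\vert_\lambda)$, it acts on the quotient $W$ by the same scalar $\lambda + \kappaconstant^2$ by which it acts on $V$. On the other hand, by formula \eqref{eq-Casimirs-from-sigma-lambda} specialized to $\sigma = 0$, the element $\lambda L^2 - R^2$ acts on the principal series representation at $(0,\varphi)$, and hence on $W$, by the scalar $\lambda(4 - \varphi^2)/4$. Equating the two expressions,
\begin{equation*}
\lambda + \kappaconstant^2 \;=\; \lambda\,\frac{4 - \varphi^2}{4},
\end{equation*}
and solving yields $\varphi^2 = -4\kappaconstant^2/\lambda$, as claimed.

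The main obstacle is a minor one: formula \eqref{eq-Casimirs-from-sigma-lambda} was originally derived by comparing with the finite-dimensional quotient realized when $(\sigma,\varphi)$ comes from highest weights $(m,n)$ as in \eqref{eq-sigma-lambda-from-mn}, but the Zariski density argument already flagged in the text immediately after that formula shows it holds for all $(\sigma,\varphi)$, including the generic irreducible spherical principal series relevant here. Everything else is essentially a specialization of results established earlier in the section.
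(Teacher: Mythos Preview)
Your proof is correct and follows essentially the same approach as the paper: both arguments use formula \eqref{eq-Casimirs-from-sigma-lambda} with $\sigma=0$ and equate the resulting scalar to $\lambda+\kappaconstant^2$. The paper's proof is terser, taking the existence of the irreducible quotient and the fact that $\sigma=0$ as understood from the preceding discussion of spherical representations, whereas you spell these points out explicitly.
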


\begin{proof}
It follows from \eqref{eq-Casimirs-from-sigma-lambda} that 
\[
\lambda \cdot \frac{\varphi ^2-4 }{4 } + \lambda + \kappaconstant ^2 = R^2 - \lambda L^2 + \lambda + \kappaconstant ^2  = 0 ,
\]
or in other words that
\[
\lambda \cdot \frac{  \varphi ^2 }{4 } + \kappaconstant ^2   = 0 .
\]
This implies the result.
\end{proof}

\begin{corollary}
Let $V$ be a standard representation of  $(\g\vert_\lambda , K_0)$ in which the Casimir element 
$
 \lambda  L^2 - R^2 $ acts on $V$ as the scalar $  \lambda + \kappaconstant ^2 $.  Unless $- 4 \kappaconstant ^2/\lambda$ is a integer square, $V$ is an irreducible principal series representation.
\end{corollary}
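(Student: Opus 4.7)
The plan is to combine Lemma~\ref{lem-parameters-for-our-reps} with the $SL(2,\C)$-classification of principal series representations reviewed above. First, I would identify the classification parameters of an irreducible subquotient of $V$ containing the trivial $K_0$-type; next, I would show that under the hypothesis the corresponding principal series is already irreducible; and finally I would use the standardness of $V$ to conclude that $V$ itself is this principal series.

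Since $V$ is standard it contains a nonzero $K_0$-fixed vector, so at least one composition factor $\overline V$ of $V$ includes the trivial $K_0$-type. Applying Lemma~\ref{lem-parameters-for-our-reps} identifies $\overline V$ as an irreducible $(\g|_\lambda, K_0)$-module with classification parameters $(\sigma, \varphi) = (0, \varphi)$, where $\varphi$ is a chosen square root of $-4\kappaconstant^2/\lambda$. The classification shows that the principal series at $(0, \varphi)$ is reducible only when $\pm \varphi = 2 + 2m$ for some integer $m \geq 0$, that is, only when $\varphi$ is a nonzero even integer. The hypothesis that $-4\kappaconstant^2/\lambda$ is not an integer square prevents $\varphi$ from being any integer at all; hence the principal series at $(0, \varphi)$ is irreducible and $\overline V$ is isomorphic to it. In particular $\overline V$ carries exactly one copy of each $SO(3)$-type $W_\ell$, $\ell = 0, 1, 2, \ldots$.

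It then remains to match $V$ with $\overline V$. For this I would fix a spherical vector $v \in V$ and consider the cyclic submodule $\U(\g|_\lambda) \cdot v$, which surjects onto the irreducible $\overline V$. Since $\overline V$ already realizes each $K_0$-type exactly once, and $V$ itself carries each $K_0$-type with multiplicity exactly one, neither the kernel of the surjection nor the $K_0$-isotypical complement of $\U(\g|_\lambda) \cdot v$ in $V$ can accommodate a further copy of any $K_0$-type. Therefore $V = \U(\g|_\lambda)\cdot v$ and the surjection is injective, so $V \cong \overline V$ is an irreducible principal series. The main obstacle is precisely this last bookkeeping step --- promoting the matching of $K_0$-isotypical components into a genuine $(\g|_\lambda, K_0)$-module isomorphism --- but the cyclic-vector construction above turns it into a direct exercise in $K$-type counting, using the fact that an irreducible spherical principal series is generated over $\U(\g|_\lambda)$ by its spherical vector.
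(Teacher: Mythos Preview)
Your proof is correct and follows essentially the same approach as the paper: invoke Lemma~\ref{lem-parameters-for-our-reps} to pin down the parameters $(0,\varphi)$ with $\varphi^2=-4\kappaconstant^2/\lambda$, use the reducibility criterion for the principal series to see that non-integral $\varphi$ forces irreducibility, and finish by $K$-type counting. The paper's proof is terser---it simply says ``the result now follows from the Lemma'' after noting irreducibility of the principal series---so your explicit cyclic-vector/$K$-type bookkeeping in the last paragraph fills in exactly what the paper leaves to the reader.
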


\begin{proof}
The criterion for \emph{reducibility} of a parabolically induced representation is that $\varphi$ be an  integer of the same parity as $n$, with $|\varphi| > |n|$.   See \cite[Chap VIII, Problems 9-14]{Knapp86} again. So if $\varphi$ is non-integral, then any irreducible representation with parameters $(0,\varphi)$ is an irreducible principal series representation.  The result now follows from the Lemma~\ref{lem-parameters-for-our-reps}.
\end{proof}

\begin{remark}
In the next section we shall construct families  of representations  using solutions of the rescaled Schr\"odinger equation.  They will satisfy the hypotheses of the corollary, and as a result for almost all $\lambda$  the fibers will be irreducible principal series representations with parameters $(0, \sqrt{- 4 \kappaconstant ^2 / \lambda})$. Among other things our construction will  in effect extend the principal series to $\varphi {=} \infty$, where we will obtain a Harish-Chandra module for $(\g\vert _0 , K)$.
\end{remark}

\section{Solutions of the Schr\"odinger equation}
\label{sec-solutions}

In this section, we shall associate to every $\lambda\in \C$ a space $\RegSol (\lambda )$ of  ``regular'' solutions of the rescaled Schr\"odinger   equation $Tf = \eigv f$. Then we shall construct a natural algebraic family of Harish-Chandra modules $\RegSolFam$ whose fibers are the spaces $\RegSol(\lambda)$.

The solutions of the Schr\"odinger equation that are studied in physics are all regular, but here we shall also examine the ``singular'' solutions.  Contrary to what one might expect, the singular solutions will  play a decisive role in our study of the physical solutions of the Schr\"odinger equation.

\subsection{Solutions by separation of variables}

In this subsection we  shall review the standard approach to    the  solution of the   (rescaled) Schr\"{o}dinger equation by separation of variables in spherical coordinates.

\begin{defn} 
\label{def-sol-lambda}
For each $\eigv\in \C$ we  define
\[  
 \Sol(\eigv)=\bigl \{\, \psi\in C^{\infty}(\R^3_0)_\fin : T\psi=\eigv\psi \, \bigr \}.
\] 
\end{defn}

The Laplacian on $\R^3_0$ may be written as 
\[
\Delta = \frac{\partial ^2}{\partial r^2}+\frac{2}{r}\frac{\partial }{\partial r} 
+ \frac{1}{\phantom{.}r^2} \Omega
\]
where $\Omega$ is the Casimir operator \eqref{eq-casimir-element}   and where   $\partial/\partial r$ is differentiation in the radial direction: 
\[
\frac{\partial }{\partial r}  = \frac {1}{r} \left [ x _1 \frac{\partial }{\partial x_1} +x_2 \frac{\partial }{\partial x_2} +x_3 \frac{\partial }{\partial x_3} \right ] .
\]
So the rescaled Schr\"odinger operator $T$ acts on the $\ell$-isotypical component of the space $C^\infty(\R^3_0)_\fin$  as 
\begin{equation}
\label{eq-radial-fmla-for-T}
T = - \frac{\partial ^2}{\partial r^2} - \frac{2}{r}\frac{\partial }{\partial r} 
+ \frac{1}{\phantom{.}r^2} \ell(\ell+1) - \frac{2\kappaconstant }{r} .
\end{equation}
Using the standard diffeomorphism
\begin{equation}
\label{eq-spherical-diffeo}
\begin{aligned}
\R^3 _0  & \stackrel \cong \longrightarrow  \R_{+}\times S^2 
\\
x  & \longmapsto  
\bigl ( \, \|x\|\,  ,\,  \|x\|^{-1}x \,  \bigr ) ,
\end{aligned}
\end{equation}
we find that for any $\eigv\in \C$,  the $\ell$-isotypical space in $\Sol (\lambda)$ is the tensor product of the two-dimensional space of $\eigv$-eigenfunctions of \eqref{eq-radial-fmla-for-T} in $C^\infty (\R_{+})$ with the $\ell$-isotypical component of the space $C^\infty (S^2)_\fin$.  The latter is a single copy of the irreducible $SO(3)$-type representation   $W_\ell$; as noted in Section~\ref{Classification} it consists of the \emph{spherical harmonic functions} of degree $\ell$, which are the restrictions of harmonic polynomial functions of degree $\ell$ to the sphere.  We arrive at the following simple conclusion:

\begin{proposition}
For every $\lambda \in \C$ the subspace $\Sol(\lambda) \subseteq C^\infty (\R^3_0)_\fin $ is invariant under the  action of  the Lie algebra  $\g$.  Moreover the   action  of $\g$ on $\Sol(\lambda)$ factors through the fiber $\g\vert_\lambda$  and gives $\Sol(\lambda)$ the structure of an admissible $(\g\vert _\lambda , K)$-module, with each $SO(3)$-type representation of $K$ occurring with multiplicity $2$. \qed
\end{proposition}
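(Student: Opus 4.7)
My plan is to establish the three assertions of the proposition in turn: invariance of $\Sol(\lambda)$ under $\g$, descent of the action to the fiber $\g|_\lambda$, and the calculation of the $K$-isotypical structure.

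For invariance, fix $X \in \g$ and $\psi \in \Sol(\lambda)$. Since $\g$ sits in $\Cent(T)$ by Definition~\ref{def-g}, one has $T(X\psi) = X T \psi = \lambda X\psi$, so $X\psi$ solves the eigenvalue equation. To see that $X\psi$ is still $K$-finite, I invoke the compatibility of the $K$-action on $C^\infty(\R^3_0)$ (pullback via the twisted action \eqref{eq-O3-action}) with the adjoint $K$-action on $\g$ as differential operators: for every $k \in K$, $k\cdot (X\psi) = (\mathrm{Ad}(k)X)\cdot (k\psi)$. Writing $X = \sum_i p_i(T) X_i$ with $X_i\in\{L_1,L_2,L_3,R_1,R_2,R_3\}$ and $p_i \in \O$, and noting that $T$ is $K$-fixed while the spans of the $L_i$ and the $R_i$ are finite-dimensional $K$-invariant subspaces, we conclude $\mathrm{Ad}(K)X$ is finite-dimensional. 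Combined with the $K$-finiteness of $\psi$, this places $K\cdot(X\psi)$ inside a finite-dimensional subspace of $C^\infty(\R^3_0)$.

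For the descent to $\g|_\lambda$, I observe that $T \in \O$ acts on $\Sol(\lambda)$ as the scalar $\lambda$, so every element of the ideal $I_\lambda\g$, with $I_\lambda = (T-\lambda) \subseteq \O$, acts as zero; hence the $\g$-action factors through $\g/I_\lambda\g = \g|_\lambda$. For the $K$-isotypical structure I use the spherical decomposition \eqref{eq-spherical-diffeo}. Because the $O(3)$-action on $\R^3_0$ is the twisted one \eqref{eq-O3-action}, the central element $-I \in O(3)$ acts trivially on $\R^3_0$, and so only $SO(3)$-type representations appear in $C^\infty(\R^3_0)_\fin$; the $\ell$-isotypical subspace identifies with $C^\infty(\R_+)\otimes W_\ell$, where $W_\ell$ is realized as the spherical harmonics of degree $\ell$ on $S^2$. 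Under this identification, and via \eqref{eq-radial-fmla-for-T}, the eigenvalue equation $T\psi = \lambda\psi$ restricted to degree $\ell$ becomes a second-order linear ODE on $(0,\infty)$ with smooth coefficients, whose $\lambda$-eigenspace in $C^\infty(\R_+)$ is exactly two-dimensional. Tensoring the $2$-dimensional radial solution space with the single copy of $W_\ell$ yields an $\ell$-isotypical component of dimension $2(2\ell+1)$; this gives multiplicity exactly $2$ for each $SO(3)$-type and, in particular, admissibility.

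The only genuinely delicate step is the $K$-finiteness of $X\psi$, which requires noticing that the differential-operator realization of $\g$ interacts with the twisted $O(3)$-action on $\R^3_0$ in the expected equivariant way and that $T$ itself is $K$-fixed (so that the $\O$-coefficients do not destroy finite-dimensionality of $K$-orbits in $\g$). Once this is checked, the descent to $\g|_\lambda$ is formal, the radial dimension count is a standard ODE fact, and the proposition follows.
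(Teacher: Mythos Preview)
Your proof is correct and follows essentially the same approach as the paper: the paper marks the proposition with \qed\ immediately after the preceding discussion, which already contains the separation-of-variables identification of the $\ell$-isotypical component of $\Sol(\lambda)$ as the tensor product of a two-dimensional radial eigenspace with $W_\ell$. You have simply made explicit the points the paper leaves implicit (invariance via $\g\subseteq\Cent(T)$, $K$-finiteness via the adjoint equivariance, and descent via $T$ acting as the scalar $\lambda$), all of which are handled correctly.
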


\subsection{Regular  solutions of the Schr\"odinger equation}

Let us examine  the radial operator \eqref{eq-radial-fmla-for-T}, acting on smooth functions of $r>0$, a little more closely.  Its $\eigv$-eigen\-functions are precisely the  solutions of the differential equation
\begin{equation}
\label{equa3.1}
r^2 \frac{d^2 \psi }{dr^2} +2 r \frac{d\psi }{dr}    - \ell(\ell{+}1)  \psi  +  2 \kappaconstant  r \psi + \eigv r^2\psi  =0.
\end{equation}
This equation is regular-singular at $r=0$, with indicial roots $\ell$ and $-(\ell{+}1)$, and general theory (namely the Frobenius method; see \cite[Chapter X]{WhittakerWatson96} or \cite[Ch 4]{Teschl12})  provides a smooth solution defined on $\R_+$ that  in fact extends to an entire function of $r{\in} \C$ with   a zero of order precisely $\ell$ at $r=0$. 

\begin{defn}
\label{def-reg-solution}
We shall denote by  $F_{\ell,\lambda}$  the unique solution of \eqref{equa3.1} that extends to an  analytic function of $r\in \C$ and has the Taylor series 
\begin{equation}
\label{eq-regular-solution}
F_{\ell,\lambda} (r) = r^\ell + \text{higher-order terms in $r$}.
\end{equation}
\end{defn}

General theory also provides  a second, linearly independent solution of the form 
\begin{equation}
\label{eq-singular-solution}
G_{\ell,\lambda}(r) = H_{\ell,\lambda}(r) + C_{\ell,\lambda} \log(r)\, F_{\ell,\lambda}(r),
\end{equation}
 where 
$H_{\ell,\lambda}$  is a meromorphic function of $r$ with a pole of order $\ell{+}1$. The constant $C_{\ell,\lambda}$ might be $0$.  Whether or not that is the case,  the scalar multiples of $F_{\ell,\lambda}$ are the only solutions that are bounded near $0\in \R^3$.

\begin{defn} 
\label{def-RegSol}
We shall denote by $\RegSol(\lambda)$ the subspace of  $\Sol(\lambda)$ comprised of 
functions that are bounded near $0{\in} \R^3$.
Equivalently, $\RegSol(\lambda)$ 
is spanned by functions of the form
\[
\R_+\times S^2 \ni (r,\theta) \longmapsto F_{\ell,\lambda}(r) Y_\ell (\theta)
\]
in spherical coordinates \eqref{eq-spherical-diffeo}, where: 

\begin{enumerate}[\rm (i)]

\item 
  $F_{\ell, \lambda}$ is the regular solution of the radial equation \eqref{equa3.1}  with leading coefficient $1$, as in  \eqref{eq-regular-solution}, and  

\item $Y_\ell$ is any spherical harmonic function of degree $\ell$.
\end{enumerate}

\end{defn}

\begin{proposition}
\label{prop-regsol-is-rep}
The subspace $\RegSol(\eigv) \subseteq \Sol(\eigv)$ is a $(\g\vert _\lambda, K)$-submodule.
\end{proposition}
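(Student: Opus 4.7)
Since $\RegSol(\lambda)$ is characterized by boundedness near the origin, a manifestly rotation-invariant condition, $K = O(3)$-invariance is automatic; I need only verify Lie algebra invariance. The angular momentum operators $L_1, L_2, L_3$ are tangential to spheres (they annihilate radial functions), so they preserve the $\ell$-isotypical decomposition and act on a basis element $F_{\ell,\lambda}(r) Y_\ell(\theta)$ as $F_{\ell,\lambda}(r) (L_i Y_\ell)(\theta)$, which remains in $\RegSol(\lambda)$. The substantive task is invariance under the second-order Runge--Lenz operators $R_i$, which shift $\ell \to \ell \pm 1$.

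For these, I exploit the following intrinsic characterization of regularity coming from the Frobenius analysis of \eqref{equa3.1}: within each $\ell'$-isotypical component, the radial ODE has a $2$-dimensional solution space spanned by $F_{\ell',\lambda} = O(r^{\ell'})$ and $G_{\ell',\lambda}$, the latter carrying an $r^{-\ell'-1}$ or $\log r$ singularity. Thus a solution $\psi \in \Sol(\lambda)$ is regular if and only if $\psi$ is bounded in a neighborhood of the origin. Since the preceding proposition already provides $R_i \psi \in \Sol(\lambda)$, it suffices to show that $R_i \psi$ is bounded near the origin whenever $\psi \in \RegSol(\lambda)$.

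To verify boundedness, I would write each regular basis element as $\psi = f_\ell(r) P_\ell(x)$, where $P_\ell = r^\ell Y_\ell$ is a harmonic polynomial of degree $\ell$ and $f_\ell(r) = F_{\ell,\lambda}(r)/r^\ell$ is entire with $f_\ell(0) = 1$. Using $\partial_m \psi = (f_\ell'(r)/r)\, x_m P_\ell + f_\ell(r)\, \partial_m P_\ell$, together with the facts that $L_j$ commutes with multiplication by radial functions and preserves polynomial degree in $x$, each of the four summands in
\begin{equation*}
R_i \psi = \sqrt{-1}\bigl[L_j\partial_k\psi - L_k\partial_j\psi - \partial_i\psi + \kappa(x_i/r)\psi\bigr]
\end{equation*}
reduces to a sum of terms of the form $(f_\ell'(r)/r)\cdot Q(x)$ or $f_\ell(r)\cdot Q(x)$ with $Q$ polynomial. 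The main obstacle is the apparent $1/r$ singularity in $f_\ell'/r$: the key observation is that this factor is always multiplied by a polynomial of degree at least $\ell+1$ (arising from $L_j[x_k P_\ell]$ or $x_i P_\ell$), so the product is $O(r^\ell)$; the remaining terms are $O(r^{\ell-1})$, or vanish identically when $\ell = 0$ (since then $\partial_m P_\ell = 0$). Consequently $R_i \psi$ is bounded near the origin, and therefore lies in $\RegSol(\lambda)$.
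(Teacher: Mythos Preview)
Your argument is correct and takes a genuinely different route from the paper's.  The paper first rewrites each Runge--Lenz operator in polar form as $\sqrt{-1}\,R = C + r^{-1}D + \partial_r E$ with $C,D,E$ differential operators on $S^2$; for $\ell\ge 1$ boundedness of $R\psi$ is then immediate because $F_{\ell,\lambda}(0)=0$, but for $\ell=0$ the polar form only gives $R\psi = O(r^{-1})$, and the paper closes the gap by a representation-theoretic argument: projecting to the $k$-isotypical pieces and invoking the structure of the spherical Hecke algebra $\mathcal H(\g,\BK)$ to handle the $k=0$ component.  Your approach, writing $\psi = f_\ell(r)P_\ell(x)$ with $P_\ell$ a homogeneous harmonic polynomial and tracking degrees in Cartesian coordinates, handles all $\ell$ uniformly and avoids both the polar-form lemma and the Hecke-algebra input; it is more elementary and self-contained.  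One small imprecision: the Coulomb summand $\kappa(x_i/r)\psi = (\kappa f_\ell(r)/r)\,x_iP_\ell$ is not literally of the form $f_\ell\cdot Q$ with $Q$ polynomial, but of course it is bounded simply because $|x_i/r|\le 1$ and $\psi$ is bounded, so this does not affect the argument.  The paper's polar decomposition, on the other hand, is reused later (in the Wronskian computations of Section~\ref{subsec-wronskian}), which is presumably why it is introduced here.
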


To prove this we shall use the following computation of the Runge-Lenz operators in polar coordinates.

\begin{lemma}
\label{lem-polar-form-of-R}
After applying  the diffeomorphism $\R^3 _0\cong \R_+{\times} S^2$ in  \eqref{eq-spherical-diffeo}  each of the Runge-Lenz operators $R \colon C^\infty (\R^3 _0)\to C^\infty (\R^3 _0)$ assumes the form
\[
\sqrt{-1} \cdot R  =   C  + \frac{1}{r} \cdot D  + \partial_r \cdot E , 
\]
where $C $, $D $ and $E $ are linear differential operators on $S^2$ with real coefficients. 
\end{lemma}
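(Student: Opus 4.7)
The plan is a direct computation in spherical coordinates. First I would express each Cartesian partial derivative as
\[
\partial_i \;=\; \theta_i\,\partial_r \;+\; \tfrac{1}{r}\, W_i,
\]
where $\theta_i = x_i/r$ is a real-valued function on $S^2$ and $W_i$ is a real-coefficient vector field on $S^2$ (namely the tangential component of $\partial_i$ after the identification $\R^3_0 \cong \R_+\times S^2$). I would also note that each $L_j$ is itself a real vector field on $S^2$: rotations preserve $r$, so $L_j$ is tangent to every sphere centered at the origin and commutes with $r^{\pm 1}$ and with $\partial_r$.

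Using these facts together with the Leibniz rule I would expand
\[
L_j\, \partial_k \;=\; \partial_r \cdot \bigl( L_j(\theta_k) + \theta_k L_j\bigr) \;+\; \tfrac{1}{r}\, L_j\, W_k,
\]
in which the first summand is the composition of $\partial_r$ with a first-order operator on $S^2$ (here $L_j(\theta_k)$ is a multiplication operator and $\theta_k L_j$ is a vector field), and the second is $r^{-1}$ times a second-order operator on $S^2$. The displayed formula for $\partial_i$ is already in the desired shape, and the Coulomb term $\kappaconstant x_i/r = \kappaconstant \theta_i$ is a function on $S^2$.

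Substituting these expansions into the definition \eqref{eq-r-ops} of $R_i$ (cyclic in $(i,j,k)$) and multiplying through by $\sqrt{-1}$ would yield
\[
\sqrt{-1}\, R_i \;=\; \partial_r\cdot E_i \;+\; \tfrac{1}{r}\, D_i \;+\; C_i
\]
with
\begin{align*}
E_i &= -\bigl( L_j(\theta_k) + \theta_k L_j - L_k(\theta_j) - \theta_j L_k - \theta_i\bigr),\\
D_i &= -\bigl( L_j\, W_k - L_k\, W_j - W_i\bigr),\\
C_i &= -\kappaconstant\, \theta_i,
\end{align*}
each of which is a differential (or multiplication) operator on $S^2$ with real-valued coefficients, as required.

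The only real obstacle is notational bookkeeping: one must consistently distinguish multiplication by a function on $S^2$, a vector field on $S^2$, and the radial operator $\partial_r$, and repeatedly use the commutations $[L_j,\partial_r]=0$ and $[\partial_r,f]=0$ for any function $f$ on $S^2$ to freely move $\partial_r$ past purely angular operators. The rest of the algebra is routine.
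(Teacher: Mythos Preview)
Your argument is correct and is essentially the paper's own proof, carried out in more detail: both decompose $\partial_j = (x_j/r)\,\partial_r + r^{-1}X_j$, observe that each $L_j$ is a tangential (spherical) vector field commuting with $\partial_r$ and $r^{\pm 1}$, and then read off the claimed form of $\sqrt{-1}R_i$ term by term. Your explicit formulas for $E_i$ and $D_i$ appear to carry a sign slip in the $L_j\partial_k$ versus $L_k\partial_j$ terms, but this does not affect the lemma, which only concerns the shape of the operator.
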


\begin{proof}
Each coordinate vector field $\partial_j$ on $\R^3_0$ has the form
\[
h_j\cdot \partial_r + \frac{1}{r} \cdot X_j,
\]
where $h_j$ is a smooth function on the $2$-sphere and $X_j$ is a vector field on $S^2$. The lemma follows from  this and the fact that 
\[
\sqrt{-1} \cdot R_i =  \sum_{j\ne i}\pm L_{i}\partial_j + \partial _i - \frac{\kappaconstant  x_i }{r} ,
\]
keeping in mind that each $L_{i}$ is a vector field on $S^2$ in spherical coordinates while $x_i/r$ is a smooth function on $S^2$.
\end{proof}

\begin{proof}[Proof of Proposition~\ref{prop-regsol-is-rep}]
Let 
$\psi = F_{\ell,\lambda} \cdot  Y_\ell $ be a generating element of the vector space $\RegSol(\lambda)$, as in Definition~\textup{\ref{def-RegSol}}.  
It suffices to show that  if $R$ is any one of the Runge-Lenz operators, then the function $R \psi$  is  bounded in a neighbourhood of $0\in \R^3$.

When $\ell{\ge} 1$,  boundedness is clear from the formula for the operator $R$ given in Lemma~\ref{lem-polar-form-of-R}, since $F_{\ell,\lambda}$ vanishes at $r{=}0$. But when $\ell{=}0$, the function $F_{\ell,\lambda}$ does not vanish at $r{=}0$, and the formula in Lemma~\ref{lem-polar-form-of-R} suggests that $R \psi$ might behave like $1/r$ at $r{=}0$. So a somewhat  closer examination is required.

Suppose then that $\ell{=}0$. Denote by $\langle R \psi\rangle^ k$ the projection of $R  \psi$ into the $k$-isotypical component of $\Sol(\lambda)$, so that 
\[
R \psi = \sum_{k\ge 0} \langle R  \psi \rangle^k .
\]
The functions $\langle R  \psi\rangle ^k$ for $k{\ge}1$ must be bounded near $r{=}0$, since the space $\Sol(\lambda)^k$ consists of functions that are either bounded near $r{=}0$ or have poles of order $k{+}1$, and $k{+}1\ge  2$, while $R \psi$ behaves at worst like $1/r$.  As for the component $\langle R  \psi\rangle^0$, it follows from the  definitions that 
\[
\langle R \psi\rangle ^0 = e R  e \psi ,
\]
and we have seen that  $e R  e$, like any element of
$\H(\g, \BK)$, acts on the $0$-isotypical component as a polynomial function of $\lambda$, so that $\langle R \psi\rangle ^0$  is a scalar multiple of $\psi$ and is therefore bounded near $r{=}0$ as required.\footnote{In fact one can compute that $e R  e{ =} 0$ in $\H(\g,\BK)$, so that $\langle R \psi\rangle ^0=0$.}
\end{proof}

\subsection{The algebraic family of regular solutions}

We shall now assemble the spaces $\RegSol(\lambda)$ into an algebraic family, as follows.

\begin{defn} 
We shall denote by $\RegSolFam$ the complex vector space that is  spanned by functions on $\C{\times} \R_{+}{\times} S^2$ (using spherical coordinates) of  the form 
\[
(\lambda, r,\theta) \longmapsto p (\lambda ) F_{\ell,\lambda} (r) Y_{\ell}(\theta) ,
\] 
where 
\begin{enumerate}[\rm (i) ]

\item $p $ is any polynomial function, and 

\item $F_{\ell, \lambda}$ and $Y_\ell$ are as in Definition~\ref{def-RegSol} above.

\end{enumerate}
We give $\RegSolFam$ the structure of a (free)  $\O $-module via the natural action of the rescaled Schr\"odinger operator $T\in \O$ on functions, which is in this case of course multiplication by  $\lambda$ since the action is on eigenfunctions.
\end{defn}

The group $K$ acts on $\RegSolFam$ as $\O $-module automorphisms via the action \eqref{eq-O3-action}  of $K$ on $\R^3_0$.  
There are obvious $K$-equivariant isomorphisms
\begin{equation}
\label{eq-regsol-fibers}
\RegSolFam \, \vert _\lambda \cong \RegSol(\lambda)
\end{equation}
for all $\lambda \in \C$.
We wish to  make  $\RegSolFam$ a $\g$-module via the natural action of $\g$ as differential operators on smooth functions on $\R^3_0$.  But we need to prove first that  $\RegSolFam$ is invariant under this action.  The following is one of our main results: 

\begin{theorem}
\label{thm-regsol-is-alg-family}
The vector space $\RegSolFam$ is invariant under the following action of elements $X\in \g$:
\[
X \colon \bigl ( p\cdot F_{\ell,\lambda}\cdot Y_\ell\bigr )
\longmapsto 
\Bigl [ (\lambda, r, \theta) \mapsto  p(\lambda) \bigl (X(F_{\ell,\lambda}Y_\ell)\bigr )(r,\theta)\Bigr ] .
\]
As a result, $\RegSolFam$ carries the structure of an algebraic family of Harish-Chandra modules.
\end{theorem}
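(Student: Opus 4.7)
The plan is to reduce everything to a concrete computation using the Frobenius power series for $F_{\ell,\lambda}$, and then use polynomial dependence on $\lambda$ together with the fiberwise invariance from Proposition~\ref{prop-regsol-is-rep} to conclude that the output of applying a generator of $\g$ to a generator of $\RegSolFam$ again lies in $\RegSolFam$.

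First I would verify invariance only on a set of $\O$-module generators of $\RegSolFam$, namely the functions $\psi_{\ell,Y} = F_{\ell,\lambda}(r)\, Y(\theta)$ with $Y$ a spherical harmonic of degree $\ell$, and only for the $\O$-module generators $L_1,L_2,L_3,R_1,R_2,R_3$ of $\g$. For the $L_i$ the action is trivial to handle: $L_i$ is a vector field on $S^2$, so $L_i\psi_{\ell,Y} = F_{\ell,\lambda}\cdot (L_iY)$, which is a finite sum of generators. The substance of the theorem lies in the Runge--Lenz operators $R_i$.

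For $R_i$, two facts from the excerpt do most of the work. By Lemma~\ref{lem-polar-form-of-R}, $R_i$ has the form $C + r^{-1}D + \partial_r E$ with angular operators $C,D,E$, so it shifts the $\ell$-grading by at most one and (crucially) acts on $r$ only by polynomial multiplication and $\partial_r$; and by Proposition~\ref{prop-regsol-is-rep}, for each fixed $\lambda$ the function $R_i\psi_{\ell,Y}|_\lambda$ lies in $\RegSol(\lambda)$. Combining the two, for each $\lambda$ we may write
\[
R_i\psi_{\ell,Y}\big|_\lambda \;=\; \sum_{\ell'\in\{\ell-1,\ell,\ell+1\}} c_{\ell'}(\lambda)\, F_{\ell',\lambda}(r)\, Y_{\ell'}(\theta),
\]
for definite spherical harmonics $Y_{\ell'}$ determined by the $L_i$-action on $Y$. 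The entire problem is then to prove that each $c_{\ell'}(\lambda)$ is a \emph{polynomial} in $\lambda$, for then $R_i\psi_{\ell,Y}\in\RegSolFam$ by construction.

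To handle this, I would first establish polynomial dependence of $F_{\ell,\lambda}$ on $\lambda$ directly from the Frobenius recurrence associated to equation~\eqref{equa3.1}: writing $F_{\ell,\lambda}(r) = \sum_{k\ge 0} a_k(\lambda)\, r^{k+\ell}$ with $a_0=1$, the indicial equation and the three-term recurrence
\[
k(k+2\ell+1)\, a_k(\lambda) \;=\; -\,2\kappaconstant \, a_{k-1}(\lambda) \;-\; \lambda\, a_{k-2}(\lambda)
\]
show by induction that each $a_k(\lambda)$ is a polynomial in $\lambda$ of degree at most $\lfloor k/2\rfloor$. Consequently $R_i\psi_{\ell,Y}(\lambda,r,\theta)$, computed by applying the differential operator $R_i$ in $(r,\theta)$ to the power series term-by-term, is, for each fixed $(r,\theta)$, a polynomial in $\lambda$. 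Projecting onto the $\ell'$-isotypical subspace in $\theta$ (a $\lambda$-independent operation) produces a function $f_{\ell'}(\lambda,r)\, Y_{\ell'}(\theta)$ with $f_{\ell'}(\lambda,r)$ polynomial in $\lambda$ for each $r$. By the fiberwise regularity we already have $f_{\ell'}(\lambda,r) = c_{\ell'}(\lambda)\, F_{\ell',\lambda}(r)$; extracting the coefficient of $r^{\ell'}$ in the Taylor expansion of both sides (noting $F_{\ell',\lambda}(r) = r^{\ell'} + \mathit{O}(r^{\ell'+1})$) identifies $c_{\ell'}(\lambda)$ with a single Taylor coefficient of $f_{\ell'}$, and hence as a polynomial in $\lambda$.

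The main obstacle, to my mind, is the $\ell=0$ case of the small-$\ell$ output, where as in the proof of Proposition~\ref{prop-regsol-is-rep} one must rule out a $1/r$ singularity creeping into the $\ell'=0$ component after matching with the candidate basis $F_{0,\lambda}$; this is handled by the same spherical-Hecke-algebra observation made there, which forces the putatively singular $\ell'=0$ projection to be a scalar multiple of $\psi_{0,Y}$ itself. Once invariance under $\g$ is established, the structure of an algebraic family of Harish-Chandra modules follows formally: $\RegSolFam$ is by construction a free $\O$-module with the stated $K$-isotypical decomposition, the $K$-action is via the given $O(3,\C)$-action on $\R^3_0$, compatibility between $\g$ and $K$ is automatic because both are realized inside the algebra of differential operators on $\R^3_0$, and the identification of the fibers in \eqref{eq-regsol-fibers} makes the admissibility and compatibility with $\k\subseteq\g$ clear.
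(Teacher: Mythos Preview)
Your approach is sound, and genuinely more elementary than the paper's, but one step is misstated: for fixed $(r,\theta)$ the function $\lambda\mapsto R_i\psi_{\ell,Y}(\lambda,r,\theta)$ is \emph{entire}, not polynomial, since the degrees of the Frobenius coefficients $a_k(\lambda)$ are unbounded in $k$. What you actually need---and what your final step in fact uses---is the weaker statement that each individual Taylor coefficient in $r$ of $R_i\psi_{\ell,Y}$ is a polynomial in $\lambda$; this follows immediately from your recurrence, since the $r^m$-coefficient is a finite $\C$-linear combination of the $a_k$. With that correction the extraction of $c_{\ell'}(\lambda)$ as the $r^{\ell'}$-coefficient goes through cleanly.

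The paper takes a more structural and less computational route. Rather than reading off a leading Taylor coefficient, it introduces the holomorphic enlargement $\RegSolFam_{\an}$ (where the polynomial $p(\lambda)$ is replaced by an arbitrary entire function), which is trivially $\g$-invariant, and then shows that $\RegSolFam$ is ``integrally closed'' in it: any element of $\RegSolFam_{\an}$ that lands in $\RegSolFam$ after multiplication by a nonzero polynomial was already there. The argument is then organized around the spherical vectors: Proposition~\ref{prop-Hecke-algebra} handles $\RegSolFam^0$ directly, a downward induction on $\ell$ (using generic irreducibility of the fibers, Lemma~\ref{lem-generically-irreducible}) propagates membership from $\ell{-}1$ to $\ell$, and the integral-closure lemma closes the loop. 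Your argument buys directness and avoids the appeal to generic irreducibility altogether; the paper's buys a template that transfers to settings where the Frobenius series is less explicit, and it foreshadows the role of the standard-and-spherical classification that is reused for $\SingSolFam$ and the intertwining operator in Section~\ref{sec-phys-sol-jantzen}.
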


It is enough to prove that $\RegSolFam$ is invariant under the action of the Runge-Lenz operators, and this is what we shall do in the next several lemmas.

\begin{lemma}
If $\ell \ge 1$ and if $\psi \in \RegSolFam^\ell$, and if $R$ is any Runge-Lenz operator, then $\langle R\psi \rangle ^{\ell{-}1}\in \RegSolFam^{\ell{-}1}$.
\end{lemma}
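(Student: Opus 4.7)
The plan is to apply Lemma~\ref{lem-polar-form-of-R} to write out $R\psi$ explicitly in spherical coordinates, and then to identify its $(\ell{-}1)$-isotypical component as an element of $\RegSolFam^{\ell-1}$ by a direct calculation with Frobenius series.

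Since $\RegSolFam^\ell$ is spanned by elements of the form $\psi(\lambda,r,\theta) = p(\lambda) F_{\ell,\lambda}(r) Y_\ell(\theta)$, it suffices to treat a single such generator. First I would substitute into the polar decomposition $\sqrt{-1}\,R = C + \tfrac{1}{r} D + \partial_r\, E$ provided by Lemma~\ref{lem-polar-form-of-R}, obtaining
\[
\sqrt{-1}\,R\psi = p(\lambda) \Bigl[ F_{\ell,\lambda}(r)\, (CY_\ell)(\theta) + \tfrac{1}{r}F_{\ell,\lambda}(r)\, (DY_\ell)(\theta) + F'_{\ell,\lambda}(r)\, (EY_\ell)(\theta) \Bigr].
\]
Since $C$, $D$, $E$ are $\lambda$-independent differential operators on $S^2$, I would project each angular factor onto the space of degree-$(\ell{-}1)$ spherical harmonics, getting fixed spherical harmonics $Y^C$, $Y^D$, $Y^E$ (some possibly zero). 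Then I would expand these in a basis $\{Y^j_{\ell-1}\}$ with coefficients $c_j$, $d_j$, $e_j\in\C$.

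The next step is to invoke separation of variables: $\langle R\psi\rangle^{\ell-1}$ lies in $\Sol(\lambda)^{\ell-1}$, so for each $j$ the radial function
\[
\varphi_j(r;\lambda) := c_j F_{\ell,\lambda}(r) + d_j\,r^{-1} F_{\ell,\lambda}(r) + e_j\, F'_{\ell,\lambda}(r)
\]
must be a solution of the radial equation \eqref{equa3.1} at level $\ell{-}1$. Using the hypothesis $\ell\ge 1$ and the Frobenius normalization $F_{\ell,\lambda}(r) = r^\ell + O(r^{\ell+1})$, each of the three summands is bounded at $r=0$; thus $\varphi_j$ lies in the one-dimensional bounded subspace spanned by $F_{\ell-1,\lambda}$. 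Comparing the coefficient of $r^{\ell-1}$ on both sides gives
\[
\varphi_j(r;\lambda) = (d_j + \ell\, e_j)\, F_{\ell-1,\lambda}(r),
\]
and crucially the constant $d_j + \ell\, e_j$ is \emph{independent of $\lambda$}. Assembling the contributions yields
\[
\langle R\psi\rangle^{\ell-1}(\lambda,r,\theta) = \frac{p(\lambda)}{\sqrt{-1}}\, F_{\ell-1,\lambda}(r)\, \tilde Y(\theta),
\qquad \tilde Y := \sum_j (d_j + \ell e_j)\, Y^j_{\ell-1},
\]
which is manifestly an element of $\RegSolFam^{\ell-1}$.

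The main obstacle to anticipate is ensuring that the scalar linking the three bounded radial building blocks to $F_{\ell-1,\lambda}$ does not acquire $\lambda$-dependence through higher-order Frobenius coefficients; this is resolved precisely because $F_{\ell-1,\lambda}$ is already uniquely pinned down by its leading coefficient (so all higher-order agreement is automatic from the ODE), and the match happens in the first nontrivial order $r^{\ell-1}$, where only the normalizations---not the $\lambda$-dependent corrections---contribute.
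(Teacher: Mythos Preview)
Your proof is correct and follows essentially the same approach as the paper: apply the polar decomposition of $R$ from Lemma~\ref{lem-polar-form-of-R} to a generator $p(\lambda)F_{\ell,\lambda}Y_\ell$, observe that the leading-order term in $r$ of the $(\ell{-}1)$-projection is $p(\lambda)\,r^{\ell-1}$ times a $\lambda$-independent spherical harmonic, and conclude via the Frobenius normalization of $F_{\ell-1,\lambda}$. The paper compresses steps (4)--(7) of your argument into the single observation that the displayed leading-order formula ``shows that the $(\ell{-}1)$-component of $R\psi$ belongs to $\RegSolFam$,'' whereas you spell out explicitly why the radial factor must be a scalar multiple of $F_{\ell-1,\lambda}$ and why that scalar is $\lambda$-independent; your version is more complete but not different in substance.
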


\begin{proof}
Think of  $\psi$ as a section $\lambda\mapsto \psi_\lambda$ of the family of   spaces $\RegSol(\lambda)$. 
We may assume that $\psi$ is one of the generators of  $\RegSolFam$, so that 
\[
\psi_\lambda = p(\lambda) \cdot F_{\ell,\lambda} \cdot Y_\ell .
\]
Using Lemma~\ref{lem-polar-form-of-R} we compute that 
\begin{multline}
\label{eq-R-applied-to-RegSol-section}
R\psi_\lambda = p(\lambda) \cdot \ell\cdot r^{\ell-1}\cdot D Y_{\ell} \,\, + \,\, 
p(\lambda) \cdot  r^{\ell-1}\cdot E Y_{\ell} 
\\ + 
\text{higher-order terms in $r$} .
\end{multline}
The formula shows that  the $(\ell{-}1)$-component of $R \psi$ belongs to $\RegSolFam$, as required.
\end{proof}

It will be convenient now to introduce a somewhat larger space than $\RegSolFam$.   We shall denote by $\RegSolFam_{\an}$ the space defined in exactly the same way as $\RegSolFam$, except that $p(\lambda)$ is replaced by any entire  function $h(\lambda)$.  Since the eigenfunctions $F_{\ell,\lambda}$ depend analytically on $\lambda$, the space  $\RegSolFam_{\an}$ is certainly invariant under the action of $\g$.

\begin{lemma}
\label{lem-integral-closure2}
If $\varphi\in \RegSolFam_{\an}$, and if $p \cdot \varphi \in \RegSolFam$ for some nonzero polynomial $p$, then $\varphi\in \RegSolFam$.
\end{lemma}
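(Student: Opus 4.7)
The plan is to reduce the statement to a fiberwise identity in the spaces $\RegSol(\lambda)$ and then invoke a standard division argument for entire functions. First I would write $\varphi$ as a finite sum
\[
\varphi = \sum_{(\ell, m) \in S} h_{\ell,m}(\lambda)\, F_{\ell,\lambda}(r)\, Y_{\ell,m}(\theta),
\]
where $S$ is a finite index set, the $h_{\ell,m}$ are entire functions of $\lambda$, and the $Y_{\ell,m}$ form a fixed basis of spherical harmonics of degree $\ell$. Any element of $\RegSolFam_{\an}$ has such a presentation (after collecting terms sharing the same angular factor and using that the $h_{\ell,m}$ are only required to be entire). Similarly, the hypothesis $p\cdot \varphi \in \RegSolFam$ gives a presentation
\[
p\cdot \varphi = \sum_{(\ell,m)\in S} q_{\ell,m}(\lambda)\, F_{\ell,\lambda}(r)\, Y_{\ell,m}(\theta),
\]
with polynomial coefficients $q_{\ell,m}$, possibly after enlarging $S$ and padding with zeros.

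The central step is to compare coefficients pointwise in $\lambda$. For each fixed $\lambda \in \C$, the functions $(r,\theta) \mapsto F_{\ell,\lambda}(r)\, Y_{\ell,m}(\theta)$ indexed by $(\ell,m)\in S$ are linearly independent in $C^\infty(\R^3_0)_{\fin}$: distinct values of $\ell$ lie in distinct $K$-isotypical components of $\RegSol(\lambda)$, and for fixed $\ell$ the spherical harmonics $Y_{\ell,m}$ are linearly independent. Consequently the fiberwise identity
\[
\sum_{(\ell,m)\in S}\bigl(p(\lambda) h_{\ell,m}(\lambda) - q_{\ell,m}(\lambda)\bigr)\, F_{\ell,\lambda}(r)\, Y_{\ell,m}(\theta) = 0
\]
forces $p(\lambda)\, h_{\ell,m}(\lambda) = q_{\ell,m}(\lambda)$ as functions of $\lambda\in\C$ for every $(\ell,m)\in S$.

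The conclusion is then a one-line fact from complex analysis: each $h_{\ell,m}$ is entire, $p$ is a nonzero polynomial, and $q_{\ell,m}$ is a polynomial, so $h_{\ell,m} = q_{\ell,m}/p$ is a rational function that is also entire. Every root of $p$ is therefore removable, meaning $p$ divides $q_{\ell,m}$ in $\C[\lambda]$ and $h_{\ell,m}$ is itself a polynomial. Hence $\varphi\in \RegSolFam$. I do not expect any real obstacle here; the only point to verify carefully is the fiberwise linear independence used in the comparison step, and this is immediate from the separation-of-variables description of $\RegSol(\lambda)$ recorded after Definition~\ref{def-RegSol}.
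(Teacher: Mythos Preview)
Your proof is correct and takes essentially the same approach as the paper: the paper's proof is the single sentence ``This follows from the fact that the only entire rational functions are polynomials,'' and you have supplied the coefficient-comparison details that reduce the lemma to precisely that fact.
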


\begin{proof}
This follows from the fact that the only entire rational functions are polynomials.
\end{proof}

To proceed, let us make note of a simple fact that will be discussed further in the next section (and in any case the lemma may be proved by direct computation): 

\begin{lemma} 
\label{lem-generically-irreducible}
For all but countably many  $\lambda{\in} \C$ the space $\RegSol(\lambda)$ is an irreducible $(\g\vert _\lambda , K)$-module. \qed 
\end{lemma}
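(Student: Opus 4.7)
The plan is to apply the classification developed in Section~\ref{sec-principal-series} to the fiber $\RegSol(\lambda)$ directly. By Definition~\ref{def-RegSol}, the $\ell$-isotypical component of $\RegSol(\lambda)$ is $F_{\ell,\lambda}(r)$ times the space of degree $\ell$ spherical harmonics on $S^2$, which is a single copy of the irreducible $SO(3)$-type $W_\ell$. Under the twisted action \eqref{eq-O3-action} the central element $-I\in K$ acts trivially on functions on $\R^3_0$ (since $\det(-I) = -1$ in dimension three), so irreducibility of $\RegSol(\lambda)$ as a $(\g|_\lambda,K)$-module coincides with irreducibility as a $(\g|_\lambda,K_0)$-module. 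In particular $\RegSol(\lambda)$ is a \emph{standard} representation in the sense introduced between Lemma~\ref{lem-vals-of-casimirs} and Lemma~\ref{lem-parameters-for-our-reps}.

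Next I would invoke Lemma~\ref{lem-vals-of-casimirs}: under the homomorphism $\action\colon \mathcal{U}(\g)\to \Cent(T)$ the central element $TL^2 - R^2$ is sent to $T + \kappaconstant^2 I$, and so it acts on the subspace $\RegSol(\lambda)\subseteq \Sol(\lambda)\subseteq C^\infty(\R^3_0)_\fin$ as the scalar $\lambda + \kappaconstant^2$.

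With these two ingredients in place, the corollary immediately following Lemma~\ref{lem-parameters-for-our-reps} applies and asserts that $\RegSol(\lambda)$ is an irreducible principal series representation of $(\g|_\lambda,K_0)$, with parameters $(\sigma,\varphi) = (0,\sqrt{-4\kappaconstant^2/\lambda})$, whenever $-4\kappaconstant^2/\lambda$ is not the square of an integer. The set of $\lambda\in\C\setminus\{0\}$ for which $-4\kappaconstant^2/\lambda$ is an integer square is exactly $\{-4\kappaconstant^2/n^2 : n = 1, 2, 3, \dots\}$; adjoining $\lambda = 0$ (which must be excluded because the parametrization of Section~\ref{sec-principal-series} is developed only for $\lambda \neq 0$) still leaves a countable exceptional set.

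I do not expect any genuine obstacle, since the $K$-type structure of $\RegSol(\lambda)$ and the scalar by which $TL^2 - R^2$ acts are essentially tautological. The one point requiring care is the passage between the disconnected group $K = O(3,\C)$ and its connected component $K_0$, but as noted above the triviality of the action of $-I$ under \eqref{eq-O3-action} makes this transition routine.
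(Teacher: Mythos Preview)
Your proposal is correct and is precisely the argument the paper has in mind: the lemma is stated with a bare \qed, the preceding sentence pointing to the principal series analysis (and to direct computation) as justification, and you have simply written that out---verifying that $\RegSol(\lambda)$ is standard, that $TL^2-R^2$ acts by $\lambda+\kappaconstant^2$ via Lemma~\ref{lem-vals-of-casimirs}, and then invoking the corollary to Lemma~\ref{lem-parameters-for-our-reps}. The only cosmetic point is that the exceptional set is actually $\{-\kappaconstant^2/n^2\}$ (since reducibility at $\sigma=0$ requires $\varphi$ to be an \emph{even} integer), but this does not affect countability.
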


\begin{lemma}
\label{lem-induction-step}
If $\ell \ge 1$, if $\psi \in \RegSolFam^\ell_{\an}$, and if $\langle R S\psi \rangle ^{\ell{-}1}\in \RegSolFam^{\ell{-}1}$ for every Runge-Lenz operator $R$ and every $S\in R(K)$, then $\psi \in \RegSolFam^\ell$.
\end{lemma}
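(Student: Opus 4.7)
The plan is to assemble the Runge-Lenz operators into a single $K$-equivariant $\O$-linear morphism, factor it by Schur and Clebsch-Gordan as a scalar polynomial multiple of a constant-coefficient embedding, and then invoke Lemma~\ref{lem-integral-closure2}.

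First, the preceding lemma guarantees that, for each Runge-Lenz operator $R$, the assignment $\psi \mapsto \langle R\psi\rangle^{\ell-1}$ takes $\RegSolFam^\ell$ into $\RegSolFam^{\ell-1}$. Combining the three Runge-Lenz operators (which span a copy of $W_1$ inside $\g$) yields a $K$-equivariant, $\O$-linear map
\[
\Phi \colon \RegSolFam^\ell \longrightarrow \mathfrak{s}^{*} \otimes_{\C} \RegSolFam^{\ell-1}.
\]
Trivialize $\RegSolFam^\ell \cong \O \otimes_\C W_\ell$ via the canonical sections $F_{\ell,\lambda} Y_\ell^m$, and likewise for $\RegSolFam^{\ell-1}$ via $F_{\ell-1,\lambda} Y_{\ell-1}^p$. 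The Clebsch-Gordan decomposition $\mathfrak{s}^{*} \otimes W_{\ell-1} = W_{\ell-2} \oplus W_{\ell-1} \oplus W_\ell$ (as $SO(3)$-representations) shows that $\Hom_K(W_\ell, \mathfrak{s}^{*} \otimes W_{\ell-1})$ is one-dimensional. Hence the $\O$-module of $K$-equivariant $\O$-linear maps $\RegSolFam^\ell \to \mathfrak{s}^{*} \otimes \RegSolFam^{\ell-1}$ is free of rank one, spanned by a Clebsch-Gordan embedding $\iota$ whose matrix coefficients in the trivializations above are independent of $\lambda$. Consequently $\Phi = c(\lambda) \cdot \iota$ for a unique $c \in \O$.

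Next I would show that $c$ is a \emph{nonzero} polynomial. By Lemma~\ref{lem-generically-irreducible}, $\RegSol(\lambda)$ is an irreducible $(\g\vert_\lambda, K)$-module for all but countably many $\lambda$; at any such $\lambda$ the $\g\vert_\lambda$-action must carry the $K$-type $W_\ell$ nontrivially into $W_{\ell-1}$, so some Runge-Lenz operator sends $\RegSol(\lambda)^\ell$ with nonzero $(\ell{-}1)$-component. Thus $\Phi\vert_\lambda \neq 0$ for generic $\lambda$, forcing $c(\lambda) \not\equiv 0$ in $\O$.

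Finally, let $\psi \in \RegSolFam^\ell_{\an}$ satisfy the hypothesis (the special case $S=1 \in R(K)$ already suffices). The factorization $\Phi = c(\lambda)\cdot\iota$ extends verbatim to the analytic enlargement, since $\iota$ has $\lambda$-independent coefficients. The hypothesis then reads $c(\lambda)\iota(\psi) \in \mathfrak{s}^{*} \otimes \RegSolFam^{\ell-1}$; injectivity of $\iota$ together with its constant-coefficient nature lets us pull this back to $c(\lambda)\cdot\psi \in \RegSolFam^\ell$. Applying Lemma~\ref{lem-integral-closure2} with $p = c(\lambda)$ concludes that $\psi \in \RegSolFam^\ell$.

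The main obstacle is the clean factorization $\Phi = c(\lambda)\cdot \iota$ with $\iota$ having $\C$-coefficients: once the canonical trivializations by $F_{\ell,\lambda}$ and $F_{\ell-1,\lambda}$ are set up, Schur's lemma and Clebsch-Gordan force the whole $\lambda$-dependence to sit in the single scalar $c(\lambda) \in \O$, after which generic irreducibility and Lemma~\ref{lem-integral-closure2} carry the argument with no further work.
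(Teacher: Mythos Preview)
Your argument is correct and reaches the same conclusion, but by a different route than the paper.

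The paper's proof is more concrete: it writes $\psi = h\cdot F_{\ell,\lambda}\cdot Y_\ell$ and uses the explicit leading-term formula \eqref{eq-R-applied-to-RegSol-section} to show directly that $\langle RS\psi\rangle^{\ell-1} = h\cdot F_{\ell-1,\lambda}\cdot Z_{\ell-1}$ with $Z_{\ell-1}$ \emph{independent of $\lambda$}.  Generic irreducibility then forces $Z_{\ell-1}\ne 0$ for some choice of $R$ and $S$, and the hypothesis immediately yields that $h$ is a polynomial.  In particular the paper never needs Lemma~\ref{lem-integral-closure2} here, because the explicit computation shows the scalar you call $c(\lambda)$ is actually a nonzero \emph{constant}.

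Your Schur/Clebsch--Gordan factorization $\Phi = c(\lambda)\cdot\iota$ is a clean conceptual substitute for that explicit computation: it isolates the $\lambda$-dependence without ever looking at the radial formula, at the cost of not knowing that $c$ is constant, which is why you need the extra appeal to Lemma~\ref{lem-integral-closure2}.  Your packaging into $\mathfrak{s}^*\otimes\RegSolFam^{\ell-1}$ also handles a general $\psi\in\O_{\an}\otimes W_\ell$ in one stroke, whereas the paper uses the freedom in $S\in R(K)$ to reduce to the rank-one case $\psi = h\cdot F_{\ell,\lambda}\cdot Y_\ell$.  (A minor quibble: $R(K)$ has no unit, so ``$S=1$'' should be read as $S=e_\ell$, the isotypical idempotent; this is harmless since $e_\ell\psi=\psi$.)
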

 
\begin{proof}
Assume that $\psi = h\cdot F_{\ell,\lambda}\cdot Y_{\ell}$, with $h$ holomorphic.
It follows from the explicit formula \eqref{eq-R-applied-to-RegSol-section} that  $\langle RS\psi\rangle^{\ell-1}$ has the form
\[
h\cdot  F_{\ell{-}1,\lambda} \cdot Z _{\ell-1}
\]
for some $Z_{\ell-1}$ in the $(\ell{-}1)$-isotypical part of $C^\infty(S^2)$ that depends  on the choices of $R$ and $S$. 

If $Y_{\ell}$ is nonzero, then $Z_{\ell{-}1}$  must be 
nonzero for some $R$ and some $S$, for otherwise the  $(\g\vert_\lambda, K)$-submodule of $\RegSol(\lambda)$ generated by $\psi_\lambda$ would be a proper submodule for every $\lambda$, contrary to Lemma~\ref{lem-generically-irreducible}.  In the case where $Z_{\ell{-}1}$ is nonzero, if $\langle RS\psi\rangle^{\ell-1}\in \RegSolFam$, then it follows that $h$ is in fact a polynomial function, and hence that $\psi\in \RegSolFam$, as required.
\end{proof}

\begin{lemma}
\label{lem-image-spherical-vectors}
If $X\in \U(\g,K)$ and if $\psi  \in \RegSolFam^0$, then $X \psi \in \RegSolFam$.
\end{lemma}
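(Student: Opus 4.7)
The plan is to leverage the right-$\H(\g,\BK)$-module decomposition of $\U\H(\g,\BK)$ from Proposition~\ref{prop-Kostant-thm} together with an induction on $K$-type that, at each inductive stage, invokes Lemma~\ref{lem-induction-step}.

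Because $\psi \in \RegSolFam^0$ is $K$-fixed, $X\psi = (Xe)\psi$, and $Xe$ lies in $\U\H(\g,\BK)$. By Proposition~\ref{prop-Kostant-thm} the latter carries the $K$-equivariant direct-sum decomposition $\U\H(\g,\BK)\cong\bigoplus_{\ell\ge 0}W_\ell\otimes_{\C}\H(\g,\BK)$. Writing $Xe = \sum_\ell X_\ell$ with $X_\ell$ in the $W_\ell$-summand, and using that $k\cdot(Y\psi)=(\mathrm{Ad}(k)Y)\psi$ whenever $\psi$ is spherical, we obtain $X\psi = \sum_\ell X_\ell\psi$ with each $X_\ell\psi$ in the $\ell$-isotypical component. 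It therefore suffices to prove the lemma under the extra assumption that $X$ lies in a single summand $W_\ell\otimes\H(\g,\BK)$. With this assumption $X\psi$ belongs to $\RegSolFam^\ell_{\an}$, since $X$ is an algebraic differential operator depending polynomially on $\lambda$ and $\psi$ is an analytic section.

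I would then induct on $\ell$. When $\ell=0$, the element $X$ lies in $\H(\g,\BK)$, which by Proposition~\ref{prop-Hecke-algebra} is freely generated over $\O$ by $TL^2-R^2$. By Lemma~\ref{lem-vals-of-casimirs} that element acts on every member of $\RegSol(\lambda)$ as the scalar $\lambda+\kappaconstant^2$, so $X$ acts on $\RegSolFam^0$ by multiplication by a polynomial in $\lambda$, and in particular $X\psi\in\RegSolFam^0$.

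For $\ell\ge 1$, I would verify the hypotheses of Lemma~\ref{lem-induction-step} applied to the section $X\psi\in\RegSolFam^\ell_{\an}$. Fix a Runge-Lenz operator $R'$ and an element $S'\in R(K)$; then $R'S'X\psi = (R'S'Xe)\psi$, and if $(R'S'Xe)_{\ell-1}$ denotes the $W_{\ell-1}$-component of $R'S'Xe$ in the decomposition of Proposition~\ref{prop-Kostant-thm}, the sphericity of $\psi$ together with $K$-equivariance of that decomposition gives
\[
\langle R'S'X\psi\rangle^{\ell-1} \;=\; (R'S'Xe)_{\ell-1}\,\psi .
\]
The inductive hypothesis, applied to the single-summand element $(R'S'Xe)_{\ell-1}\in W_{\ell-1}\otimes\H(\g,\BK)$ acting on the spherical section $\psi$, shows that this element lies in $\RegSolFam^{\ell-1}$. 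Lemma~\ref{lem-induction-step} then concludes $X\psi\in\RegSolFam^\ell$, as required. The principal obstacle is essentially bookkeeping: one must check that the identity $\langle R'S'X\psi\rangle^{\ell-1}=(R'S'Xe)_{\ell-1}\psi$ really does follow from sphericity of $\psi$ combined with the compatibility between the adjoint $K$-action on $\U(\g,\BK)$ and the $K$-action on $X\psi$, and that the decomposition supplied by Proposition~\ref{prop-Kostant-thm} is indeed $K$-equivariant with respect to this adjoint action.
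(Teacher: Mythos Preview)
Your proof is correct and follows essentially the same route as the paper's: both handle the base case $\ell=0$ via Proposition~\ref{prop-Hecke-algebra} and Lemma~\ref{lem-vals-of-casimirs}, and both carry out the inductive step on $\ell$ by invoking Lemma~\ref{lem-induction-step}. The only organizational difference is that you first decompose $Xe$ via Proposition~\ref{prop-Kostant-thm} and induct on the statement ``$X\psi\in\RegSolFam^\ell$ for $X$ in the $W_\ell$-summand'', whereas the paper dispenses with that decomposition and inducts directly on the statement ``$\langle X\psi\rangle^\ell\in\RegSolFam^\ell$ for all $X\in\U(\g,K)$'', observing that $\langle RS\langle X\psi\rangle^\ell\rangle^{\ell-1}=\langle (RSe_\ell X)\psi\rangle^{\ell-1}$ with $RSe_\ell X$ again in $\U(\g,K)$; this avoids the bookkeeping you flag at the end.
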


\begin{proof}
Fix $\psi \in \RegSolFam^0$.  We need to prove that $\langle X \psi\rangle ^\ell \in \RegSolFam$ for every $X$ and every $\ell\ge 0$.  If $\ell=0$, then since 
\[
\langle X \psi \rangle^0 = e   X  e \cdot \psi ,
\]
 with $e  X   e \in \mathcal{H}(\g,\BK)$, the required  result follows for every $X$  from Proposition~\ref{prop-Hecke-algebra} and Lemma~\ref{lem-vals-of-casimirs}, which together show that $e\cdot X \cdot e$ acts on $\RegSolFam^0$ as multiplication by a polynomial in $\lambda$.  The cases of higher $\ell$ are handled by induction on $\ell$ using Lemma~\ref{lem-induction-step}.
\end{proof}

\begin{lemma}
\label{lem-integral-closure1}
If $\psi \in \RegSolFam$, then there exists a
 nonzero polynomial $p$, some $S^0\in \mathcal{U}(\g,K)$ and some $\psi^0\in \RegSolFam^0$, such that  $p \cdot \psi = S^0 \cdot \psi^0$
\end{lemma}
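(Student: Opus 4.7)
The plan is to combine the structure theorem for $\U\H(\g,\BK)$ (Proposition~\ref{prop-Kostant-thm}) with generic irreducibility (Lemma~\ref{lem-generically-irreducible}) to reduce the problem to rank counting for a morphism of free $\O$-modules. By $K$-equivariance of the conclusion and multiplicativity of the polynomial $p$, I may assume $\psi$ lies in a single isotypical component $\RegSolFam^\ell$; the case $\ell{=}0$ is immediate, so take $\ell\ge 1$ and fix the spherical section $\psi^0:=F_{0,\lambda}\in \RegSolFam^0$.

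By Lemma~\ref{lem-image-spherical-vectors}, the map $X\mapsto X\psi^0$ defines an $\O$-linear morphism $\U\H(\g,\BK)\to \RegSolFam$ whose $\sigma_\ell$-isotypical part lands in $\RegSolFam^\ell$. Proposition~\ref{prop-Kostant-thm} identifies this part with $W_\ell \otimes_\C \H(\g,\BK)$, and by Proposition~\ref{prop-Hecke-algebra} together with Lemma~\ref{lem-vals-of-casimirs}, the free generator $TL^2-R^2$ of $\H(\g,\BK)$ over $\O$ acts on $\psi^0$ as multiplication by $\lambda + \kappaconstant^2\in \O$. Consequently, the $\H(\g,\BK)$-action factors through the quotient $\H(\g,\BK)\twoheadrightarrow \O$, yielding a morphism
\[
\bar\Phi_\ell\colon W_\ell\otimes_\C \O \longrightarrow \RegSolFam^\ell
\]
of free $\O$-modules, each of rank $2\ell+1$.

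The central step is to show that $\bar\Phi_\ell$ is generically an isomorphism, equivalently that its determinant $p:=\det(\bar\Phi_\ell)$ is a nonzero element of $\O$. For every $\lambda$ outside the countable exceptional set in Lemma~\ref{lem-generically-irreducible}, the fiber $\RegSol(\lambda)$ is irreducible and contains the nonzero $K$-fixed vector $F_{0,\lambda}$, so it coincides with the cyclic $(\g|_\lambda,K)$-module generated by $F_{0,\lambda}$. Specializing $\bar\Phi_\ell$ at such a $\lambda$ therefore gives a surjection from $W_\ell$ onto $\RegSol(\lambda)^\ell$, both of dimension $2\ell+1$, hence an isomorphism.

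With $p\ne 0$ as above, one has $p\cdot\RegSolFam^\ell \subseteq \operatorname{image}(\bar\Phi_\ell)$, so every $\psi\in\RegSolFam^\ell$ admits some $X\in W_\ell\otimes_\C \O$ with $p\psi = \bar\Phi_\ell(X)$. Lifting $X$ along the inclusion $\O \hookrightarrow \H(\g,\BK)=\O[TL^2-R^2]$ yields $S^0\in W_\ell\otimes_\C \H(\g,\BK) \subseteq \U\H(\g,\BK)\subseteq \U(\g,\BK)$ with $S^0\psi^0 = \bar\Phi_\ell(X) = p\psi$, completing the proof. The only substantive obstacle is the generic surjectivity above, but this follows directly from Lemma~\ref{lem-generically-irreducible} together with the fact that any irreducible spherical Harish-Chandra module is cyclic on its spherical vector.
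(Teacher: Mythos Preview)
Your argument is correct. Both your proof and the paper's rely on Lemma~\ref{lem-image-spherical-vectors} (to know that $X\psi^0\in\RegSolFam$) and on Lemma~\ref{lem-generically-irreducible} (to know that the classifying map hits everything at a generic fiber), but they package the conclusion differently.

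The paper refines the $K$-isotypical decomposition further, using rank-one idempotents $f_\alpha\in R(K)$ so that each $f_\alpha\cdot\RegSolFam$ is a free $\O$-module of rank one. Picking a generator $\psi$ of such a summand, one only needs to find \emph{some} $S_\alpha$ and \emph{some} single value of $\lambda$ at which $(f_\alpha S_\alpha\psi^0)_\lambda\ne 0$; then $f_\alpha S_\alpha\psi^0$ is a nonzero element of a rank-one free module generated by $\psi$, hence equals $p\cdot\psi$ for some nonzero $p$. This sidesteps any matrix algebra.

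Your approach instead works with the whole rank-$(2\ell{+}1)$ isotypical block at once, identifies the classifying map on that block via Propositions~\ref{prop-Hecke-algebra} and~\ref{prop-Kostant-thm}, and invokes Cramer's rule to produce $p=\det(\bar\Phi_\ell)$. This is slightly heavier machinery but has the virtue of making explicit that the obstruction polynomial is a determinant of the classifying morphism, which ties the lemma back to the structure theory of Section~\ref{Classification} in a transparent way. The paper's version is quicker because reducing to rank one makes the ``determinant'' trivial; yours gives a more uniform $p$ across the whole $\ell$-block.
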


\begin{proof}
It follows from the Peter-Weyl decomposition \eqref{eq-peter-weyl} and the $K$-iso\-typical decompositon of $\RegSolFam$ that there is a family of pairwise orthogonal idempotents $f_\alpha$ in $R(K)$ such that 
\[
\RegSolFam = \bigoplus_{\alpha} f_\alpha \cdot \RegSolFam
\]
and such that each summand on the right is a free $\O$-module of rank one.  It suffices to prove the lemma for generators of each of these summands, so let $\psi$ be a generator for the $\O$-module $f_\alpha\cdot \RegSolFam$.  Thanks to Lemma~\ref{lem-generically-irreducible} we can write 
\[
\psi_\lambda= (f_\alpha \cdot \psi)_\lambda = (f_\alpha \cdot S_\alpha \cdot   \psi^0)_\lambda
\]
for at least one value of $\lambda$, for some $S_\alpha \in \U(\g,\BK)$ and for some $\psi^0\in \RegSolFam^0$.  This means, in particular that $f_\alpha \cdot S_\alpha\cdot   \psi^0_\lambda$ is nonzero, and hence that $f_\alpha \cdot S _\alpha\cdot  \psi^0$ is nonzero.  Since the latter is an element of a free $\O$-module of rank one, of which $\psi$ is a generator, we can write 
\[
f_\alpha \cdot S_\alpha \cdot  \psi^0 = p \cdot \psi
\]
for some $p\in \O$, as required.
\end{proof}

\begin{proof}[Proof of Theorem~\ref{thm-regsol-is-alg-family}]
Given $\psi\in \RegSolFam$ and $S\in \mathcal{U}(\g,K)$, we wish to prove that $S\psi \in \RegSolFam$.    As we have already observed,  certainly $S \psi\in \RegSolFam_{\an}$.  But if $p \psi = S^0  \psi^0$, as in Lemma~\ref{lem-integral-closure1}, then 
\[
p S \psi = SS^0 \psi^0 ,
\]
and so it follows from Lemma~\ref{lem-image-spherical-vectors} that  $p S\psi \in \RegSolFam$. 
Lemma~\ref{lem-integral-closure2} now implies that $S\psi\in \RegSolFam$, as required. 
\end{proof}


\subsection{An explicit formula for the action}
\label{subsec-explicit-formulas}

It is also possible to prove Theorem~\ref{thm-regsol-is-alg-family} by an explicit computation, although the details are surprisingly intricate.  Here we give a sketch of the method and at the same time  give explicit formulas for a basis of the regular solutions.

To begin, if  $\eigv{\ne}0$, then it may be shown that 
\begin{equation}
\label{eq-SpecialFunc1}
F_{\ell,\lambda}(r)
= 
r^\ell e^{ -\sqrt{- \eigv}r}
M \Bigr  (-\frac{\kappaconstant}{ \sqrt{-\eigv}}{+}\ell{+}1 \, , \, 2\ell{+}2 \, , \, 2\sqrt{- \eigv}r\Bigl  )  ,
\end{equation}
 where $M(a,b,z)$ is Kummer's confluent hypergeometric function. 
 (It follows from   Kummer's   formula 
 \[ 
 e^{-z}M(a,b,z)=M(b-a,b,-z)
 \] 
 and the fact that Bessel functions of odd order are odd 
that $F_{\ell,\lambda}$ does not depend on the choice of the square root.) 
If $\eigv{=}0$, then 
 \begin{equation}
\label{eq-SpecialFunc2}
F_{\ell,\lambda}(r)
= 
\frac{(2\ell+1)!}{2^{\ell}\kappaconstant^{\ell}}\frac{1}{\sqrt{2\kappaconstant r}} \, J_{2\ell+1}(\sqrt{8 \kappaconstant r}) 
\end{equation}
where $J_{\nu}(z)$ is the Bessel function of the first kind.   See for example  \cite[Section 36]{LandauLifshitz}. 

Next, for  
 $\ell \ge m \ge -\ell$,   we define  $\psi^{\ell,m}\in \RegSolFam$ by 
\[
{\psi}^{\ell,m}_\eigv  
	=F_{\ell,\lambda}  \cdot Y^{\ell,m},
\]
where
the functions $Y^{\ell,m}$ are the basis for  the spherical harmonic functions of degree $\ell$ from  \cite[Chap.\ 15, eq.\ 15.137]{ARFKEN2013715}.   Now, form  the following combination of Runge-Lenz vectors:
\[
R_-=-iR_1 - R_2. 
\]
Then it may be calculated that 
\begin{multline}
\label{ac}
 R_-\psi^{\ell,\ell}_\eigv
	=
\sqrt{2\ell(2\ell +1)} \ell  \,\psi^{\ell-1,\ell-1}_\eigv\\
 - \frac{\sqrt{2}  } {\sqrt{(2\ell+3)(2\ell+1))}} \frac{\kappaconstant^2+{\eigv}(\ell+1)^2}{(\ell+1)(2\ell+3)}\,\psi^{\ell+1,\ell-1}_\eigv.
\end{multline}
The computation  involves the explicit form of the functions $F_{\ell,\lambda}$  along with various identities for hypergeometric, and other, functions.  

It can be shown that the entire $\g$-action is determined by this family of formulas.  The formulas indicate that $R_-$ maps $\psi^{\ell,\ell}$ back into $\RegSolFam$, and hence $\RegSolFam$  is invariant under the full action of $\g$.

\subsection{Singular solutions and the Wronskian form}
\label{subsec-wronskian}

In this subsection we shall examine the quotient
\[
\SingSol(\lambda) = \Sol(\lambda) / \RegSol (\lambda) 
\]
as a $(\g\vert_\lambda, K)$-module.

\begin{defn}
The (modified)
\emph{Wronskian} of two functions $\varphi$ and $\psi$ of $r>0$ is the function
\[
\wr (\varphi, \psi ) (r)  = r^2 \left (  \frac{d \varphi(r)}{d r} \psi (r) - \varphi(r)\frac{d  \psi (r)}{d r}\right ) .
\]
\end{defn}

The  modification ensures that if $\varphi$ and $\psi$ are two solutions of the radial equation \eqref{equa3.1}  for the same $\lambda$ and the same $\ell$, then $\wr (\varphi, \psi ) $  is a \emph{constant} function of $r$; to see this, differentiate with respect to $r$.   Moreover since all solutions are real-analytic functions of $r>0$,   the (modified) Wronskian   of two solutions vanishes identically if and only if the solutions are linearly dependent. 

 Of course the Wronskian  pays no attention to the angular coordinates in $S^2$.  In order to remedy this, we make the following definition.

\begin{defn} The  \emph{Wronskian form} on $\Sol(\lambda)$ is the bilinear form
\[
\Wr \colon \Sol (\lambda ) \times \Sol (\lambda) \longrightarrow \C
\]
defined by
\[
\Wr (\varphi, \psi) = \frac{1}{4 \pi} \int _{S^2} \wr (\varphi, \psi) \,\, d\operatorname{Area} .
\]
\end{defn}

The integral is a $K$-invariant and nondegenerate bilinear form on each $K$-iso\-typical subspace of $\Sol(\lambda)$, and moreover distinct isotypical subspaces are orthogonal to one another. 

\begin{lemma} If $\varphi,\psi \in \RegSol(\lambda)$, then $\Wr(\varphi,\psi) = 0$.
\end{lemma}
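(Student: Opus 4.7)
The plan is to reduce the computation to a single $K$-isotypical subspace and then observe that on each such subspace the Wronskian integrand vanishes pointwise.

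First I would use the fact noted just before the lemma: the bilinear form $\Wr$ pairs distinct $K$-isotypical components trivially. So if $\varphi,\psi\in\RegSol(\lambda)$ are decomposed into their $\ell$-isotypical parts, then $\Wr(\varphi,\psi)$ is the sum of the pairings of matching components. It therefore suffices to prove $\Wr(\varphi,\psi){=}0$ when $\varphi,\psi\in\RegSol(\lambda)^\ell$ for a single $\ell$.

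Next, by Definition~\ref{def-RegSol} every element of $\RegSol(\lambda)^\ell$ can be written in spherical coordinates as $F_{\ell,\lambda}(r)\,Y(\theta)$ for some degree-$\ell$ spherical harmonic $Y$, with the same radial factor $F_{\ell,\lambda}$. So write
\[
\varphi(r,\theta)=F_{\ell,\lambda}(r)\,Y(\theta),\qquad \psi(r,\theta)=F_{\ell,\lambda}(r)\,Z(\theta).
\]
Interpreting the radial derivative in $\wr$ as $\partial_r$ applied to functions on $\R_+{\times}S^2$, I compute
\[
\wr(\varphi,\psi)(r,\theta)=r^2\bigl(F_{\ell,\lambda}'(r)F_{\ell,\lambda}(r)-F_{\ell,\lambda}(r)F_{\ell,\lambda}'(r)\bigr)Y(\theta)Z(\theta)=0
\]
pointwise on $\R_+{\times}S^2$. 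Integrating over $S^2$ gives $\Wr(\varphi,\psi)=0$, completing the proof on the $\ell$-isotypical subspace and, combined with the first step, on all of $\RegSol(\lambda)$.

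There is no serious obstacle here: the crucial input is simply that the regular radial solution space at each $\ell$ is one-dimensional, so that two regular elements of the same $K$-type differ only by an angular factor and share the same radial factor, forcing the Wronskian of radial functions to vanish identically. The only thing to double-check is the interpretation of the modified Wronskian for functions of $(r,\theta)$, which is the natural one using $\partial_r$ in place of $d/dr$ and under which the $K$-isotypical orthogonality asserted in the text holds.
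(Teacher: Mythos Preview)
Your proof is correct and follows essentially the same approach as the paper: reduce to a single $K$-isotypical component by orthogonality, then note that both functions there share the same radial factor $F_{\ell,\lambda}$, so the modified Wronskian integrand vanishes pointwise. The paper phrases the second step as linear dependence of the radial parts, but the content is identical.
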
 

\begin{proof}
If $\varphi$ and $\psi$ belong to distinct isotypical subspaces, then the integral is zero.  If they belong to the same isotypical space, then,  both functions being regular solutions,  their radial parts are linearly dependent, and so the integrand is zero.
\end{proof}
 
Thanks to the lemma, the Wronskian form defines a nondegenerate bilinear form 
\[
\Wr\colon \SingSol(\lambda) \times \RegSol(\lambda)\longrightarrow \C.
\]
We shall prove the following result:

\begin{theorem}
\label{thm-Wr-theta}
Let $\lambda \in \C$, and let 
 $X \in \g\vert _\lambda$. If  $\varphi \in \SingSol (\lambda)$ and $\psi \in \RegSol(\lambda)$, then
\[
\Wr( X \varphi, \psi ) + \Wr(\varphi, \theta(X) \psi) = 0 .
\]
As a result, the Wronskian  
induces an isomorphism of $(\g\vert _\lambda, K)$-mod\-ules  
from $\SingSol(\lambda) $ to the $\theta$-twisted dual of $\RegSol(\lambda)$. 
\end{theorem}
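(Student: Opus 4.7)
The plan is to verify the adjoint identity on a small set of Lie algebra generators and then bootstrap. The cornerstone observation is that on $\g$ the involution $\theta$ coincides with $X \mapsto -X^t$, where $X^t$ denotes the formal transpose of a differential operator on $\R^3_0$ with respect to Lebesgue measure. This is checked on generators: $L_i$ has real coefficients and is formally skew-adjoint, so $L_i^t = -L_i = -\theta(L_i)$; the Runge-Lenz operator $R_i$ has purely imaginary coefficients and is formally skew-adjoint, giving $R_i^t = R_i = -\theta(R_i)$.

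The set of $X \in \g\vert_\lambda$ for which the identity $\Wr(X\varphi,\psi) + \Wr(\varphi,\theta(X)\psi) = 0$ holds is closed under Lie brackets: a routine double application of the identity (first for $X$ at $(Y\varphi,\psi)$, then for $Y$ at $(\varphi,\theta(X)\psi)$) together with the homomorphism property $\theta([X,Y]) = [\theta(X),\theta(Y)]$ gives the identity for $[X,Y]$. It is also stable under multiplication by $T\in\O$, because $T$ commutes with everything in $\g$ and both sides of the identity simply pick up a factor of $\lambda$. It therefore suffices to verify the identity on the generators $L_1, L_2, L_3$ and on one of the $R_i$, say $R_1$, since the other $R_j$'s then follow from the commutation relations $[L_k, R_i] = -\varepsilon_{kij}R_j$ of \eqref{eq-lr-relations}.

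For $X = L_i$, a direct polar computation, using that $L_i$ commutes with $\partial_r$ and with multiplication by $r$, yields $\wr(L_i\varphi,\psi) + \wr(\varphi,L_i\psi) = L_i(\wr(\varphi,\psi))$ by Leibniz. Since $L_i$ is a divergence-free Killing field on $S^2$, the integral $\int_{S^2} L_i f \, d\Omega$ vanishes for any $f \in C^\infty(S^2)$, giving $\Wr(L_i\varphi,\psi) + \Wr(\varphi,L_i\psi) = 0$, as required since $\theta(L_i) = L_i$. For $X = R_1$ (where $\theta(R_1) = -R_1$ means the desired identity is $\Wr(R_1\varphi,\psi) = \Wr(\varphi,R_1\psi)$), I would use Lemma~\ref{lem-polar-form-of-R} to write $\sqrt{-1}\cdot R_1 = C + r^{-1} D + \partial_r E$ with $C, D, E$ real-coefficient operators on $S^2$ and then directly compute $\wr(R_1\varphi,\psi) - \wr(\varphi, R_1\psi)$. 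After integrating over $S^2$, the resulting expression reduces to a collection of integration-by-parts terms of the form $\int_{S^2}[(P\alpha)\beta - \alpha(P^t\beta)]\,d\Omega$ on the closed manifold $S^2$, each of which vanishes; the overall cancellation reflects the structural constraints on $C, D, E$ imposed by the commutation $[R_1, T] = 0$.

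The main obstacle is this $R_1$ computation, which is elementary but algebraically intricate because the pieces of $R_1$ couple the radial and angular variables. An attractive alternative is to use the explicit formulas of Subsection~\ref{subsec-explicit-formulas} (suitably extended to the singular radial functions $G_{\ell,\lambda}$) to check the identity directly on a basis of $K$-types, exploiting classical recurrences among Wronskians of confluent hypergeometric functions of adjacent parameters. Once the identity is established, the already-nondegenerate pairing $\Wr\colon \SingSol(\lambda)\times\RegSol(\lambda) \to \C$ is $(\g\vert_\lambda, K)$-equivariant after the $\theta$-twist on the second factor, so it induces an injective morphism $\SingSol(\lambda)\to\RegSol(\lambda)^{*,\theta}$ of $(\g\vert_\lambda,K)$-modules. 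Since each $SO(3)$-type occurs with multiplicity one in both source and target, the morphism is an isomorphism on every isotypical component and hence an isomorphism, completing the proof.
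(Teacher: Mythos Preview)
Your reduction to generators via bracket closure is sound, and the $L_i$ case is handled correctly. The gap is at the $R$-step, which you do not carry out. Your claim that after integrating over $S^2$ ``the resulting expression reduces to a collection of integration-by-parts terms \dots\ each of which vanishes'' is not correct as stated. If one writes $\varphi=g(r)Y$, $\psi=f(r)Z$ and uses Lemma~\ref{lem-polar-form-of-R}, then $\int_{S^2}\bigl(\wr(R\varphi,\psi)-\wr(\varphi,R\psi)\bigr)\,d\Omega$ still contains terms with genuinely radial coefficients such as $r^2(g''f-2g'f'+gf'')$ multiplying $\int_{S^2}(EY)Z\,d\Omega$, and terms with $r(g'f-gf')$ multiplying $\int_{S^2}(DY)Z\,d\Omega$. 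These do not vanish by $S^2$ integration alone; their cancellation requires the radial eigenvalue equation for $f$ and $g$ together with the relations among $C,D,E$ forced by $[T,R]=0$. You gesture at this (``the overall cancellation reflects the structural constraints\ldots''), but that is precisely the computation that must be done, and it is not ``a collection of integration-by-parts terms each of which vanishes.''

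The paper avoids this full computation by a different strategy. It proves the $R$-compatibility only for the specific pairing $\varphi\in\SingSol(\lambda)^{\ell-1}$, $\psi\in\RegSol(\lambda)^{\ell}$ (Lemma~\ref{lem-tech-wronskian}), via a leading-order-in-$r$ calculation: since the Wronskian is $r$-independent, one reads it off from the $r^0$ term, where only the explicit pole and zero orders of $G_{\ell-1,\lambda}$ and $F_{\ell,\lambda}$ enter, together with the single relation $E_{\ell-1}^{\ell}=(1-\ell)D_{\ell-1}^{\ell}$ extracted from $[T,R]=0$ (Lemma~\ref{lem-components-of-D-and-E}). This partial compatibility is enough: for generic $\lambda$ both $\SingSol(\lambda)$ and $\RegSol(\lambda)^{*,\theta}$ are irreducible with multiplicity-free $K$-types, so by Schur any $K$-map between them differs from a module map only by a scalar in each $K$-type, and compatibility on the single ladder $\ell{-}1\to\ell$ pins all those scalars to be equal. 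The identity at the remaining (countably many) $\lambda$ then follows by continuity in $\lambda$. Your route, if the $R$-computation were completed, would yield the identity for every $\lambda$ without the Schur/continuity step; but as written the key step is missing.
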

 
 \begin{corollary}
If $\lambda\in \R$, then the Wronskian form
induces an isomorphism of $(\g\vert _\lambda, K)$-mod\-ules  
from $\SingSol(\lambda) $ to the $\sigma$-twisted dual of $\RegSol(\lambda)$. 
\end{corollary}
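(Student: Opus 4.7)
The plan is to deduce the corollary directly from Theorem~\ref{thm-Wr-theta} by combining the $\theta$-twisted Wronskian pairing with complex conjugation, which for real $\lambda$ gives a natural antilinear involution on $\Sol(\lambda)$.

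First I would observe that for $\lambda \in \R$, the rescaled Schr\"odinger operator $T$ has real coefficients, and the ideal $I_\lambda \subseteq \O$ is stable under the real structure $\sigma$. Consequently, pointwise complex conjugation $C \colon \psi \mapsto \bar\psi$ is a well-defined antilinear involution preserving $\Sol(\lambda)$ and $\RegSol(\lambda)$ (boundedness near the origin is clearly conjugation-invariant), and hence descends to $\SingSol(\lambda)$. I would record two compatibility identities needed later: for any $X \in \g$ acting on smooth functions on $\R^3_0$, a coefficient-by-coefficient calculation yields $\overline{X\psi} = \sigma'_\g(X)\,\bar\psi$, and therefore, using the relation $\theta = \sigma_\g \circ \sigma'_\g$ together with the commutation of $\sigma_\g$ and $\sigma'_\g$,
\[
\theta(X)\,\bar\psi \;=\; \overline{\sigma_\g(X)\,\psi}\,.
\]
Second, from $\wr(\bar\varphi,\bar\psi)(r) = \overline{\wr(\varphi,\psi)(r)}$ (since $r$ is real) and the fact that the spherical area form is real, I obtain $\Wr(\bar\varphi,\bar\psi) = \overline{\Wr(\varphi,\psi)}$.

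Next I would define the candidate isomorphism
\[
\Phi \colon \SingSol(\lambda) \longrightarrow \RegSol(\lambda)^{*,\sigma},\qquad
\Phi(\varphi)(\psi) \;=\; \Wr(\varphi,\bar\psi).
\]
The map $\psi \mapsto \Phi(\varphi)(\psi)$ is antilinear in $\psi$, so $\Phi(\varphi)$ is a genuine element of $\overline{\RegSol(\lambda)^*}$. To verify $\g$-equivariance, I would apply Theorem~\ref{thm-Wr-theta} and the identity $\theta(X)\,\bar\psi = \overline{\sigma_\g(X)\psi}$:
\[
\Phi(X\varphi)(\psi) \;=\; \Wr(X\varphi,\bar\psi) \;=\; -\Wr\bigl(\varphi,\theta(X)\bar\psi\bigr) \;=\; -\Wr\bigl(\varphi,\overline{\sigma_\g(X)\psi}\bigr) \;=\; -\Phi(\varphi)\bigl(\sigma_\g(X)\psi\bigr),
\]
which is precisely the $\sigma$-twisted dual action of $X$ on $\Phi(\varphi)$. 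For $K$-equivariance I would argue analogously, using that the $K{=}O(3,\C)$-action on functions on $\R^3_0$ comes from a real action of $O(3)$ and so intertwines with $C$ via the involution $k \mapsto (k^*)^{-1}$ on $K$; this expresses the $\sigma$-twisted $K$-action on $\RegSol(\lambda)^*$ exactly as required.

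Finally, bijectivity of $\Phi$ is inherited from Theorem~\ref{thm-Wr-theta}, since $\Phi$ is the composition of the $\theta$-twisted isomorphism there with the antilinear involution $C$ (implemented through the argument $\psi \mapsto \bar\psi$), both of which are bijective. The main subtlety in the plan is keeping the various antilinear structures straight: one must carefully distinguish $\sigma_\g$ (formal-adjoint real structure) from $\sigma'_\g$ (complex conjugation of coefficients), use the identity $\sigma_\g = \theta \circ \sigma'_\g$ at the fiber level, and ensure that the conjugation $C$ is inserted in the argument of $\Wr$ on the $\RegSol$-side precisely so that it converts the $\theta$-twist into the $\sigma$-twist. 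Beyond this bookkeeping the argument is purely formal.
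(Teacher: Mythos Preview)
Your proposal is correct and follows essentially the same approach as the paper's proof: both apply Theorem~\ref{thm-Wr-theta} with $\bar\psi$ in place of $\psi$ and invoke the identity $\theta(X)\bar\psi = \overline{\sigma_{\g}(X)\psi}$ (which you derive from $\theta = \sigma_{\g}\circ\sigma'_{\g}$ and the commutation of the two involutions) to convert the $\theta$-twist into the $\sigma$-twist. The paper's argument is simply a terser version of yours, compressing the bookkeeping you spell out into two displayed equations.
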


\begin{proof}[Proof of the Corollary]
The theorem immediately gives
\[
\Wr( X \varphi, \overline\psi ) + \Wr(\varphi, \theta(X) \overline\psi) = 0 
\]
for $\varphi\in \SingSol(\lambda)$ and $\psi\in \RegSol(\overline{\lambda})$.  But this the same as 
\[
\Wr( X \varphi, \overline\psi ) + \Wr(\varphi, \overline{\sigma(X)\psi}) = 0 ,
\]
from which the result follows.
\end{proof}

We shall prove the theorem in a sequence of steps.  The first concerns the constituents of the formula 
\begin{equation}
\label{eq-fmla-for-r-recap}
\sqrt{-1} R = C + \frac 1r \cdot D + \partial _r \cdot E 
\end{equation}
for a Runge-Lenz operator that we obtained in Lemma~\ref{lem-polar-form-of-R}.    We shall use the following notation: if $A$ is a linear operator on a locally finite-dimensional representation space $V$ for $K$, then we shall denote by 
\[
A_k^{\ell}
\colon V^k \longrightarrow V^\ell
\]
the composition of the restriction of $A$ to the $k$-isotypical space of $V$ with the projection onto the $\ell$-isotpical space.
\begin{lemma}
If $D$ and $E$ are the operators in \eqref{eq-fmla-for-r-recap}, acting on smooth functions on $S^2$, and if $\ell\ge 1$, then
\label{lem-components-of-D-and-E}
\[
 E_{\ell-1}^\ell = (1{-}\ell)  D_{\ell-1}^\ell 
\]
and 
\[
 E_\ell^{\ell-1} = (1{+}\ell)  D_{\ell}^{\ell-1} .
\]
\end{lemma}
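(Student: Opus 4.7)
The plan is to apply the decomposition~\eqref{eq-fmla-for-r-recap} to explicit regular and singular radial eigenfunctions of $T$, then to use the $\g$-invariance of $\RegSolFam$ from Theorem~\ref{thm-regsol-is-alg-family} to compare leading Taylor behavior near $r{=}0$ and extract the operator relations.

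For the first identity, I would fix a spherical harmonic $Y_{\ell-1}$ of degree $\ell{-}1$, apply $\sqrt{-1}\, R$ to $F_{\ell-1,\lambda}(r)\, Y_{\ell-1}(\theta)$, and use~\eqref{eq-fmla-for-r-recap} to write the result as
\[
F_{\ell-1,\lambda}\cdot CY_{\ell-1} + \tfrac{1}{r} F_{\ell-1,\lambda}\cdot DY_{\ell-1} + F'_{\ell-1,\lambda}\cdot EY_{\ell-1} .
\]
Projecting onto the $\ell$-isotypical component and invoking Theorem~\ref{thm-regsol-is-alg-family}, the result lies in $\RegSol(\lambda)^\ell$ and hence takes the form $F_{\ell,\lambda}(r)\,\widetilde Y_\ell(\theta)$ for some $\widetilde Y_\ell$.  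Using $F_{\ell-1,\lambda}(r) = r^{\ell-1} + O(r^\ell)$, the terms $\tfrac{1}{r}F_{\ell-1,\lambda}$ and $F'_{\ell-1,\lambda}$ each contribute a leading term of order $r^{\ell-2}$ with coefficients $1$ and $\ell{-}1$, respectively, while the right-hand side begins only at order $r^\ell$.  Cancellation of the $r^{\ell-2}$ coefficient then yields the first claimed linear relation between $D_{\ell-1}^\ell Y_{\ell-1}$ and $E_{\ell-1}^\ell Y_{\ell-1}$, and since $Y_{\ell-1}$ was arbitrary, this lifts to an operator identity.

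For the second identity, I would carry out the parallel argument starting from a singular solution $G_{\ell,\lambda}(r)\, Y_\ell(\theta)$, whose radial part has leading behavior $r^{-(\ell+1)}$ at the origin by~\eqref{eq-singular-solution}.  The $(\ell{-}1)$-isotypical projection of its image under $\sqrt{-1}\, R$ lies in $\Sol(\lambda)^{\ell-1}$, whose most singular radial behavior is $r^{-\ell}$.  The terms $\tfrac{1}{r}G_{\ell,\lambda}$ and $G'_{\ell,\lambda}$ produce leading $r^{-\ell-2}$ contributions with coefficients $1$ and $-(\ell{+}1)$, and cancellation of this coefficient yields the second relation.

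The main obstacle is keeping track of the precise numerical factors that arise from the Taylor (and Frobenius) expansions of $F_{m,\lambda}$ and $G_{m,\lambda}$, and translating identities tested on arbitrary spherical harmonics $Y_{\ell-1}$ and $Y_\ell$ into operator-level identities.  The latter step is a routine application of the Wigner--Eckart theorem to the vector operators built from $C$, $D$ and $E$: restricted to a fixed $K$-isotypical subspace, the components of $D$ and of $E$ are proportional to a common intertwiner, so a single scalar cancellation condition on a specific $Y_{\ell-1}$ (respectively $Y_\ell$) promotes to the stated operator identity.
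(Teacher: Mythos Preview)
There is a genuine gap: the leading-order cancellation you describe does \emph{not} produce the relation in the lemma.  Carrying out your computation, the $r^{\ell-2}$ coefficient of the $\ell$-isotypical projection of $\sqrt{-1}R\bigl(F_{\ell-1,\lambda}Y_{\ell-1}\bigr)$ is
\[
D_{\ell-1}^{\ell}Y_{\ell-1} \;+\;(\ell-1)\,E_{\ell-1}^{\ell}Y_{\ell-1},
\]
and its vanishing gives $D_{\ell-1}^{\ell}+(\ell-1)E_{\ell-1}^{\ell}=0$, that is, $E_{\ell-1}^{\ell}=\tfrac{1}{1-\ell}\,D_{\ell-1}^{\ell}$ (for $\ell\ge 2$) or $D_0^{1}=0$ (for $\ell=1$).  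The lemma, however, asserts $E_{\ell-1}^{\ell}=(1-\ell)\,D_{\ell-1}^{\ell}$.  These two relations coincide only when $\ell=2$; for all other $\ell$ they are independent conditions.  The same issue arises in your singular-solution argument for the second identity: cancelling the $r^{-(\ell+2)}$ coefficient yields $D_{\ell}^{\ell-1}-(\ell+1)E_{\ell}^{\ell-1}=0$, not $E_{\ell}^{\ell-1}=(\ell+1)D_{\ell}^{\ell-1}$.  So the argument proves \emph{a} relation between $D$ and $E$, but not the one stated.

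The paper's proof proceeds by an entirely different and more direct route: it expands the commutator $[T,R]$ in the polar decomposition $\sqrt{-1}R=C+\tfrac{1}{r}D+\partial_r E$ and $T=-\partial_r^2-\tfrac{2}{r}\partial_r-\tfrac{1}{r^2}\Omega-\tfrac{2\kappaconstant}{r}$, obtaining an expression of the form
\[
\tfrac{1}{r^{2}}\partial_r\cdot(\dots)+\tfrac{1}{r^{2}}\cdot(\dots)+\tfrac{1}{r^{3}}\cdot(\dots),
\]
where each bracket is a differential operator on $S^2$ built from $C$, $D$, $E$ and $\Omega$.  Since $[T,R]=0$, each bracket vanishes identically; taking the $(\ell{-}1)\to\ell$ and $\ell\to(\ell{-}1)$ components of the first bracket and using the eigenvalue of $\Omega$ on spherical harmonics gives the two relations of the lemma.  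This uses only the commutation $[T,R]=0$, not the $\g$-invariance of $\RegSolFam$, and it yields the precise coefficients needed in the subsequent Wronskian computation.
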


\begin{proof}
A short computation reveals that 
\begin{multline*}
[T,R] 
	= 
\frac{1}{r^2} \pr \cdot  \bigl ( 2 E - 2 D - [\Omega, D] \bigr )
- \frac 1 {r^2} \cdot \bigl ( \kappaconstant  D + [\Omega, C]\bigr )
\\ +\frac {1}{r^3} \cdot \bigl ( 2 [ \Omega, D] - 2 \Omega D -[\Omega, E] \bigr) .
\end{multline*}
Since $[T,R]=0$, each of the three terms in parentheses is zero.  Our interest is in the first. Since $\Omega$ acts as $\ell(\ell{+}1)$ on the $\ell$-isotypical space we obtain 
\[
\bigl [  2E - 2D +\ell(\ell{+}1) D -D(\ell{-}1)\ell\bigr ] _{\ell-1}^\ell = 0
\]
and after simplification this becomes
\[
\bigl  [ E - (1{-}\ell)D\bigr ] _{\ell-1}^\ell =  0
\]
as required for the first identity.  The second is handled similarly.
\end{proof}

\begin{lemma}
If $Y$ and $Z$ are smooth functions on $S^2$, then 
\[
\int _{S^2} Y\cdot DZ\, d\operatorname{Area}  = - 
\int_{S^2 } DY\cdot Z \, d\operatorname{Area}   
\]
and
\[
\int _{S^2} Y\cdot EZ\, d\operatorname{Area}  =  
\int_{S^2 } EY\cdot Z \, d\operatorname{Area}   
 - \, 
2 \int_{S^2 } DY\cdot Z \, d\operatorname{Area}   . 
\]
\end{lemma}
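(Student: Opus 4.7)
My strategy is to derive both integral identities from the formal self-adjointness of $\sqrt{-1}\, R$ on $L^2(\R^3_0)$. Recall from Subsection~\ref{subsec-real-structure} that each Runge-Lenz operator $R_i$ is formally skew-adjoint, so $\sqrt{-1}\, R$ is formally self-adjoint. Using the spherical-coordinate factorization
\[
L^2(\R^3_0) \,\cong\, L^2(\R_+,\, r^{2}\, dr) \,\otimes\, L^2(S^2, d\operatorname{Area}),
\]
this self-adjointness will translate into adjoint relations among the purely angular operators $C$, $D$, $E$ of the decomposition $\sqrt{-1}\, R = C + \tfrac{1}{r} D + \partial_r E$ from Lemma~\ref{lem-polar-form-of-R}. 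Those relations are exactly the two identities stated.

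The one genuinely substantive ingredient is the adjoint of the radial derivative on the weighted space $L^2(\R_+, r^2\, dr)$. A one-line integration by parts with the $r^2$ weight gives $\partial_r^{*} = -\partial_r - \tfrac{2}{r}$; the correction term $-\tfrac{2}{r}$ coming from the measure factor is the only nonroutine feature and is precisely what will be responsible for the cross-coupling between the two angular operators in the second identity. Because $C$, $D$, and $E$ are angular (so commute with $\partial_r$ and $1/r$) and have real coefficients (so their $L^2(S^2)$ adjoints agree with the transposes against the bilinear pairing $(Y,Z)\mapsto \int_{S^2} YZ\, d\operatorname{Area}$ used in the lemma), the adjoints of the three summands of $\sqrt{-1}\, R$ assemble immediately, and $(\sqrt{-1}\, R)^{*}$ becomes a sum of terms of the same three radial types $1$, $\tfrac{1}{r}$, and $\partial_r$, now with new angular-operator coefficients.

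The final step is to equate $(\sqrt{-1}\, R)^{*}$ with $\sqrt{-1}\, R$ and match the coefficients of $1$, $\tfrac{1}{r}$, and $\partial_r$ separately; these are linearly independent as operators on $C^\infty(\R_+)$ with angular-operator coefficients, which is clear from the order filtration on differential operators. One of the matched coefficients produces the skew-symmetry identity $D^{T} = -D$, while the other inherits the correction term from $\partial_r^{*}$ and produces $E^{T} = E - 2D$ with its characteristic cross-term. Translating these two adjoint identities back into integral form against test functions $Y$ and $Z$ yields precisely the two formulas in the lemma. I expect the main (minor) hurdle to be the bookkeeping in separating the $\partial_r$ and $\tfrac{1}{r}$ contributions cleanly, but once the radial-coefficient independence is invoked the argument is mechanical.
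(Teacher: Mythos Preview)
Your proposal is correct and takes essentially the same approach as the paper's own proof: both derive the identities from the formal self-adjointness of $\sqrt{-1}\,R$ on $L^2(\R^3)$ together with the formula $\partial_r^{*}=-\partial_r-\tfrac{2}{r}$ for the radial adjoint, and then read off the angular relations for $D$ and $E$ by matching the $1$, $\tfrac{1}{r}$, and $\partial_r$ coefficients. Your write-up simply fills in the coefficient-matching step that the paper leaves implicit.
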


\begin{proof}
 The operator $\sqrt{-1} R$   is formally self-adjoint for the usual inner product of  functions on $\R^3$.  The present lemma follows from the formula for $\sqrt{-1}R$  and  from the fact that the formal adjoint of $\partial_r$ is $-\partial_r - 2/r$.
\end{proof}

\begin{lemma} 
\label{lem-tech-wronskian}
If $\varphi\in \SingSol(\lambda) ^{\ell-1} $ and $\psi\in \RegSol(\lambda)^\ell$, 
then 
\[
\Wr( R \varphi, \psi )  - \Wr (\varphi, R  \psi) = 0 .
\]
\end{lemma}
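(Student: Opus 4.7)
The plan is to reduce the identity to a direct calculation in polar coordinates and invoke the radial ODE. By bilinearity of the Wronskian form, it suffices to handle the pure-tensor case $\varphi = f(r)\, Y$ with $Y\in C^\infty(S^2)^{\ell-1}$ and $\psi = g(r)\, Z$ with $Z\in C^\infty(S^2)^\ell$, where $f$ solves the radial equation \eqref{equa3.1} with index $\ell{-}1$ and $g$ solves it with index $\ell$, both at the common eigenvalue $\lambda$. Since $\Wr$ is $K$-invariant and distinct isotypes are $S^2$-orthogonal, $\Wr(R\varphi,\psi)$ depends only on the $\ell$-component of $R\varphi$, and $\Wr(\varphi,R\psi)$ only on the $(\ell{-}1)$-component of $R\psi$.

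Expanding via the polar decomposition of Lemma~\ref{lem-polar-form-of-R},
\[
(\sqrt{-1}R\varphi)^\ell = f\, C^\ell_{\ell-1}Y + \tfrac{f}{r}\, D^\ell_{\ell-1}Y + f'\, E^\ell_{\ell-1}Y,
\]
\[
(\sqrt{-1}R\psi)^{\ell-1} = g\, C^{\ell-1}_\ell Z + \tfrac{g}{r}\, D^{\ell-1}_\ell Z + g'\, E^{\ell-1}_\ell Z,
\]
I would then use the ladder proportionalities of Lemma~\ref{lem-components-of-D-and-E} to eliminate $E^\ell_{\ell-1}$ and $E^{\ell-1}_\ell$ in favour of $D^\ell_{\ell-1}$ and $D^{\ell-1}_\ell$. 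Each projection becomes a radial coefficient times a $C$-angular vector plus another radial coefficient times a $D$-angular vector.

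Next I would compute both Wronskians. Each angular vector produces its own radial Wronskian $r^2(u'v - u v')$, which is integrated over $S^2$. On the $\varphi$-side the angular integrals are already in the form $\int_{S^2} C^\ell_{\ell-1}Y\cdot Z$ and $\int_{S^2}D^\ell_{\ell-1}Y\cdot Z$. On the $\psi$-side I would apply the adjoint relations stated just after Lemma~\ref{lem-polar-form-of-R} (together with self-adjointness of $C$, which is forced by self-adjointness of $\sqrt{-1}R$) to rewrite the $\psi$-side integrals in the same two forms. After this normalisation, the $C$-coefficient in $\Wr(R\varphi,\psi)-\Wr(\varphi,R\psi)$ is manifestly zero by inspection, leaving only the $D$-coefficient to be analysed: it is a polynomial in $f,f',f'',g,g',g''$ and powers of $r$.

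The closing step is to substitute for $r^2 f''$ and $r^2 g''$ using the radial ODE \eqref{equa3.1} at index $\ell{-}1$ and $\ell$ respectively. Because $f$ and $g$ share the same $\lambda$ and $\kappaconstant$, the $\lambda r^2 fg$ and $2\kappaconstant r fg$ contributions drop out pairwise. The remaining angular-momentum contributions $(\ell{-}1)\ell$ and $\ell(\ell{+}1)$ combine with the explicit $(1\pm\ell)$ prefactors inherited from the $E{\to}D$ substitution to produce exactly the terms needed to cancel the leading $r\cdot(f'g-fg')$ and $fg$ parts, giving zero.

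The principal obstacle is the signed bookkeeping of three separate ingredients — the $(1\pm\ell)$ factors from Lemma~\ref{lem-components-of-D-and-E}, the minus signs from the $D,E$ adjoint relations, and the substitutions from the two radial ODEs — and the check that they conspire to cancel. The reason this must work is conceptual: both the ladder identity of Lemma~\ref{lem-components-of-D-and-E} and the Wronskian identity being proved are manifestations of the single relation $[T,R]=0$, so once the decomposition is in hand the cancellation is forced.
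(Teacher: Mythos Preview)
Your route is genuinely different from the paper's. The paper does not compute the Wronskian as a function of $r$ and then reduce via the radial ODE. Instead it exploits the fact that both $(R\varphi)^\ell$ and $\psi$ lie in $\Sol(\lambda)^\ell$, so that their radial Wronskian is \emph{constant} in $r$ (and likewise for $\varphi$ and $(R\psi)^{\ell-1}$). Hence it suffices to extract the $r^0$ coefficient from leading asymptotics: a representative of $\varphi$ has radial part $r^{-\ell}+\cdots$ (singular solution at index $\ell{-}1$) while $\psi$ has radial part $r^{\ell}+\cdots$ (regular solution at index $\ell$). Feeding only these leading powers through the polar decomposition produces explicit numerical multiples of $\int_{S^2} DY\cdot Z$ and $\int_{S^2} EY\cdot Z$; the adjoint relations and Lemma~\ref{lem-components-of-D-and-E} then finish the job in two lines. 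No second derivatives appear and the radial ODE is never invoked.

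Your direct calculation is in principle workable and, if completed, would establish the identity for arbitrary $\varphi\in\Sol(\lambda)^{\ell-1}$ and $\psi\in\Sol(\lambda)^\ell$, not merely singular against regular. But your account of the cancellation is not correct: after the ODE substitutions the $\lambda r^2 fg$ contribution from $r^2 f''$ carries the prefactor $(1{-}\ell)$ while the one from $r^2 g''$ carries $-(1{+}\ell)$, so together they leave a residue proportional to $2\ell\,\lambda r^2 fg$, not zero; the $\kappa$ terms behave the same way. The vanishing is therefore not ``pairwise'' but requires combining the first-derivative, potential, and centrifugal pieces all at once. The bookkeeping obstacle you flag is real, and your sketch of how it resolves would need to be reworked. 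The paper's leading-term shortcut is designed precisely to avoid this.
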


\begin{proof}
First, we compute that 
\begin{multline*}
\wr(R\varphi, \psi) = (2\ell^2 {+}\ell) r^0 \cdot DY\cdot Z   - (2 \ell{+}1) r^0 \cdot EY\cdot Z
+\,\, \text{positive-order terms in $r$} ,
\end{multline*}
for some spherical harmonic functions $Y$ and $Z$ of degrees $\ell{-}1$ and $\ell$, respectively.  Hence
\[
\Wr(R\varphi, \psi) = (2\ell^2 {+}\ell)  \int _{S^2} DY\cdot Z \, d\operatorname{Area}  
 - (2 \ell{+}1) \int _{S^2}  EY\cdot Z \, d\operatorname{Area}  ,
\]
and similarly
\[
\Wr(\varphi, R\psi) = -(2\ell^2 {-}\ell)  \int _{S^2} Y\cdot DZ \, d\operatorname{Area}  
 - (2 \ell{-}1) \int _{S^2}  Y\cdot EZ \, d\operatorname{Area}  .
\]
It follows that 
\begin{multline*}
\Wr(R\varphi, \psi)  - \Wr (\varphi, R \psi) 
= (2\ell {-}2)  \int _{S^2} DY\cdot Z \, d\operatorname{Area}  
 - 2 \int _{S^2}  EY\cdot Z \, d\operatorname{Area} , 
\end{multline*}
and to complete the proof we need only apply Lemma~\ref{lem-components-of-D-and-E}.
\end{proof}

\begin{proof}[Proof of Theorem~\ref{thm-Wr-theta}]
The Wronskian pairing determines a $K$-equivariant vector space isomorphism
\begin{equation}
\label{eq-potential-intertwiner}
\SingSol(\lambda) \longrightarrow \RegSol(\lambda)^{*,\theta}
\end{equation}
(the right-hand side is the $\theta$-twisted dual). Moreover this isomorphism is compatible with the action of the Runge-Lenz operators $R$ from $(\ell{-}1)$- to $\ell$-isotypical components, for all $\ell{>}0$, thanks to Lemma~\ref{lem-tech-wronskian}. 

Now, for  all but countably many $\lambda$, the $(\g\vert_\lambda, K)$-modules in \eqref{eq-potential-intertwiner} are irreducible, and moreover they are isomorphic to one another. This follows from the discussion in Section~\ref{sec-principal-series}. It therefore follows from Schur's lemma that for such $\lambda$, \eqref{eq-potential-intertwiner} can differ from an isomorphism of $(\g\vert _\lambda, K)$-modules only by a multiplicative scalar in each $K$-isotypical component, since these components have multiplicity one.  But compatibility with the Runge-Lenz operators, even to the limited extent we have exhibited it, implies that these scalars must all be equal to one another.  So \eqref{eq-potential-intertwiner} is indeed an isomorphism of $(\g\vert _\lambda, K)$-modules.  This proves the lemma when the modules in \eqref{eq-potential-intertwiner} are irreducible. 
The general case follows by a continuity argument.
\end{proof}

 \subsection{The algebraic family of singular solutions}
\label{subsec-singsolfam}
In view of the  preceding section it is natural to organize the modules $\SingSol(\lambda)$ into an algebraic family.   Linearly independent solutions to the radial Schr\"odinger equation are given in \eqref{eq-regular-solution} and \eqref{eq-singular-solution}, and from the formulas it is clear that the coefficient of $r^{-(\ell+1)}$ in a solution depends only on the class of the solution in $\SingSol(\lambda)$.

\begin{defn} 
We shall denote by $\SingSolFam$ the complex vector space spanned by sections
\[
\C \ni \lambda \longmapsto \phi_\lambda \in \SingSol(\lambda)
\]
that are representable in the form
\[
\psi_\lambda = p(\lambda)G_{\ell,\lambda}\cdot Y_\ell \in \Sol (\lambda)
\]
where $p(\lambda)$ is a polynomial function of $\lambda$ and where $G_{\ell,\lambda}$ is as in  \eqref{eq-singular-solution}, with coefficient  of $r^{-(\ell+1)}$ the constant $1$. \end{defn}

By repeating computations from the previous two subsections we arrive at the following result.

\begin{theorem}
\label{thm-singsol-is-a-twisted-dual}
The space $\SingSolFam$ is an algebraic family of Harish-Chandra modules for $(\g,\BK)$. The Wronskian form
\[
\SingSolFam \times \RegSolFam \longrightarrow \O
\]
induces an isomorphism of algebraic families from 
$
\SingSolFam $
to the $\theta$-twisted dual of $ \RegSolFam$. \qed
\end{theorem}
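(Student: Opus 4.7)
The plan is to parallel the two-step development used for regular solutions (Theorem~\ref{thm-regsol-is-alg-family}) and the fiberwise Wronskian duality (Theorem~\ref{thm-Wr-theta}), and to let the duality do most of the work. The key observation is that the $\theta$-twisted dual $\RegSolFam^{*,\theta}$ is automatically an admissible algebraic family of Harish-Chandra modules (since $\RegSolFam$ is admissible, its $\ell$-isotypical summand being free of rank $2\ell{+}1$ with multiplicity one as a $K$-representation). So once the Wronskian is shown to give an $\O$-module isomorphism $\SingSolFam \to \RegSolFam^{*,\theta}$, the algebraic family structure on $\SingSolFam$ can simply be transported from the dual.

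The first step is to verify that the fiberwise Wronskian assembles into an $\O$-bilinear, $K$-equivariant pairing
\[
\Wr\colon \SingSolFam \times \RegSolFam \longrightarrow \O .
\]
For two solutions of the same radial equation~\eqref{equa3.1} with the same $\ell$ and the same $\lambda$, the modified Wronskian is constant in $r$, so it suffices to evaluate its leading behavior at $r{=}0$. Using the normalizations $F_{\ell,\lambda}(r) = r^\ell + \cdots$ and leading pole coefficient $1$ in $G_{\ell,\lambda}$, a direct computation gives $\wr(G_{\ell,\lambda}, F_{\ell,\lambda}) = -(2\ell{+}1)$, independent of $\lambda$ (the logarithmic correction $C_{\ell,\lambda}\log(r) F_{\ell,\lambda}$ cannot alter this constant since the full Wronskian is $r$-independent). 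Orthogonality of spherical harmonics then makes $\Wr(p\, G_{\ell,\lambda} Y_\ell,\, q\, F_{\ell',\lambda} Y_{\ell'})$ a polynomial in $\lambda$ on the generators of $\SingSolFam$ and $\RegSolFam$, as required.

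Combining this with Theorem~\ref{thm-Wr-theta} applied fiber-by-fiber shows the pairing is a fiberwise isomorphism onto $\RegSolFam^{*,\theta}$. Since both sides are free as $\O$-modules with matching $K$-isotypical decompositions, a $K$-equivariant $\O$-linear map that is bijective on fibers is an $\O$-module isomorphism. I would then transport the canonical $(\g,\BK)$-module structure on $\RegSolFam^{*,\theta}$ to $\SingSolFam$ through this isomorphism, and verify fiberwise (using Theorem~\ref{thm-Wr-theta} again) that the transported action agrees with the natural action coming from $\g \subseteq \Cent(T)$ acting on $\Sol$ and descending to $\Sol/\RegSol$. This simultaneously yields $\g$-stability of $\SingSolFam$ and the asserted twisted-equivariance of the Wronskian.

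The main obstacle I anticipate is the direct verification of $\g$-stability in the spirit of Theorem~\ref{thm-regsol-is-alg-family}: the logarithmic correction in $G_{\ell,\lambda}$ makes the $\lambda$-dependence of singular solutions awkward to track under the Runge-Lenz operators. Routing through the Wronskian duality sidesteps this issue, reducing the algebraic content to a single leading-term calculation at $r{=}0$ together with the already-established structure on $\RegSolFam$. If one insisted on a direct argument, one would instead repeat the integral-closure reasoning of Lemmas~\ref{lem-integral-closure1} and~\ref{lem-integral-closure2}, with the singular analog of Lemma~\ref{lem-image-spherical-vectors} again following from Proposition~\ref{prop-Hecke-algebra} together with the generic irreducibility of the fibers.
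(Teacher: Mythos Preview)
Your proposal is correct, and since the paper's own proof is the single sentence ``By repeating computations from the previous two subsections,'' your write-up is a legitimate way to make that precise. The two routes differ in emphasis, though. The paper's wording suggests redoing the leading-term and integral-closure arguments of Theorem~\ref{thm-regsol-is-alg-family} for the singular solutions and then invoking Theorem~\ref{thm-Wr-theta}; your primary argument instead shows first that the Wronskian pairing lands in $\O$ (your constant $-(2\ell{+}1)$ computation), establishes the $\O$-module isomorphism $\SingSolFam \cong \RegSolFam^{*,\theta}$, and then reads off $\g$-stability of $\SingSolFam$ from the fiberwise $\theta$-equivariance of Theorem~\ref{thm-Wr-theta} together with the already-known stability of $\RegSolFam$. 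This buys you a genuine simplification: you never have to track the $\lambda$-dependence of the logarithmic term $C_{\ell,\lambda}\log(r)F_{\ell,\lambda}$ under a Runge-Lenz operator, because membership in $\SingSolFam$ is detected entirely by the leading pole coefficient, which in turn is read off from the Wronskian. Your final paragraph correctly identifies the alternative direct argument as viable too, and that is closer to what the paper literally says.

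One small point worth making explicit in your write-up: the step ``a $K$-equivariant $\O$-linear map that is bijective on fibers is an $\O$-module isomorphism'' is justified here because on each $\ell$-isotypical summand the Wronskian map is the nonzero constant $-(2\ell{+}1)$ times the pairing of spherical harmonics, so it is visibly an isomorphism of free $\O$-modules---no appeal to a general fiberwise criterion is needed.
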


\begin{proposition}
The family  $\SingSolFam$ is a  standard and spherical family of $(\g,\BK)$-modules, in the sense of  \textup{Definition~\ref{defn-standard-spherical}}. 
\end{proposition}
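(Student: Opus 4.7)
The plan is to verify the two conditions of Definition~\ref{defn-standard-spherical} separately, using the classifying-morphism machinery behind Theorem~\ref{thm-spherical-classification} to reduce the second condition to an explicit leading-coefficient computation. Condition (i) would be immediate from the construction of $\SingSolFam$: for each $\ell \ge 0$, the $\ell$-isotypical component $\SingSolFam^\ell$ is a free $\O$-module of rank $2\ell+1$, spanned by sections $\lambda \mapsto G_{\ell,\lambda}\cdot Y_\ell$ as $Y_\ell$ runs over a basis of spherical harmonics of degree $\ell$. Thus $\SingSolFam$ contains each $SO(3)$-type of $K$ exactly once.

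For condition (ii), I would consider the classifying morphism
\[
\Phi \colon \U\H(\g,\BK) \otimes_{\H(\g,\BK)} \SingSolFam^0 \longrightarrow \SingSolFam
\]
induced by the inclusion $\SingSolFam^0 \hookrightarrow \SingSolFam$. Since $\SingSolFam^0$ is free of rank one over $\O$, generated by $\lambda \mapsto G_{0,\lambda}$, Proposition~\ref{prop-Kostant-thm} identifies the domain with $\bigoplus_\ell W_\ell \otimes_\C \O$, matching the $K$-isotypical structure of the codomain established in (i). The multiplicity-one condition together with $K$-equivariance and $\O$-linearity forces $\Phi^\ell$ to be multiplication by some polynomial $p_\ell(\lambda) \in \O$ in suitably chosen compatible bases. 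For generic $\lambda$, Lemma~\ref{lem-generically-irreducible} applied to $\RegSol(\lambda)$ together with the duality of Theorem~\ref{thm-Wr-theta} shows that $\SingSol(\lambda)$ is irreducible, hence spherically generated by its nonzero $K$-fixed vector; so $\Phi$ is a fiberwise isomorphism at generic $\lambda$, each $p_\ell$ is a nonzero polynomial, and $\Phi$ is injective as a map of $\O$-modules.

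The main obstacle is upgrading ``nonzero polynomial'' to ``nonzero constant'', which is equivalent to establishing fiberwise surjectivity of $\Phi$ at the exceptional parameters (namely $\lambda_0 = -\kappaconstant^2/n^2$ for $n \in \N$ and $\lambda_0 = 0$, where $\SingSol(\lambda_0)$ may be reducible). I would handle this by a direct leading-coefficient computation modelled on Section~\ref{subsec-explicit-formulas}: an $\ell$-fold product of lowering Runge-Lenz operators (such as $R_-^\ell$) applied to the spherical section $G_{0,\lambda}$ yields an element of $\SingSolFam^\ell$ whose leading $r^{-(\ell+1)}$ coefficient --- which under our normalization of $G_{\ell,\lambda}$ pins down the element up to the choice of spherical harmonic --- is a nonzero \emph{numerical} constant independent of $\lambda$. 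This can be verified inductively using the $r^{-1}$ singularity of $G_{0,\lambda}$ near $r = 0$ and the decomposition $\sqrt{-1}R = C + r^{-1}D + \partial_r E$ from Lemma~\ref{lem-polar-form-of-R}: the leading $r^{-(\ell+1)}$ behavior after one application of a Runge-Lenz operator comes exclusively from the $r^{-1}D$ and $\partial_r E$ pieces and contributes no $\lambda$-dependence. This would show each $p_\ell$ is a unit in $\O$, $\Phi$ is an isomorphism, and $\SingSolFam$ is generated by its spherical vectors.
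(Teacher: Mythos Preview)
Your proposal is correct and rests on the same computational core as the paper's proof: the leading singular term of a Runge--Lenz operator applied to $G_{\ell,\lambda}\cdot Y_\ell$ comes only from the $r^{-1}D$ and $\partial_r E$ pieces of Lemma~\ref{lem-polar-form-of-R}, and so is $\lambda$-independent. The paper simply executes this one step at a time rather than through the classifying morphism: it observes that
\[
R(G_{\ell,\lambda}Y_\ell) = \bigl(-(\ell{+}1)DY_\ell + EY_\ell\bigr)\,r^{-(\ell+2)} + \text{higher order},
\]
so $\langle R\psi\rangle^{\ell+1} = G_{\ell+1,\lambda}\cdot Z_{\ell+1}$ for a spherical harmonic $Z_{\ell+1}$ not depending on $\lambda$, and generic irreducibility forces $Z_{\ell+1}\ne 0$ for some choice of $Y_\ell$ and $R$; hence $\SingSolFam^{\ell+1}\subseteq \mathcal U(\g)\cdot\SingSolFam^\ell$. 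Your route through the classifying morphism $\Phi$ and the polynomials $p_\ell$ reaches the same conclusion but carries more overhead; the paper's single-step induction avoids worrying about how an $\ell$-fold product like $R_-^\ell$ distributes across intermediate $K$-types. One small terminological point: you write ``lowering'' Runge--Lenz operators, but since you begin at $\ell=0$ you are necessarily using them to \emph{raise} the $K$-type.
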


\begin{proof}
We need to check   that  $\SingSolFam$ is generated by its spherical vectors.  It follows from the   formula in Lemma~\ref{lem-polar-form-of-R} for the Runge-Lenz operators in polar coordinates that if $\psi_\lambda = G_{\ell,\lambda}  Y_\ell$, then 
\begin{multline*}
R\psi_\lambda =  -(\ell{+}1)  r^{-(\ell{+}2)}\cdot D Y_{\ell} \,\, + \,\, 
  r^{-(\ell{+}2)}\cdot E Y_{\ell} 
+ 
\text{higher-order terms in $r$} .
\end{multline*}
Thus 
\[
\langle R\psi_\lambda  \rangle^{\ell+1} = G_{\ell{+}1,\lambda}  Z_{\ell+1}
\]
for some spherical harmonic function  $Z_{\ell+1}$.
Now for every $\ell{\ge}0$  there is at  least one $Y_\ell$, as above, for which $Z_{\ell+1}$ is nonzero, for otherwise every module $\SingSol(\lambda)$ would be reducible. It follows that  
\[
\SingSolFam^{\ell+1} \subseteq  \mathcal{U}(\g)\cdot   \SingSolFam^\ell,
\]
and hence that   $\SingSolFam$ is generated by its spherical vectors, as required.
\end{proof}

The following is an immediate consequence of the classification result in  Theorem~\ref{thm-spherical-classification} and    Lemma~\ref{lem-vals-of-casimirs}. 
 
\begin{theorem}
\label{thm-abstract-characterization-of-singsol} Up to isomorphism,
$\SingSolFam$ is the  unique standard and spherical family for which the Casimir element $TL^2 -R^2$ acts as $T + \kappaconstant ^2 I$.\qed
\end{theorem}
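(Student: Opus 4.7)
My plan is to assemble two earlier results, since the theorem is advertised as an immediate consequence of the classification and the Casimir computation. First, I would note that the proposition immediately preceding this theorem already establishes that $\SingSolFam$ is a standard and spherical family of Harish-Chandra modules for $(\g,\BK)$. Theorem~\ref{thm-spherical-classification} therefore applies, and it tells us that $\SingSolFam$ is determined up to isomorphism by a single datum, namely the polynomial $Q \in \O$ by which the central element $TL^2 - R^2 \in \mathcal{Z}(\g)$ acts on the spherical isotypical summand $\SingSolFam^0$ (equivalently, on all of $\SingSolFam$, since $\SingSolFam$ is generated by $\SingSolFam^0$ and $TL^2 - R^2$ is central). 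So the entire task reduces to computing $Q$ and then invoking the classification.

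Next I would compute $Q$. By construction, $\SingSolFam$ is an $\O$-module quotient of the ambient family of $K$-finite solutions of $T\psi = \lambda \psi$, and the $\g$-action on this ambient family is induced from the realization morphism $\action\colon \mathcal{U}(\g) \to \Cent(T)$ of \eqref{eq-morphism-into-centralizer}. Lemma~\ref{lem-vals-of-casimirs} gives $\action(TL^2 - R^2) = T + \kappaconstant^2 I$, viewed as an element of $\Cent(T)$. Since $T \in \O$ acts on a $\lambda$-eigenfunction simply by multiplication by $\lambda$, the element $TL^2 - R^2$ acts on each fiber $\SingSol(\lambda)$ as the scalar $\lambda + \kappaconstant^2$, and hence on the family $\SingSolFam$ as multiplication by the polynomial $T + \kappaconstant^2 \in \O$. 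In other words $Q = T + \kappaconstant^2$.

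With this value identified, I would close the proof by direct appeal to Theorem~\ref{thm-spherical-classification}: any standard and spherical family on which $TL^2 - R^2$ acts as $T + \kappaconstant^2 I$ must be isomorphic to $\SingSolFam$, and that is exactly what the theorem claims. I expect no real obstacle here; the substance of the argument has already been carried out in establishing the classification theorem and in showing that $\SingSolFam$ is standard and spherical. The only minor point worth being careful about is that the realization map $\action$ is defined on the full family of solutions of the Schr\"odinger equation, so one should observe that the action of $TL^2 - R^2$ on the quotient family $\SingSolFam$ is inherited from its action on $\Sol$, which is guaranteed by the $\O$-linearity of the $\g$-action together with the fact that $\RegSolFam$ is a $(\g,\BK)$-submodule (Theorem~\ref{thm-regsol-is-alg-family}).
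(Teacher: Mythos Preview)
Your proposal is correct and follows exactly the paper's approach: the theorem is stated as an immediate consequence of the preceding proposition (that $\SingSolFam$ is standard and spherical), the classification Theorem~\ref{thm-spherical-classification}, and the Casimir computation in Lemma~\ref{lem-vals-of-casimirs}, and your write-up assembles precisely these ingredients. The extra care you take in verifying that the action of $TL^2 - R^2$ descends to the quotient $\SingSolFam$ is a reasonable elaboration but not something the paper spells out.
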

 
Of course, this result leads to a characterization of $\RegSolFam$, since it is the $\theta$-twisted dual of $\SingSolFam$.
  
\section{Physical Solutions and Jantzen quotients}\label{sec-phys-sol-jantzen}

In this section   we shall examine the solution spaces
\[
\PhysSol(\lambda)\subseteq \RegSol(\lambda)
\]
 for the Schr\"odinger equation that are used in physics.  The physical solutions    are characterized within $\RegSol(\lambda)$ by boundary conditions at infinity in $\R^3_0$, which imply that the physical solutions are either square-integrable, or nearly square-integrable.  The main purpose of this section is to explain that  the physical solutions can also be obtained in a purely represent\-ation-theoretic fashion using the Jantzen technique.

\subsection{Physical solution spaces}
\label{subsec-sol}
By the $K$-finite \textit{physical solutions} of the (re\-scaled) Schr\"odinger equation   we mean the solutions computed in physics texts by separation of variables in spherical coordinates.  See for example \cite[Section 36]{LandauLifshitz} or Section \ref{subsec-explicit-formulas}.

For a given $\eigv{\in} \R$,  the space $\PhysSol(\eigv)$ of physical solutions consists of  those functions in $\RegSol(\lambda)$  that converge to zero at infinity.   Computations using  explicit formulas  show that 
\begin{enumerate}[\rm (a)]

\item   $\PhysSol(- {\kappaconstant ^2}/{n^2})$ is the finite-dimensional subspace of $\RegSol(- {\kappaconstant ^2}/{n^2})$ comprised of the $SO(3)$-types $\ell {<} n$.  It is  a Harish-Chandra submodule.

\item   $\PhysSol(\eigv)=\RegSol(\lambda)$ for all $\lambda \ge 0$

\item $\PhysSol(\eigv) = 0$ otherwise.

\end{enumerate}
Moreover in every case the physical solution space has the structure of an irreducible $(\g\vert_\lambda, K)$-module and carries an in\-variant inner product.  See for example
\cite{Bander1},  \cite{Torres1998} and \cite{Bander2} for the cases of $\eigv{<}0$, $\eigv{=}0$ and $\eigv{>}0$, respectively.

\subsection{An intertwining operator}
\label{subsec-intertwining}

In this subsection we shall analyze the intertwining operator $\mathcal{A}$ from the algebraic family of singular solutions  to the regular of solutions that is characterized by the following diagram:
\begin{equation}
\label{eq-intertwiner-diagram}
\xymatrix{
\SingSolFam \ar[rr] ^{\mathcal{A}} & &\RegSolFam
\\
& \U\H(\g,\BK)\otimes _{\H(\g, \BK)} \O  \ar[ul]^{\cong} \ar[ur]& 
}
\end{equation}
The tensor product at the bottom is that for which the Casimir  $TL^2 {-} R^2{ \in} \H(\g, \BK)$ acts as  multiplication by $T {+} \kappaconstant ^2I$ on $\mathcal O$.
The diagonal morphisms are the classifying morphisms 
\eqref{eq-classifying-morphism} for $\SingSolFam $ and $\RegSolFam$ (we identify the $\ell{=}0$ isotypical parts of these families with $\mathcal{O}$ using any isomorphisms of $\H(\g,\BK)$-modules). The left classifying morphism is an isomorphism since the $\sigma$-twisted dual is standard and spherical; see   the proof of Theorem~\ref{thm-spherical-classification}.  There is therefore a unique morphism $\mathcal{A}$ that makes the diagram commute.

If $\mathcal{B}\colon \SingSolFam\to \RegSolFam$ is any other intertwining  morphism, then since almost all fibers in the families are irreducible, it follows from Schur's lemma that there are nonzero and relatively prime $p,q\in \O$ for which 
\[
p\cdot  \mathcal{A} = q \cdot \mathcal{B} .
\] 
But the morphism $\mathcal{A}$ is nonzero in each fiber since it is an isomorphism on $\ell{=}0$ isotypical spaces, and as a result $q$ must be a constant polynomial.   So, up to a nonzero complex scalar multiple, $\mathcal{A}$ is the unique intertwining morphism that is nonzero in each fiber.

We can identify all the $\ell$-isotyopical space $\RegSolFam^{\ell}$ as an $\O$-module and a $K$-module with $\O\otimes_{\C} W_\ell$, for example  by the mapping 
\[
\RegSolFam^{\ell} \ni F_{\ell,\lambda}\cdot Y_\ell \longmapsto 1{\otimes} Y_{\ell}\in \O\otimes_{\C} W_\ell.
\]
Compare Definition~\ref{def-RegSol}. We can do the same for the $\ell$-isotypical subspace of  $\SingSolFam $. After making these identifications,  it follows from Schur's lemma that  the restriction of $\mathcal{A}$  to the $\ell$-isotypical parts of our families is multiplication by a \emph{diagonal coefficient} polynomial function $\mathcal{A}_\ell(\lambda)$ that is determined up to multiplication by a nonzero constant. The constant depends on $\ell$, and varies with our choices of identifications.

The diagonal coefficient polynomials may be explicitly computed quite easily by making an explicit calculation with a single Runge-Lenz operator. The result is as follows:

\begin{proposition}
\label{prop-intertwiner}
The intertwining operator $\mathcal{A}$ has diagonal coefficients
\begin{equation*}
\mathcal{A}_{\ell}(\lambda)  = \text{\rm constant}_{\ell} \cdot \prod _{n=1}^\ell \left(\lambda n^2+\kappaconstant ^2 \right),
\end{equation*}
for all  $\ell \ge 1$. \qed
\end{proposition}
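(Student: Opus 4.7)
The plan is to exploit the fact that $\mathcal{A}$ is uniquely determined up to scalar together with its intertwining property with respect to a raising Runge--Lenz operator to derive a recursion on the $\mathcal{A}_\ell$, then telescope. Fix the identifications $\SingSolFam^\ell \cong \O\otimes_\C W_\ell$ via $G_{\ell,\lambda}Y_\ell \mapsto 1\otimes Y_\ell$ and $\RegSolFam^\ell \cong \O\otimes_\C W_\ell$ via $F_{\ell,\lambda}Y_\ell \mapsto 1\otimes Y_\ell$, so that the Schur-style argument preceding the proposition identifies $\mathcal{A}|_{\SingSolFam^\ell}$ with multiplication by some $\mathcal{A}_\ell \in \O$, normalized by a fixed choice of $\mathcal{A}_0 = \text{constant}_0$.

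First, fix a Runge--Lenz operator $R$ and denote by $R^{\ell+1}_\ell$ its $\ell \to \ell{+}1$ component on either family.  Because the $\ell$- and $(\ell{+}1)$-isotypical pieces are multiplicity-free and the actions of $K$ on the families are free of $\lambda$, both versions of $R^{\ell+1}_\ell$ are $\O$-scalar multiples of a single, fixed, $\lambda$-independent Clebsch--Gordan-type intertwiner $\psi^R_\ell\colon W_\ell \to W_{\ell+1}$:
\[
R^{\ell+1,\,\mathrm{reg}}_\ell = \alpha^{\mathrm{reg}}_\ell(\lambda)\,\psi^R_\ell ,
\qquad
R^{\ell+1,\,\mathrm{sing}}_\ell = \alpha^{\mathrm{sing}}_\ell(\lambda)\,\psi^R_\ell .
\]
Comparing the two sides of $\mathcal{A}\circ R = R\circ \mathcal{A}$ on the $\ell\to \ell{+}1$ components then gives the key identity
\[
\alpha^{\mathrm{reg}}_\ell(\lambda)\,\mathcal{A}_\ell(\lambda) \;=\; \alpha^{\mathrm{sing}}_\ell(\lambda)\,\mathcal{A}_{\ell+1}(\lambda) .
\]

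Next I would verify that $\alpha^{\mathrm{sing}}_\ell$ is a nonzero constant, while $\alpha^{\mathrm{reg}}_\ell$ is a nonzero constant multiple of $\kappaconstant^2+\lambda(\ell{+}1)^2$.  The first statement is essentially the computation already used to prove that $\SingSolFam$ is standard and spherical:  applying the decomposition $\sqrt{-1}R = C + r^{-1}D + \partial_r E$ of Lemma~\ref{lem-polar-form-of-R} to $G_{\ell,\lambda}Y_\ell$ and extracting the coefficient of $r^{-(\ell+2)}$ produces the purely spherical, $\lambda$-independent element $\bigl(-(\ell{+}1)E + D\bigr)Y_\ell$, whose projection to the $(\ell{+}1)$-isotypical component is by design a nonzero scalar multiple of $\psi^R_\ell$.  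The second statement is immediate from formula~\eqref{ac} on taking $R = R_-$ and $Y_\ell = Y^{\ell,\ell}$:  the coefficient of $\psi^{\ell+1,\ell-1}_\lambda$ in $R_-\psi^{\ell,\ell}_\lambda$ is a nonzero constant times $\kappaconstant^2+\lambda(\ell{+}1)^2$.  Schur's lemma applied to $\psi^R_\ell$ then forces this same $\lambda$-dependence in $\alpha^{\mathrm{reg}}_\ell$ regardless of which $R$ and which $Y_\ell$ were used.

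Substituting these into the key identity yields the recursion
\[
\mathcal{A}_{\ell+1}(\lambda) = c_\ell \cdot \bigl(\kappaconstant^2 + \lambda(\ell{+}1)^2\bigr)\cdot \mathcal{A}_\ell(\lambda)
\]
with nonzero constants $c_\ell$, and iterating produces the stated formula once the constants are absorbed into $\text{constant}_\ell$.  The main obstacle is nothing conceptual but only the calculation underlying \eqref{ac}, which the authors themselves call ``surprisingly intricate'';  happily, the Schur reduction to the single intertwiner $\psi^R_\ell$ means that only one matrix element of (ac)---the coefficient of $\psi^{\ell+1,\ell-1}_\lambda$ in $R_-\psi^{\ell,\ell}_\lambda$---is actually needed, and the $\lambda$-dependence of that coefficient is manifest on inspection of the Kummer or Bessel formulas~\eqref{eq-SpecialFunc1}--\eqref{eq-SpecialFunc2} for $F_{\ell,\lambda}$.
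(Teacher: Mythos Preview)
Your proposal is correct and follows essentially the same approach the paper indicates: the paper's proof consists only of the remark that the diagonal coefficients ``may be explicitly computed quite easily by making an explicit calculation with a single Runge-Lenz operator,'' together with a reference to the analogous two-dimensional computation, and your recursion via the $\ell\to\ell{+}1$ component of a Runge--Lenz operator on the two families is precisely that calculation spelled out. The only quibble is that your expression $\bigl(-(\ell{+}1)E+D\bigr)Y_\ell$ for the leading singular coefficient has $D$ and $E$ interchanged relative to the paper's own display in the proof that $\SingSolFam$ is standard and spherical, but this is cosmetic and does not affect the argument, since all that matters is that the expression is $\lambda$-independent and nonzero.
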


Compare \cite[Sec.~IV]{Subag}, where essentially the same computation is carried out for the $2$-dimensional hydrogen atom.

\subsection{Jantzen quotients}
\label{subsec-jantzen-review}
We begin with a quick introduction; see \cite[Sec.~4.1]{Ber2017} for further details, which are given  in precisely the context we are discussing. 

An intertwiner $\mathcal{A} \colon \F \to \H$ between algebraic families of Harish-Chandra modules over $\C$ determines \emph{Jantzen filtrations} of the fibers of $\F$ and $\H$, as follows: 
\begin{enumerate}[\rm (a)]

\item For $\F$, the filtration is decreasing, and for $p\in \Z$  the space $\F\vert_\lambda ^{(p)} $ is defined to be the image in the fiber $\F\vert_\lambda$ of all sections $f$ for which $\mathcal{A}(f)$ vanishes to order $p$ or more  at $\lambda$.  

\item For $\H$ the filtration is increasing, and  $\H\vert_\lambda^{(p)}$ is defined to be the image in the fiber  $\H\vert_\lambda$ of the space of all sections $h$ such that $(T-\lambda )^p \cdot h$ belongs to the image of $\mathcal{A}$.
\end{enumerate}
The map $f \mapsto (T-\lambda)^{-p} A(f)$ induces an isomorphism of $(\g\vert_\lambda, K)$-modules
\begin{equation}
\label{eq-jantzen-iso}
\mathcal{A}^{(p)}\colon \F\vert_\lambda ^{(p)} \Big / \F\vert_\lambda ^{(p+1)} \stackrel\cong \longrightarrow 
\H\vert_\lambda ^{(p)} \Big / \H\vert_\lambda ^{(p-1)} .
\end{equation}

For the specific intertwiner introduced in \eqref{eq-intertwiner-diagram} it  is a simple matter to read off the terms in the Jantzen filtration, and the \emph{Jantzen quotients}  \eqref{eq-jantzen-iso}, from Proposition~\ref{prop-intertwiner}.  We'll state the results  only for the regular solutions; the results for the singular solutions  can of course be inferred from the isomorphism in \eqref{eq-jantzen-iso}.

\begin{proposition}
Let $\lambda \in \C$.  The Janzten filtration of the fiber $\RegSol(\lambda)$ determined by the intertwiner \eqref{eq-intertwiner-diagram} is as follows:

\begin{enumerate}[\rm (a)]

\item If  $\lambda =  - \kappaconstant ^2 / n^2 $, then  $\RegSol(\lambda)^{(p)}$  is zero for $p{<}0$,  it is the direct sum of the $\ell$-isotypical subspaces in the range $\ell =  0,1,2, \dots,  n{-}1$ for $p{=}0$, and it is  the full space $\RegSol(\lambda)$ for $p{\ge} 1$. There are therefore exactly two nonzero Jantzen quotients.  The first  is $n^2$-dimensional with $K$-types 
 \[
 \ell =  0,1,2, \dots,  n{-}1 ,
 \]
  each with multiplicity one.  The second is irreducible and infinite-dimensional, with $K$-types
\[
 \ell =  n, n{+}1, n{+}2, \dots ,
 \]
each with multiplicity one. 
\item For other values of $\lambda$  the Jantzen filtration of $\RegSol(\lambda) $ is trivial: $\RegSol(\lambda)^{(p)}$ is zero for $p{<}0$, while it is the full space $\RegSol(\lambda)$ for $p{\ge} 0$.
\qed

\end{enumerate}
\end{proposition}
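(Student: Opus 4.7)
The plan is to read off the filtration directly from Proposition~\ref{prop-intertwiner}. First identify each $K$-isotypical summand as a free $\O$-module with $K$-action: by Schur's lemma applied on the Zariski-open locus where the fibers are irreducible (Lemma~\ref{lem-generically-irreducible} and Section~\ref{sec-principal-series}), the isotypical components $\RegSolFam^{\ell}\cong W_{\ell}\otimes_{\C}\O$ and $\SingSolFam^{\ell}\cong W_{\ell}\otimes_{\C}\O$ are rank-$(2\ell{+}1)$ free $\O$-modules, and the intertwiner $\mathcal{A}$ restricts on the $\ell$-summand to $\mathrm{id}_{W_{\ell}}\otimes \mathcal{A}_{\ell}$, with diagonal coefficient
\begin{equation*}
\mathcal{A}_{\ell}(\lambda) \;=\; \text{constant}_{\ell}\,\prod_{k=1}^{\ell}\bigl(\lambda\, k^{2}+\kappaconstant^{2}\bigr)
\end{equation*}
by Proposition~\ref{prop-intertwiner}. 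The roots of $\mathcal{A}_{\ell}$ are the distinct simple values $\lambda_{k}=-\kappaconstant^{2}/k^{2}$ for $k=1,\dots,\ell$. Consequently the image of $\mathcal{A}$ on the $\ell$-summand is $W_{\ell}\otimes\mathcal{A}_{\ell}\,\O$.

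Next compute the filtration fiberwise. Fix $\lambda_{0}\in\C$ and let $v_{\ell}\in\{0,1\}$ be the order of vanishing of $\mathcal{A}_{\ell}$ at $\lambda_{0}$, which is $1$ exactly when $\lambda_{0}=\lambda_{k}$ for some $k\le \ell$ and $0$ otherwise (simplicity of roots is essential here). For $h\in \O$, the condition $(\lambda-\lambda_{0})^{p}h\in\mathcal{A}_{\ell}\O$ is equivalent, by unique factorization in $\C[\lambda]$ together with the coprimality of $(\lambda-\lambda_{0})$ with $\mathcal{A}_{\ell}/(\lambda-\lambda_{0})^{v_{\ell}}$, to $h\in (\lambda-\lambda_{0})^{\max(v_{\ell}-p,0)}\O$. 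Under evaluation at $\lambda_{0}$ this image fills the one-dimensional fiber if $p\ge v_{\ell}$ and vanishes otherwise. Thus the full $\ell$-isotypical component $W_{\ell}\subseteq \RegSol(\lambda_{0})$ lies in $\RegSol(\lambda_{0})^{(p)}$ precisely when $p\ge v_{\ell}$, and for $p$ smaller than every $v_{\ell}$ (in particular $p<0$) one recovers $\RegSol(\lambda_{0})^{(p)}=0$.

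The two cases of the proposition are now immediate. If $\lambda_{0}\ne\lambda_{k}$ for every positive integer $k$, then $v_{\ell}=0$ for all $\ell$, so $\RegSol(\lambda_{0})^{(0)}=\RegSol(\lambda_{0})$, proving (b). If $\lambda_{0}=\lambda_{n}=-\kappaconstant^{2}/n^{2}$, then $v_{\ell}=0$ for $\ell<n$ and $v_{\ell}=1$ for $\ell\ge n$, so
\begin{equation*}
\RegSol(\lambda_{n})^{(0)}\;=\;\bigoplus_{\ell=0}^{n-1} W_{\ell}, \qquad \RegSol(\lambda_{n})^{(p)}=\RegSol(\lambda_{n})\ \text{for } p\ge 1,
\end{equation*}
the first having dimension $\sum_{\ell=0}^{n-1}(2\ell{+}1)=n^{2}$. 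That both terms are $(\g|_{\lambda_{n}},K)$-submodules is automatic since the filtration is constructed from an intertwiner of algebraic families.

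The one point that is not purely combinatorial, and is therefore the main obstacle, is the assertion that the \emph{second} Jantzen quotient (with $K$-types $\ell\ge n$) is irreducible. For this I would invoke the classification in Section~\ref{sec-principal-series}. At $\lambda=\lambda_{n}$, Lemma~\ref{lem-parameters-for-our-reps} gives $\varphi^{2}=-4\kappaconstant^{2}/\lambda_{n}=4n^{2}$, so $\varphi=\pm 2n$ is an even integer, which is precisely the reducibility condition for the standard principal series of $PSL(2,\C)$ (see the discussion after \eqref{eq-Casimirs-from-sigma-lambda}). The reducible principal series then has exactly two composition factors: a finite-dimensional constituent with $K$-types $\ell<n$ and an infinite-dimensional irreducible constituent with $K$-types $\ell\ge n$, each $K$-type appearing with multiplicity one. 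Since the second Jantzen quotient is a nonzero $(\g|_{\lambda_{n}},K)$-subquotient of the standard spherical family $\RegSol(\lambda_{n})$ (characterized by Theorem~\ref{thm-abstract-characterization-of-singsol} via its dual) and carries exactly the $K$-types $\ell\ge n$ with multiplicity one, it must coincide with that infinite-dimensional irreducible composition factor; irreducibility follows.
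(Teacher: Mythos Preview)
Your proof is correct and follows precisely the route the paper indicates: the paper gives no argument beyond remarking that the filtration is read off directly from the diagonal coefficients in Proposition~\ref{prop-intertwiner}, and you have carried this out, including the appeal to the $PSL(2,\C)$ composition series for the irreducibility of the infinite-dimensional quotient. One minor imprecision: your claimed equivalence $(\lambda-\lambda_0)^p h \in \mathcal{A}_\ell\O \Longleftrightarrow h\in (\lambda-\lambda_0)^{\max(v_\ell-p,0)}\O$ is not literally correct, since $h$ must also be divisible by the cofactor $\mathcal{A}_\ell/(\lambda-\lambda_0)^{v_\ell}$; but that cofactor is a unit at $\lambda_0$, so the image in the fiber is exactly as you state and the conclusion is unaffected.
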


\begin{remark} 
The term \emph{Jantzen quotient} is standard, but of course a more accurate term would be  \emph{Jantzen subquotient}.  Indeed the Jantzen quotient of most interest  in the proposition above, namely the finite-dimensional Jantzen quotient in item (a), is in fact a submodule of $\RegSol(\lambda)$.
\end{remark}
\begin{corollary}
The negative part of the phsyical spectrum  of the rescaled Schr\"o\-dinger operator $T$  coincides with the set of all  $\eigv\in \C$ for which  $\RegSol(\lambda)$ has a nontrivial Jantzen filtration. \qed
\end{corollary}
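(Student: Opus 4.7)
The plan is to read off the corollary directly from the preceding proposition together with the description of the physical spectrum recorded in Subsection~\ref{subsec-sol}. The proposition gives a complete description of the Jantzen filtration of $\RegSol(\lambda)$ for every $\lambda \in \C$: it is nontrivial precisely when $\lambda = -\kappaconstant^2/n^2$ for some positive integer $n$, and for all other values of $\lambda \in \C$ the filtration consists of only the zero module and the full module. So the set of $\lambda$ for which the Jantzen filtration is nontrivial is exactly $\{-\kappaconstant^2/n^2 : n = 1, 2, \dots\}$.

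Next, I would recall from Subsection~\ref{subsec-sol} (item (a) of the description of physical solutions) that $\PhysSol(-\kappaconstant^2/n^2)$ is nonzero for every positive integer $n$, while item (c) asserts that $\PhysSol(\lambda) = 0$ for any other negative real $\lambda$. Hence the negative part of the physical spectrum of $T$ is precisely $\{-\kappaconstant^2/n^2 : n = 1, 2, \dots\}$, matching the set identified above.

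Putting these two identifications side by side completes the proof, since both sets agree. No step here is a substantial obstacle; the real content has already been absorbed into the proof of Proposition~\ref{prop-intertwiner} and the explicit diagonal coefficient formula $\mathcal{A}_\ell(\lambda) = \mathrm{constant}_\ell \cdot \prod_{n=1}^\ell (\lambda n^2 + \kappaconstant^2)$, which is what makes the Jantzen filtration nontrivial exactly at the energies $-\kappaconstant^2/n^2$. If anything deserves comment, it is the observation that the zero set of these polynomials contains only negative real numbers, which is why the corollary can be stated purely in terms of the negative part of the spectrum rather than the full physical spectrum.
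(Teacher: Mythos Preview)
Your proposal is correct and matches the paper's approach: the corollary is marked \qed\ with no proof, indicating it is meant to be read off immediately from the preceding proposition (which pins down exactly when the Jantzen filtration is nontrivial) together with the description of $\PhysSol(\lambda)$ in Subsection~\ref{subsec-sol}. Your added remark that the zeros of the diagonal coefficients $\mathcal{A}_\ell$ are all negative real numbers is a helpful clarification of why the comparison with the \emph{negative} part of the spectrum suffices.
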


\subsection{Non-degenerate invariant Hermitian forms}
\label{subsec-ndg-hermitian-forms}

Here we shall use the fact that $\SingSolFam$ is isomorphic to the $\sigma$-twisted dual of $\RegSolFam$ (this follows from Theorem~\ref{thm-spherical-classification}).  Intertwiners of the form
\[
\mathcal{A} \colon \RegSolFam^{*,\sigma}\longrightarrow \RegSolFam,
\]
between a family and its $\sigma$-twisted dual were analyzed in \cite[Sec.~4.2]{Ber2017}, where it was pointed out  that for a suitable $c\in \C^{\times}$ the formula 
\[
\bigl \langle \mathcal{A}^{(p)}[\alpha ], [\varphi ]\bigr \rangle =  c \cdot  \Bigl  ((T-\lambda )^{-p}\cdot \alpha (\varphi) \Bigr ) \Big\vert_\lambda 
\]
determines an invariant and nondegenerate hermitian form on the $p$'th Janzten quotient. Here $\alpha$ and $\varphi$ determine elements in the $p$'th parts of the Jantzen filtrations, as in Subsection~\ref{subsec-jantzen-review}, and $[\alpha]$ and $[\varphi]$ are the corresponding elements in the Jantzen quotients.  The term on the right is a polynomial in $T$, and in the formula we take its value   at $\lambda$.

The Hermitian  forms, like the Jantzen quotients themselves, are easy to compute using Proposition~\ref{prop-intertwiner}. Compare \cite[Sec.~5]{Ber2017} or \cite[Sec.~IV]{Subag}.  Doing so, one obtains  the following:

\begin{proposition}\label{propos5}
For $\lambda\in \R$ the above Hermitian form  is  definite  precisely on the following  Jantzen quotients:
\begin{enumerate}[\rm (a)]

\item 
The unique Jantzen quotient when $\lambda \ge 0$.

 \item 
The finite-dimensional Janzten quotient when 
$\eigv   = -{\kappaconstant ^2}/{n^2}$, for some natural number $n\ge 1$.\qed

\end{enumerate}
\end{proposition}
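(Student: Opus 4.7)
The plan is to compute the Hermitian form on each Jantzen quotient explicitly in terms of the diagonal coefficients $\mathcal A_\ell(\lambda)$ from Proposition~\ref{prop-intertwiner}, and then determine its signature by a factor-by-factor sign analysis.

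By \cite[Sec.~4.2]{Ber2017}, after fixing $K$-equivariant identifications $\RegSolFam^\ell \cong \O \otimes_{\C} W_\ell$, the Hermitian form on the $p$-th Jantzen quotient, restricted to its $\ell$-isotypical subspace, is proportional to $\tfrac{1}{p!}\mathcal A_\ell^{(p)}(\lambda)$ times a fixed positive-definite $K$-invariant pairing on $W_\ell$.  Here $\ell$ contributes to the $p$-th quotient precisely when $\mathcal A_\ell$ vanishes to order exactly $p$ at $\lambda$.  Definiteness on a Jantzen quotient is therefore equivalent to constancy of sign of these scalars as $\ell$ ranges over the indices in that quotient.

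The sign analysis proceeds from the factored form
\[
\mathcal A_\ell(\lambda) = \mathrm{constant}_\ell \cdot \prod_{m=1}^\ell (\lambda m^2 + \kappaconstant^2).
\]
For $\lambda \ge 0$, every factor is strictly positive and the Jantzen filtration is trivial, giving definiteness of the unique quotient.  For $\lambda_0 = -\kappaconstant^2/n^2$ restricted to $\ell < n$, every factor equals $\kappaconstant^2(1 - m^2/n^2) > 0$, giving definiteness of the finite-dimensional quotient.  For the same $\lambda_0$ but $\ell \ge n$, the Jantzen filtration has $p = 1$ and
\[
\mathcal A_\ell'(\lambda_0) = \mathrm{constant}_\ell \cdot n^2 \prod_{\substack{m=1 \\ m \ne n}}^\ell (\lambda_0 m^2 + \kappaconstant^2)
\]
acquires one negative factor for each $m$ with $n < m \le \ell$, so the sign alternates with $\ell$ and the form is indefinite.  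For $\lambda < 0$ not of the form $-\kappaconstant^2/n^2$, the Jantzen filtration is trivial and $\sqrt{-\kappaconstant^2/\lambda}$ is a non-integer real number lying strictly between two consecutive positive integers; the factor $\lambda m^2 + \kappaconstant^2$ is positive for $m$ below this threshold and negative above it, so $\mathcal A_\ell(\lambda)$ changes sign as $\ell$ crosses it, and the form is indefinite.

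The main obstacle is establishing that the $\ell$-dependent constants $\mathrm{constant}_\ell$ in Proposition~\ref{prop-intertwiner} are themselves sign-coherent, so that they do not introduce spurious sign changes in the analysis above.  This is a normalization issue: one must align the isomorphisms $\RegSolFam^\ell \cong \O \otimes W_\ell$ with an iterated Runge-Lenz ladder.  The explicit recursion \eqref{ac} supplies the tool, since the coefficient of $\psi^{\ell-1,\ell-1}$ there is the positive real scalar $\sqrt{2\ell(2\ell+1)}\,\ell$; starting from $\psi^{0,0}$ and iterating the lowering operator $R_-$ thus furnishes a canonical sign-coherent basis vector in each $\ell$-isotypical component of $\RegSolFam$, with the analogous statement for $\SingSolFam$ following from the Wronskian duality of Theorem~\ref{thm-Wr-theta}.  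Once this normalization is in place, the sign count applies verbatim, in precise analogy with \cite[Sec.~5]{Ber2017} and \cite[Sec.~IV]{Subag}.
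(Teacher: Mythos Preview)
Your approach matches the paper's: the proposition is stated with a \qed and the surrounding text merely says the form is ``easy to compute using Proposition~\ref{prop-intertwiner},'' citing \cite[Sec.~5]{Ber2017} and \cite[Sec.~IV]{Subag}. Your factor-by-factor sign analysis of $\prod_{m=1}^\ell(\lambda m^2+\gamma^2)$ across the four regimes of $\lambda$ is exactly the intended computation, and the case breakdown is correct.

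The final paragraph, however, is confused about the ladder direction. Formula~\eqref{ac} sends $\psi^{\ell,\ell}$ to a combination of $\psi^{\ell-1,\ell-1}$ and $\psi^{\ell+1,\ell-1}$. You cannot ``start from $\psi^{0,0}$ and iterate $R_-$'' to reach the highest-weight vectors $\psi^{\ell,\ell}$ for $\ell\ge 1$: applying $R_-$ to $\psi^{0,0}$ lands in the $m=-1$ line of $\ell=1$, and subsequent applications never return to highest weight. The positive coefficient $\sqrt{2\ell(2\ell+1)}\,\ell$ you cite is attached to $\psi^{\ell-1,\ell-1}$ and runs the wrong way for the construction you describe.

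What one actually needs is the \emph{ratio} of the raising coefficient in \eqref{ac} for $\RegSolFam$ to the corresponding coefficient for $\SingSolFam$. The $\theta$-twisted duality of Theorem~\ref{thm-singsol-is-a-twisted-dual}, together with $\theta(R_i)=-R_i$, converts the lowering matrix entry on one side into the raising matrix entry on the dual side; comparing the two yields $\mathrm{const}_{\ell}/\mathrm{const}_{\ell-1}$ as an explicit positive rational in $\ell$. Alternatively, and more directly, note that for real $\lambda$ the radial solutions $F_{\ell,\lambda}$ and $G_{\ell,\lambda}$ have real Taylor and Laurent coefficients (by the Frobenius method), so the sesquilinear Wronskian pairing $\wr(G_{\ell,\lambda},\overline{F_{\ell,\lambda}})=\wr(G_{\ell,\lambda},F_{\ell,\lambda})=-(2\ell+1)$ is real of constant sign in~$\ell$; this pins down the Hermitian form in the natural bases without any appeal to a ladder. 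Either route closes the gap, after which your sign table applies verbatim.
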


We arrive at the following theorem, which completely recovers both the physical spectrum of the rescaled Schr\"odinger operator and the physical solution spaces themselves from the algebraic family $\RegSolFam$.

\begin{theorem}\label{th3}
The physical  spectrum of the rescaled Schr\"odinger operator $T$ coincides with the set of all  $\eigv\in \R$ for which $\RegSol(\eigv)$  has  a nonzero infinitesimally unitary Jantzen quotient. There is in this case a unique infinitesimally unitary Jantzen quotient, and it  is the submodule     $\PhysSol(\lambda)$  of $\RegSol(\lambda)$.  \qed
\end{theorem}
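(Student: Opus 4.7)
The plan is to derive Theorem~\ref{th3} by combining three ingredients already in place: (1) the explicit description of $\PhysSol(\lambda)$ from Subsection~\ref{subsec-sol}, characterised by decay at infinity in $\R^3_0$; (2) the explicit description of the Jantzen filtration of $\RegSol(\lambda)$ attached to the intertwiner $\mathcal{A}$ of diagram \eqref{eq-intertwiner-diagram}, as recorded in the Proposition preceding Proposition~\ref{propos5}; and (3) Proposition~\ref{propos5}, which identifies exactly those Jantzen quotients on which the induced invariant Hermitian form is definite, and hence infinitesimally unitary.

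First I would split $\lambda\in\R$ into the three ranges that also appear in Subsection~\ref{subsec-sol}: the continuous part $\lambda\ge 0$, the discrete values $\lambda=-\kappaconstant^2/n^2$ with $n\in\N$, and all other negative $\lambda$. In each range, the structural proposition tells us what the Jantzen quotients look like: a single (trivial-filtration) quotient in the first and third ranges, and two nontrivial quotients in the second. Proposition~\ref{propos5} then singles out the infinitesimally unitary Jantzen quotients: the unique quotient when $\lambda\ge 0$, the finite-dimensional quotient carrying the $K$-types $\ell=0,1,\ldots,n-1$ when $\lambda=-\kappaconstant^2/n^2$, and none at all in the third range. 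These are precisely the values of $\lambda$ belonging to the physical spectrum \eqref{eq-spec-H} (rewritten in terms of $T$), so the first assertion of the theorem is established.

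Second, I would match the unique infinitesimally unitary Jantzen quotient with $\PhysSol(\lambda)$. For $\lambda\ge 0$ both equal $\RegSol(\lambda)$, so nothing needs checking. For $\lambda=-\kappaconstant^2/n^2$, the Jantzen-filtration proposition identifies $\RegSol(\lambda)^{(0)}$ with the $(\g|_\lambda,K)$-submodule of $\RegSol(\lambda)$ spanned by the isotypical components $\ell<n$, and the Remark after that proposition notes that this is actually a submodule, so the Jantzen \emph{quotient} is realised concretely as a submodule. Subsection~\ref{subsec-sol} describes $\PhysSol(\lambda)$ by the same concrete recipe, and the $(\g|_\lambda,K)$-module structures agree because both inherit the ambient action on $\RegSol(\lambda)$.

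I do not anticipate a substantive obstacle: with the preceding two propositions in hand, the theorem is essentially a bookkeeping result that reconciles the algebraic (Jantzen/unitarity) and analytic (decay at infinity) characterisations. The only point that merits attention is the compatibility of the Harish-Chandra module structures in the case $\lambda=-\kappaconstant^2/n^2$, and this is immediate once both the Jantzen quotient and $\PhysSol(\lambda)$ are exhibited as the same subspace of $\RegSol(\lambda)$. The conceptual content of the theorem is not in its proof but in the fact that it can be proved at all, namely that the Jantzen technique, when applied to the single algebraic family $\RegSolFam$ and the canonical intertwiner $\mathcal{A}$, simultaneously detects the full physical spectrum of $T$ and cuts out the correct physical solution spaces together with their invariant inner products.
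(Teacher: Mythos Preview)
Your proposal is correct and matches the paper's own treatment: the theorem is stated with a \qed and no separate proof, precisely because it follows immediately from the Jantzen-filtration proposition, Proposition~\ref{propos5}, and the description of $\PhysSol(\lambda)$ in Subsection~\ref{subsec-sol}, exactly as you outline.
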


\section{The Schr\"odinger Operator as a Self-Adjoint Operator}
\label{sec-spectral}

In this section we shall begin an examination of  the Schr\"odinger operator from the point of view of Hilbert space spectral theory by reviewing how one can obtain  a self-adjoint operator (in the technical sense of unbounded operator theory) from the rescaled Schr\"odinger operator
\begin{equation}
\label{eq-schrodinger-op2}
T=- \triangle-\frac{2 \kappaconstant  }{r} .
\end{equation}
There are two well-known approaches, both  leading to the same self-adjoint operator, and we shall need to make use of both of them in what follows.

\subsection{The Kato-Rellich theorem}

The    function $-2\kappaconstant  /r$ is locally squ\-are-int\-egrable on $\R^3$, and therefore the operator \eqref{eq-schrodinger-op2} is well-defined as map  from $C_c^\infty (\R^3)$ into $\mathcal{L}^2(\R^3)$.  Initially, we shall view $T$ as  a symmetric operator with this domain. 
The domain can then be extended, as follows.  Denote by $\mathcal{H}^2 (\R^3)$ the completion of $C_c^\infty (\R^3)$ in the norm 
\[
\| f \|^2 _{\mathcal{H}^2(\R^3)} =  \| \Delta f \|^2+ \| f\|^2 ,
\]
where---here and below---the unlabelled norms and  inner products are to be taken in $\mathcal{L}^2(\R^3)$.  This is a standard Sobolev space, of course. The inclusion of $C_c^\infty (\R^3)$ into $\mathcal{L}^2 (\R^3)$ extends to a continuous inclusion of $\mathcal{H}^2 (\R^3)$ into $\mathcal{L}^2 (\R^3)$. Moreover: 

\begin{lemma}
\label{lem-sobolev}
 The inclusion of $C_c^\infty (\R^3)$ into $\mathcal{L}^\infty  (\R^3)$ extends to a continuous inclusion of $\mathcal{H}^2 (\R^3)$ into $\mathcal{L}^\infty (\R^3)$.
\end{lemma}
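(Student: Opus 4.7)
The plan is to prove the lemma via the Fourier transform, exploiting the fact that the dimension $3$ is small enough for $(1+|\xi|^4)^{-1}$ to be integrable on $\R^3$.

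First I would establish the target inequality on test functions. For $f \in C_c^\infty(\R^3)$, Plancherel's theorem gives
\[
\| f \|^2 _{\mathcal{H}^2(\R^3)} \;=\; \| f\|^2+\| \Delta f \|^2 \;=\; \int_{\R^3} (1 + |\xi|^4)\, |\hat f(\xi)|^2 \, d\xi .
\]
Cauchy-Schwarz then yields
\[
\|\hat f\|_{\mathcal{L}^1} \;\le\; \Bigl(\int_{\R^3} \frac{d\xi}{1+|\xi|^4}\Bigr)^{1/2} \cdot \|f\|_{\mathcal{H}^2(\R^3)} ,
\]
where the first factor is finite because $4 > 3$, so the integrand is bounded near the origin and decays like $|\xi|^{-4}$ at infinity. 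Fourier inversion gives $\|f\|_{\mathcal{L}^\infty} \le (2\pi)^{-3/2}\|\hat f\|_{\mathcal{L}^1}$, and combining these inequalities yields $\|f\|_{\mathcal{L}^\infty} \le C \|f\|_{\mathcal{H}^2(\R^3)}$ for all $f \in C_c^\infty(\R^3)$ and some absolute constant $C$.

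Next I would extend by continuity. Any $f \in \mathcal{H}^2(\R^3)$ is, by definition, the $\mathcal{H}^2$-limit of a Cauchy sequence $\{f_n\} \subset C_c^\infty(\R^3)$. The estimate just proved shows that $\{f_n\}$ is also Cauchy in $\mathcal{L}^\infty(\R^3)$, so it converges uniformly to some bounded measurable function $g$ with $\|g\|_{\mathcal{L}^\infty} \le C \|f\|_{\mathcal{H}^2(\R^3)}$. To finish, I need to identify $g$ with the $\mathcal{L}^2$-representative of $f$ referred to in the statement preceding the lemma; this is the only step requiring any care. But $\{f_n\}$ converges to $f$ in $\mathcal{L}^2$, so a subsequence converges to $f$ almost everywhere, while the full sequence converges pointwise everywhere to $g$. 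Hence $f = g$ almost everywhere, so the $\mathcal{H}^2$ element $f$ admits an $\mathcal{L}^\infty$ representative with the desired bound, completing the proof.

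I do not expect any real obstacle: this is a standard Sobolev embedding, and the only dimension-dependent input---the integrability of $(1+|\xi|^4)^{-1}$ on $\R^3$---is the crucial and simplest place where the dimension enters.
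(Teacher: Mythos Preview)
Your proof is correct and complete. The paper does not actually prove this lemma; it simply says ``This is a special case of the Sobolev embedding theorem'' and moves on. Your Fourier-transform argument is precisely the standard proof of that embedding in the relevant case $\mathcal{H}^2(\R^3)\hookrightarrow \mathcal{L}^\infty(\R^3)$, so you have supplied the details the paper omits rather than taken a different route.
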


This is a special case of the Sobolev embedding theorem.  As a result of the lemma, the operator \eqref{eq-schrodinger-op2} is in fact well-defined as a bounded linear operator from $\mathcal{H}^2(\R^3)$ into $\mathcal{L}^2(\R^3)$.
The following result is well known; see for example \cite[Chap.\ V, Thm.\ 5.4]{Kato76} for a proof.

\begin{theorem}
\label{thm-H-bounded-below-ess-s-a}
The rescaled Schr\"odinger operator $T$ for the 3-dimensional hydrogen atom is essentially self-adjoint  and bounded below on the domain $C_c^\infty (\R^3)$. The  domain of the unique self-adjoint extension is   the Sobolev space $\mathcal{H}^2 (\R^3) \subseteq \mathcal{L}^2 (\R^3)$.
\end{theorem}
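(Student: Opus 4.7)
The plan is to deduce the theorem from the Kato-Rellich perturbation theorem applied to the decomposition $T = -\Delta + V$, where $V$ is multiplication by the Coulomb potential $-2\kappa/r$. Recall that the (unperturbed) free Laplacian $-\Delta$ is already known to be essentially self-adjoint on $C_c^\infty(\mathbb{R}^3)$, with unique self-adjoint extension whose domain is precisely the Sobolev space $\mathcal{H}^2(\mathbb{R}^3)$; this is standard and follows from Fourier transforming $-\Delta$ into the self-adjoint multiplication operator by $|\xi|^2$. The whole theorem will then follow once I verify that $V$ is symmetric on $\mathcal{H}^2(\mathbb{R}^3)$ and is infinitesimally $-\Delta$-bounded, i.e.\ that for every $\varepsilon>0$ there exists $C_\varepsilon$ with
\[
\|V\psi\|_{\mathcal{L}^2} \le \varepsilon\,\|\Delta\psi\|_{\mathcal{L}^2} + C_\varepsilon \|\psi\|_{\mathcal{L}^2}, \qquad \psi\in \mathcal{H}^2(\mathbb{R}^3).
\]

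The first step is to establish a non-uniform version of the above estimate. Split the potential as $r^{-1} = r^{-1}\chi_{\{r\le 1\}} + r^{-1}\chi_{\{r>1\}}$. The second piece is a bounded multiplication operator with operator norm $\le 1$, while the first has $r^{-1}\chi_{\{r\le 1\}} \in \mathcal{L}^2(\mathbb{R}^3)$ by a direct polar-coordinate computation. Hence
\[
\|r^{-1}\psi\|_{\mathcal{L}^2} \le \|r^{-1}\chi_{\{r\le 1\}}\|_{\mathcal{L}^2}\, \|\psi\|_{\mathcal{L}^\infty} + \|\psi\|_{\mathcal{L}^2},
\]
and Lemma~\ref{lem-sobolev} (Sobolev embedding) yields $\|\psi\|_{\mathcal{L}^\infty}\le c\,(\|\Delta\psi\|_{\mathcal{L}^2}+\|\psi\|_{\mathcal{L}^2})$. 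This gives a bound of the desired form but with an uncontrolled (positive) coefficient in front of $\|\Delta\psi\|$.

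The main technical step, and in some sense the crux of the argument, is to upgrade this to an \emph{infinitesimal} relative bound. This is achieved by a scaling argument: for $\lambda>0$ and $\psi\in \mathcal{H}^2(\mathbb{R}^3)$ set $\psi_\lambda(x) = \lambda^{3/2}\psi(\lambda x)$, so that $\|\psi_\lambda\|_2 = \|\psi\|_2$, $\|\Delta \psi_\lambda\|_2 = \lambda^2 \|\Delta\psi\|_2$, and $\|r^{-1}\psi_\lambda\|_2 = \lambda \|r^{-1}\psi\|_2$. Substituting $\psi_\lambda$ into the preliminary estimate and dividing by $\lambda$ gives
\[
\|r^{-1}\psi\|_{\mathcal{L}^2} \le C\lambda \|\Delta\psi\|_{\mathcal{L}^2} + C\lambda^{-1} \|\psi\|_{\mathcal{L}^2},
\]
and taking $\lambda$ small makes the coefficient of $\|\Delta\psi\|$ arbitrarily small. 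Multiplication by the constant $-2\kappa$ preserves the estimate, so $V$ is infinitesimally $-\Delta$-bounded.

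With this relative bound in hand, the Kato-Rellich theorem (see \cite[Chap.~V, Thm.~5.4]{Kato76}) immediately implies that $T = -\Delta + V$ is self-adjoint on $\mathcal{D}(-\Delta) = \mathcal{H}^2(\mathbb{R}^3)$ and essentially self-adjoint on any core of $-\Delta$, in particular on $C_c^\infty(\mathbb{R}^3)$. Boundedness below then follows from a standard consequence of Kato-Rellich: for any $a<1$ one may choose $b$ so that $\|V\psi\| \le a\|{-}\Delta\psi\| + b\|\psi\|$; combining with $\langle -\Delta\psi,\psi\rangle \ge 0$ and Cauchy-Schwarz yields $\langle T\psi,\psi\rangle \ge -M\|\psi\|^2$ for a constant $M$ depending only on $a$ and $b$. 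The anticipated obstacle is purely the infinitesimal-boundedness step; the rest is a bookkeeping application of general perturbation theory.
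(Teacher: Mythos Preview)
Your proof is correct and follows exactly the approach the paper has in mind: the paper does not give its own argument but simply cites \cite[Chap.~V, Thm.~5.4]{Kato76} and remarks that this theorem ``is the archetypical application of the Kato-Rellich theorem.'' You have supplied precisely the standard details behind that citation---the $L^2+L^\infty$ splitting of the Coulomb potential, the Sobolev embedding (Lemma~\ref{lem-sobolev}), the scaling trick to obtain infinitesimal relative boundedness, and the invocation of Kato--Rellich---so there is nothing to compare.
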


\begin{remark}
\label{eq-operator-bounded-below}
Recall for later that to say that  $T$  is bounded below on $\mathcal{H}^2 (\R^3)$ is to say that there is a constant $C_0 \ge 0$ such that 
\begin{equation*}
\langle T f,f\rangle \ge  - C _0 \langle f,f \rangle 
\end{equation*}
for every $f\in \mathcal{H}^2( \R^3 )$.  Equivalently,    the spectrum of  the self-adjoint operator $T$ is bounded below by $-C_0$. 
\end{remark}

Theorem~\ref{thm-H-bounded-below-ess-s-a}  is the archetypical application  of the Kato-Rellich theorem, \cite[Chap.\ 5, Thm\ 4.4]{Kato76}, and accordingly we shall call the self-adjoint operator it provides the \emph{Kato-Rellich extension} of  \eqref{eq-schrodinger-op2}. 

\begin{theorem}
The spectrum of the Kato-Rellich extension of $T$ consists of the nonnegative reals $[0,\infty)$ together with a countable discrete subset of $(-\infty,0)$, each element of which is an eigenvalue of finite multiplicity.
\end{theorem}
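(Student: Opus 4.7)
The plan is to decompose the argument into three standard pieces: identify the essential spectrum, describe the discrete spectrum, and pin down the containment $[0,\infty)\subseteq \operatorname{Spec}(T)$. Throughout, let $T_0=-\Delta$ with domain $\mathcal{H}^2(\R^3)$ and write $V=-2\kappaconstant/r$, so that the Kato--Rellich extension is $T=T_0+V$ on the same domain.

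First I would show that $V$ is relatively compact with respect to $T_0$, meaning that $V(T_0+C)^{-1}\colon \mathcal{L}^2(\R^3)\to \mathcal{L}^2(\R^3)$ is compact for some (hence any) $C>0$ in the resolvent set of $T_0$. The standard way to do this is to split $V=V_1+V_2$ where $V_1=V\cdot \chi_{B_R}$ is supported in a ball and $V_2=V-V_1$ vanishes at infinity. The operator $V_2(T_0+C)^{-1}$ is a norm limit of compact operators because $V_2\to 0$ in $\mathcal{L}^\infty$ as $R\to\infty$ and $(T_0+C)^{-1}$ is bounded, while $V_1(T_0+C)^{-1}$ is compact because $V_1\in \mathcal{L}^2(\R^3)$ (Coulomb is locally $\mathcal{L}^2$ in three dimensions) and $(T_0+C)^{-1}$ maps $\mathcal{L}^2$ into $\mathcal{H}^2$, which embeds compactly into $\mathcal{L}^2_{\mathrm{loc}}$ by Rellich--Kondrachov. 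By Weyl's theorem on the invariance of the essential spectrum under relatively compact perturbations, one then concludes
\[
\operatorname{Spec}_{\mathrm{ess}}(T) = \operatorname{Spec}_{\mathrm{ess}}(T_0)=[0,\infty).
\]

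Second, by the very definition of essential spectrum, the complement $\operatorname{Spec}(T)\setminus [0,\infty)$ consists of isolated eigenvalues of finite multiplicity, which therefore can accumulate only on $[0,\infty)$. Because $T$ is bounded below (Remark~\ref{eq-operator-bounded-below}), this discrete set is contained in $[-C_0,0)$, so it is at most a countable set whose only possible accumulation point is $0$. This gives the desired description of the negative part of the spectrum.

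Third, $[0,\infty)\subseteq \operatorname{Spec}(T)$ follows since the essential spectrum is always contained in the full spectrum. (If one wishes an independent verification, one can construct Weyl sequences: for each $\lambda\ge 0$ take normalized approximate plane-wave packets supported far from the origin, on which $V$ acts as a small perturbation because of the decay of $1/r$.) The hardest step is the first one, since it requires the Rellich-type compactness argument for the Coulomb singularity; once the essential spectrum has been identified with $[0,\infty)$, the remaining structural assertions are immediate consequences of Weyl's theorem together with the boundedness from below established in Theorem~\ref{thm-H-bounded-below-ess-s-a}.
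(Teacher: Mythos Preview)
Your argument is correct and follows precisely the route the paper indicates: show that the Coulomb potential is a relatively compact perturbation of $-\Delta$ and invoke Weyl's theorem, so that $\operatorname{Spec}_{\mathrm{ess}}(T)=[0,\infty)$ and everything below zero is discrete spectrum of finite multiplicity. The paper merely cites \cite[Chap.~V, Thm.~5.7]{Kato76} for this, so your sketch is in fact more detailed than what appears there.
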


This is proved by arguing that the potential term $-2\kappaconstant  /r$  in the rescaled Schr\"o\-dinger operator is a relatively compact perturbation of the kinetic term; see for example \cite[Chap.\ V, Thm.\ 5.7]{Kato76}.  

By elliptic regularity, the negative energy eigenfunctions for the Kato-Rellich extension are smooth functions on the open set $\R^3_0\subseteq \R^3$. They are, of course eigenfunctions, in the ordinary sense there, and they are square-integrable. Conversely, the space of all such square-integrable eigenfunctions may be determined explicitly, as we indicated in Section~\ref{subsec-sol}, and by Theorem~\ref{thm-H-bounded-below-ess-s-a} they all belong to the   domain of the Kato-Rellich extension. So after these explicit computations we arrive at the following:

\begin{theorem}
\label{thm-Kato-Rellich-spectrum}
The spectrum of the Kato-Rellich extension of the rescaled Schr\"od\-inger operator $T$ is the set 
\[\Bigl \{\, - \frac{\kappaconstant ^2 }{ n^2}   : n=1,2,\ldots \,  \Bigr\} \, \cup\, [0,\infty).
\]
\end{theorem}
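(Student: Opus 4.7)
\medskip
\noindent\textbf{Proof plan.}  The strategy is to combine the previous theorem (which reduces the problem to locating the discrete negative eigenvalues) with explicit information about radial eigenfunctions, using Sobolev embedding to pin down the boundary behaviour at the origin.

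First, by Theorem \ref{thm-H-bounded-below-ess-s-a} the domain of the Kato--Rellich extension is $\mathcal{H}^2(\R^3)$, which by Lemma~\ref{lem-sobolev} embeds continuously into $\mathcal{L}^\infty(\R^3)$.   So any $\mathcal{L}^2$-eigenfunction $\psi$ of the extension at eigenvalue $\lambda$ is bounded on $\R^3$.  By  elliptic regularity applied to $T$ on the open set $\R^3_0$ (where the coefficients are smooth), $\psi$ is smooth there.  Moreover $\psi$ lies in a finite-dimensional eigenspace by the preceding theorem, and $O(3)$ preserves this eigenspace, so after decomposing $\mathcal{L}^2$-iso\-typic\-ally we may assume $\psi$ is $K$-finite.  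Thus the $\mathcal{L}^2$-eigenspace at $\lambda<0$ is spanned by smooth, bounded, $K$-finite eigenfunctions on $\R^3_0$ that are also square-integrable.

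Second, I would invoke separation of variables as in Section~\ref{sec-solutions}.  On each $\ell$-isotypical component, the radial ODE \eqref{equa3.1} has a two-dimensional solution space spanned by $F_{\ell,\lambda}$ and $G_{\ell,\lambda}$, and only $F_{\ell,\lambda}$ is bounded at $r=0$.  Since $\psi$ is bounded on all of $\R^3$, its radial components must each be a scalar multiple of $F_{\ell,\lambda}$.  The question therefore reduces to: for which $\lambda<0$ is $F_{\ell,\lambda}$ square-integrable with respect to $r^2\,dr$ on $(0,\infty)$?

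Third, I would appeal to the explicit formula \eqref{eq-SpecialFunc1}.  Writing $\mu=\sqrt{-\lambda}>0$,
\[
F_{\ell,\lambda}(r)=r^\ell e^{-\mu r}\,M\!\bigl(-\kappaconstant/\mu+\ell+1,\,2\ell+2,\,2\mu r\bigr).
\]
The classical asymptotic behaviour of Kummer's function at $+\infty$ is $M(a,b,z)\sim \Gamma(b)\Gamma(a)^{-1}e^{z}z^{a-b}$ unless $a$ is a non-positive integer, in which case $M(a,b,\cdot)$ is a polynomial.  In the generic case, $F_{\ell,\lambda}(r)$ grows like $e^{+\mu r}$ at infinity and is \emph{not} square-integrable.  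In the terminating case $-\kappaconstant/\mu+\ell+1=-k$ for some integer $k\ge 0$, so $\mu=\kappaconstant/n$ with $n=\ell+k+1\ge\ell+1$, and $F_{\ell,\lambda}$ decays exponentially, so it is square-integrable; being smooth on $\R^3_0$, vanishing at the origin, and exponentially decaying, it then lies in $\mathcal{H}^2(\R^3)$ and hence in the Kato--Rellich domain.  Running over $\ell\ge 0$, this yields exactly the spectral values $\lambda=-\kappaconstant^2/n^2$, $n=1,2,\dots$, finishing the identification of the negative spectrum.

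The main obstacle is the asymptotic dichotomy for Kummer's function: one must distinguish polynomial termination from exponential blow-up carefully enough to ensure that the \emph{only} square-integrable regular solutions occur at the stated values of $\lambda$.  A cleaner, more representation-theoretic route would be to invoke Theorem~\ref{th3}, which already characterises $\PhysSol(\lambda)$ as the unique infinitesimally unitary Jantzen quotient of $\RegSol(\lambda)$, but verifying that ``infinitesimally unitary'' matches ``square-integrable in the Kato--Rellich domain'' still ultimately rests on the same decay analysis.
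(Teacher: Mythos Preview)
Your proposal is correct and follows essentially the same route as the paper: the paper's argument (in the paragraph preceding the theorem) is to use elliptic regularity to identify negative eigenfunctions as smooth, square-integrable solutions on $\R^3_0$, then defer to the ``explicit computations'' of Section~\ref{subsec-sol} (ultimately the physics literature) to determine which $\lambda<0$ admit such solutions, and finally cite Theorem~\ref{thm-H-bounded-below-ess-s-a} to place them back in the self-adjoint domain.  You have simply filled in those explicit computations using the Kummer asymptotics applied to \eqref{eq-SpecialFunc1}, together with the Sobolev embedding of Lemma~\ref{lem-sobolev} to force regularity at the origin.

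One small slip: when $\ell=0$ the regular solution $F_{0,\lambda}$ does \emph{not} vanish at the origin (its leading term is $r^0=1$), so ``vanishing at the origin'' is not the right justification for membership in $\mathcal{H}^2(\R^3)$.  The correct observation is that in the terminating case $F_{\ell,\lambda}(r)Y_\ell(\theta)$ is a polynomial in the Cartesian coordinates times a decaying exponential, hence a Schwartz function on $\R^3$, and therefore lies in $\mathcal{H}^2(\R^3)$.
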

\subsection{The Friedrichs extension}
In this section we shall review the Friedrichs extension of $T$. See for instance \cite[Chap.~VI, Sec.~3]{Kato76} for full details.

We have already seen that $T$ is bounded below on  $C^\infty _c (\R^3_0)$ (and indeed on a larger domain). Choose a constant $C_0 >  0$ so that 
\[
\langle Tf, f\rangle + C_0 \langle f, f\rangle \ge 0
\]
for every $f\in C_c^\infty (\R^3_0)$, and then choose $C {>}C_0$. Using this choice of $C$, denote by $\mathcal{F}(\R^3)$ the Hilbert space completion of $C_c^\infty (\R^3_0)$ in the norm associated to the inner product 
\begin{equation}
\label{eq-K-inner-product}
\langle f,g\rangle_{\mathcal{F}(\R^3)} = \langle (T+C)f,g\rangle_{\mathcal{L}^2 (\R^3)}   .
\end{equation}
There is a bounded linear map
\[
\mathcal{F} (\R^3) \longrightarrow \mathcal{L}^2 (\R^3)
\]
that extends the inclusion of $C_c^\infty (\R^3_0)$ into $\mathcal{L}^2(\R^3)$. The extended map is an inclusion, too, and we can therefore regard $\mathcal{F} (\R^3) $ as a dense linear subspace of $\mathcal{L}^2 (\R^3)$ as well as a Hilbert space in its own right.

\begin{theorem}
\label{thm-Friedrichs-ext}
There is a unique extension of the operator \eqref{eq-schrodinger-op2} with domain $C_c^\infty (\R^3_0)$ to a self-adjoint operator on a domain that is included within  the subspace  $\mathcal{F} (\R^3)\subseteq \mathcal{L}^2 (\R^3)$.  The extension is characterized by the formula
\[
\langle (T+C)^{-1}f,g\rangle _{\mathcal{F}(\R^3)}= \langle f,g\rangle_{\mathcal{L}^2 (\R^3)}
\]
for all $f\in \mathcal{L}^2 (\R^3)$ and all $g\in \mathcal{F}(R^3)$.
\end{theorem}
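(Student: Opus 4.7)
The plan is to carry out the standard Friedrichs construction: build the bounded positive self-adjoint resolvent $R = (T+C)^{-1}$ by Riesz representation on the Hilbert space $\F(\R^3)$, and then recover the desired self-adjoint extension as $T := R^{-1} - C$. First I would verify that the natural map $\F(\R^3) \to \mathcal{L}^2(\R^3)$ extending the inclusion $C_c^\infty(\R^3_0) \hookrightarrow \mathcal{L}^2(\R^3)$ is an injection, so that $\F(\R^3)$ really sits inside $\mathcal{L}^2(\R^3)$. Continuity of this map is immediate from the inequality
\[
(C - C_0)\|f\|_{\mathcal{L}^2(\R^3)}^2 \le \langle (T+C)f,f\rangle_{\mathcal{L}^2(\R^3)} = \|f\|_{\F(\R^3)}^2,
\]
which holds on $C_c^\infty(\R^3_0)$ thanks to Remark~\ref{eq-operator-bounded-below} and the strict inequality $C > C_0$. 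For injectivity, if $f \in \F(\R^3)$ is the $\F$-limit of $f_n \in C_c^\infty(\R^3_0)$ and $f_n \to 0$ in $\mathcal{L}^2(\R^3)$, then for any fixed $m$ the symmetry of $T+C$ on $C_c^\infty(\R^3_0)$ gives
\[
\langle f_n, f_m\rangle_{\F(\R^3)} = \langle f_n,(T+C)f_m\rangle_{\mathcal{L}^2(\R^3)} \longrightarrow 0,
\]
so $\langle f, f_m\rangle_{\F(\R^3)} = 0$ for every $m$ and hence $f = 0$ by density.

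Once this is secured, the functional $g \mapsto \langle f, g\rangle_{\mathcal{L}^2(\R^3)}$ is bounded on $\F(\R^3)$ for each $f \in \mathcal{L}^2(\R^3)$, and the Riesz representation theorem produces a unique $Rf \in \F(\R^3)$ satisfying the identity displayed in the theorem. Routine arguments show that $R\colon \mathcal{L}^2(\R^3) \to \F(\R^3) \subseteq \mathcal{L}^2(\R^3)$ is bounded, symmetric, positive and injective; hence $R$ is a bounded self-adjoint operator with dense range, and its inverse $R^{-1}$ is an unbounded self-adjoint operator on $\mathcal{L}^2(\R^3)$ with domain $\mathrm{Range}(R) \subseteq \F(\R^3)$. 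I then define the Friedrichs extension to be $T := R^{-1} - C$. To see that this $T$ extends the original operator on $C_c^\infty(\R^3_0)$, I would use the definition \eqref{eq-K-inner-product} to get $\langle (T+C)\varphi, g\rangle_{\mathcal{L}^2(\R^3)} = \langle \varphi, g\rangle_{\F(\R^3)}$ for all $\varphi, g \in C_c^\infty(\R^3_0)$, extend this by density to all $g \in \F(\R^3)$, and conclude $R(T+C)\varphi = \varphi$, so that $\varphi \in \mathrm{Range}(R)$ and $R^{-1}\varphi = (T+C)\varphi$ as required.

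For uniqueness, any self-adjoint extension $T'$ of $T\vert_{C_c^\infty(\R^3_0)}$ whose domain sits inside $\F(\R^3)$ satisfies the same strict positivity estimate for $T'+C$ and so has a bounded self-adjoint inverse $R'$ on $\mathcal{L}^2(\R^3)$; comparing $R'$ and $R$ via the defining identity on the dense subspace $C_c^\infty(\R^3_0) \subseteq \F(\R^3)$ forces $R' = R$ and hence $T' = T$. The principal technical obstacle is the injectivity of the completion map $\F(\R^3) \to \mathcal{L}^2(\R^3)$ addressed in the first paragraph: this is what makes the strict inequality $C > C_0$ essential and what allows the $\F$- and $\mathcal{L}^2$-inner products to be compared on a single dense subspace. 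Without it the Riesz construction would produce only an abstract self-adjoint operator on $\F(\R^3)$ that one could not honestly interpret as living on a subspace of $\mathcal{L}^2(\R^3)$; after that obstacle is cleared the remaining steps are standard applications of Riesz representation and unbounded-operator theory, and one could shortcut the verification by invoking the general Friedrichs theorem from \cite[Chap.~VI, Sec.~3]{Kato76}.
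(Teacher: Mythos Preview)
Your argument is correct and is precisely the standard Friedrichs construction. The paper itself does not prove this theorem at all: it simply states the result and cites \cite[Chap.~VI, Thm.~2.11]{Kato76}, remarking that this is the Friedrichs extension of $T$ from the initial domain $C_c^\infty(\R^3_0)$. So your proposal supplies exactly the details that the paper outsources to Kato, and there is nothing further to compare.
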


This is   the \emph{Friedrichs extension} of the rescaled Schr\"odinger operator $T$ from the initial domain $C_c^\infty(\R^3_0)$ to a self-adjoint operator that is bounded below; see  for example \cite[Chap.\ VI, Thm\, 2.11]{Kato76}.

\begin{theorem}
\label{thm-Friedrichs-is-Standard-extension}
The Friedrichs extension of the Schr\"odinger  operator from \textup{Theorem~\ref{thm-Friedrichs-ext}} coincides with the Kato-Rellich extension from \textup{Theorem~\ref{thm-H-bounded-below-ess-s-a}}.
\end{theorem}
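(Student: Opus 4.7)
The plan is to prove coincidence by invoking uniqueness. By Theorem~\ref{thm-H-bounded-below-ess-s-a}, $T$ is essentially self-adjoint on $C_c^\infty(\R^3)$, so the Kato-Rellich extension $T_{KR}$ is the \emph{only} self-adjoint extension of $T|_{C_c^\infty(\R^3)}$. Consequently, it suffices to verify that the Friedrichs extension $T_F$ also extends $T|_{C_c^\infty(\R^3)}$, i.e.\ that every $\phi \in C_c^\infty(\R^3)$ lies in $\operatorname{dom}(T_F)$ with $T_F\phi = T\phi$. I would carry this out in two steps: first show $C_c^\infty(\R^3) \subseteq \mathcal{F}(\R^3)$, then identify the action of $T_F$ on such elements.

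For the first step, fix $\phi \in C_c^\infty(\R^3)$ and choose a family of radial cut-offs $\chi_\varepsilon \in C^\infty(\R^3_0)$ with $\chi_\varepsilon \equiv 0$ on $\{|x|\le \varepsilon/2\}$, $\chi_\varepsilon \equiv 1$ on $\{|x|\ge \varepsilon\}$, and $|\nabla \chi_\varepsilon| = O(\varepsilon^{-1})$. Then $\chi_\varepsilon \phi \in C_c^\infty(\R^3_0)$, and I claim $\chi_\varepsilon \phi \to \phi$ in the norm \eqref{eq-K-inner-product}. The $\mathcal{L}^2$ norm of $(1-\chi_\varepsilon)\phi$ is $O(\varepsilon^{3/2})$; the $\mathcal{L}^2$ norm of its gradient picks up contributions from $\nabla\chi_\varepsilon \cdot \phi$ (which is $O(\varepsilon^{-1})$ on an annulus of volume $O(\varepsilon^3)$, hence $\mathcal{L}^2$ norm $O(\varepsilon^{1/2})$) and from $(1-\chi_\varepsilon)\nabla\phi$ (which is $O(\varepsilon^{3/2})$); finally the Coulomb integral $\int |(1-\chi_\varepsilon)\phi|^2/r\,dx$ is $O(\|\phi\|_\infty^2\int_{|x|\le\varepsilon} r^{-1}dx) = O(\varepsilon^2)$. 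All three terms tend to zero, so $\phi$ lies in the completion $\mathcal{F}(\R^3)$.

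For the second step, for any $h \in C_c^\infty(\R^3_0)$ two integrations by parts (entirely legitimate since $\phi$ and $h$ are smooth with compact supports and $1/r$ is locally integrable on $\R^3$) yield
\[
\langle \phi, h\rangle_{\mathcal{F}(\R^3)} = \langle \phi, (T+C)h\rangle_{\mathcal{L}^2} = \langle (T+C)\phi, h\rangle_{\mathcal{L}^2}.
\]
Both sides extend continuously to $h \in \mathcal{F}(\R^3)$: the left by definition, the right because $\mathcal{F}(\R^3) \hookrightarrow \mathcal{L}^2(\R^3)$ continuously and $(T+C)\phi \in \mathcal{L}^2(\R^3)$. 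Density of $C_c^\infty(\R^3_0)$ in $\mathcal{F}(\R^3)$ then gives $\langle \phi, h\rangle_{\mathcal{F}(\R^3)} = \langle (T+C)\phi, h\rangle_{\mathcal{L}^2}$ for all $h \in \mathcal{F}(\R^3)$. By the characterization of $T_F$ in Theorem~\ref{thm-Friedrichs-ext} (applied with $f = (T+C)\phi$), this is exactly the statement that $\phi \in \operatorname{dom}(T_F)$ with $(T_F + C)\phi = (T+C)\phi$, so $T_F\phi = T\phi$ as needed.

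The main obstacle is the cut-off estimate in the first step: one must verify that a point in $\R^3$ is ``removable'' for the Friedrichs form, and this is precisely the place where the geometry of dimension three enters (an annulus of inner and outer radii $\varepsilon/2$ and $\varepsilon$ has volume $O(\varepsilon^3)$, which is what makes the $O(\varepsilon^{-1})$ gradient bound for the cut-offs harmless in the $\mathcal{L}^2$ sense). The same argument would fail in dimension one, reflecting the well-known fact that the one-dimensional Coulomb Schr\"odinger operator has a nontrivial moduli of self-adjoint extensions on $\R_+$.
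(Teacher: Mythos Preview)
Your argument is correct, but it is organized differently from the paper's.  The paper invokes the uniqueness clause in the Friedrichs theorem (Theorem~\ref{thm-Friedrichs-ext}): the Friedrichs extension is the \emph{only} self-adjoint extension with domain contained in $\mathcal{F}(\R^3)$, so it suffices to show $\mathcal{H}^2(\R^3)\subseteq\mathcal{F}(\R^3)$.  This inclusion is obtained by interposing $\mathcal{H}^1(\R^3)$: one checks that $C_c^\infty(\R^3_0)$ is dense in $\mathcal{H}^1(\R^3)$ and that the Friedrichs norm is dominated by the $\mathcal{H}^1$ norm.  You instead invoke the uniqueness coming from essential self-adjointness (Theorem~\ref{thm-H-bounded-below-ess-s-a}) and show that $T_F$ extends $T|_{C_c^\infty(\R^3)}$.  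The underlying analytic content is the same---removability of the origin for the form, which is exactly your cut-off estimate, and is also what lies behind the paper's claim that $C_c^\infty(\R^3_0)$ is dense in $\mathcal{H}^1(\R^3)$.  Your route is more self-contained (no appeal to $\mathcal{H}^1$ as an intermediate space) at the price of the extra Step~2 identifying the action of $T_F$ on $C_c^\infty(\R^3)$; the paper's route is shorter because once $\mathcal{H}^2\subseteq\mathcal{F}$ is known, the Friedrichs uniqueness finishes the argument without any further computation.
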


\begin{proof} 
Thanks to the uniqueness assertion in Theorem~\ref{thm-Friedrichs-ext}, it suffices to prove that 
\[
\mathcal{H}^2 (\R^3) \subseteq \mathcal{F} (\R^3) \subseteq  \mathcal{L}^2 (\R^3).
\]
 We shall use the  Sobolev space $\mathcal{H}^1(\R^3)$, which  is the completion of $C_c^\infty (\R^3)$ in the norm associated to the inner product
\[
\langle f,f\rangle_{\mathcal{H}^1(\R^3)} =  - \langle \Delta f , f \rangle + \langle f,f\rangle .
\]
As with  $\mathcal{H}^2 (\R^3)$ and $\mathcal{F} (\R^3) $, the natural map from $\mathcal{H}^1 (\R^3 )$ into $\mathcal{L}^2(\R^3)$ is an inclusion, and moreover 
\[
\mathcal{H}^2 (\R^3)\subseteq \mathcal{H}^1 (\R^3) \subseteq \mathcal{L}^2 (\R^3).
\]
Now it may be shown without difficulty that $C_c^\infty (\R^3_0)$ is dense in $\mathcal{H}^1(\R^3)$, and since it is clear from \eqref{eq-K-inner-product} that 
\[
\langle f,f\rangle _{\mathcal{F}(\R^3)} \le \text{constant} \cdot \langle f,f \rangle _{\mathcal{H}^1(\R^3)}
\]
for all $f\in C_c^\infty (\R_0^3)$, the space $\mathcal{H}^1 (\R^3)$  is included within $\mathcal{F} (\R^3) $. We see therefore that  
\[
\mathcal{H}^2 (\R^3)\subseteq \mathcal{H}^1 (\R^3) \subseteq \mathcal{F} (\R^3),
\]
which proves the theorem.
\end{proof}

\section{Resolvents of the Schr\"odinger Operator}
\label{sec-resolvents}

In this section we shall work with the self-adjoint  rescaled Schr\"odinger operator  $T$,  with  $\domain(T) {=} \mathcal{H}^2 (\R^3)$, as in Section~\ref{sec-spectral}.  We shall use the resolvent operators $(T-\lambda)^{-1}$ to construct new families of eigenfunctions for $T$. Using these we shall give an explicit construction of  the  intertwining operator between the algebraic families of singular and regular solutions that we examined in Section~\ref{sec-phys-sol-jantzen}.

\subsection{Resolvents and solutions of the Schr\"odinger equation}
\label{subsec-resolvent-operators}

\begin{defn} 
A smooth function $\sigma\colon \R^3 \to \C$ is a \emph{cutoff  at $0$} if it is compactly supported and if it is  identically equal to $1$ in some neighborhood of $0$.   A smooth function $\sigma\colon \R^3 \to \C$ is a \emph{cutoff  at $\infty$} if it is identically equal to $1$ in the complement of some compact set.
\end{defn}

In the following definition $\domain(T)$ will refer to the self-adjoint domain of $T$ constructed in     Section~\ref{sec-spectral}.

\begin{defn}
We shall write 
\[
\domain_0 (T) = \bigl \{ \, f \colon \R^3 _0 \to \C : \text{$\sigma  f \in \domain(T)$ for some cutoff   $\sigma$ at $0$} \, \bigr \} 
\]
and
\[
\domain_\infty (T) = \bigl \{ \, f \colon \R^3 _0 \to \C : \text{$\sigma  f \in \domain(T)$ for some cutoff   $\sigma$ at $\infty$} \, \bigr \} .
\]
\end{defn}

  \begin{remark}
  Note that $\domain(T)$ is closed under pointwise multiplication by smooth functions on $\R^3$ that are constant outside of a compact set.  It follows that  $\domain(T)$ is included in both $\domain_0 (T)$ and $\domain_\infty (T)$.
\end{remark}

\begin{defn}
We shall write  
\[
\Sol_{0}(\lambda) = \Sol (\lambda) \cap \domain_0(T)
\quad \text{and} \quad \Sol_{\infty}(\lambda) = \Sol (\lambda)\cap \domain_\infty(T).
\]
\end{defn}

We shall be most concerned with $\Sol_{\infty}(\lambda) $, which according to the definition  consists of those $K$-finite smooth $\lambda$-eigen\-functions of the rescaled Schr\"odinger operator $T$   that   coincide with elements of $\domain(T)$ near infinity in $\R^3_0$.  Here is its most important property, as far as we are concerned:

\begin{theorem}
\label{thm-resolvent-solutions}
Suppose that $\lambda\notin \Spec(T)$. For every $G\in \SingSol(\lambda)$ there is a unique $V\in \Sol_\infty (\lambda)$ such that 
\[
\Wr (G,F) = \Wr (V,F)\qquad \forall F\in \RegSol(\lambda).
\]
\end{theorem}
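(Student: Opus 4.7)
The plan is to prove uniqueness by a spectral argument and existence by an explicit resolvent construction.

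\textbf{Uniqueness.} Suppose $V_1, V_2 \in \Sol_\infty(\lambda)$ both satisfy the stated Wronskian identity. Then $W := V_1 - V_2$ pairs trivially against every $F \in \RegSol(\lambda)$, and by the non-degeneracy of the Wronskian pairing (Theorem~\ref{thm-Wr-theta}) this places $W$ in $\RegSol(\lambda)$; in particular $W$ extends smoothly across $0$ to all of $\R^3$. Since also $\tau W \in \domain(T) = \mathcal{H}^2(\R^3)$ for some cutoff $\tau$ at infinity, the partition $W = \tau W + (1{-}\tau)W$ writes $W$ as an $\mathcal{H}^2$ tail plus a compactly supported smooth piece, giving $W \in \mathcal{H}^2(\R^3)$. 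Thus $W$ is an $\mathcal{L}^2$-eigenvector of the self-adjoint $T$ at the value $\lambda \notin \Spec(T)$, so $W = 0$.

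\textbf{Existence.} Lift $G$ to some $\tilde G \in \Sol(\lambda)$ and choose a smooth function $\chi$ on $\R^3$ that vanishes near $0$ and equals $1$ outside a compact set. Because $(T-\lambda)\tilde G = 0$ on $\R^3_0$ and the Coulomb potential commutes with $\chi$,
\[
h := (T-\lambda)(\chi \tilde G) = [{-}\Delta,\chi]\tilde G
\]
is smooth, $K$-finite, and compactly supported in $\R^3_0$ (its support lies in the annulus $\{\nabla \chi \neq 0\}$), hence in $\mathcal{L}^2(\R^3)$. Since $\lambda \notin \Spec(T)$, the resolvent $u := (T-\lambda)^{-1}h$ belongs to $\mathcal{H}^2(\R^3) = \domain(T)$, is $K$-finite (as $T$ commutes with $K$), and is smooth on $\R^3_0$ by elliptic regularity for $T$ on the complement of the origin. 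Set
\[
V := (1{-}\chi)\tilde G + u \in C^\infty(\R^3_0)_{\fin}.
\]
Then $(T-\lambda) V = -h + h = 0$ on $\R^3_0$, so $V \in \Sol(\lambda)$.

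\textbf{Verification.} To see $V \in \Sol_\infty(\lambda)$, choose a second cutoff $\sigma$ at infinity that vanishes near $0$ and equals $1$ outside a set containing $\operatorname{supp}(1{-}\chi)$: then $\sigma(1{-}\chi)\tilde G \in C^\infty_c(\R^3)$ and $\sigma u = u - (1{-}\sigma)u \in \mathcal{H}^2(\R^3)$, so $\sigma V \in \mathcal{H}^2(\R^3)$. For the class of $V$ modulo $\RegSol(\lambda)$, note that $V - \tilde G = -\chi \tilde G + u$ is a homogeneous solution on $\R^3_0$ which near the origin (where $\chi = 0$) coincides with $u$; by the Sobolev embedding $\mathcal{H}^2(\R^3) \subseteq \mathcal{L}^\infty(\R^3)$ of Lemma~\ref{lem-sobolev}, $u$ is pointwise bounded, so $V - \tilde G$ is a bounded solution at the origin and therefore regular. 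Hence $V - \tilde G \in \RegSol(\lambda)$, $[V] = G$ in $\SingSol(\lambda)$, and non-degeneracy of the Wronskian then delivers $\Wr(V,F) = \Wr(G,F)$ for every $F \in \RegSol(\lambda)$.

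\textbf{Main obstacle.} The delicate step is this last identification: one must translate the operator-theoretic $\mathcal{H}^2$-regularity of the resolvent into a pointwise bound near the singular Coulomb potential, in order to separate the regular branch of the Frobenius expansion from the singular one. Lemma~\ref{lem-sobolev} is the bridge that makes this possible; once it is in hand, everything else reduces to elementary bookkeeping with cutoff functions and the spectral theorem for $T$.
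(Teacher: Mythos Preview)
Your argument is essentially correct and reaches the same conclusion as the paper, but the existence proof takes a different and somewhat more direct route, and there is one small slip in the uniqueness proof that you should repair.

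\textbf{The slip in uniqueness.} You assert that $W\in\RegSol(\lambda)$ ``extends smoothly across $0$ to all of $\R^3$,'' and then conclude that $(1{-}\tau)W$ is a compactly supported smooth function. This is not true: already for $\ell=0$ the regular radial solution has the form $F_{0,\lambda}(r)=1+a_1 r+\cdots$, and $a_1|x|$ is not smooth at the origin. What is true, and what suffices, is that $(1{-}\tau)W\in\mathcal{H}^2(\R^3)$. Indeed $W$ is bounded near $0$, so $(1{-}\tau)W\in\mathcal{L}^2$, and on $\R^3_0$ one has $-\Delta W=(2\gamma/r+\lambda)W$, which is $O(1/r)$ and hence locally $\mathcal{L}^2$ in three dimensions; a short integration-by-parts check (using that $W$ and $\partial_r W$ are bounded near $0$) shows that the distributional Laplacian on $\R^3$ agrees with this function, with no singular contribution at the origin. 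Thus $(1{-}\tau)W\in\mathcal{H}^2(\R^3)$ and your conclusion $W\in\domain(T)$ stands. The paper handles this point differently: its existence argument produces, as a by-product, the inclusion $\RegSol(\lambda)\subseteq\Sol_0(\lambda)$, and then uniqueness follows from $\Sol_0(\lambda)\cap\Sol_\infty(\lambda)\subseteq\domain(T)$.

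\textbf{Comparison of the existence arguments.} The paper works one $K$-isotype at a time: it applies the resolvent to an arbitrary $K$-finite test function, extracts from the result two radial eigenfunctions $U_0\in\Sol_0(\lambda)$ and $U_\infty\in\Sol_\infty(\lambda)$ that together span the two-dimensional radial solution space, and then does linear algebra to express (a scalar multiple of) $U_\infty$ as $G_{\ell,\lambda}Y_\ell$ minus a regular solution. Your construction is more global and more explicit: you cut off a chosen lift $\tilde G$ away from the origin, feed the commutator $[-\Delta,\chi]\tilde G$ into the resolvent, and assemble $V=(1-\chi)\tilde G+u$ directly. (You should say that $\chi$ is radial, so that $h$ and hence $u$ are $K$-finite, but this is implicit.) Your route is shorter and avoids the isotypical bookkeeping; the paper's route has the side benefit of simultaneously establishing $\Sol_0(\lambda)=\RegSol(\lambda)$ and the direct-sum decomposition $\Sol(\lambda)=\Sol_0(\lambda)\oplus\Sol_\infty(\lambda)$, which it records and uses later.
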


\begin{proof}
First we shall prove existence, and for this it suffices to consider each $\ell$-isotypical component  of $\SingSol(\lambda)$ individually.  Suppose that  $G{\in} \SingSol(\lambda)$ is represented by the product of   a $\lambda$-eigenfunction $G_{\ell,\lambda}$ of the radial operator   \eqref{eq-radial-fmla-for-T} and a spherical harmonic function $Y_\ell$  of degree $\ell$.  Let $f$ be a smooth, compactly supported function on $\R^3_0$ of the form $\varphi\cdot Y_\ell$, where $\varphi$ is a smooth, compactly supported function of $r{\in} (0,\infty)$.  The function
\[
U = (T-\lambda)^{-1} f
\]
is smooth on $\R^3_0$ thanks to elliptic regularity, and has the form 
\[
U = \psi \cdot Y_\ell
\]
for some smooth function $\psi$ of $r$ (which is not necessarily compactly supported) thanks to the $K$-equivariance of the rescaled Schr\"odinger operator.  From 
the equation 
\[
TU  = \lambda U +f 
\]
we see that  $U$ is a $\lambda$-eigenfunction for $T$ outside of the compact set where $f$ is supported, and so $\psi$ is a $\lambda$-eigenfunction for the radial operator  \eqref{eq-radial-fmla-for-T} outside of a compact subset of $(0,\infty)$.

Now every  solution of the eigenvalue  equation  \eqref{equa3.1} that is defined on a subinterval of $(0,\infty)$ may be extended to a solution on $(0,\infty)$.  So there are $\lambda$-eigen\-functions $\psi_0$ and $\psi_\infty$ that that agree with $\psi$ near $0$ and near $\infty$, respectively.  The corresponding functions 
\[
U_0 = \psi_0 \cdot Y_\ell \quad \text{and} \quad U_\infty  = \psi_\infty \cdot Y_\ell 
\]
belong to $\Sol_0(\lambda)$ and $\Sol_\infty (\lambda)$, respectively. Note that since $(T-\lambda)^{-1}$ has dense range, there is at least one $\psi$ for which $U_0$ and $U_\infty$ are nonzero.

The functions  $\psi_0$ and $\psi_\infty$ cannot be linearly dependent, because if they were, then   after adjusting one by a multiplicative scalar  the two functions would glue together to determine a $\lambda$-eigenfunction in $\domain(T)$. This would contradict the assumption that $\lambda\notin \Spec (T)$.   So  $\psi_0$ and $\psi_\infty$ must span the two-dimensional space of  $\lambda$-eigen\-functions of the radial operator  \eqref{eq-radial-fmla-for-T}.  

Let 
   $F_{\ell,\lambda}$ be the regular radial eigenfunction in  \eqref{eq-regular-solution}, and let    $G_{\ell,\lambda}$ the  eigenfunction from  \eqref{eq-singular-solution}, normalized so that $\Wr(F_{\ell,\lambda},G_{\ell,\lambda}) =1 $.  From the above we can write 
\[
F_{\ell,\lambda}\cdot Y_\ell = a\cdot U_0 + b\cdot  U_\infty
\quad \text{and} \quad 
G_{\ell,\lambda}\cdot Y_\ell = c\cdot U_0 + d\cdot  U_\infty,
\]
for suitable constants $a,b,c,d$. The scalar $d$ must be nonzero because $G_{\ell,\lambda}$ is unbounded near $0$, and is therefore not an element of $\domain_0(T)$; compare Lemma~\ref{lem-sobolev}. It therefore follows from the second equation that $U_\infty$ is unbounded near $0$, and from this it follows from the first equation that   the scalar $b$ must be zero.  This implies that $a$ is nonzero, and so we can write 
\[
 d \cdot U_\infty =  G_{\ell,\lambda}\cdot Y_\ell - c\cdot U_0  = G_{\ell,\lambda}\cdot Y_\ell - c/a \cdot  F_{\ell,\lambda} \cdot Y_\ell .
\]
It follows that we can take $V =  d\cdot  U_\infty$. This completes the proof of existence.

To prove uniqueness, note first that the above argument shows $F_{\ell,\lambda}\cdot Y_\ell\in \Sol_0 (\lambda)$, and therefore $\RegSol(\lambda) \subseteq \Sol_0(\lambda)$. Now it suffices to show that if $V\in \Sol_\infty (\lambda)$, and if $\Wr(V,F) =0$ for all $F\in \RegSol(\lambda)$, then $V=0$. But if the Wronskians are all $0$, then $V\in \RegSol(\lambda)$, and hence $V\in \Sol_0(\lambda)$, so that in fact $V \in \domain(T)$. This again contradicts the assumption that $\lambda \notin \Spec(T)$.
\end{proof}

\begin{remark}
\label{rem-characterization-of-sol-zero-lambda-ell}
The proof shows that if $\lambda \notin \Spec(T)$, then
\[
\Sol_{0}(\lambda) = \RegSol(\lambda)
\quad \text{and} \quad 
\Sol (\lambda) = \Sol_0 ( \lambda) \oplus \Sol_\infty ( \lambda) .
\]
\end{remark}

\subsection{The family of Kodaira-type solutions}  
\label{subsec-kodaira-type-solutions}
In this subsection we shall use the solutions  in Theorem~\ref{thm-resolvent-solutions} to create an algebraic family of $(\g,\boldsymbol{K})$-mod\-ules.  We shall name the family after Kodaira, in recognition of his fundamental results from \cite{Kodaira49} that we shall review in the next subsection.

  \begin{theorem}
  \label{thm-action-of-runge-lenz-on-sol-infty}
  If $\lambda\notin\Spec(T)$,    if $V \in \Sol_\infty (\lambda)$, and if $R$ is any Runge-Lenz operator, then  $R V \in \Sol_\infty(\lambda)$.
 \end{theorem}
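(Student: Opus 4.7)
The approach is to use the direct-sum decomposition $\Sol(\lambda) = \RegSol(\lambda) \oplus \Sol_\infty(\lambda)$ from Remark~\ref{rem-characterization-of-sol-zero-lambda-ell}, and to prove that the $\RegSol(\lambda)$-component of $RV$ must vanish. Since $R$ lies in $\g\vert_\lambda$ and $\Sol(\lambda)$ is a $(\g\vert_\lambda, K)$-module (Section~\ref{sec-solutions}), we have $RV \in \Sol(\lambda)$, so we may write uniquely $RV = F + W$ with $F \in \RegSol(\lambda)$ and $W \in \Sol_\infty(\lambda)$. The claim reduces to showing $F = 0$.

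The key analytic input will be exponential decay of $V$, and consequently of $RV$, at infinity. Decomposing $V = \sum_{\ell,m} \phi_{\ell,m}(r) Y_{\ell,m}(\theta)$ as a finite sum (by $K$-finiteness), each radial factor $\phi_{\ell,m}$ solves the ODE \eqref{equa3.1}, whose dominant balance as $r \to \infty$ yields two linearly independent solutions behaving like $e^{\pm \sqrt{-\lambda}\, r}/r$, with the square root chosen so that $\Re\sqrt{-\lambda} > 0$ (possible since $\lambda \notin [0,\infty) \subseteq \Spec(T)$). The condition $\sigma V \in \mathcal{L}^2(\R^3)$ for some cutoff $\sigma$ at infinity forces each $\phi_{\ell,m}$ to lie in the one-dimensional subspace of exponentially decaying solutions, and the ODE then propagates this decay to all derivatives. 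Inspection of the formula \eqref{eq-r-ops} for $R$ shows that $RV$ is a finite combination of terms $x_a \partial_b \partial_c V$, $\partial_a V$, and $\kappaconstant (x_a/r) V$, each of which retains exponential decay at infinity (the prefactors $x_a$ and $x_a/r$ grow at most polynomially). In particular $RV$ is in $\mathcal{L}^2$ near infinity.

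On the other hand, under the hypothesis $\lambda \notin \Spec(T)$, every nonzero element of $\RegSol(\lambda)$ must contain an exponentially growing radial component at infinity, for otherwise some $F_{\ell,\lambda} Y_{\ell,m}$ would be a square-integrable eigenfunction of the self-adjoint operator $T$, forcing $\lambda \in \Spec(T)$. Since $F = RV - W$ is $\mathcal{L}^2$ near infinity (both $RV$ and $W$ are), $F \in \RegSol(\lambda)$ cannot contain any exponentially growing component, and hence $F = 0$. Therefore $RV = W \in \Sol_\infty(\lambda)$, completing the proof.

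I expect the main obstacle to lie in the middle paragraph: carefully coordinating the $\mathcal{L}^2$-at-infinity constraint coming from $V \in \Sol_\infty(\lambda)$, the asymptotic ODE analysis of \eqref{equa3.1}, and the preservation of exponential decay by the polynomial-coefficient operator $R$. Once the decay dichotomy has been made precise, the rest of the argument is essentially algebraic and uses only the direct-sum structure of $\Sol(\lambda)$.
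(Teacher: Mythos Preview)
Your proof is correct, but it follows a genuinely different route from the paper's.

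The paper argues directly that $RV \in \domain_\infty(T)$ via Sobolev bootstrapping: since $\sigma V \in H^2(\R^3)$ and $TV = \lambda V$, elliptic regularity (using that the Coulomb potential has bounded derivatives away from $0$) upgrades $V$ to $H^4$ near infinity. The Runge--Lenz operator $R$ is then analyzed as a second-order operator whose angular parts $L_j$ act boundedly on each $K$-isotypical piece of $H^4$, whose derivative parts lose two Sobolev orders, and whose multiplication by $x_i/r$ is harmless near infinity; hence $RV \in H^2$ near infinity, i.e.\ $RV \in \domain_\infty(T)$.

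Your approach instead uses the direct-sum decomposition $\Sol(\lambda) = \RegSol(\lambda)\oplus \Sol_\infty(\lambda)$ together with the exponential dichotomy of solutions of the radial ODE (valid because $\lambda \notin [0,\infty)\subseteq\Spec(T)$). You show that $V$, and hence $RV$, decays exponentially at infinity, so $RV$ and $W$ are both $L^2$ near infinity, forcing the regular component $F$ to be $L^2$ near infinity as well; a nonzero such $F$ would produce an $L^2$-eigenfunction and place $\lambda$ in the spectrum.

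The paper's argument is more robust---it does not rely on the specific asymptotic form of Coulomb solutions and would transfer to other short-range or Kato-class potentials---and it never invokes the direct-sum decomposition. Your argument is more elementary in that it avoids elliptic regularity, and it makes the exponential decay explicit in a way that foreshadows the limiting-absorption analysis in Section~\ref{subsec-limiting-absoprtion}. One small point worth tightening: when you conclude that a nonzero $F \in \RegSol(\lambda)$ that is $L^2$ near infinity yields a genuine eigenfunction, you are implicitly using that such an $F$ lies in $\domain(T)=H^2(\R^3)$; this follows because its radial components and all their derivatives decay exponentially (from the ODE) and behave like $r^\ell$ near $0$, but it is worth saying explicitly.
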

 
 \begin{proof}
The only issue is whether or not $RV\in \domain_\infty(T)$.  The smooth, $K$-finite  function $V$ is in the   Sobolev space $H^2 (\R^3) $  near infinity (that is, $\sigma V\in H^2(\R^3)$ for a suitable cutoff $\sigma$ at infinity).  But   $V$ is also an eigenfunction for $T$, so  the function   $ TV = - \Delta V - 2\kappaconstant / r V $  is in the   Sobolev space $H^2 (\R^3) $  near infinity. Since the Coulomb potential   has bounded derivatives away from zero, it follows from elliptic regularity that $V$ lies in  the  Sobolev space $H^4 (\R^3) $  near infinity. 
 
Now the  Runge-Lenz operators have the form
 \begin{equation}
\label{eq-form-of-runge-lenz-operator}
-{\sqrt{-1}} R_i=   \frac{ \kappaconstant   x_i}{  r}  - \partial_i +  \sum_{j\ne i}   \pm \partial_j L_{j}  .
\end{equation}
Each $L_j$ acts as a bounded operator on any $\ell$-isotypical part of $H^4 (\R^3) $.  The partial derivatives map $H^4(\R^3)$ into $H^3(\R^3)$, and so into $H^2(\R^3)$, and  $\kappaconstant  x_i/r$ maps $H^4(\R^3)$ into itself near infinity. \end{proof}
 
\begin{defn} 
\label{def-kodaira-sol-fam}
We shall denote by $\KodSolFam$ the complex vector space   spanned by functions   
\[
\bigl [ \C \setminus \Spec(T)\bigr ] \times \R^3_{0} \longrightarrow \C
\]
 of  the form 
$
(\lambda, x) \mapsto  V_\lambda(x) ,
$ 
where 
\begin{enumerate}[\rm (i) ]

\item $V_\lambda\in \Sol_\infty ( \lambda)$, and 

\item if $F\in \RegSolFam$, then $\Wr (F_\lambda,V_\lambda)$ is a polynomial function of $\lambda$ on the complement of $\Spec(T)$.

\end{enumerate}
We give $\KodSolFam$ the structure of an   $\O $-module via pointwise multiplication by polynomial functions in  $\lambda$, and the structure of a $K$-module via the actions of $K$ on the $K$-finite solution spaces $\Sol(\lambda)$.
\end{defn}

  \begin{lemma}  
  The  usual action of differential operators on functions gives the $\O$-module   the structure of a $(\g,\boldsymbol{K})$-module.
 \end{lemma}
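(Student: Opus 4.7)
The plan is to verify the three defining conditions of an algebraic family of Harish-Chandra modules for $(\g,\BK)$: closedness of the $\O$-module $\KodSolFam$ under the action of $\BK$, closedness under the action of $\g$, and compatibility of these actions with each other and with the $\O$-module structure. The crux of the proof is $\g$-closedness, specifically preservation of condition (ii) in Definition~\ref{def-kodaira-sol-fam}.

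Closedness under $K$ is straightforward: the action of $K$ on $\R^3_0$ commutes with $T$ and carries cutoffs at infinity to cutoffs at infinity, so $k \cdot \Sol_\infty(\lambda) = \Sol_\infty(\lambda)$ fiber by fiber. For the Wronskian condition, $K$-invariance of the radial Wronskian together with $K$-invariance of the area measure on $S^2$ gives $\Wr(F_\lambda,(kV)_\lambda) = \Wr((k^{-1}F)_\lambda, V_\lambda)$; since $\RegSolFam$ is itself a $K$-module, $k^{-1} F \in \RegSolFam$ and the right-hand side is polynomial in $\lambda$.

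For closedness under $\g$, fix $V \in \KodSolFam$ and $X \in \g$. The fiberwise statement $(XV)_\lambda \in \Sol_\infty(\lambda)$ follows from Theorem~\ref{thm-action-of-runge-lenz-on-sol-infty} after reducing to the Runge-Lenz generators (the angular momentum operators trivially preserve $\domain(T)$ near infinity). To handle condition (ii), observe that the projection $\Sol(\lambda) \to \SingSol(\lambda)$ is $\g$-equivariant, since $\RegSol(\lambda)$ is a $\g$-submodule by Proposition~\ref{prop-regsol-is-rep}, and that the Wronskian pairing factors through this projection. Combining Theorem~\ref{thm-Wr-theta} with antisymmetry of the Wronskian yields
\[
\Wr\bigl(F_\lambda,(XV)_\lambda\bigr) = -\Wr\bigl((\theta(X) F)_\lambda, V_\lambda\bigr)
\]
for every $F \in \RegSolFam$. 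Since $\theta$ preserves $\g$, the section $\theta(X) F$ lies in $\RegSolFam$ by Theorem~\ref{thm-regsol-is-alg-family}, and condition (ii) for $V$, applied with $\theta(X) F$ in place of $F$, tells us that the right-hand side is polynomial in $\lambda$.

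The remaining compatibility conditions are essentially automatic: $\O$-linearity of the $\g$-action amounts to the fact that every $X \in \g$ commutes with $T$ by construction (Definition~\ref{def-g}); $K$-equivariance of the $\g$-action reduces to the adjoint $K$-action on $\g$ being conjugation of differential operators; and the restriction of the $\g$-action to $\k$ coincides with the infinitesimal $K$-action because both are effected by the same operators $L_1, L_2, L_3$. The main obstacle is the displayed identity in the preceding paragraph, which transfers the $\g$-action from $V$ to $F$ through the Wronskian duality; once it is in hand, everything else is routine.
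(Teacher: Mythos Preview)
Your proof is correct and follows the same route as the paper's: the key step in both is to invoke Theorem~\ref{thm-action-of-runge-lenz-on-sol-infty} for condition~(i) and then use the $\theta$-twisted Wronskian identity of Theorem~\ref{thm-Wr-theta} to reduce condition~(ii) for $XV$ to condition~(ii) for $V$ with $\theta(X)F$ in place of $F$. The paper's proof is two sentences long and leaves the $K$-closedness, the compatibility conditions, and the passage through the quotient $\Sol(\lambda)\to\SingSol(\lambda)$ implicit; you have spelled all of these out, which is helpful but not a different argument.
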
 
 
 \begin{proof}
 In view of Theorem~\ref{thm-action-of-runge-lenz-on-sol-infty} it only needs to be shown that if $V\in \KodSolFam$, and if $R\in \g$ is a Runge-Lenz operator, then  $\Wr (F_\lambda,RV_\lambda)$ is a polynomial function of $\lambda$.   But this follows from Theorem~\ref{thm-Wr-theta}.
 \end{proof}
 
 The following result shows that the family of Kodaira solutions carries the structure of an algebraic family of Harish-Chandra modules over $(\g,\BK)$, despite being initially defined only over the complement of the spectrum of $T$.
 
 \begin{theorem}
 \label{thm-iso-of-kodaira-and-sing-sol}
 The $\mathcal O$-module morphism
  \[
\SingSolFam \longrightarrow  \KodSolFam   
 \]
  that maps each $G\in \SingSolFam$ to the    unique $V \in \KodSolFam$ such that 
 \[
\qquad  \qquad \Wr (G_\lambda,F_\lambda ) = \Wr (V_\lambda ,F_\lambda ) \qquad  \forall F\in \RegSolFam \quad \forall \lambda \notin \Spec(T)
 \]
is an isomorphism of $(\g,\boldsymbol{K})$-modules.
 \end{theorem}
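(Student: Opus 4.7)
The plan is to verify four things in order: well-definedness and $\O$-linearity of $\phi \colon \SingSolFam \to \KodSolFam$, its $K$-equivariance, its $\g$-equivariance, and finally its bijectivity. For well-definedness, for each $\lambda \notin \Spec(T)$ and each fiber $G_\lambda \in \SingSol(\lambda)$, Theorem~\ref{thm-resolvent-solutions} produces a unique $V_\lambda \in \Sol_\infty(\lambda)$ matching $G_\lambda$ on Wronskians. Assembling these, condition (ii) of Definition~\ref{def-kodaira-sol-fam} becomes $\Wr(F_\lambda, V_\lambda) = \Wr(F_\lambda, G_\lambda)$, and the right-hand side is polynomial by Theorem~\ref{thm-singsol-is-a-twisted-dual}. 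The $\O$-linearity is immediate and $K$-equivariance follows from the $K$-invariance of $\Wr$ together with the $K$-equivariance of $T$ (hence of its resolvents).

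For bijectivity, injectivity is immediate: if $V \equiv 0$ then $\Wr(F, G) = 0$ for every $F \in \RegSolFam$, and Theorem~\ref{thm-singsol-is-a-twisted-dual} forces $G = 0$. For surjectivity, given any $V \in \KodSolFam$, the map $F \mapsto \Wr(F, V)$ defines an element of $\RegSolFam^{*,\theta}$ by condition (ii) of Definition~\ref{def-kodaira-sol-fam}, hence corresponds via Theorem~\ref{thm-singsol-is-a-twisted-dual} to a unique $G \in \SingSolFam$. Then $\phi(G)$ matches $V$ on all Wronskians with regular solutions, so the fiberwise uniqueness in Theorem~\ref{thm-resolvent-solutions} forces $\phi(G)_\lambda = V_\lambda$ for every $\lambda \notin \Spec(T)$, i.e.\ as elements of $\KodSolFam$.

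For $\g$-equivariance, the cleanest route is not to rework the $\theta$-antisymmetry of $\Wr$ on arbitrary pairs of solutions, but instead to identify $\phi$ on fibers over $\C \setminus \Spec(T)$ as the inverse of an already manifestly equivariant map. By Remark~\ref{rem-characterization-of-sol-zero-lambda-ell}, for $\lambda \notin \Spec(T)$ one has $\Sol(\lambda) = \RegSol(\lambda) \oplus \Sol_\infty(\lambda)$, so the projection $\pi_\lambda \colon \Sol_\infty(\lambda) \to \SingSol(\lambda) = \Sol(\lambda)/\RegSol(\lambda)$ is a vector space isomorphism. Since $\Sol_\infty(\lambda)$ is $\g$-stable by Theorem~\ref{thm-action-of-runge-lenz-on-sol-infty} and $\RegSol(\lambda)$ is $\g$-stable by Proposition~\ref{prop-regsol-is-rep}, the projection $\pi_\lambda$ is a $(\g|_\lambda, K)$-module isomorphism. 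Unraveling the Wronskian matching condition: for any lift $\widetilde G$ of $G$ to $\Sol(\lambda)$, the condition $V - \widetilde G \in \RegSol(\lambda)$ is equivalent, by nondegeneracy of $\Wr$ on $\SingSol(\lambda) \times \RegSol(\lambda)$, to $\Wr(F, V) = \Wr(F, G)$ for all $F \in \RegSol(\lambda)$. Hence $\phi|_\lambda = \pi_\lambda^{-1}$ is $(\g|_\lambda, K)$-equivariant for every $\lambda \notin \Spec(T)$. To upgrade this to all of $\C$: the $\O$-module isomorphism $\phi$ forces $\KodSolFam$ to be free of the same $\O$-rank as $\SingSolFam$, hence torsion-free; then for each $X \in \g$ the $\O$-linear map $X \circ \phi - \phi \circ X$ vanishes on the Zariski-dense set $\C \setminus \Spec(T)$ and is therefore identically zero.

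The main technical input is the $\g$-stability of $\Sol_\infty(\lambda)$ (Theorem~\ref{thm-action-of-runge-lenz-on-sol-infty}), which has already been proven in the paper via Sobolev regularity of the resolvent together with the explicit form \eqref{eq-form-of-runge-lenz-operator} of the Runge-Lenz operators; granted that, every remaining step is either a direct application of the two Wronskian isomorphisms (Theorems~\ref{thm-singsol-is-a-twisted-dual} and \ref{thm-resolvent-solutions}) or a torsion-freeness argument. The only subtle point to monitor is the interplay between the analytic definition of $\KodSolFam$ (sections over $\C \setminus \Spec(T)$) and its algebraic $\O$-module structure (polynomial multiplication in $\lambda$)---but precisely this interplay is what the isomorphism $\phi$ is designed to resolve, retroactively endowing $\KodSolFam$ with the structure of an algebraic family of Harish-Chandra modules over the full affine line.
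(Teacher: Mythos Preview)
Your proof is correct, and your approach to $\g$-equivariance differs from the paper's in an instructive way. The paper invokes Theorem~\ref{thm-Wr-theta} directly: since $\Wr(G_\lambda, F_\lambda) = \Wr(V_\lambda, F_\lambda)$ for all $F$, the identity $\Wr(X\varphi, \psi) = -\Wr(\varphi, \theta(X)\psi)$ applied to both sides (with $\theta(X)F$ in place of $F$) gives $\Wr((XG)_\lambda, F_\lambda) = \Wr((XV)_\lambda, F_\lambda)$, hence $\phi(XG) = X\phi(G)$ by uniqueness. You instead observe that the Wronskian matching condition is equivalent to $V_\lambda$ and any lift of $G_\lambda$ having the same image in $\SingSol(\lambda)$, so that $\phi|_\lambda$ is precisely the inverse of the quotient projection $\Sol_\infty(\lambda) \to \SingSol(\lambda)$; equivariance then follows because both $\RegSol(\lambda)$ and $\Sol_\infty(\lambda)$ are $\g|_\lambda$-stable (Proposition~\ref{prop-regsol-is-rep} and Theorem~\ref{thm-action-of-runge-lenz-on-sol-infty}). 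Your route is more structural and lets the direct-sum decomposition of Remark~\ref{rem-characterization-of-sol-zero-lambda-ell} do the work; the paper's route is shorter once Theorem~\ref{thm-Wr-theta} is in hand, and has the mild advantage of not needing to re-invoke the $\g$-stability of $\Sol_\infty(\lambda)$.

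One small remark: your final torsion-freeness step is unnecessary. By Definition~\ref{def-kodaira-sol-fam} elements of $\KodSolFam$ are functions on $\bigl[\C \setminus \Spec(T)\bigr] \times \R^3_0$, so agreement of $X \circ \phi$ and $\phi \circ X$ on every fiber over $\C \setminus \Spec(T)$ is already agreement as elements of $\KodSolFam$; there is nothing to extend across the spectral points.
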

 
 \begin{remark}
 By definition, if $G\in \SingSolFam$, then $\Wr (G_\lambda,F_\lambda )$ is a polynomial function of $\lambda$. So the function $V$ defined above is indeed an element of $\KodSolFam$.
 \end{remark}

\begin{proof}
The $\g$-equivariance of the morphism follows from Theorem~\ref{thm-Wr-theta}.  The $K$-equivariance is clear. The fact that the morphism is an isomorphism follows from Theorem~\ref{thm-singsol-is-a-twisted-dual}.
\end{proof}

\subsection{The limiting absorption principle}
\label{subsec-limiting-absoprtion}
The title of this subsection refers to the idea that physical solutions of the Schr\"odinger equation for the real eigenvalue $\lambda$ should be expressible as limits, as $\varepsilon{\to} 0$, of  rapidly decaying $(\lambda{\pm}i\varepsilon)$-eigenfunctions (classically these correspond to solutions of equations with   damping terms added).  

In mathematics the term has come to refer to the convergence of resolvent operators $( T{-}\lambda )^{-1}$ in some topology as $\lambda$ converges to the spectrum, but here we shall keep close to the original meaning.  We shall use some decisive contributions of Kodaira \cite{Kodaira49}  on the limiting absorption principle to obtain an explicit intertwining operator from singular to regular solutions of the rescaled Schr\"odinger equation.  Later, in Theorem~\ref{thm-determination-of-spectral-measure},   we shall use the same techniques to fully determine the spectral theory of the rescaled Schr\"odinger  operator; this was one of Kodaira's original purposes.

The starting point is the following theorem of Kodaira concerning eigenfunctions for the rescaled Schr\"odinger operator associated to non-real eigenvalues.

\begin{theorem}[See   {\cite[Theorem~5.1]{Kodaira49}}]
\label{thm-kodaira-asymptotics}
If $\operatorname{Im}(k){\ge} 0$ and $k{\ne} 0$, then there is a unique $K$-invariant eigenfunction $U_k$    for the eigenvalue $k^2$ with 
\begin{equation*}
U_k(r) \sim \frac{1}{r}\exp\bigl (\sqrt{-1} kr + \frac{\kappaconstant \sqrt{-1}}{k} \log( r)\bigr ) \qquad \text{as $r \to \infty$}.
\end{equation*}
As $k$ varies, the  functions $U_k$ vary continuously in the  topology of uniform convergence of smooth functions and all their derivatives on compact subsets of $\R^3_0$.  
\end{theorem}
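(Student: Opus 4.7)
The plan is to reduce the problem to a one-dimensional ODE. Since an $O(3)$-invariant smooth function on $\R^3_0$ is a function of $r = |x|$ alone, the required eigenfunction has the form $U_k(x) = \psi(r)$, and the eigenvalue equation $T U_k = k^2 U_k$ is the $\ell = 0$ specialization of the radial operator \eqref{eq-radial-fmla-for-T}, namely
\begin{equation*}
-\psi'' - \tfrac{2}{r}\psi' - \tfrac{2\kappaconstant}{r}\psi = k^2 \psi .
\end{equation*}
Writing $u(r) = r\psi(r)$ converts this into the Coulomb wave equation
\begin{equation*}
u'' + k^2 u + \tfrac{2\kappaconstant}{r} u = 0 ,
\end{equation*}
and the prescribed asymptotic for $U_k$ becomes $u_k(r) \sim \exp\bigl(\sqrt{-1}kr + (\sqrt{-1}\kappaconstant/k)\log r\bigr)$ as $r \to \infty$. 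So it suffices to produce a unique such $u_k$, continuously dependent on $k$, and then set $U_k(x) = u_k(|x|)/|x|$.

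To construct $u_k$ I would use the Jost-type substitution $u(r) = \exp(\sqrt{-1}kr)\,r^{\sqrt{-1}\kappaconstant/k}\,v(r)$. A short computation, using that the choice of power $\alpha = \sqrt{-1}\kappaconstant/k$ is exactly what kills the $1/r$ coefficient coming from the Coulomb potential, transforms the equation into
\begin{equation*}
v'' + \bigl(2\sqrt{-1}k + \tfrac{2\alpha}{r}\bigr) v' + \tfrac{\alpha(\alpha-1)}{r^2} v = 0 , \qquad \alpha = \sqrt{-1}\kappaconstant/k ,
\end{equation*}
for which the prescribed asymptotic is simply $v(r) \to 1$. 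Using the explicit fundamental solutions $1$ and $e^{-2\sqrt{-1}kr}$ of the constant-coefficient part $v'' + 2\sqrt{-1}k\,v' = 0$, I would rewrite this as a Volterra integral equation on $[R,\infty)$ of the form $v(r) = 1 + \int_r^\infty K(r,s,k)\,v(s)\,ds$ and solve it by Neumann iteration. When $\operatorname{Im}(k) > 0$ the kernel contains an exponentially decaying factor $e^{-2\operatorname{Im}(k)(s-r)}$, so convergence of the iteration, and holomorphic dependence on $k$, are immediate.

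The main obstacle will be the boundary case $\operatorname{Im}(k) = 0$, $k \ne 0$, where the kernel only decays polynomially. Here one has to extract oscillatory cancellation from the factor $e^{\pm 2\sqrt{-1}ks}$ appearing in $K$: an integration by parts in the $s$-integral produces an equivalent kernel whose $L^1$-norm on $[R,\infty)$ tends to zero as $R \to \infty$, uniformly on compact subsets of $\{k : \operatorname{Im}(k) \ge 0,\, k \ne 0\}$. This uniform estimate yields both existence and joint continuity of $v(r,k)$ on $[R,\infty)$ for $R$ large, and the standard theorem on continuous dependence on parameters for linear ODEs propagates the solution and its continuity in $k$ back to all $r > 0$. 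Uniqueness follows because the other linearly independent solution of the Coulomb equation at infinity has asymptotic $\exp\bigl(-\sqrt{-1}kr - (\sqrt{-1}\kappaconstant/k)\log r\bigr)$, which is incompatible with the prescribed behavior (it grows exponentially when $\operatorname{Im}(k) > 0$, and carries an independent phase when $k$ is real and nonzero). Setting $U_k(x) = u_k(|x|)/|x|$ then gives the asserted continuous family.
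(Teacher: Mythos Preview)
The paper does not prove this theorem at all: it is stated with an explicit citation to Kodaira \cite[Theorem~5.1]{Kodaira49} and is used as a black box thereafter. So there is no ``paper's own proof'' to compare against; your sketch is an attempt to reconstruct Kodaira's argument.

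Your approach---reducing to the $\ell=0$ radial equation, substituting $u=r\psi$, peeling off the modified Jost factor $e^{ikr}r^{i\kappaconstant/k}$ to kill the long-range $1/r$ term, and then running a Volterra iteration at infinity---is the classical construction of Coulomb Jost solutions and is essentially what Kodaira does. One point worth tightening: after your substitution the perturbation contains the first-order term $\tfrac{2\alpha}{r}v'$, not just a zeroth-order $1/r^2$ term, so the Volterra kernel as you have written it is not literally of the form $\int_r^\infty K(r,s,k)\,v(s)\,ds$. You should either pass to the first-order system $(v,v')$ and iterate a matrix Volterra equation, or integrate the $v'$ contribution by parts once to reduce to a genuine zeroth-order kernel with $O(1/s^2)$ decay; either route works and the resulting bounds are uniform on compacta in $\{\operatorname{Im}(k)\ge 0,\ k\ne 0\}$, which is what gives the continuity statement. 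Your uniqueness argument is fine: for $\operatorname{Im}(k)>0$ the competing solution grows, and for real $k\ne 0$ the ratio of the two asymptotic forms is $e^{-2ikr}r^{-2i\kappaconstant/k}$, which does not converge, so no nontrivial linear combination can share the prescribed asymptotic.
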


In the  numerator in  the display is the constant $\kappaconstant $ from \eqref{eq-kappa}.  Observe that if $\operatorname{Im}(k)>  0$, then $U_k \in \domain_\infty (T)$.

\begin{lemma}
\label{lem-wronskian-for-u} If $k{>}0$, then
$ \Wr (U_{k}, U_{-k}) = 2 i k$.  In particular, $U_k$ and $U_{-k}$ are linearly independent.\end{lemma}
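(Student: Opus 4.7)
The idea is to compute $\Wr(U_k, U_{-k})$ by exploiting the fact that the Wronskian is constant in $r$ for solutions of the same radial eigenvalue equation, and then evaluating the constant via the asymptotic form provided by Theorem~\ref{thm-kodaira-asymptotics}.

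First I would use $K$-invariance to reduce to the $\ell{=}0$ radial problem. Since $U_k$ and $U_{-k}$ are spherically symmetric, they lie in the $\ell{=}0$ isotypical component, and the radial formula \eqref{eq-radial-fmla-for-T} combined with the substitution $U_k(r) = v_k(r)/r$ reduces the equation $T U_k = k^2 U_k$ to the Sturm--Liouville form
\[
v_k''(r) + \Bigl(k^2 + \frac{2\kappaconstant}{r}\Bigr) v_k(r) = 0.
\]
Since $U_{\pm k}$ depend only on $r$, the integrand defining $\Wr$ is constant on spheres, so $\Wr(U_k, U_{-k}) = \wr(U_k, U_{-k})$ pointwise; a short substitution yields
\[
\wr(U_k, U_{-k})(r) = v_k'(r) v_{-k}(r) - v_k(r) v_{-k}'(r),
\]
which is the ordinary Wronskian of two solutions of the reduced equation above and is therefore a constant function of $r$.

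Next I would evaluate this constant using the asymptotics at infinity. Define $g_k(r) := \exp\bigl(\sqrt{-1}kr + (\kappaconstant\sqrt{-1}/k)\log r\bigr)$, so that Theorem~\ref{thm-kodaira-asymptotics} reads $v_k(r) \sim g_k(r)$. The identities $g_k g_{-k} \equiv 1$ and $g_k'/g_k = \sqrt{-1}k + \sqrt{-1}\kappaconstant/(kr)$ give the explicit computation
\[
g_k'(r) g_{-k}(r) - g_k(r) g_{-k}'(r) = 2\sqrt{-1}k + \frac{2\sqrt{-1}\kappaconstant}{kr} \longrightarrow 2\sqrt{-1}k.
\]
Granting that the asymptotic $v_k \sim g_k$ can be differentiated (that is, $v_k'(r)/g_k'(r) \to 1$, and similarly for $v_{-k}$), this limit coincides with the constant value of the Wronskian, yielding $\Wr(U_k, U_{-k}) = 2\sqrt{-1}k$. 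Linear independence of $U_k$ and $U_{-k}$ follows immediately since $2\sqrt{-1}k$ is nonzero for $k>0$.

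The main obstacle is precisely the differentiated asymptotic, which is not formally implied by the leading asymptotic $v_k \sim g_k$. The standard remedy is a WKB reduction: writing $v_k = g_k(1+h_k)$ with $h_k \to 0$ and substituting into the reduced equation yields, after using $(\phi_k')^2 - (k^2 + 2\kappaconstant/r) = O(1/r^2)$ for $\phi_k(r) = kr + (\kappaconstant/k)\log r$, the equation
\[
h_k''(r) + 2\sqrt{-1}\phi_k'(r)\, h_k'(r) = O(1/r^2)\,(1 + h_k(r)),
\]
with $2\sqrt{-1}\phi_k'(r) = 2\sqrt{-1}k + O(1/r)$ bounded away from zero. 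A standard Levinson-type asymptotic integration, exploiting the integrability of the $O(1/r^2)$ inhomogeneity at infinity, then delivers $h_k(r), h_k'(r) \to 0$ as $r \to \infty$, establishing the differentiated asymptotic and completing the proof.
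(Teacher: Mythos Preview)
Your proof is correct and follows essentially the same route as the paper: compute the constant Wronskian from the asymptotic form of $U_{\pm k}$, with the key issue being that the asymptotic equivalence must survive differentiation. The paper's proof is simply a two-line citation---it notes that Kodaira's \cite[Theorem~5.1]{Kodaira49} already asserts the differentiated asymptotic and says the result then follows---whereas you supply the explicit Wronskian computation and your own Levinson-type justification for $v_k' \sim g_k'$.
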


\begin{proof}
Kodaira shows in  \cite[Theorem~5.1]{Kodaira49}    that differentiating both sides of the asymptotic equivalence in Theorem~\ref{thm-kodaira-asymptotics} with respect to $r$   preserves the asymptotic equivalence. The result follows.
\end{proof}

Now define a continuous function in the region  $\operatorname{Im}(k)\ge 0$ by 
\begin{equation}
\label{eq-kodaira-a}
a(k) = \Wr (F_{0,\lambda} , U_k) ,
\end{equation}
 where $\lambda = k^2$.

\begin{lemma}
\label{lem-kodaira-dmla-for-f} 
If $k>  0$, and if $\lambda = k^2$, then
\[
F_{0,\lambda}  = \tfrac{1}{2 i k} \bigl ( a(-k) U_k - a(k) U_{-k}\bigr ).
\]
\end{lemma}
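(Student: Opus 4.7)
The plan is to use $\{U_k, U_{-k}\}$ as a basis for the two-dimensional space of $\ell{=}0$ (radial) $\lambda$-eigenfunctions of the rescaled Schr\"odinger operator, and to determine the coefficients of $F_{0,\lambda}$ in this basis by pairing with the Wronskian form.

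First I would note that when $k>0$, both $U_k$ and $U_{-k}$ are well-defined via Theorem~\ref{thm-kodaira-asymptotics}, and both are $K$-invariant $\lambda$-eigenfunctions with $\lambda = k^2 = (-k)^2$. By Lemma~\ref{lem-wronskian-for-u} we have $\Wr(U_k,U_{-k}) = 2ik \ne 0$, so $U_k$ and $U_{-k}$ are linearly independent. Because the $\ell{=}0$ isotypical space of $\Sol(\lambda)$ is spanned by the two linearly independent radial solutions of \eqref{equa3.1}, namely $F_{0,\lambda}$ and $G_{0,\lambda}$, it is two-dimensional, and hence $\{U_k,U_{-k}\}$ is a basis for it. In particular, there exist unique scalars $\alpha,\beta \in \C$ with
\[
F_{0,\lambda} = \alpha\, U_k + \beta\, U_{-k}.
\]

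To extract $\alpha$ and $\beta$ I would take the Wronskian of both sides against $U_{\pm k}$ and use the definition $a(\pm k) = \Wr(F_{0,\lambda},U_{\pm k})$ together with two elementary facts: antisymmetry of $\Wr$, which is immediate from $\wr(\varphi,\psi) = r^2(\varphi'\psi - \varphi\psi')$ and gives $\Wr(U_{\pm k},U_{\pm k}) = 0$; and Lemma~\ref{lem-wronskian-for-u}, which gives $\Wr(U_{-k},U_k) = -2ik$. This yields
\[
a(k) = \beta\,\Wr(U_{-k},U_k) = -2ik\,\beta \quad\text{and}\quad a(-k) = \alpha\,\Wr(U_k,U_{-k}) = 2ik\,\alpha,
\]
so $\alpha = a(-k)/(2ik)$ and $\beta = -a(k)/(2ik)$. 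Substituting these values into the expansion recovers the formula displayed in the lemma.

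There is no real obstacle here: once $\{U_k,U_{-k}\}$ is recognized as a Wronskian-biorthogonal basis of the radial eigenspace, the computation is just Cramer's rule in a $2{\times}2$ system. The only tacit inputs are the antisymmetry of $\Wr$ and the fact that the radial $\lambda$-eigenspace is two-dimensional, both of which are elementary and already implicit in the paper.
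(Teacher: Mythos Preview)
Your proof is correct and takes essentially the same approach as the paper: both arguments use the fact that $\{U_k,U_{-k}\}$ is a basis for the spherical $\lambda$-eigenspace (via Lemma~\ref{lem-wronskian-for-u}) and then compare Wronskians against $U_{\pm k}$. The only cosmetic difference is that the paper verifies the displayed identity by checking Wronskians of both sides, whereas you solve the $2{\times}2$ linear system to derive the coefficients; these are two phrasings of the same computation.
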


\begin{proof}
It suffices to compute that the Wronskians of both sides of the asserted equality with $U_k$ and $U_{-k}$ are equal, since by Lemma~\ref{lem-wronskian-for-u} these two functions span the spherical $\lambda$-eigenfunctions.  
\end{proof}

\begin{lemma}
\label{lem-kodaira-a-is-nonzero1} 
If $\operatorname{Im}(k)>  0$  and $a(k){= } 0$, then $k^2 \in \Spec (T)$.
\end{lemma}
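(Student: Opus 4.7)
My plan is to read the hypothesis $a(k) = 0$ as a linear dependence between two radial solutions of the same second-order ODE, and then to exhibit the corresponding spherically symmetric function as a genuine $\mathcal{L}^2$-eigenvector of the self-adjoint operator $T$ for eigenvalue $k^2$, which forces $k^2 \in \Spec(T)$.

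Both $F_{0,\lambda}$ and $U_k$ satisfy the $\ell{=}0$ radial eigenvalue equation \eqref{equa3.1} with $\lambda = k^2$, and the modified Wronskian of two such solutions is constant in $r$; so $a(k) = \Wr(F_{0,\lambda}, U_k) = 0$ forces proportionality, say $U_k = c\, F_{0,\lambda}$ for some nonzero constant $c$. Let $\Psi(x) = F_{0,\lambda}(|x|)$. Then $\Psi$ is a smooth, spherically symmetric eigenfunction of $T$ on $\R^3_0$ for the eigenvalue $k^2$; it is bounded (in fact continuous) near the origin because $F_{0,\lambda}$ is entire in $r$, and by the asymptotics in Theorem~\ref{thm-kodaira-asymptotics} together with the hypothesis $\operatorname{Im}(k) > 0$ it decays exponentially at infinity. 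In particular $0 \ne \Psi \in \mathcal{L}^2(\R^3)$.

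It remains to prove that $\Psi \in \domain(T) = \mathcal{H}^2(\R^3)$, for then $T\Psi = k^2\Psi$ together with $\Psi \ne 0$ identifies $k^2$ as an eigenvalue of $T$. I would split $\Psi = \sigma\Psi + (1-\sigma)\Psi$ using a cutoff $\sigma$ at $0$. On $\R^3_0$, the function $(1-\sigma)\Psi = (1-\sigma)c^{-1}U_k$ vanishes near the origin and therefore extends by zero to a smooth, exponentially decaying function on $\R^3$, so it lies in $\mathcal{H}^2(\R^3)$. The remaining piece $\sigma\Psi$ is compactly supported and bounded with bounded radial derivative (again from $F_{0,\lambda}$ being entire), so $\sigma\Psi \in \mathcal{L}^2(\R^3)$; it then suffices to verify $\Delta(\sigma\Psi) \in \mathcal{L}^2(\R^3)$ in the distributional sense on all of $\R^3$, for Fourier analysis then gives $\sigma\Psi \in \mathcal{H}^2(\R^3)$.

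The main obstacle is therefore a removable-singularity argument at the origin. Classically on $\R^3_0$ one has $-\Delta(\sigma\Psi) = -k^2\sigma\Psi - 2\kappaconstant\sigma\Psi/r - (\Delta\sigma)\Psi - 2\nabla\sigma\cdot\nabla\Psi$, and this right-hand side belongs to $\mathcal{L}^2(\R^3)$ because $1/r \in \mathcal{L}^2_{\text{loc}}(\R^3)$ in dimension three, while the remaining terms are bounded and compactly supported. To upgrade this to an identity on all of $\R^3$, I would test against $\phi \in C_c^\infty(\R^3)$ and integrate by parts on the shell $\{|x| > \epsilon\}$: the boundary integrals on the sphere $\{|x| = \epsilon\}$ involve $\sigma\Psi$ and $\partial_r(\sigma\Psi)$, each bounded near the origin, multiplied by the surface area $4\pi\epsilon^2$, so they vanish as $\epsilon \to 0$. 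No delta-function contribution at $0$ appears, and the distributional identity above holds on $\R^3$. Combining the two pieces gives $\Psi \in \domain(T)$, and hence $k^2 \in \Spec(T)$.
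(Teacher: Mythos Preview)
Your argument is correct and follows the same core idea as the paper: from $a(k)=\Wr(F_{0,\lambda},U_k)=0$ one gets $U_k$ proportional to $F_{0,\lambda}$, so the resulting spherically symmetric function is simultaneously regular at the origin and decaying at infinity, hence an honest $\mathcal{L}^2$-eigenvector in $\domain(T)$, forcing $k^2\in\Spec(T)$.

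The difference is one of packaging. The paper's proof is two lines: it writes $U_k=a(k)G_{0,\lambda}+b(k)F_{0,\lambda}$, observes that $a(k)=0$ makes $F_{0,\lambda}$ a multiple of $U_k\in\domain_\infty(T)$, and then invokes Remark~\ref{rem-characterization-of-sol-zero-lambda-ell} (the direct sum $\Sol(\lambda)=\Sol_0(\lambda)\oplus\Sol_\infty(\lambda)$ for $\lambda\notin\Spec(T)$), which was established in the proof of Theorem~\ref{thm-resolvent-solutions}. Your route instead unpacks what that remark encodes: you exhibit $\Psi$ explicitly and verify $\Psi\in\mathcal{H}^2(\R^3)$ by a cutoff decomposition and a removable-singularity argument at $0$. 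This is longer but more self-contained, and makes transparent exactly where the three-dimensional fact $1/r\in\mathcal{L}^2_{\mathrm{loc}}$ and the $O(\epsilon^2)$ surface area enter. One cosmetic slip: the signs in your displayed formula for $-\Delta(\sigma\Psi)$ should be $+k^2\sigma\Psi+2\kappaconstant\sigma\Psi/r$ rather than minus, since $-\Delta\Psi=k^2\Psi+2\kappaconstant\Psi/r$; this does not affect the $\mathcal{L}^2$ conclusion.
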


\begin{proof}
Let $\lambda {=} k^2$. We can    write   
\[
U_k = a(k) G_{0,\lambda}   +  b(k) F_{0,\lambda},
\]
where $F_{0,\lambda}$ and $G_{0,\lambda}$ are as in the proof of Theorem~\ref{thm-resolvent-solutions}, except that we view them here   as $K$-invariant functions on $\R^3_0$, and $b(k)$ is some    scalar.  If $a(k){=}0$, then $F_{0,\lambda}$ is a multiple of $U_k$, and hence belongs to $\domain_\infty(T)$.  It then follows from Remark~\ref{rem-characterization-of-sol-zero-lambda-ell} that $\lambda \in \Spec(T)$.
\end{proof}

\begin{lemma}
\label{lem-kodaira-a-is-nonzero2} 
If $k>  0$, then $a(-k) {=} \overline{a(k)}$ and  $a(k){\ne } 0$.
\end{lemma}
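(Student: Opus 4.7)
The plan is to establish the two claims in sequence, with the symmetry $a(-k) = \overline{a(k)}$ used as the key input for the nonvanishing.

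First I would observe that $F_{0,\lambda}$ is real-valued for real $\lambda$: the radial equation \eqref{equa3.1} has real coefficients when $\lambda \in \R$, and the defining Taylor series \eqref{eq-regular-solution} has real coefficients (explicitly, $F_{0,\lambda}(r) = 1 + \text{higher-order}$), so complex conjugation preserves both the equation and the normalization and hence $\overline{F_{0,\lambda}} = F_{0,\lambda}$.

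Next I would show that for $k > 0$, $\overline{U_k} = U_{-k}$. Both sides are $K$-invariant eigenfunctions for the real eigenvalue $k^2 = (-k)^2$. Conjugating the asymptotic formula of Theorem~\ref{thm-kodaira-asymptotics} for $U_k$ gives
\[
\overline{U_k}(r) \sim \frac{1}{r}\exp\bigl(-\sqrt{-1} k r - \frac{\kappaconstant \sqrt{-1}}{k}\log(r)\bigr),
\]
which matches exactly the asymptotic that characterizes $U_{-k}$. By the uniqueness assertion in Theorem~\ref{thm-kodaira-asymptotics} (applicable since $-k$ lies on the boundary $\operatorname{Im}(k)=0$, $k\ne 0$), these two functions coincide. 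Combining, and using that the Wronskian form is complex-bilinear and intertwines complex conjugation entrywise,
\[
\overline{a(k)} = \overline{\Wr(F_{0,\lambda}, U_k)} = \Wr(\overline{F_{0,\lambda}}, \overline{U_k}) = \Wr(F_{0,\lambda}, U_{-k}) = a(-k).
\]

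For the nonvanishing, I would argue by contradiction: suppose $a(k) = 0$ for some $k>0$. By the conjugation identity just proved, $a(-k) = 0$ too. Then Lemma~\ref{lem-kodaira-dmla-for-f} forces
\[
F_{0,\lambda} = \tfrac{1}{2ik}\bigl( a(-k)U_k - a(k) U_{-k}\bigr) = 0,
\]
which is incompatible with $F_{0,\lambda}(0) = 1$ coming from the normalization in Definition~\ref{def-reg-solution}.

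I expect the only delicate point to be the identification $\overline{U_k} = U_{-k}$: one must confirm that Theorem~\ref{thm-kodaira-asymptotics}'s uniqueness applies on the real axis (not just for $\operatorname{Im}(k) > 0$), so that the asymptotic characterization pins $U_{-k}$ down. The continuity statement in Theorem~\ref{thm-kodaira-asymptotics} as $k$ approaches the real axis together with this uniqueness gives exactly what is needed.
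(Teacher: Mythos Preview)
Your proof is correct and follows essentially the same approach as the paper: establish $\overline{U_k}=U_{-k}$ via the uniqueness in Theorem~\ref{thm-kodaira-asymptotics}, use reality of $F_{0,\lambda}$ to get $a(-k)=\overline{a(k)}$, and then invoke Lemma~\ref{lem-kodaira-dmla-for-f} to conclude $a(k)\ne 0$. The only cosmetic difference is that you obtain the conjugation identity directly from the definition $a(k)=\Wr(F_{0,\lambda},U_k)$, whereas the paper extracts it by conjugating the decomposition in Lemma~\ref{lem-kodaira-dmla-for-f}; your route is if anything slightly cleaner.
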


\begin{proof}
It follows from the uniqueness statement in Theorem~\ref{thm-kodaira-asymptotics} that if $k> 0$ then $\overline{U_k}$ = $U_{-k}$.  Since $F_{0,\lambda}$ is real-valued when $\lambda$ is real, it follows from Lemma~\ref{lem-kodaira-dmla-for-f} that $a(-k) = \overline{a(k)}$.  This shows that $a(k)$ must be nonzero, for otherwise $F_{0,\lambda}$ would be identically zero.
\end{proof}

\begin{theorem} 
\label{thm-limit-absorption}
Let $V_0\in \KodSolFam$ be the   family of $K$-invariant Kodaira-type eigenfunctions
such that 
\[
\Wr (V_{0,\lambda}, F_{0,\lambda})=1
\]
for every $\lambda\notin \Spec(T)$.
For every  $\lambda >0$ the limits 
\[
V^+_{0,\lambda} = \lim_{\varepsilon \to 0+} V_{0,\lambda+i\varepsilon}
\quad \text{and}\quad 
V^-_{0,\lambda} =  \lim_{\varepsilon \to 0+} V_{0,\lambda-i\varepsilon}
\]
exist in the standard topology on $C^\infty(\R^3_0)$ of uniform convergence on compact sets of functions and all their derivatives.  The limits are $\lambda$-eigenfunctions, and  moreover 
\[
V^+_{0,\lambda} - V^-_{0,\lambda}=  \Wr( V^+_\lambda ,  V^-_\lambda)\cdot F_{0,\lambda}  
\] 
for every $\lambda > 0$.
\end{theorem}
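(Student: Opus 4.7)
The plan is to identify $V_{0,\lambda}$ explicitly in terms of Kodaira's distinguished eigenfunctions $U_k$, and then reduce the boundary-limit statement to the continuity of $k\mapsto U_k$ provided by Theorem~\ref{thm-kodaira-asymptotics} together with the nonvanishing of $a(k)$ on $\R\setminus\{0\}$ supplied by Lemma~\ref{lem-kodaira-a-is-nonzero2}.

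First I would fix $\lambda\notin\Spec(T)$ with $\operatorname{Im}(\lambda)\neq 0$ and write $\lambda=k^{2}$ with $\operatorname{Im}(k)>0$. From the asymptotic in Theorem~\ref{thm-kodaira-asymptotics}, $U_k$ and all its derivatives decay exponentially at infinity, so that $\sigma U_k\in\mathcal{H}^{2}(\R^{3})=\domain(T)$ for any cutoff $\sigma$ at infinity, and hence $U_k\in\Sol_{\infty}(\lambda)$. Combining Remark~\ref{rem-characterization-of-sol-zero-lambda-ell} with the fact that each $K$-type appears with multiplicity one in $\RegSol(\lambda)$ and with multiplicity two in $\Sol(\lambda)$, the $K$-invariant part of $\Sol_{\infty}(\lambda)$ is one-dimensional. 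So $V_{0,\lambda}$ must be a scalar multiple of $U_k$. The normalization $\Wr(V_{0,\lambda},F_{0,\lambda})=1$ together with the antisymmetry of the Wronskian and the definition $a(k)=\Wr(F_{0,\lambda},U_k)$ then forces $V_{0,\lambda}=-U_k/a(k)$.

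Next, as $\lambda=\mu\pm i\varepsilon$ with $\mu>0$ and $\varepsilon\to 0^{+}$, the square root $k$ in the upper half-plane tends to $+\sqrt{\mu}$ in the $+$ case and to $-\sqrt{\mu}$ in the $-$ case. Theorem~\ref{thm-kodaira-asymptotics} provides continuity of $k\mapsto U_k$ on the closed upper half-plane minus the origin in the $C^{\infty}(\R_{0}^{3})$ topology, and Lemma~\ref{lem-kodaira-a-is-nonzero2} gives $a(\pm\sqrt{\mu})\neq 0$. Consequently the limits $V^{\pm}_{0,\mu}=-U_{\pm\sqrt{\mu}}/a(\pm\sqrt{\mu})$ exist in $C^{\infty}(\R_{0}^{3})$, and because the space of $\mu$-eigenfunctions of $T$ is closed in that topology, the limits are themselves $\mu$-eigenfunctions.

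For the Wronskian identity, I would substitute these explicit formulas for $V^{\pm}_{0,\mu}$ into Lemma~\ref{lem-kodaira-dmla-for-f}, which writes $F_{0,\mu}$ as a combination of $U_{\sqrt{\mu}}$ and $U_{-\sqrt{\mu}}$. Using $\Wr(U_k,U_{-k})=2ik$ from Lemma~\ref{lem-wronskian-for-u}, a short linear-algebra calculation shows that $V^{+}_{0,\mu}-V^{-}_{0,\mu}$ is a constant multiple of $F_{0,\mu}$, and that this constant agrees with $\Wr(V^{+}_{0,\mu},V^{-}_{0,\mu})$, as required. The main obstacle is the first step: matching the abstract Wronskian normalization defining $V_{0,\lambda}$ inside $\KodSolFam$ to the concrete asymptotic normalization defining $U_k$. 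This is what forces the use of the fine structure of $\Sol_{\infty}(\lambda)$ established in Section~\ref{sec-resolvents}, rather than purely formal manipulations with the resolvent alone.
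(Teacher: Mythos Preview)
Your proposal is correct and follows essentially the same route as the paper's proof: identify $V_{0,\lambda}$ as a scalar multiple of Kodaira's $U_k$ using the one-dimensionality of $\Sol_\infty(\lambda)^{0}$, invoke the continuity of $k\mapsto U_k$ and the nonvanishing of $a(k)$ to obtain the limits, and then read off the final identity from Lemma~\ref{lem-kodaira-dmla-for-f}. The only discrepancy is a sign: your formula $V_{0,\lambda}=-U_k/a(k)$ is the one dictated by the stated normalization $\Wr(V_{0,\lambda},F_{0,\lambda})=1$, whereas the paper writes $V_{0,\lambda}=U_k/a(k)$; this is a harmless typo in one of the two places and does not affect the argument.
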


\begin{proof}
To begin, suppose that  $\lambda\notin \Spec (T)$. Choose $k{\in} \C$ such that  $\operatorname{Im}(k)>0$ and $k^2 = \lambda$. By Lemma~\ref{lem-kodaira-a-is-nonzero1}, $a(k)$ is nonzero.  As we already noted, $U_k\in \Sol_\infty (\lambda)$.  Therefore $V_{0,\lambda}$ and $U_k$ are multiples of one another, since the space of  $K$-invariant  functions in $\Sol_\infty (\lambda)$ is one-dimensional.  In fact it follows from \eqref{eq-kodaira-a} and the Wronskian condition in the statement of the theorem that  
\begin{equation}
\label{eq-rel-between-u-and-v}
V_{0,\lambda} = \frac{1}{a(k)} U_k
\end{equation}
Once again, this formula holds for $\lambda\notin \Spec (T)$ and $\operatorname{Im}(k) > 0$ with $k^2 = \lambda$.

Now suppose that $\lambda > 0$.  The existence of the limits in the statement of the theorem follows from the continuity properties of $U_k$ as a function of $k$, and from Lemmas~\ref{lem-kodaira-a-is-nonzero1} and \ref{lem-kodaira-a-is-nonzero2}.    In fact 
\begin{equation*}
V^{+}_{0,\lambda} =  \frac{1}{a(\sqrt{\lambda})} U_{\sqrt{\lambda}}
\quad\text{and}\quad 
V^{-}_{0,\lambda} = \frac{1}{a(-\sqrt{\lambda})} U_{-\sqrt{\lambda}} ,
\end{equation*}
where in both cases we used the positive square root of $\lambda$. 
The final formula in the statement of the theorem is a consequence of  Lemma~\ref{lem-kodaira-dmla-for-f}.
\end{proof}

For $\lambda > 0$ let us now define 
\begin{equation}
\label{eq-def-w-lambda}
w(\lambda) =  \Wr( V^+_{0,\lambda} ,  V^-_{0,\lambda}) = \frac{2i\sqrt{\lambda}\,\,}{|a(\sqrt{\lambda})|^2}
\end{equation}
as in Theorem~\ref{thm-limit-absorption}.

\begin{theorem} 
\label{thm-limiting-absorption2}
For every $V\in \KodSolFam$ and for every $\lambda >0$ the limits 
\[
V^+_\lambda = \lim_{\varepsilon \to 0+} V_{\lambda+i\varepsilon}
\quad \text{and}\quad 
V^-_\lambda =  \lim_{\varepsilon \to 0+} V_{\lambda-i\varepsilon}
\]
exist in the standard topology on $C^\infty(\R^3_0)$ of uniform convergence on compact sets of functions and all their derivatives.  The limits are $\lambda$-eigenfunctions, and  moreover
\[
V^+_\lambda - V^-_\lambda\in \RegSol(\lambda).
\]
In fact there is a unique $F\in \RegSolFam$ such that 
\[
V^+_\lambda - V^-_\lambda= {w(\lambda)}\cdot F_\lambda  
\]
for every $\lambda >0$.
\end{theorem}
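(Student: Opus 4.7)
The plan is to bootstrap from the spherical case proved in Theorem~\ref{thm-limit-absorption} by exploiting the standard-and-spherical structure of $\KodSolFam$.  Via Theorem~\ref{thm-iso-of-kodaira-and-sing-sol} the family $\KodSolFam$ is isomorphic to $\SingSolFam$ and therefore inherits the standard-and-spherical property.  The Wronskian normalization makes $V_0$ a free generator of the rank-one $\O$-module $\KodSolFam^0$, so the classifying isomorphism built in the proof of Theorem~\ref{thm-spherical-classification} together with Proposition~\ref{prop-Kostant-thm} guarantees that every $V \in \KodSolFam$ has the form $V = X\cdot V_0$ for some $X \in \mathcal{U}(\g,\BK)$.

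The second ingredient is that one-sided $\lambda$-limits in the $C^\infty(\R^3_0)$ topology commute with the action of $\mathcal{U}(\g,\BK)$.  The generators $L_i$ and $R_i$ act as linear differential operators on $\R^3_0$ whose coefficients depend polynomially on $\lambda$ (the only $\lambda$-dependence entering through the central action of $T$), and elements of $R(K)$ act independently of $\lambda$.  The subspace of $\KodSolFam$ consisting of those $V$ whose one-sided limits at a given $\lambda_0 > 0$ exist in $C^\infty(\R^3_0)$ is therefore a $\mathcal{U}(\g,\BK)$-submodule; since it contains $V_0$ by Theorem~\ref{thm-limit-absorption}, it coincides with $\KodSolFam$.

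To identify the difference, I write $V = XV_0$ and apply $X$ to the limit formula from Theorem~\ref{thm-limit-absorption}, obtaining
\[
V^+_{\lambda_0} - V^-_{\lambda_0}
= X|_{\lambda_0}\bigl(V^+_{0,\lambda_0} - V^-_{0,\lambda_0}\bigr)
= w(\lambda_0)\cdot X|_{\lambda_0}\tilde F_{0,\lambda_0},
\]
where $\tilde F_0$ denotes the $K$-invariant section of $\RegSolFam$ with $\tilde F_{0,\lambda} = F_{0,\lambda}$.  Setting $F := X\cdot \tilde F_0$, which lies in $\RegSolFam$ by Theorem~\ref{thm-regsol-is-alg-family}, yields the required identity on $(0,\infty)$.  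Uniqueness follows because sections of $\RegSolFam$ are, pointwise in $x \in \R^3_0$, entire functions of $\lambda$, so two candidates that agree on the ray $(0,\infty)$ must coincide on all of $\C$; the same argument also shows that the apparent ambiguity in the decomposition $V = XV_0$ is harmless, since $F$ is forced by the equation alone.

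The most delicate point I anticipate is verifying the compatibility of $C^\infty$-convergence on compact sets with the polynomially $\lambda$-dependent operator coefficients that appear in a generic $X \in \mathcal{U}(\g,\BK)$.  This reduces to the elementary observation that if $f_\lambda \to f_{\lambda_0}$ in $C^\infty(\R^3_0)$ and $a(x,\lambda)$ is jointly smooth with polynomial $\lambda$-dependence, then $a(\cdot,\lambda)\partial^\alpha f_\lambda \to a(\cdot,\lambda_0)\partial^\alpha f_{\lambda_0}$ uniformly on compact sets, together with all derivatives in $x$.
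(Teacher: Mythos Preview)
Your argument is correct and follows essentially the same route as the paper: reduce to the spherical generator $V_0$ via the standard-and-spherical property of $\KodSolFam$, use that the $C^\infty$-limits established in Theorem~\ref{thm-limit-absorption} are preserved under application of any $X\in\mathcal U(\g,\BK)$ because such $X$ act as differential operators, and then set $F=X\cdot \tilde F_0$. The paper's version is terser (it simply notes that $\KodSolFam$ is generated by its spherical part and that $C^\infty$-limits commute with differential operators), while you spell out the uniqueness argument and the polynomial $\lambda$-dependence of coefficients more explicitly; but the substance is the same.
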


\begin{proof}
The case where $V$ is spherical is covered by Theorem~\ref{thm-limit-absorption}.  Because the limits there exist in the $C^\infty$ topology, if $X \in \mathcal{U}(\g)$, then in addition the limits 
\[
(XV_{0,\lambda})^+ = \lim_{\varepsilon \to 0+} XV_{0,\lambda+i\varepsilon}
\quad \text{and}\quad 
(XV_{0,\lambda})^- =  \lim_{\varepsilon \to 0+} XV_{0,\lambda-i\varepsilon}
\]
exist, and    if $\lambda >0$, then 
\[
(XV_{0,\lambda})^+ - (XV_{0,\lambda})^-= w(\lambda) \cdot XF_{0,\lambda}  
\] 
This proves the theorem because $\KodSolFam$ is generated by its spherical isotypical component.
\end{proof}

We come now to one of the main results in this paper, which recovers the algebraic intertwining operator of Section~\ref{sec-phys-sol-jantzen} from analysis and the limiting absorption principle.

 \begin{theorem}
 \label{thm-lap-construction-of-intertwiner}
Define a morphism of $\O$-modules 
\[
\mathcal{A} \colon \KodSolFam\longrightarrow \RegSolFam
\]
by mapping $V \in \KodSolFam$ to the unique $F\in \RegSolFam$ for which 
\[
F _\lambda =  \frac{1}{w(\lambda) }  \bigr ( V^+_\lambda   - V^-_\lambda \bigr  )  \qquad \forall \lambda > 0,
\]
   as in Theorem~\textup{\ref{thm-limiting-absorption2}}. This  is a  morphism of  algebraic families of $(\g,\BK)$-modules, and an isomorphism on $\ell{=}0$ isotypical  components. \qed
\end{theorem}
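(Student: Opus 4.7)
The plan is to verify that the $\O$-module map $\mathcal{A}$, which is well-defined by Theorem~\ref{thm-limiting-absorption2}, intertwines the $K$- and $\g$-actions and then to compute its restriction to $\ell{=}0$ isotypical components explicitly.

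First, $\O$-linearity: if $p\in\O$ and $V\in\KodSolFam$, then $(pV)_\lambda = p(\lambda) V_\lambda$ pointwise, so $(pV)^\pm_\lambda = p(\lambda) V^\pm_\lambda$ for $\lambda>0$. Dividing by $w(\lambda)$ gives $\mathcal{A}(pV)_\lambda = p(\lambda)\mathcal{A}(V)_\lambda$ on $(0,\infty)$. Since $(0,\infty)$ is Zariski-dense in $\C$ and both sides are global sections of the algebraic family $\RegSolFam$, the identity holds globally. The $K$-equivariance is immediate: $K$ acts on $C^\infty(\R^3_0)$ by continuous operators, which commute with the $C^\infty$-topology limits defining $V^\pm_\lambda$ and with scalar division by $w(\lambda)$.

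For $\g$-equivariance, it suffices to show $\mathcal{A}(RV) = R\,\mathcal{A}(V)$ for a Runge-Lenz operator $R$. By Theorem~\ref{thm-action-of-runge-lenz-on-sol-infty} and the lemma following Definition~\ref{def-kodaira-sol-fam}, $RV\in\KodSolFam$. Because $R$ is a continuous linear differential operator on $C^\infty(\R^3_0)$ and the limits in Theorem~\ref{thm-limiting-absorption2} are $C^\infty$-topology limits, we have $(RV)^\pm_\lambda = R\,V^\pm_\lambda$, so
\[
\mathcal{A}(RV)_\lambda \;=\; \frac{1}{w(\lambda)}\,R\bigl(V^+_\lambda - V^-_\lambda\bigr) \;=\; R\,\mathcal{A}(V)_\lambda \qquad (\lambda>0).
\]
By Theorem~\ref{thm-regsol-is-alg-family}, $R\,\mathcal{A}(V)$ is an element of $\RegSolFam$, so both sides are algebraic sections of $\RegSolFam$ agreeing on a Zariski-dense set, hence are equal.

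Finally, for the isomorphism on $\ell{=}0$: let $V_0\in\KodSolFam^0$ be the spherical section of Theorem~\ref{thm-limit-absorption}, normalized by $\Wr(V_{0,\lambda},F_{0,\lambda}){=}1$. From the very definition of $w(\lambda)$ and the final formula of that theorem, $V^+_{0,\lambda} - V^-_{0,\lambda} = w(\lambda)\,F_{0,\lambda}$, so $\mathcal{A}(V_0) = F_0$. Now $\SingSolFam^0$ is a free $\O$-module of rank one (Theorem~\ref{thm-spherical-classification}), and Theorem~\ref{thm-iso-of-kodaira-and-sing-sol} transports this to $\KodSolFam^0$ with $V_0$ the Wronskian dual of the generator $F_0\in\RegSolFam^0$, hence itself a generator. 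Thus $\mathcal{A}$ restricts to an $\O$-linear map between two rank-one free modules sending generator to generator, and so is an isomorphism on $\ell{=}0$. The main obstacle is Step~3: passing from the analytic definition of $\mathcal{A}$ to its algebraic $\g$-equivariance requires both the $C^\infty$-topology convergence supplied by Theorems~\ref{thm-limit-absorption} and~\ref{thm-limiting-absorption2} (to commute $R$ with the boundary-value limit) and the Zariski-density upgrade from $(0,\infty)$ to all of $\C$.
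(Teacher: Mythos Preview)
Your argument is correct and matches the paper's approach: the paper in fact gives no separate proof at all (the statement ends with a \qed), treating the result as an immediate consequence of Theorem~\ref{thm-limiting-absorption2}, whose proof already shows $(XV_{0,\lambda})^+ - (XV_{0,\lambda})^- = w(\lambda)\cdot XF_{0,\lambda}$ for $X\in\mathcal{U}(\g)$ and uses that $\KodSolFam$ is generated by its spherical part. Your write-up simply makes explicit the $\O$-linearity, $K$- and $\g$-equivariance, and the $\ell{=}0$ computation that the paper leaves to the reader.
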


Using the isomorphism in Theorem~\ref{thm-iso-of-kodaira-and-sing-sol}  we obtain from Theorem~\ref{thm-lap-construction-of-intertwiner} an intertwining operator from 
$ 
 \SingSolFam $ to $ \RegSolFam$. 
It agrees with the intertwiner analyzed in Subsection~\ref{subsec-intertwining} up to an overall complex multiplicative factor, because   up  to a scalar factor there is  a unique intertwiner that is an isomorphism on    $\ell{=}0$ isotypical  components. 

\section{The Spectral Decomposition of the Schr\"odinger Operator}
\label{sec-spectral-decomp}

The spectral theorem for unbounded self-adjoint operators  provides a \emph{measurable} family of solution spaces parametrized by the values  in the   spectrum of the Schr\"odinger operator.    These spaces, $\mathcal{H}_\lambda$, which are Hilbert spaces, carry actions of the visible symmetry group $K$. We shall prove that for almost every $\lambda$  in the spectrum, the space of  $K$-finite vectors in $\mathcal{H}_\lambda$  carries a natural action of the   hidden symmetry Lie algebra $\g\vert_\lambda$, and that the $(\g\vert_\lambda, K)$-module so-obtained coincides with the physical solution space for $ \lambda$.

\subsection{Direct integral decomposition} 

From now on we shall denote by $T$ the Kato-Rellich extension of the rescaled operator \eqref{eq-schrodinger-op2}. Since $T$ is self-adjoint, general theory provides a spectral decomposition of the underlying Hilbert space $\mathcal{L}^2(\R^3)$, as in the following theorem.  See for example \cite[A.69]{dixmier1982c} for the terminology and \cite[A.84]{dixmier1982c} for the result.

\begin{theorem}
\label{thm-dir-integral}
There is a fully-supported Borel measure $\mu$ on $\operatorname{Spec}(T)$, a $\mu$-measurable field of Hilbert spaces $\{ \,\mathcal{H}_\lambda\,\}_{\lambda \in \operatorname{Spec}(T)}$, and a unitary isomorphism 
\begin{equation*}
\mathcal{L}^2(\R^3)\stackrel \cong\longrightarrow  \int^{\oplus}_{\operatorname{spec}(T)}\mathcal{H}_{\lambda} \, d\mu(\lambda) ,
\end{equation*}
from $\mathcal{L}^2(\R^3)$ to the space of $\mu$-square-integrable sections of the measurable field, such that 
if $f\in \mathcal{L}^2(\R^3)$, then 
\[
[(T\pm i I)^{-1}f\,]_\lambda = (\lambda \pm iI)^{-1}[f]_\lambda
\]
for almost every $\lambda$ \textup{(}we are writing $\lambda \mapsto [f]_\lambda$ for the measurable, square-integrable section of $\{ \,\mathcal{H}_\lambda\,\}_{\lambda \in \operatorname{Spec}(T)}$ that is attached to $f\in \mathcal{L}^2(\R^3)$ by the unitary isomorphism\textup{)}.
\end{theorem}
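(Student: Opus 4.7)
The plan is essentially to invoke the spectral multiplicity theorem for an unbounded self-adjoint operator on a separable Hilbert space, in the form given in Dixmier's text \cite[A.84]{dixmier1982c}, so the proof will be more a verification that the hypotheses apply than a fresh argument. The prerequisites are already in place: by Theorem~\ref{thm-H-bounded-below-ess-s-a} the rescaled Schr\"odinger operator $T$ (with domain $\mathcal{H}^2(\R^3)$) is self-adjoint on the separable Hilbert space $\mathcal{L}^2(\R^3)$, which is all that standard spectral theory demands.

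First I would form the abelian von Neumann algebra $\mathcal{M}\subseteq \mathcal{B}(\mathcal{L}^2(\R^3))$ generated by the resolvents $(T\pm iI)^{-1}$, or equivalently by the bounded Borel functional calculus of $T$. The Hahn--Hellinger direct integral decomposition for commutative von Neumann algebras on a separable Hilbert space then provides a standard Borel space $X$, a finite Borel measure $\nu$ on $X$, a $\nu$-measurable field $\{\mathcal{H}_x\}_{x\in X}$, a measurable function $\tau\colon X \to \R$, and a unitary isomorphism of $\mathcal{L}^2(\R^3)$ onto $\int^\oplus_X \mathcal{H}_x\, d\nu(x)$ that carries $T$ to multiplication by $\tau$ and $\mathcal{M}$ onto the algebra of diagonal operators. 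Pushing $\nu$ forward along $\tau$ and disintegrating along the fibers of $\tau$ produces a Borel measure $\mu$ on $\R$ and a reindexed measurable field $\{\mathcal{H}_\lambda\}_{\lambda\in \R}$ on which $T$ becomes multiplication by the coordinate $\lambda$. Since by construction the support of $\mu$ is the support of the spectral measure of $T$, namely $\Spec(T)$, restricting to this support gives the required measure of full support.

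Once $T$ is realized as multiplication by $\lambda$ on the direct integral, the bounded Borel functional calculus yields at once that $(T\pm iI)^{-1}$ corresponds to multiplication by the bounded measurable function $(\lambda\pm i)^{-1}$. Evaluating the resulting identity fiberwise gives exactly the stated formula $[(T\pm iI)^{-1}f]_\lambda = (\lambda \pm i)^{-1}[f]_\lambda$ for $\mu$-almost every $\lambda$. The main (minor) obstacle is bookkeeping: one must track support statements carefully to ensure $\mu$ is fully supported on $\Spec(T)$, and arrange that the exceptional null sets arising at each stage of the direct-integral construction can be combined into a single $\mu$-null set outside of which the resolvent identity holds pointwise. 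Neither is a genuine difficulty, and the theorem is in effect a quotation of well-known spectral-theoretic machinery.
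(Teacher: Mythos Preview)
Your proposal is correct and matches the paper's approach exactly: the paper does not give a proof of this theorem but simply cites \cite[A.84]{dixmier1982c} (with \cite[A.69]{dixmier1982c} for terminology), which is precisely the reference you invoke. Your additional sketch of the Hahn--Hellinger argument is sound elaboration, but strictly speaking the paper treats this as a quotation of standard spectral theory rather than something requiring proof.
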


The Hilbert spaces $\mathcal{H}_\lambda$ for $\lambda{<}0$ identify with the $\lambda$-eigenspaces for $T$ in $\mathcal{L}^2 (\R^3)$ via the isomorphism in the theorem, and so are easily analyzed. The situation for $\lambda{\ge}0$   is more complicated; it will be explained in the following subsections.

\subsection{Visible symmetries}

For the sequel, we shall fix a measure $\mu$ and a $\mu$-measur\-able field  as in   Theorem~\ref{thm-dir-integral}.  Our ultimate aim is to equip the fibers of the field with hidden symmetries, but we shall start here with the easier visible symmetries.

In the following theorem, by a measurable family of unitary actions of $K$ on the measurable field $\{ \mathcal{H}_{\lambda} \}$ we shall mean a family of unitary group actions, one on each of the fibers $\mathcal{H}_\lambda$, that maps measurable sections to measurable sections.

\begin{theorem}
There is a measurable family of unitary $K$-actions on the measurable field $\{ \mathcal{H}_\lambda \}$ such that 
\begin{equation*}
[g\cdot f]_\lambda = g\cdot [f]_\lambda 
\end{equation*}
for every $g\in K$ and almost every $\lambda$.
\end{theorem}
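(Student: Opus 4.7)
The plan is to exploit the fact that $K = O(3)$ acts unitarily on $\mathcal{L}^2(\R^3)$ via the action on $\R^3$, and that this unitary action commutes with $T$ since $T$ is rotation-invariant. Commutation with $T$ is exactly the condition that lets the action be diagonalized alongside the spectral decomposition of $T$ in \textup{Theorem~\ref{thm-dir-integral}}, giving a fiberwise action up to null-set issues that must then be managed carefully.

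First I would invoke the disintegration theorem for the commutant: letting $\mathcal{A}$ denote the abelian von Neumann algebra generated by the bounded Borel functional calculus of $T$, the direct integral realizes $\mathcal{A}$ as $L^\infty(\Spec(T),\mu)$ acting diagonally, and its commutant $\mathcal{A}'$ coincides with the algebra of decomposable operators, i.e., those of the form $\int^\oplus A(\lambda)\, d\mu(\lambda)$ for a $\mu$-measurable field of bounded operators; see \cite{dixmier1982c}. Since each $U_g$ commutes with $T$, it lies in $\mathcal{A}'$, so it disintegrates as $\int^\oplus U_g(\lambda)\, d\mu(\lambda)$ for a measurable field of unitaries on the fibers $\mathcal{H}_\lambda$. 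At this stage one already has, for each fixed $g$, a measurable equivariance relation $[U_g f]_\lambda = U_g(\lambda)[f]_\lambda$ for almost every $\lambda$.

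The hard part will be upgrading this into a measurable \emph{group} action of $K$, rather than a family of disintegrations indexed by $g\in K$ each defined only up to a $g$-dependent null set. The standard way I would handle this is to fix a countable dense subgroup $K_0 \subseteq K$; for each pair $(g,h)\in K_0 \times K_0$ the relation $U_g U_h = U_{gh}$ on $\mathcal{L}^2(\R^3)$ forces $U_g(\lambda)U_h(\lambda) = U_{gh}(\lambda)$ for $\mu$-a.e.\ $\lambda$, and countability of $K_0\times K_0$ lets one shrink to a single conull set on which the cocycle identity holds for all pairs in $K_0$. Strong continuity of $g\mapsto U_g$ on $\mathcal{L}^2(\R^3)$, together with second-countability of $K$, then forces the $K_0$-action on $\mathcal{H}_\lambda$ (for almost every $\lambda$) to extend by continuity to a unitary $K$-action, and redefining the $U_g(\lambda)$ on the remaining null set makes the construction honest.

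An alternative I would probably prefer for cleanliness uses the Peter-Weyl decomposition: since $T$ commutes with $K$, every $K$-isotypical component $\mathcal{L}^2(\R^3)^\sigma$ is $T$-invariant, and, after writing it as $W_\sigma \otimes \operatorname{Hom}_K(W_\sigma,\mathcal{L}^2(\R^3))$, it carries the form $W_\sigma \otimes M_\sigma$ with $T$ acting as $I \otimes T_\sigma$. Applying the spectral theorem separately to each $T_\sigma$ produces a measurable field $\{\mathcal{H}_\lambda^\sigma\}$ and thus a fiber $\mathcal{H}_\lambda = \widehat{\bigoplus}_\sigma W_\sigma \otimes \mathcal{H}_\lambda^\sigma$ on which $K$ acts tautologically through $\sigma$ on the first tensor factor, with measurability of the $K$-action being visible by construction. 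The only thing left to check is that this field agrees with the one from \textup{Theorem~\ref{thm-dir-integral}} up to a measurable isomorphism, which follows from uniqueness of the disintegration with respect to the abelian von Neumann algebra generated by $T$.
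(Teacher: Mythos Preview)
Your proposal is correct. The paper does not actually prove this theorem but simply cites \cite[8.3.1, 18.7.6, A.80]{dixmier1982c}, and your first approach is precisely a sketch of what those references contain: disintegrate each $U_g$ as a decomposable operator in the commutant of the diagonal algebra, then repair the null-set issues using a countable dense subgroup of $K$ and extend by continuity. The one step you pass over quickly---that strong continuity of $g\mapsto U_g$ on $\mathcal{L}^2(\R^3)$ forces strong continuity of $g\mapsto U_g(\lambda)$ on almost every fiber---is genuinely the delicate point in Dixmier's argument, but since the paper is content to cite rather than prove, your level of detail is already more than it offers.

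Your second approach via Peter--Weyl is a different and, for a compact group, more transparent route: by decomposing $\mathcal{L}^2(\R^3)$ into isotypical pieces $W_\sigma\otimes M_\sigma$ and applying the spectral theorem to each $T_\sigma$ separately, you build the fiberwise $K$-action into the construction and avoid the measurable-cocycle bookkeeping entirely. The price is that you must reconcile the resulting field with the one already fixed in Theorem~\ref{thm-dir-integral}, which as you note comes down to uniqueness of the disintegration relative to the abelian von Neumann algebra generated by $T$. For the purposes of this paper---where only $SO(3)$-types appear and the isotypical structure is used heavily in the sequel---this second approach is arguably the more natural one.
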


This is a  consequence of the fact that the $K$-action on $\mathcal{L}^2 (\R^3)$ maps the domain of the self-adjoint operator $T$ into itself, and fixes $T$. See \cite[8.3.1,18.7.6,A.80]{dixmier1982c}.

\subsection{Hidden symmetries}

Our   aim in this subsection is to decompose the Runge-Lenz operators 
\[
R \colon C_c^\infty (\R^3 _0) \longrightarrow C_c^\infty (\R^3 _0) .
\]
 into families of operators acting on the Hilbert spaces $\mathcal{H}_\lambda$  in Theorem~\ref{thm-dir-integral}, although actually it will be necessary to consider only the  $K$-finite vectors in $\mathcal{H}_\lambda$. We shall prove the following result:

\begin{theorem}
\label{thm-action-from-Friedrichs}
There are  operators 
\[
R_{\lambda} \colon \mathcal{H}_{\lambda,\fin}  \longrightarrow \mathcal{H}_{\lambda ,\fin}
\]
with the following properties:
\begin{enumerate}[\rm (a)]

\item if $\{ f_\lambda \}$ is a measurable section of $\{ \mathcal{H}_\lambda^\sigma \}$ for some $\sigma \in \widehat K$, then $\{ R_{\lambda}f_\lambda  \}$ is a measurable section of $\{\mathcal{H}_\lambda\}$; and

\item if $g\in C_c^\infty (\R^3_0)_\fin$, then $[ R g]_\lambda = R_{\lambda } [g]_\lambda $ for $\mu$-almost every $\lambda$.

\end{enumerate}

\end{theorem}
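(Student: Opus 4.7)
The plan is to upgrade the formal commutation $RT=TR$ on the core $C_c^\infty(\R^3_0)_\fin$ to the statement that the closure of $R$ is a \emph{decomposable operator} with respect to the direct integral of Theorem~\ref{thm-dir-integral}, and then to extract the fiber operators as the desired $R_\lambda$.

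First I would reduce to a $K$-isotypical statement. Because $R\in\g$ transforms as a vector under the adjoint $K$-action, it sends the $\sigma$-isotypical part of $C_c^\infty(\R^3_0)_\fin$ into the direct sum of the finitely many $\sigma'$-isotypical parts appearing in the decomposition of $\sigma \otimes V$, with $V$ the standard three-dimensional representation of $K$. Writing $R=\sum_{\sigma,\sigma'} R^{\sigma\to\sigma'}$, it suffices to construct, for each pair $(\sigma,\sigma')$, a measurable family of linear maps $R^{\sigma\to\sigma'}_\lambda\colon\mathcal{H}_\lambda^\sigma\to\mathcal{H}_\lambda^{\sigma'}$ whose fiberwise action recovers $R^{\sigma\to\sigma'}$ on $g\in C_c^\infty(\R^3_0)_\sigma$. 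Since $\mathcal{H}_\lambda$ is $K$-admissible for a.e.\ $\lambda$, the spaces $\mathcal{H}_\lambda^\sigma$ and $\mathcal{H}_\lambda^{\sigma'}$ are finite-dimensional, so we are looking for a measurable family of linear maps between finite-dimensional vector spaces.

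The essential point is then to show that $\overline{R^{\sigma\to\sigma'}}$, the closure of $R^{\sigma\to\sigma'}$ initially defined on $C_c^\infty(\R^3_0)_\sigma$, is decomposable. Closability is immediate: the formal skew-adjointness of $R$ noted in Subsection~\ref{subsec-real-structure} shows that $(R^{\sigma\to\sigma'})^*$ contains $-R^{\sigma'\to\sigma}$ on $C_c^\infty(\R^3_0)_{\sigma'}$ and is therefore densely defined. Decomposability is equivalent, by standard direct integral theory, to the statement that $\overline{R^{\sigma\to\sigma'}}$ commutes with the resolvent $(T-z)^{-1}$ for $z$ in the resolvent set of $T$. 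Granting this, the general theory of decomposable operators produces the desired measurable family of fiber operators, and properties (a) and (b) of the theorem follow from the defining property of the decomposition applied to measurable $\sigma$-isotypical sections and to the dense set $C_c^\infty(\R^3_0)_\fin\subseteq\domain(\overline{R^{\sigma\to\sigma'}})$ respectively.

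The main obstacle is verifying the commutation of $\overline{R^{\sigma\to\sigma'}}$ with $(T-z)^{-1}$. Formally, $RT=TR$ on $C_c^\infty(\R^3_0)_\sigma$ yields $(T-z)^{-1}R=R(T-z)^{-1}$ on the dense subspace $(T-z)C_c^\infty(\R^3_0)_\sigma\subseteq\mathcal{L}^2_\sigma$; but to extend the identity to $\domain(\overline{R^{\sigma\to\sigma'}})$ one must approximate $(T-z)^{-1}g$---which is smooth on $\R^3_0$ by elliptic regularity but is not compactly supported---by elements of $C_c^\infty(\R^3_0)_\sigma$ in a manner that controls their image under $R^{\sigma\to\sigma'}$. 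The difficulty is that when written in Cartesian coordinates $R$ contains second-order terms with linearly growing coefficients (for instance $x_k\partial_j^2$ arising from expanding $L_j\partial_i$), so $R$ is not relatively bounded with respect to any power of $T$ and a naive cutoff does not converge in the graph norm of $R$. A cleaner route is to restrict to each $K$-isotypical piece, where $R^{\sigma\to\sigma'}$ becomes an ordinary differential operator on $\R_+$ intertwining Sturm--Liouville operators---an action that can be written down explicitly, as in equation~\eqref{ac}---and to verify commutation with the radial resolvent of $T$ directly from the Kummer/Bessel asymptotics recalled in Subsection~\ref{subsec-explicit-formulas}, which encode the principal-series structure of the fibers.
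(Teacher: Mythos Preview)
Your outline is in the right spirit---reduce to isotypical pieces, prove commutation of $R$ with $(T-z)^{-1}$, then invoke decomposability---but you have not actually closed the main gap, and the paper's proof does so by a device you have missed.

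First, your diagnosis of the obstruction is slightly off. Once you restrict to a fixed $K$-isotypical piece, the angular momentum operators $L_j$ in \eqref{eq-form-of-runge-lenz-operator} act as bounded operators (they act within a finite-dimensional $K$-type), so the terms $x_k\partial_j^2$ you worry about do not arise: on $C_c^\infty(\R^3_0)_\sigma$ the operator $R^{\sigma\to\sigma'}$ is effectively a \emph{first}-order differential operator with bounded coefficients. This does not by itself give relative $T$-boundedness, but it means the problem is milder than you suggest.

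Second, and more importantly, the paper bypasses the approximation-by-cutoff argument altogether by exploiting the Casimir identity from Lemma~\ref{lem-vals-of-casimirs}:
\[
TL^2 - R^2 = T + \kappaconstant^2 I .
\]
On the $\ell$-isotypical subspace $L^2$ acts as the scalar $-\ell(\ell{+}1)$, and since each $R_i$ is formally skew-adjoint one obtains
\[
\sum_i \|R_i f\|^2 = -\langle R^2 f, f\rangle = \text{const}\cdot\langle Tf,f\rangle + \text{const}\cdot\|f\|^2 \le \text{const}\cdot\|f\|_{\mathcal F(\R^3)}^2 ,
\]
so each $R_i$ extends to a bounded map $\mathcal F(\R^3)_\fin\to\mathcal L^2(\R^3)_\fin$ on every isotypical piece. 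Consequently $S_i = R_i(T+C)^{-1}$ is a genuinely \emph{bounded} operator on $\mathcal L^2(\R^3)^\sigma$, and the commutation $S_i(T+C)^{-1}=(T+C)^{-1}S_i$ follows from a short inner-product computation using the Friedrichs characterization (Lemmas~\ref{lem-little-Friedrichs-lemma1}--\ref{lem-little-commutation-lemma}). One then decomposes the bounded operator $S_i$ fiberwise and sets $R_{i,\lambda}=(\lambda+C)S_{i,\lambda}$.

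Your fallback---verifying commutation with the radial resolvent via Kummer/Bessel asymptotics---could presumably be made to work, but as written it is a sketch rather than a proof, and it forfeits the conceptual point: the relative form-boundedness of the Runge--Lenz operators is a direct representation-theoretic consequence of the Casimir relation, not an accident of special-function identities.
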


Well-known examples show that care must be taken when attempting to decompose one unbounded operator with respect to a direct integral decomposition of another; see for example \cite[Sec.\ 10]{Nelson59}.  So we shall proceed cautiously, following a sequence of small steps.

\begin{lemma}
Each of the Runge-Lenz operators on $C_c^\infty (\R^3 _0)_\fin $ extends to  a continuous linear operator from $\mathcal{F}(\R^3)_\fin$ into $\mathcal{L}^2(\R^3 )_\fin$ \textup{(}the operator is bounded on each isotypical subspace, but not uniformly bounded over all isotypical subspaces\textup{)}.  The extension   maps the subspace $\mathcal{H}^2 (\R^3)_\fin\subseteq \mathcal{F}(\R^3)_\fin$ into $\mathcal{F}(\R^3)_\fin$
\end{lemma}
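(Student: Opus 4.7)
My plan is to work isotypical-component-wise and exploit the explicit Cartesian formula
\[
-\sqrt{-1}\,R_i \;=\; \frac{\kappaconstant\, x_i}{r} \;-\; \partial_i \;+\; \sum_{j\neq i}\pm\,\partial_j L_j
\]
from equation (2.2.2). On each $\ell$-isotypical subspace, the angular generator $L_j$ acts through the finite-dimensional representation $W_\ell$, so it is bounded (with norm of order $\ell$); hence analysis of $R_i$ reduces to bounding multiplication by $\kappa x_i/r$ and first-order partial derivatives. A necessary preliminary is to identify the form domain precisely: the inclusion $\mathcal{H}^1(\R^3)\subseteq \mathcal{F}(\R^3)$ is already in the proof of Theorem~\ref{thm-Friedrichs-is-Standard-extension}, and I would complement it by applying the three-dimensional Hardy inequality $\int |f|^2/|x|^2\,dx \leq 4\|\nabla f\|^2$ together with Cauchy--Schwarz to control $|2\kappaconstant \int |f|^2/r|$ by $\tfrac{1}{2}\|\nabla f\|^2 + 8\kappaconstant^2 \|f\|^2$. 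This shows the form norm dominates the $\mathcal{H}^1$ norm once $C$ is chosen large enough, so the two spaces coincide with equivalent norms.

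For the first assertion, I would then note that each of the three summands of $R_i$ is bounded $\mathcal{H}^1\to\mathcal{L}^2$ on every $\ell$-isotypical piece: multiplication by $\kappaconstant x_i/r$ is bounded $\mathcal{L}^2\to\mathcal{L}^2$ since $|x_i/r|\leq 1$; the operator $\partial_i$ is bounded $\mathcal{H}^1\to\mathcal{L}^2$; and $\partial_j L_j$ is the composition of the $\ell$-isotypical restriction of $L_j$ (bounded with norm $O(\ell)$) followed by $\partial_j$. The dependence on $\ell$ of the third bound is exactly the source of the lack of uniformity flagged in the lemma.

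For the second assertion I must show that each summand maps $\mathcal{H}^2 \to \mathcal{H}^1$. The terms $\partial_i f$ and $\partial_j L_j f$ are essentially immediate once one observes that $L_j$ commutes with $\Delta$ and acts boundedly on every $\ell$-isotypical subspace of $\mathcal{H}^2$, so $L_j f \in \mathcal{H}^2$ and hence $\partial_j L_j f\in \mathcal{H}^1$. The serious term is the Coulomb multiplication $\kappaconstant x_i f/r$, for which I would compute the distributional derivative
\[
\partial_j\!\left(\frac{x_i f}{r}\right) \;=\; \frac{\delta_{ij}}{r}\,f \;-\; \frac{x_i x_j}{r^3}\,f \;+\; \frac{x_i}{r}\,\partial_j f ,
\]
and control each summand in $\mathcal{L}^2$: the first is $\|f/r\|_{\mathcal{L}^2}\leq 2\|\nabla f\|_{\mathcal{L}^2}$ by Hardy, the second is pointwise bounded by $|f|/r$ because $|x_i x_j/r^3|\leq 1/r$, and the third is pointwise bounded by $|\partial_j f|\in \mathcal{L}^2$ since $f\in\mathcal{H}^2$.

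I expect the main obstacle to be rigorously justifying the distributional derivative formula above, given that $x_i/r$ is not smooth at the origin. The cleanest route is to verify the identity and the $\mathcal{L}^2$ estimates first on $C_c^\infty(\R^3_0)$, where the computation is classical, and then pass to the $\mathcal{H}^2$-limit using the $\mathcal{H}^1\to\mathcal{L}^2$ bounds established in the first step together with the density of $C_c^\infty(\R^3_0)$ in $\mathcal{H}^1(\R^3)$ (which rests on the fact that a single point has vanishing capacity in $\R^3$). Once that density/limiting argument is in place, both claims follow on every isotypical piece, completing the proof of the lemma.
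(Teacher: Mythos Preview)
Your proposal is correct, but for the first assertion it takes a genuinely different route from the paper. The paper exploits the Casimir identity $TL^2 - R^2 = T + \kappaconstant^2 I$ from Lemma~\ref{lem-vals-of-casimirs}: on the $\ell$-isotypical component $L^2$ acts as a scalar, so $-\langle R^2 f,f\rangle$ equals a constant times $\langle Tf,f\rangle$ plus a constant times $\langle f,f\rangle$; since each $R_i$ is skew-symmetric this gives $\sum_i\|R_i f\|^2 \le \text{const}_\ell\cdot\|f\|_{\mathcal F}^2$ in one stroke, without ever needing to identify $\mathcal F(\R^3)$ with $\mathcal H^1(\R^3)$. Your approach instead first pins down $\mathcal F = \mathcal H^1$ via Hardy and then bounds the three summands of the explicit formula for $R_i$ term by term. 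Both work; the paper's argument is more representation-theoretic and avoids the preliminary identification of the form domain, while yours is a direct PDE estimate that makes the $\ell$-dependence of the bound completely explicit. For the second assertion the two arguments are essentially the same decomposition into the three summands of \eqref{eq-form-of-runge-lenz-operator}; the only difference is that the paper controls $f/r$ via the Sobolev embedding $\mathcal H^2\hookrightarrow L^\infty$ (Lemma~\ref{lem-sobolev}) together with local square-integrability of $1/r$, whereas you use Hardy's inequality, which is an equally clean alternative.
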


\begin{proof}
From \eqref{eq-two-casimirs}  and Lemma~\ref{lem-vals-of-casimirs}
\begin{equation*}
     T L^2 - R^2  =    T + \kappaconstant ^2  I 
 \end{equation*}
on $C_c^\infty (\R^3 _0)_\fin$.  The operator $L^2$ in this formula is the Casimir for $K$, and on each isotypical component of $C_c^\infty (\R^3 _0)_\fin$ it acts as a scalar.  Therefore on any such  component we have
\begin{equation}
\label{eq-casimir-formula-for-R}
- \langle R^2  f ,f\rangle  =  \text{constant} \cdot   \langle Tf, f\rangle  + \text{constant}\cdot \langle f,f\rangle  .
\end{equation}
But recall that when viewed as unbounded operators on $\mathcal{L}^2 (\R^3)$ with domain $C_c^\infty (\R^3 _0)$, the Runge-Lenz operators are skew-symmetric.  So it follows from \eqref{eq-casimir-formula-for-R} that 
\[ 
\| R_1f\|^2 + \| R_2f\|^2  + \| R_3f\|^2  \le  \text{constant} \cdot  \| f\|^2 _{\mathcal{F}(\R^3)} .
\]
This proves the first assertion in the lemma. 

 To prove the second assertion, recall the formula \eqref{eq-form-of-runge-lenz-operator} for $R_i$.
The operators $L_{j}$ map each isotypical component of $\mathcal{H}^2(\R^3)$ boundedly into itself.  The operators  $\partial_i,\partial_j$ are bounded as operators from $\mathcal{H}^2 (\R^3)$ into $\mathcal{H}^1(\R^3)$, and hence as operators from $\mathcal{H}^2 (\R^3)$ into $\mathcal{F}(\R^3)$; see the proof  of Theorem~\ref{thm-Friedrichs-is-Standard-extension}. Finally, $\kappaconstant  x_i/r$ maps $\mathcal{H}^2 (\R^3)$ continuously into $\mathcal{H}^1(\R^3)$, and hence into $\mathcal{F}(\R^3)$.  This is a simple direct computation using Lemma~\ref{lem-sobolev}.
 \end{proof}

 \begin{lemma}
 \label{lem-little-Friedrichs-lemma1}
If $h\in \mathcal{F}(\R^3)$ and $g\in C_c^\infty (\R^3)$, then
 \[
  \langle  h , g \rangle _{ \mathcal{F}(\R^3)} =  \langle   h ,(T+C)g\rangle_{ \mathcal{L}^2(\R^3)}   .
\]
 \end{lemma}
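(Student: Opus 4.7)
The plan is to extend the defining identity \eqref{eq-K-inner-product} by density in two stages: first extend the second argument from $C_c^\infty(\R^3_0)$ to $C_c^\infty(\R^3)$, then extend the first argument from $C_c^\infty(\R^3_0)$ to $\mathcal{F}(\R^3)$.  For $h,g\in C_c^\infty(\R^3_0)$ the identity $\langle h,g\rangle_{\mathcal{F}(\R^3)}=\langle h,(T+C)g\rangle_{\mathcal{L}^2(\R^3)}$ is immediate from \eqref{eq-K-inner-product} together with the formal self-adjointness of $T+C$ on $C_c^\infty(\R^3_0)$ (ordinary integration by parts, with no boundary terms at the origin because the supports avoid it).

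For the first extension, fix $h\in C_c^\infty(\R^3_0)$ and let $g\in C_c^\infty(\R^3)$.  Choosing a smooth cutoff $\rho\in C_c^\infty(\R^3)$ that equals $1$ near the origin, I set $g_n = g\cdot(1-\rho(\cdot/\varepsilon_n))\in C_c^\infty(\R^3_0)$ with $\varepsilon_n\to 0$.  Since $g\in \mathcal{L}^\infty$ and $\nabla\rho(\cdot/\varepsilon_n)$ is supported in a ball of radius $O(\varepsilon_n)$ with sup-norm $O(\varepsilon_n^{-1})$, a direct estimate gives $\|g\nabla\rho(\cdot/\varepsilon_n)\|_{\mathcal{L}^2}=O(\varepsilon_n^{1/2})\to 0$; combined with dominated convergence for the remaining pieces, this yields $g_n\to g$ in $\mathcal{H}^1(\R^3)$, hence in $\mathcal{F}(\R^3)$ by the comparison between these two norms used in the proof of Theorem~\ref{thm-Friedrichs-is-Standard-extension}.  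Both sides of the identity applied to $g_n$ now pass to the limit as $n\to\infty$: the left by continuity of the $\mathcal{F}$-inner product, and the right by rewriting it as $\langle (T+C)h,g_n\rangle_{\mathcal{L}^2}$, using that $(T+C)h$ is a fixed $\mathcal{L}^2$-function and $g_n\to g$ in $\mathcal{L}^2$, and then integrating by parts once more to recover $\langle h,(T+C)g\rangle_{\mathcal{L}^2}$ (valid since $\operatorname{supp}(h)$ avoids the origin).

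The second extension is routine: given $h\in \mathcal{F}(\R^3)$, approximate by $h_k\in C_c^\infty(\R^3_0)$ with $h_k\to h$ in $\mathcal{F}(\R^3)$, which is possible by definition of $\mathcal{F}(\R^3)$.  The left-hand side of the identity for $h_k$ converges to $\langle h,g\rangle_{\mathcal{F}(\R^3)}$ by continuity of the $\mathcal{F}$-inner product, while the right-hand side converges to $\langle h,(T+C)g\rangle_{\mathcal{L}^2(\R^3)}$ because $(T+C)g$ is a fixed $\mathcal{L}^2$-function and the embedding $\mathcal{F}(\R^3)\hookrightarrow \mathcal{L}^2(\R^3)$ is continuous.

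The main obstacle is the first extension, specifically the need to approximate $g\in C_c^\infty(\R^3)$ by functions vanishing near the origin in a topology strong enough that both sides of the identity pass to the limit.  The cutoff argument works because a single point has zero $1$-capacity in $\R^3$ — the same fact that underlies the density of $C_c^\infty(\R^3_0)$ in $\mathcal{H}^1(\R^3)$ invoked in the proof of Theorem~\ref{thm-Friedrichs-is-Standard-extension}.  It is essential that the lemma is stated with the $\mathcal{F}$-norm rather than a stronger norm such as the $\mathcal{H}^2$-norm, since in three dimensions a point has positive $2$-capacity and the analogous approximation in $\mathcal{H}^2$ would fail.
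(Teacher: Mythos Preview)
Your proof is correct and follows the same overall strategy as the paper: verify the identity for $h\in C_c^\infty(\R^3_0)$, then extend in $h$ by density in $\mathcal{F}(\R^3)$.  The paper's version is considerably shorter, however, because it avoids your first extension step entirely.  For $h\in C_c^\infty(\R^3_0)$ the linear functional $g\mapsto\langle(T+C)h,g\rangle_{\mathcal{L}^2}$ is already continuous on all of $\mathcal{F}(\R^3)$ (since $(T+C)h\in\mathcal{L}^2$ and $\mathcal{F}\hookrightarrow\mathcal{L}^2$), and it agrees with $g\mapsto\langle h,g\rangle_{\mathcal{F}}$ on the dense subspace $C_c^\infty(\R^3_0)$ by the very definition \eqref{eq-K-inner-product}; hence the two agree for every $g\in\mathcal{F}(\R^3)$, in particular for $g\in C_c^\infty(\R^3)$.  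One then integrates by parts once (valid since $\operatorname{supp}h$ avoids the origin) to move $T+C$ onto $g$.  So the explicit cutoff construction, the $\mathcal{H}^1$-convergence estimate, and the capacity remark---while all correct---are not needed: an abstract density-plus-continuity argument in $g$ does the job in one line.
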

 
 \begin{proof}
In the special case where $h\in C_c^\infty (\R^3_0)$, the  only difference between the right-hand side of the formula in the lemma and the definition \eqref{eq-K-inner-product} of the inner product on $\mathcal{F}(\R^3)$ is    the choice of factor in the inner product to which $T{+}C$ is applied. So the lemma follows in this special case from the symmetry of $T$ on $C_c^\infty (\R^3_0)\subseteq \mathcal{L}^2 (\R^3)$. Since both sides of the formula are continuous in $h\in \mathcal{F}(\R^3)$, the general case follows from the special case.
\end{proof}

 \begin{lemma}
 \label{lem-little-Friedrichs-lemma2}
If $h\in \mathcal{F}(\R^3)_\fin$ and $g\in C_c^\infty (\R^3)$, then
 \[
  \langle  R_i h , g \rangle _{ \mathcal{L}^2(\R^3)} =   -\langle   h ,R_ig\rangle_{ \mathcal{L}^2(\R^3)}   .
\]
 \end{lemma}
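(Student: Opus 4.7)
The strategy is the standard extension-by-density: the identity is already known on $C_c^\infty(\R^3_0)$ because the $R_i$ were shown to be formally skew-adjoint there (see Section~\ref{subsec-real-structure}), so one need only approximate $h$ by elements of $C_c^\infty(\R^3_0)_\fin$ in a topology strong enough that both sides pass to the limit, and the natural candidate is the Friedrichs norm of $\mathcal{F}(\R^3)$.

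First I would produce the approximating sequence. By construction, $\mathcal{F}(\R^3)$ is the completion of $C_c^\infty(\R^3_0)$ in the Friedrichs norm \eqref{eq-K-inner-product}, so there exist $h_n' \in C_c^\infty(\R^3_0)$ with $h_n' \to h$ in $\mathcal{F}(\R^3)$. Since $h$ is $K$-finite, fix a central idempotent $e \in R(K)$ (a finite sum of isotypical projectors) for which $eh = h$. The $K$-action on $\R^3$ preserves both the Laplacian and the Coulomb potential, so $K$ acts unitarily on $\mathcal{L}^2(\R^3)$ and commutes with $T$, hence acts isometrically on $\mathcal{F}(\R^3)$. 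Consequently $e$ is a bounded operator on $\mathcal{F}(\R^3)$, and $h_n := e h_n'$ converges to $eh = h$ in $\mathcal{F}$; moreover each $h_n$ is smooth, compactly supported in $\R^3_0$ (rotations preserve such support), and $K$-finite, so $h_n \in C_c^\infty(\R^3_0)_\fin$.

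Next I would pass to the limit in the identity
\[
\langle R_i h_n, g\rangle_{\mathcal{L}^2(\R^3)} = -\langle h_n, R_i g\rangle_{\mathcal{L}^2(\R^3)},
\]
which holds for each $n$ by the formal skew-adjointness of $R_i$ on $C_c^\infty(\R^3_0)$. For the right-hand side, the continuous inclusion $\mathcal{F}(\R^3) \hookrightarrow \mathcal{L}^2(\R^3)$ gives $h_n \to h$ in $\mathcal{L}^2$, and since $R_i g \in \mathcal{L}^2(\R^3)$ is fixed, the inner products converge. For the left-hand side, the preceding lemma provides an extension of $R_i$ to a bounded operator from each isotypical summand of $\mathcal{F}(\R^3)_\fin$ into $\mathcal{L}^2(\R^3)_\fin$; since all $h_n$ lie in the fixed finite sum of isotypes picked out by $e$, the bound is uniform, so $R_i h_n \to R_i h$ in $\mathcal{L}^2$, and pairing with $g$ gives convergence of the left-hand side. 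Equating the two limits yields the desired identity.

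The only potentially subtle point is the approximation step, where one must preserve simultaneously \emph{smoothness}, \emph{support away from the origin}, and \emph{$K$-finiteness}; the construction $h_n = e h_n'$ handles all three at once precisely because $K$ acts by isometries on $\mathcal{F}(\R^3)$ and preserves $\R^3_0$. Everything else is a routine continuity argument.
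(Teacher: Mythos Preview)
Your proposal is correct and follows essentially the same density-and-continuity argument as the paper, which simply asserts that the identity is obvious for $h\in C_c^\infty(\R^3_0)$ and follows by continuity of both sides in $h\in\mathcal{F}(\R^3)_\fin$. You have just made explicit the two ingredients the paper leaves implicit: the construction of $K$-finite approximants via projection by a central idempotent, and the invocation of the preceding lemma for the $\mathcal{F}\to\mathcal{L}^2$ boundedness of $R_i$ on each isotypical summand.
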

 
 \begin{proof}
 This is obvious for $h\in C_c^\infty (\R^3_0)$ and follows in general by continuity of the left-hand and right-hand sides in $h\in  \mathcal{F}(\R^3)_\fin$.\end{proof}
 
 \begin{lemma}
 \label{lem-little-commutation-lemma}
 The restriction of the operator
 \[
  R_i (T+C)^{-1} \colon \mathcal{L}^2(\R^3)_\fin \longrightarrow \mathcal{F}(\R^3)_\fin
  \]
 to $\mathcal{F}(\R^3)_\fin \subseteq \mathcal{L}^2 (\R^3)_\fin$  is equal to 
 $ (T{+}C)^{-1} R_i $ .
 \end{lemma}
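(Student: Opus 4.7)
The plan is to verify the desired equality by testing both sides against a dense set of functions in $\mathcal{F}(\R^3)$ and using the characterization of $(T+C)^{-1}$ given in Theorem~\ref{thm-Friedrichs-ext}. Let $h\in\mathcal{F}(\R^3)_\fin$ and set $u = (T+C)^{-1}h$. By Theorems~\ref{thm-H-bounded-below-ess-s-a} and \ref{thm-Friedrichs-is-Standard-extension}, $u\in\mathcal{H}^2(\R^3)_\fin$ and $(T+C)u=h$ in $\mathcal{L}^2(\R^3)$. The previous lemma gives $R_i u\in\mathcal{F}(\R^3)_\fin$ and $R_i h\in\mathcal{L}^2(\R^3)_\fin$, so the expression $(T+C)^{-1}R_i h$ makes sense and lies in $\mathcal{H}^2(\R^3)_\fin\subseteq\mathcal{F}(\R^3)_\fin$. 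It therefore suffices to show that
\[
\langle R_i u,\,g\rangle_{\mathcal{F}(\R^3)} \;=\; \langle R_i h,\,g\rangle_{\mathcal{L}^2(\R^3)}
\]
for every $g$ in a dense subset of $\mathcal{F}(\R^3)_\fin$; the natural choice is $g\in C_c^\infty(\R^3_0)_\fin$, which is dense in $\mathcal{F}(\R^3)$ by construction of the latter.

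For such a $g$, note that since the coefficients of $R_i$ are smooth on $\R^3_0$, the function $R_i g$ again lies in $C_c^\infty(\R^3_0)$, and the differential-operator identity $R_i(T+C)=(T+C)R_i$ holds pointwise on this space because $R_i\in\Cent(T)$ and $C$ is a scalar. Starting from the left-hand side, I would chain together:
\begin{enumerate}[\rm (i)]
\item Lemma~\ref{lem-little-Friedrichs-lemma1} applied to $R_i u\in\mathcal{F}(\R^3)$, converting $\langle R_i u,g\rangle_{\mathcal{F}}$ into $\langle R_i u,(T+C)g\rangle_{\mathcal{L}^2}$;
\item Lemma~\ref{lem-little-Friedrichs-lemma2} applied to $u\in\mathcal{F}(\R^3)_\fin$ and to the test function $(T+C)g\in C_c^\infty(\R^3_0)$, yielding $-\langle u,R_i(T+C)g\rangle_{\mathcal{L}^2}$;
\item the commutation $R_i(T+C)g=(T+C)R_i g$, valid in $C_c^\infty(\R^3_0)$;
\item the self-adjointness of $T$ applied to $u\in\domain(T)$ and $R_i g\in C_c^\infty(\R^3_0)\subseteq\domain(T)$, yielding $-\langle(T+C)u,R_i g\rangle_{\mathcal{L}^2}=-\langle h,R_i g\rangle_{\mathcal{L}^2}$;
\item Lemma~\ref{lem-little-Friedrichs-lemma2} once more, now applied to $h\in\mathcal{F}(\R^3)_\fin$ and $g\in C_c^\infty(\R^3)$, turning $-\langle h,R_i g\rangle_{\mathcal{L}^2}$ into $\langle R_i h,g\rangle_{\mathcal{L}^2}$.
\end{enumerate}
Combining these steps yields the identity above.

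Finally, applying the characterization of $(T+C)^{-1}$ from Theorem~\ref{thm-Friedrichs-ext} to $f=R_i h$ gives $\langle(T+C)^{-1}R_i h,g\rangle_{\mathcal{F}}=\langle R_i h,g\rangle_{\mathcal{L}^2}$ for all $g\in\mathcal{F}(\R^3)$. Subtracting, one obtains $\langle R_i u-(T+C)^{-1}R_i h,\,g\rangle_{\mathcal{F}}=0$ for every $g\in C_c^\infty(\R^3_0)_\fin$, hence for every $g$ in the corresponding isotypical closure, which is dense in the same isotypical part of $\mathcal{F}(\R^3)$. Since the difference already lies in $\mathcal{F}(\R^3)$, it must vanish there, and therefore in $\mathcal{L}^2(\R^3)$ as well.

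I do not expect a genuine obstacle: the only care required is bookkeeping at step (iv), where one must verify that $R_i g$ lies in the self-adjoint domain so that symmetry of $T$ may be invoked, and at the density step at the end, where one should pass to $K$-isotypical components so that the continuity of the pairings inherited from the previous lemma is available. Both points are routine given the preparation already carried out.
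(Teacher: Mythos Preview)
Your proof is correct and follows essentially the same route as the paper's: convert the $\mathcal{F}$-pairing to an $\mathcal{L}^2$-pairing via Lemma~\ref{lem-little-Friedrichs-lemma1}, move $R_i$ across via Lemma~\ref{lem-little-Friedrichs-lemma2}, commute $R_i$ with $T+C$ on test functions, and finish with the Friedrichs characterization. The only cosmetic differences are that the paper restricts both arguments to $C_c^\infty(\R^3_0)_\fin$ from the outset (leaving the extension to $\mathcal{F}(\R^3)_\fin$ implicit), and at your step~(iv) the paper re-invokes Lemma~\ref{lem-little-Friedrichs-lemma1} together with Theorem~\ref{thm-Friedrichs-ext} rather than appealing to self-adjointness directly.
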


\begin{proof}
It suffices to prove that 
\[
\langle   R_i (T+C)^{-1} f,g\rangle_{ \mathcal{F}(\R^3)} = \langle   (T+C)^{-1} R_i f,g\rangle_{ \mathcal{F}(\R^3)}  
\]
for all $f,g\in C_c^\infty (\R^3 _0)_\fin$.  Using Lemmas~\ref{lem-little-Friedrichs-lemma1} and ~\ref{lem-little-Friedrichs-lemma2} we compute that 
\[
\begin{aligned}
\bigl \langle   R_i (T{+}C)^{-1} f,g\bigr \rangle_{ \mathcal{F}(\R^3)} 
	& =\bigl\langle   R_i (T{+}C)^{-1} f,(T{+}C)g\bigr\rangle_{ \mathcal{L}^2(\R^3)}  \\
	&= - \bigl\langle    (T{+}C)^{-1} f,R_i(T{+}C)g\bigr\rangle_{ \mathcal{L}^2(\R^3)}  \\
	&= - \bigl\langle    (T{+}C)^{-1} f,(T{+}C)R_ig\bigr\rangle_{ \mathcal{L}^2(\R^3)}  \\
	&= - \bigl\langle    f, R_ig\bigr\rangle_{ \mathcal{L}^2(\R^3)}  \\
	&= \phantom{-} \bigl\langle    R_if,  g\bigr\rangle_{ \mathcal{L}^2(\R^3)} .  \\
\end{aligned}
\]
The proof is finished by using the characterization   in Theorem~\ref{thm-Friedrichs-ext}.
\end{proof}

\begin{proof}[Proof of Theorem~\ref{thm-action-from-Friedrichs}]
The operators 
\[
S_i =R_i(T+C)^{-1}
\]
 are defined on $\mathcal{L}^2 (\R^3)_\fin$ and are bounded on each isotypical component. Moreover it follows immediately from Lemma~\ref{lem-little-commutation-lemma} that these operators commute with the resolvent operator $(T{+}C)^{-1}$.  It follows that each $S_i$ decomposes as a bounded measurable family $\{ S_{i,\lambda}\}$ on each isotypical part of the measurable family  $\{ \mathcal{H}_\lambda\}$. The operators $S_{i,\lambda}$ have the property that if $f\in L^2 (\R^3)_\fin$, then
\begin{equation}
\label{eq-property-of-S-lambda}
  [S_if]_\lambda = S_{i,\lambda} [f]_\lambda
 \end{equation}
 for $\mu$-almost every $\lambda$.
Now we  define 
 \[
 R_{i,\lambda} = (\lambda {+} C)S_{i,\lambda} \colon \mathcal{H}_{\lambda,\fin} \longrightarrow \mathcal{H}_{\lambda,\fin} .
 \]
 If $g\in C_c^\infty (\R^3_0)$, and if $f = (T{+}C)g$, then \eqref{eq-property-of-S-lambda} implies that 
 \[
   [R_ig]_\lambda = R_{i,\lambda} [g]_\lambda,
   \]
 for $\mu$-almost every $\lambda$, as required.  
\end{proof}

\subsection{Gelfand-Kostyuchenko method}

The abstract direct integral decomposition in Theorem~\ref{thm-dir-integral} can be made more concrete using the theory of distributions. The technique, which sometimes called the Gelfand-Kostyu\-chenko method, is as follows.

\begin{theorem}
\label{thm-realization}
There is a family of continuous linear maps 
\[
\varepsilon_\lambda  \colon C_c^\infty (\R_0^3) \longrightarrow \mathcal{H}_\lambda
\qquad (\lambda \in \Spec(T) )
\]
such that 
\begin{enumerate}[\rm (a)]

\item
for every $f\in C_c^\infty (\R_0^3)$ the family $\{ \varepsilon_\lambda  (f)\} $ is a measurable and square-integrable section of the field $\{ \mathcal{H}_\lambda \}$; 

\item 
for every $f\in C_c^\infty (\R_0^3)$, the sections $\{ [f]_\lambda \} $ and $\{ \varepsilon_\lambda (f)\}$ are equal, almost everywhere; and 

\item for almost every $\lambda$ the map $\varepsilon_\lambda$ has dense range.

\end{enumerate}

\end{theorem}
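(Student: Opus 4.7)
The plan is to apply the Gelfand--Kostyuchenko method, which upgrades the abstract direct integral decomposition of Theorem~\ref{thm-dir-integral} to a pointwise family of realization maps on test functions. The key input is that $C_c^\infty(\R^3_0)$ is nuclear, and that sufficiently high powers of the resolvent $(T{+}C)^{-1}$ are smoothing enough to produce locally trace-class or Hilbert--Schmidt operators. One can then invoke the standard abstract argument, as exposed for example in \cite{Bernstein88}.

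First I would exhaust $\R^3_0$ by an increasing sequence of open sets $U_n$ with $\overline{U_n}\subset \R^3_0$ compact, and choose cutoffs $\chi_n\in C_c^\infty(\R^3_0)$ identically equal to one on $\overline{U_n}$. For $N$ sufficiently large (depending on $n$), elliptic regularity for $T$ combined with the Sobolev embedding theorem shows that $\chi_n (T{+}C)^{-N}\chi_n$ is a Hilbert--Schmidt operator on $\mathcal{L}^2(\R^3)$ with continuous integral kernel. Its direct integral decomposition is a measurable field of fiberwise Hilbert--Schmidt operators, with fiber norm controlled by $(\lambda{+}C)^{-N}$ times constants depending only on $\chi_n$. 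Since $(T{+}C)^{-N}$ acts diagonally on the direct integral, for each $f\in C_c^\infty(U_n)$ we have $[f]_\lambda = (\lambda{+}C)^N [(T{+}C)^{-N}\chi_n f]_\lambda$ almost everywhere, and the right-hand side can be evaluated pointwise using the fibered Hilbert--Schmidt operators. Using the nuclear Fr\'echet topology on $C_c^\infty(U_n)$ and the abstract Gelfand--Kostyuchenko argument, this produces, outside a $\mu$-null set $N_n\subset \Spec(T)$, a continuous linear map $\varepsilon_\lambda^{(n)}\colon C_c^\infty(U_n)\to\mathcal{H}_\lambda$ with $\varepsilon_\lambda^{(n)}(f)=[f]_\lambda$ for every $f\in C_c^\infty(U_n)$ and every $\lambda\notin N_n$. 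These maps are manifestly compatible: $\varepsilon_\lambda^{(n+1)}$ restricts to $\varepsilon_\lambda^{(n)}$ on $C_c^\infty(U_n)$ whenever $\lambda\notin N_n\cup N_{n+1}$. Letting $N=\bigcup_n N_n$, which remains $\mu$-null, I would define $\varepsilon_\lambda=\bigcup_n \varepsilon_\lambda^{(n)}$ for $\lambda\notin N$, and set $\varepsilon_\lambda=0$ for $\lambda\in N$. Properties (a) and (b) follow by construction.

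For property (c), pick a countable subset $\{f_k\}\subset C_c^\infty(\R^3_0)$ that is dense in $\mathcal{L}^2(\R^3)$; then the family $\{\,\lambda\mapsto [f_k]_\lambda\,\}$ is total in $\int^\oplus\mathcal{H}_\lambda\,d\mu(\lambda)$. A standard direct-integral argument (project onto the orthogonal complement of the span in each fiber and integrate) shows that $\{[f_k]_\lambda\}$ is total in $\mathcal{H}_\lambda$ for $\mu$-almost every $\lambda$, and for such $\lambda$ one has $\varepsilon_\lambda(f_k)=[f_k]_\lambda$, proving density of the range. The main technical obstacle is justifying the pointwise evaluation step: establishing that the fiber operators of $\chi_n(T{+}C)^{-N}\chi_n$ are uniformly controlled Hilbert--Schmidt operators and that this control is strong enough to turn the almost-everywhere defined sections $[f]_\lambda$ into a genuinely continuous family of linear functionals on the nuclear space $C_c^\infty(U_n)$ for almost every $\lambda$. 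This is exactly what the Gelfand--Kostyuchenko framework accomplishes, and the cleanest path is to appeal directly to the abstract theorem rather than to redo the argument by hand.
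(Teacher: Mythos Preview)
Your proposal is correct and aligns with the paper's approach: the paper does not supply its own proof of this theorem but simply cites \cite[Sec.~1]{Bernstein88} for the Gelfand--Kostyuchenko method, exactly the source you invoke. Your sketch merely fills in standard details of that argument, so there is nothing to compare.
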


See   \cite[Sec.\ 1]{Bernstein88} for an account of this result that is especially well suited to our purposes.

\begin{lemma}
\label{lem-eval-gK-mod}
The evaluation morphisms in Theorem~\ref{thm-realization} may be chosen so that they  are $K$-equivariant, and in addition so that 
\[
\varepsilon_\lambda (T f )  = \lambda \varepsilon_\lambda (f)\quad \text{and} \quad 
\varepsilon_\lambda (R_i f ) = R_{i,\lambda} \varepsilon_\lambda (f)
\]
for all $f\in C_c^\infty (\R^3_0)$ and all $i=1,2,3$.
\end{lemma}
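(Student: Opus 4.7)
The plan is to start from any family $\{\varepsilon_\lambda\}$ provided by Theorem~\ref{thm-realization} and modify it in two stages: first by averaging to install $K$-equivariance, and then by adjusting on a $\mu$-null set so that the intertwining identities hold at every $\lambda$ rather than just almost everywhere.

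First I would replace $\varepsilon_\lambda$ by the averaged map
\[
\tilde\varepsilon_\lambda(f) := \int_K k^{-1}\cdot \varepsilon_\lambda(k\cdot f)\,dk,
\]
integrated against normalized Haar measure on the compact group $K$. Since $K$ acts measurably on $\{\mathcal{H}_\lambda\}$ and continuously on $C_c^\infty(\R^3_0)$, this integral is well defined, continuous in $f$, and $K$-equivariant; measurability in $\lambda$ and square-integrability of the resulting section are preserved by Fubini. To verify that $\tilde\varepsilon_\lambda(f) = [f]_\lambda$ for $\mu$-almost every $\lambda$, I would combine the $K$-equivariance of the direct integral, $[k\cdot f]_\lambda = k\cdot [f]_\lambda$ a.e.\ in $\lambda$, with Fubini on $K\times\Spec(T)$: for $\mu$-almost every $\lambda$ the equality $\varepsilon_\lambda(k\cdot f) = k\cdot[f]_\lambda$ holds for almost every $k\in K$, and integration in $k$ yields the claim. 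In particular conditions (a)--(c) of Theorem~\ref{thm-realization} are retained, since the images of a countable dense subset of $C_c^\infty(\R^3_0)$ under $\tilde\varepsilon_\lambda$ agree almost everywhere with the original $[f]_\lambda$, which were dense almost everywhere.

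Next, I would promote the intertwining identities from ``almost every $\lambda$'' to ``every $\lambda$''. Theorem~\ref{thm-dir-integral} gives $[Tf]_\lambda = \lambda[f]_\lambda$ almost everywhere for each $f$, and Theorem~\ref{thm-action-from-Friedrichs}(b) gives $[R_if]_\lambda = R_{i,\lambda}[f]_\lambda$ almost everywhere for each $K$-finite $f$, so after the averaging step the identities
\[
\tilde\varepsilon_\lambda(Tf) = \lambda\tilde\varepsilon_\lambda(f), \qquad \tilde\varepsilon_\lambda(R_if) = R_{i,\lambda}\tilde\varepsilon_\lambda(f)
\]
hold for $\mu$-almost every $\lambda$, with the null set depending on $f$. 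To secure a single null set valid for all $f$, I would fix a countable dense subset $\{f_n\}$ of $C_c^\infty(\R^3_0)$, chosen inside $C_c^\infty(\R^3_0)_{\fin}$ so that $R_{i,\lambda}$ may be applied to $\tilde\varepsilon_\lambda(f_n)$, and let $N\subset \Spec(T)$ be the countable union of exceptional null sets attached to the $f_n$. For $\lambda\notin N$ both sides of each identity are continuous in $f$ in the topology of $C_c^\infty(\R^3_0)$, since $T$ and the $R_i$ act continuously and $\tilde\varepsilon_\lambda$ is continuous by construction, so the identities extend by density to all $f$.

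Finally, I would redefine $\tilde\varepsilon_\lambda := 0$ for $\lambda\in N$. Since $N$ is $\mu$-null this preserves (a)--(c) of Theorem~\ref{thm-realization}, and after the adjustment the relations with $T$ and each $R_i$ hold for every $\lambda\in\Spec(T)$, while the $K$-equivariance installed at the first step is of course retained. The main technical obstacle I anticipate is the bookkeeping of null sets and the Fubini interchange used to convert ``for every $k$, almost every $\lambda$'' into ``for almost every $\lambda$, every $k$''; everything else reduces to a standard averaging-and-density argument.
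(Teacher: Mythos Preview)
Your proposal is correct, and its second stage---fixing a countable dense family of test functions, redefining $\varepsilon_\lambda$ to zero on the union of the exceptional null sets, and then extending the identities to all $f$ by continuity of $\varepsilon_\lambda$---is exactly what the paper does. The only substantive difference is in how $K$-equivariance is handled: you install it at the outset by Haar averaging, whereas the paper treats it on the same footing as the $T$- and $R_i$-identities, using a countable dense subset of $K$ together with the countable dense set in $C_c^\infty(\R^3_0)$, redefining on the resulting null set, and extending by joint continuity in $(k,f)$.

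The paper's uniform treatment is shorter and sidesteps the Fubini bookkeeping you flag as the main technical obstacle; your averaging is not wrong, just more work than necessary, since after the second stage you redefine on a null set anyway and the zero map is trivially $K$-equivariant. One small point worth making explicit in either approach: the continuity extension for the $R_i$-identity should be carried out $K$-isotypical component by component, since $R_{i,\lambda}$ is bounded on each $\mathcal{H}_\lambda^\sigma$ separately but not uniformly on $\mathcal{H}_{\lambda,\fin}$.
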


\begin{proof}
Fix a countable dense set in $C_c^\infty (\R^3_0)$. For any element of this set, the displayed equations hold almost everywhere. So by redefining $\varepsilon_\lambda$ to be zero on a suitable nullset of values $\lambda$, we can arrange for the equations to hold for all $\lambda$ and all elements of our countable dense set.  The equations now hold for all $f$ by continuity.  The property of $K$-equivariance is handled in the same way.
 \end{proof}

 Now let us denote by  $\mathcal{D}'(\R_0^3)$ the space of distributions on $\R_0^3$; it is of course the topological dual space of  $C_c^\infty (\R_0^3)$.  It acquires by duality an action of the group $K$ and the Lie algebra $\g$.

\begin{defn}
\label{def-realization}
For decomposition $\{ \mathcal{H}_\lambda \}$ as in Theorem~\ref{thm-dir-integral} and a family of evaluation maps $\{ \varepsilon_\lambda\}$ as in Theorem~\ref{thm-realization} and Lemma~\ref{lem-eval-gK-mod}, the  
  \emph{realization morphisms} 
\[
\rho_\lambda \colon \mathcal{H}_\lambda \longrightarrow \mathcal{D}'(\R_0^3)
\]
are the complex conjugates of the adjoints of the morphisms $\varepsilon_\lambda$, and hence are characterized by 
\[
\langle  \varepsilon _\lambda (f) , v_\lambda \rangle _{\mathcal{H}_\lambda} 
=
\int_{\R^3 _0} \overline{f} \cdot \rho_\lambda (v_\lambda) ,
\]
where the integral on the right denotes the natural pairing between distributions on $\R^3 _0$ and test functions.
\end{defn}

\begin{lemma}
\label{lem-eigendistributions}
For almost every $\lambda\in \Spec (T)$, the realization  map 
\[
\rho_\lambda  \colon 
\mathcal{H}_\lambda 
\longrightarrow 
\mathcal{D}' (\R_0^3)
\]
is injective. For every  $\lambda$ the   map $\rho_\lambda$ is $K$-equivariant, and its image  lies within the space of $\lambda$-eigendistributions of the Schr\"odinger operator.  
\end{lemma}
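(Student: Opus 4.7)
The plan is to read off all three assertions directly from the defining formula
\[
\langle \varepsilon_\lambda (f), v_\lambda \rangle_{\mathcal{H}_\lambda} = \int_{\R^3_0} \overline{f}\cdot \rho_\lambda(v_\lambda)
\]
together with the equivariance properties of $\varepsilon_\lambda$ recorded in Lemma~\ref{lem-eval-gK-mod}. There are no analytic subtleties to confront, since $\rho_\lambda$ is defined by duality and everything pairs with compactly supported test functions on $\R^3_0$, where $T$ is an honest formally self-adjoint differential operator.

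For injectivity I will invoke Theorem~\ref{thm-realization}(c): for $\mu$-almost every $\lambda$ the range of $\varepsilon_\lambda$ is dense in $\mathcal{H}_\lambda$. If $v_\lambda\in \ker \rho_\lambda$, then the defining formula forces $\langle \varepsilon_\lambda (f), v_\lambda \rangle_{\mathcal{H}_\lambda} = 0$ for every $f\in C_c^\infty(\R^3_0)$, hence $v_\lambda$ is orthogonal to a dense subspace and therefore vanishes.

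For $K$-equivariance I will use that the $K$-action on $\R^3_0$ preserves Lebesgue measure, so the distributional action of $K$ satisfies $\int (k\cdot D)\cdot f = \int D\cdot (k^{-1}\cdot f)$. Combined with the $K$-equivariance of $\varepsilon_\lambda$ (Lemma~\ref{lem-eval-gK-mod}) and the unitarity of $K$ on $\mathcal{H}_\lambda$, the computation
\[
\int \overline{f}\cdot \rho_\lambda(k\cdot v_\lambda) = \langle \varepsilon_\lambda(f),\, k\cdot v_\lambda\rangle_{\mathcal{H}_\lambda} = \langle \varepsilon_\lambda(k^{-1}\cdot f),\, v_\lambda\rangle_{\mathcal{H}_\lambda} = \int \overline{k^{-1}\cdot f}\cdot \rho_\lambda(v_\lambda) = \int \overline{f}\cdot \bigl(k\cdot \rho_\lambda(v_\lambda)\bigr)
\]
shows $\rho_\lambda(k\cdot v_\lambda) = k\cdot \rho_\lambda(v_\lambda)$ as distributions.

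Finally, for the $\lambda$-eigendistribution property, I will use that $T$ is a differential operator with real coefficients that is formally self-adjoint on $C_c^\infty(\R^3_0)$, so its action on distributions satisfies $\langle TD, \overline{f}\rangle = \langle D, \overline{Tf}\rangle$. Combined with the identity $\varepsilon_\lambda(Tf) = \lambda\,\varepsilon_\lambda(f)$ from Lemma~\ref{lem-eval-gK-mod}, this gives
\[
\int \overline{f}\cdot T\rho_\lambda(v_\lambda) = \int \overline{Tf}\cdot \rho_\lambda(v_\lambda) = \langle \varepsilon_\lambda(Tf), v_\lambda\rangle_{\mathcal{H}_\lambda} = \lambda\,\langle \varepsilon_\lambda(f), v_\lambda\rangle_{\mathcal{H}_\lambda} = \lambda\int \overline{f}\cdot \rho_\lambda(v_\lambda)
\]
for every $f\in C_c^\infty(\R^3_0)$, so $T\rho_\lambda(v_\lambda) = \lambda\,\rho_\lambda(v_\lambda)$. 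There is no genuine obstacle here; the only point requiring mild care is keeping track of complex conjugates in the duality between $\mathcal{H}_\lambda$ and $\mathcal{D}'(\R^3_0)$ and ensuring the $K$-action is interpreted as acting by measure-preserving diffeomorphisms so that no Jacobian factors appear.
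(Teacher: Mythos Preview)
Your proof is correct and follows exactly the same approach as the paper's: injectivity from the density of the range of $\varepsilon_\lambda$ in Theorem~\ref{thm-realization}(c), and both equivariance and the eigendistribution property from the corresponding properties of $\varepsilon_\lambda$ in Lemma~\ref{lem-eval-gK-mod} via the defining duality. The paper simply states these implications in one sentence each, whereas you have written out the duality computations explicitly.
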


\begin{proof}
Injectivity of $\rho_\lambda$ follows from the density of the image of $\varepsilon_\lambda$.  Equivariance and the eigendistribution property follow from the equivariance of $\varepsilon_\lambda$ and compatibility of $\varepsilon_\lambda$ with $T$, as in Lemma~\ref{lem-eval-gK-mod}.
\end{proof}

\subsection{Comparison with the physical solution spaces}

It is a consequence of  elliptic regularity and  Lemma \ref{lem-eigendistributions} above     that  the range of  the realization morphism in Definition~\ref{def-realization} in fact lies in the space of  smooth $\lambda$-eigen\-functions   for $T$ on $\R^3_0$, which we shall denote by $C^\infty (\R^3 _0)_\lambda$. Of course this space was studied in detail in Section~\ref{sec-solutions}, and we shall use some of the information presented there to  compute the $K$-equivariant map 
\[
\rho_\lambda \colon \mathcal{H}_{\lambda,\fin} \longrightarrow C^\infty (\R^3 _0)_{\lambda,\fin}
\]

\begin{theorem}
\label{thm-gk-module-on-spectral-fibers}
For almost every $\lambda $ there is an action of $\g\vert _\lambda  $ on the space $\mathcal{H}_{\lambda,\fin}$ such that 
\begin{enumerate}[\rm (a)]

\item the $\g\vert _\lambda  $-action on $\mathcal{H}_{\lambda , \fin}$, together with the $K$-action, gives $\mathcal{H}_{\lambda ,\fin}$ the structure of a $(\g\vert_\lambda, K)$-module; and  

\item the realization map $\rho_\lambda  \colon 
\mathcal{H}_{\lambda,\fin} 
\to 
C^\infty (\R_0^3)_{\lambda,\fin}$  is an injective morphism of $(\g\vert_\lambda, K)$-modules.
\end{enumerate}
\end{theorem}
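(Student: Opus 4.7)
The plan is to define a $\g\vert_\lambda$-action on $\mathcal{H}_{\lambda,\fin}$ from the pieces already constructed, and then use the injectivity of $\rho_\lambda$ (valid for almost every $\lambda$ by Lemma~\ref{lem-eigendistributions}) to transfer the Harish-Chandra module relations from $\Sol(\lambda) = C^\infty(\R^3_0)_{\lambda,\fin}$, which is already known to be a $(\g\vert_\lambda,K)$-module.

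First I would define the candidate action. The subalgebra $\k\subseteq \g\vert_\lambda$ acts by differentiating the given locally finite unitary $K$-action on $\mathcal{H}_{\lambda,\fin}$. Each Runge-Lenz generator $R_i$ acts as the operator $R_{i,\lambda}$ from Theorem~\ref{thm-action-from-Friedrichs}. Since $\g\vert_\lambda = \k\vert_\lambda \oplus \s\vert_\lambda$ as a $K$-module with $\s\vert_\lambda$ spanned by the $R_i$, this determines a $\C$-linear map $\g\vert_\lambda \to \End(\mathcal{H}_{\lambda,\fin})$.

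Next I would prove that $\rho_\lambda$ intertwines this candidate action with the differential-operator action on $\Sol(\lambda)$. The $K$-equivariance of $\rho_\lambda$ is already recorded in Lemma~\ref{lem-eigendistributions}, and accounts for the $\k$-part. For the Runge-Lenz generators, the starting point is the relation $\varepsilon_\lambda(R_i f) = R_{i,\lambda}\varepsilon_\lambda(f)$ of Lemma~\ref{lem-eval-gK-mod}, combined with the formal skew-adjointness of $R_i$ from Subsection~\ref{subsec-real-structure}. Decomposing the sesquilinear identity $\langle R_i f, g\rangle_{\mathcal L^2} = -\langle f, R_i g\rangle_{\mathcal L^2}$ through the direct integral of Theorem~\ref{thm-dir-integral} yields formal skew-adjointness of $R_{i,\lambda}$ on $\mathcal{H}_{\lambda,\fin}$ for $\mu$-almost every $\lambda$, using the density of the range of $\varepsilon_\lambda$ from Theorem~\ref{thm-realization}(c). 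Unwinding the defining formula $\langle \varepsilon_\lambda(f),v\rangle_{\mathcal H_\lambda} = \int \bar f \cdot \rho_\lambda(v)$ then gives $\rho_\lambda(R_{i,\lambda}v) = R_i \rho_\lambda(v)$ in $\Sol(\lambda)$. A small but crucial bookkeeping point is that, because of the explicit $\sqrt{-1}$ in the Runge-Lenz operators together with their skew-adjointness, the distributional transpose $R_i^\vee$ (relative to the bilinear pairing of distributions with test functions) coincides with $R_i$ itself, so no sign correction is required.

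Once intertwining is established, the remainder is formal. The image $\rho_\lambda(\mathcal{H}_{\lambda,\fin})$ is a $(\g\vert_\lambda,K)$-submodule of $\Sol(\lambda)$ because each generator preserves it, and all Lie-bracket and Harish-Chandra compatibility relations valid in $\mathcal{U}(\g\vert_\lambda)$ pull back through the injective map $\rho_\lambda$ to operator identities on $\mathcal{H}_{\lambda,\fin}$; for instance the key commutator $[R_i,R_j] = \varepsilon_{ijk}\lambda L_k$ is verified by observing that $\rho_\lambda$ sends the difference of its two sides on $\mathcal{H}_{\lambda,\fin}$ to zero in $\Sol(\lambda)$. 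This simultaneously yields parts (a) and (b). The main obstacle I anticipate is the descent step for the formal skew-adjointness of $R_{i,\lambda}$: one must carefully combine Fubini with the almost-everywhere density furnished by Theorem~\ref{thm-realization}(c) in order to pass the sesquilinear identity from $C_c^\infty(\R^3_0)_\fin$ to each fiber $\mathcal{H}_{\lambda,\fin}$, and then align the distributional and Hilbert-space sign conventions so that the intertwining identity emerges without a stray factor of $-1$.
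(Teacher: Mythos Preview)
Your proposal is correct and follows essentially the same route as the paper: define the candidate action from the $K$-action and the operators $R_{i,\lambda}$ of Theorem~\ref{thm-action-from-Friedrichs}, establish that $\rho_\lambda$ intertwines, and then pull the $(\g\vert_\lambda,K)$-module relations back through the injective $\rho_\lambda$. The paper compresses the intertwining step into the phrase ``commute with $\varepsilon_\lambda$, and hence also with $\rho_\lambda$,'' whereas you unpack this by noting that passing from $\varepsilon_\lambda$ to its conjugate adjoint $\rho_\lambda$ requires the fiberwise skew-adjointness of $R_{i,\lambda}$ together with the identity $R_i^\vee = R_i$; this is a genuine detail that the paper leaves implicit, and your identification of it as the main obstacle is on target.
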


\begin{proof}
The $K$-action and   Theorem~\ref{thm-action-from-Friedrichs} give actions of the individual generators $L_{i}$ and $R_i$ of $\g\vert_\lambda$ on $\mathcal{H}_{\lambda,\fin}$.  It follows from 
Lemma~\ref{lem-eval-gK-mod} that these actions commute with the  morphism $\varepsilon_\lambda$, and hence also with $\rho_\lambda$. It follows from the injectivity of $\rho_\lambda$  that the actions of the individual generators constitute an action of $\g\vert _\lambda$, and indeed a $(\g\vert _\lambda , K)$-module structure.
\end{proof}

\begin{theorem}\label{th7}
For almost every   $\lambda \in \Spec (T) $ the realization morphism 
\[
\rho_\lambda \colon  \mathcal{H}_{\lambda,\fin}
 \longrightarrow
C^\infty (\R^3 _0)_{\lambda,\fin}
\]
 is an isomorphism of $\g\vert _\lambda $-modules onto  
the physical solution space $\PhysSol(\lambda)$.
\end{theorem}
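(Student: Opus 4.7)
The plan is to identify $\rho_\lambda(\mathcal{H}_{\lambda,\fin})$ with $\PhysSol(\lambda)$ for almost every $\lambda\in\Spec(T)$ by assembling three ingredients: first, by Theorem~\ref{thm-gk-module-on-spectral-fibers} the image is a nonzero $(\g|_\lambda,K)$-submodule of $\Sol(\lambda)$; second, it inherits from $\mathcal{H}_\lambda$ a $K$-invariant Hermitian form with respect to which the Runge--Lenz operators $R_{i,\lambda}$ act by skew-adjoint maps (this is the fiberwise shadow of the formal skew-symmetry of $R_i$ on $C_c^\infty(\R^3_0)$, transported through Theorem~\ref{thm-action-from-Friedrichs}), so that the image is an infinitesimally unitary $(\g|_\lambda,K)$-module; and third, Theorem~\ref{th3} characterizes $\PhysSol(\lambda)$ as the unique infinitesimally unitary Jantzen quotient of $\RegSol(\lambda)$. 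Once I show that the image sits inside $\RegSol(\lambda)$ and is nonzero, irreducibility of $\PhysSol(\lambda)$ and uniqueness of the unitary quotient will force the image to coincide with $\PhysSol(\lambda)$.

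For the discrete part of the spectrum, where $\lambda=-\kappaconstant^2/n^2$, I would handle the argument directly. The unitary equivalence of Theorem~\ref{thm-dir-integral} identifies $\mathcal{H}_\lambda$ with the finite-dimensional $\mathcal{L}^2$-eigenspace of the Kato--Rellich extension of $T$, and $\rho_\lambda$ is simply the inclusion (elements are automatically smooth on $\R^3_0$ by elliptic regularity). The explicit description of these $\mathcal{L}^2$-eigenfunctions recalled in Section~\ref{subsec-sol} then yields at once $\rho_\lambda(\mathcal{H}_{\lambda,\fin})=\PhysSol(\lambda)$.

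For the continuous part $\lambda\ge 0$, where $\PhysSol(\lambda)=\RegSol(\lambda)$, I would first compute the $K$-multiplicities. Decomposing $\mathcal{L}^2(\R^3)$ into $K$-isotypical components of the form $\mathcal{L}^2(\R_{+},r^2\,dr)\otimes W_\ell$, each factor carries the radial Schr\"odinger operator, and this operator has simple absolutely continuous spectrum on $[0,\infty)$; hence each $K$-type occurs in $\mathcal{H}_{\lambda,\fin}$ with multiplicity exactly one. Containment of the image in $\RegSol(\lambda)$ then comes from the limiting absorption principle of Section~\ref{sec-resolvents}: the spectral projection for $T$ is realized fiberwise as the jump of the resolvent $(T-\lambda-i\varepsilon)^{-1}-(T-\lambda+i\varepsilon)^{-1}$ across the real axis, and Theorem~\ref{thm-limiting-absorption2} shows that this jump lands in $\RegSolFam$ rather than in the larger family $\KodSolFam$. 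Combined with nonvanishing of the image (which follows from density in Theorem~\ref{thm-realization}(c)) and irreducibility of $\RegSol(\lambda)$ for generic $\lambda$ (Lemma~\ref{lem-generically-irreducible}), this forces $\rho_\lambda(\mathcal{H}_{\lambda,\fin})=\RegSol(\lambda)=\PhysSol(\lambda)$.

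The principal obstacle is the compatibility statement tying the Hilbert-space spectral decomposition to the analytic constructions of Sections~\ref{sec-resolvents} and~\ref{sec-phys-sol-jantzen}. Concretely, one must verify that the measurable family $\{R_{i,\lambda}\}$ produced by Theorem~\ref{thm-action-from-Friedrichs} is genuinely skew-adjoint on $\mathcal{H}_{\lambda,\fin}$ for $\mu$-almost every $\lambda$, and that the Gelfand--Kostyuchenko realization $\rho_\lambda$ intertwines the fiberwise spectral jump with the algebraically defined intertwiner of Theorem~\ref{thm-lap-construction-of-intertwiner}; this is what rules out any singular behavior of $\rho_\lambda(v)$ at the origin for $\lambda\ge 0$. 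Once these compatibilities are established, the algebraic machinery of Sections~\ref{sec-phys-sol-jantzen} and~\ref{sec-resolvents} takes over and the identification with $\PhysSol(\lambda)$ follows.
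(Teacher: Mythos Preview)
Your treatment of the discrete spectrum matches the paper's. For $\lambda>0$, however, the paper takes a considerably more direct route than either mechanism you propose. Rather than invoking infinitesimal unitarity or the limiting absorption principle, the paper shows that the composite $\mathcal{H}_{\lambda,\fin}\to\Sol(\lambda)\to\Sol(\lambda)/\PhysSol(\lambda)$ vanishes by a Sobolev-space argument on the $\ell=0$ isotypical part alone (which suffices, since the quotient is irreducible). Writing $\rho_\lambda(v_\lambda)=a_\lambda F_{0,\lambda}+b_\lambda G_{0,\lambda}$ for a measurable section, if $b$ were nonzero on a set of positive measure then a suitably normalized, compactly-supported-in-$\lambda$ section would correspond to an element $g\in\domain(T)=\mathcal{H}^2(\R^3)$; but unwinding the realization shows $g$ equals an integral in $\lambda$ of $a_\lambda F_{0,\lambda}+b_\lambda G_{0,\lambda}$, which is unbounded near the origin because $G_{0,\lambda}$ has a pole there, contradicting the embedding $\mathcal{H}^2(\R^3)\subset\mathcal{L}^\infty(\R^3)$ of Lemma~\ref{lem-sobolev}. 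Once the image lies in $\RegSol(\lambda)=\PhysSol(\lambda)$, irreducibility and the nonvanishing of $\mathcal{H}_\lambda$ a.e.\ finish the proof, as you say.

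The obstacle you flag in your own approach is real: the paper never establishes that the Gelfand--Kostyuchenko maps $\rho_\lambda$ intertwine with the resolvent-jump construction of Section~\ref{sec-resolvents}, so your limiting-absorption route would require an independent argument. Your unitarity route via Theorem~\ref{th3} has a further gap: that theorem identifies $\PhysSol(\lambda)$ among the Jantzen quotients of $\RegSol(\lambda)$, not among arbitrary infinitesimally unitary submodules of the full space $\Sol(\lambda)$, and for $\lambda>0$ the two irreducible constituents of $\Sol(\lambda)$ are isomorphic, so unitarity alone cannot single out the regular one. The paper's Sobolev argument bypasses both issues by exploiting directly the boundedness near the origin of functions in $\domain(T)$.
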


\begin{proof}
When $\lambda<0$ the theorem follows from the discussion preceding Theorem~\ref{thm-Kato-Rellich-spectrum}.  The singleton  set $\{ \lambda{=}0\}$ has $\mu$-measure zero since there is no $\lambda{=}0$ eigenfunction that is square-integrable.  So we can concentrate on the case where $\lambda > 0$. 

We have seen that $C^\infty (\R^3 _0)_{\eigv,\fin}$ is an extension
\[
0 \longrightarrow  
	\PhysSol(\lambda) \longrightarrow  
		 C^\infty (\R^3 _0)_{\lambda,\fin}  \longrightarrow  
		 C^\infty (\R^3 _0)_{\lambda,\fin} \big / \PhysSol(\lambda)\longrightarrow  
		0
\]
 of one irreducible $(\g\vert_\lambda, K)$-module by a second (which is a   copy of the first).  We shall show that the composition 
\begin{equation}
\label{eq-composition-is-zero-into-nonphys-space}
	\mathcal{H}_{\lambda,\fin} \longrightarrow 
		 C^\infty (\R^3 _0)_{\lambda,\fin}  \longrightarrow 
		 C^\infty (\R^3 _0)_{\lambda,\fin} \big / \PhysSol(\lambda)  
\end{equation}
is zero for almost every $\lambda$.  This  will show that the realization morphism maps the $(\g\vert _\lambda , K)$-module $\mathcal{H}_{\lambda,\fin} $   injectively into a submodule of $\PhysSol(\lambda)  $ for almost every $\lambda$.  Since the physical solution space is irreducible, when the realization morphism is injective the only possibilities are that the map is an isomorphism, or that $\mathcal{H}_{\lambda,\fin}$ is zero.  But since the spectrum of $T$ includes the full half line $(0,\infty)$, the space $\mathcal{H}_{\lambda,\fin}$, and hence $\mathcal{H}_\lambda$, can be zero only for a measure zero set of positive energy values.   So if we can show that \eqref{eq-composition-is-zero-into-nonphys-space} is zero, then the proof will be complete.

To prove that \eqref{eq-composition-is-zero-into-nonphys-space} is zero, since  the quotient $(\g\vert _\lambda ,K)$-module is irreducible it suffices to show that the restriction 
\begin{equation*}
	\mathcal{H}_{\lambda }^{0} \longrightarrow 
		 C^\infty (\R^3 _0)_{\lambda }^0\longrightarrow 
		 C^\infty (\R^3 _0)_{\lambda }^0\big / \PhysSol(\lambda)^0
\end{equation*}
to the trivial $K$-isotypical part  (that is,  the $\ell{=}0$ part) is zero.  This is what we shall now do. 

The space $\Sol(\lambda)^0{=}C^\infty (\R^3 _0)_{\lambda}^0$  is  span\-ned by the regular solution $F_{0,\lambda}$ in \eqref{eq-regular-solution}, which spans $\PhysSol(\lambda)^0$, and the solution $G_{0,\lambda}$ in \eqref{eq-singular-solution}. 
If $\{ v_\lambda \}$ is a measurable section of $\{ \mathcal{H}_\lambda^0\} $ then we may write 
\[
\rho_\lambda(v_\lambda) = a_\lambda F_{0,\lambda} + b_\lambda  G_{0,\lambda}
\]
where $a$ and $b$ are measurable scalar functions of $\lambda$.  We must  show that $b_\lambda=0$ for almost every $\lambda$.

Suppose that for some section $\{v_\lambda\}$ the function $b$ is not zero almost everywhere.  After multiplying the section by a suitable scalar function of $\lambda$, we may assume that $b_\lambda\ge 0$ for all $\lambda$; that $v_\lambda$ is square-integrable; that $v_\lambda$ vanishes outside a compact set in $(0,\infty)$; that $a$ and $b$ are   bounded functions; but   still $b$ is not zero almost everywhere.  The square-integrable section $\{ v_\lambda\}$ corresponds to a function  $g\in \mathcal{L}^2 (\R^3)$.  In fact the function  $g$ lies in the domain $\mathcal{H}^2 (\R^3)$ of the self-adjoint operator $T$ because the section $\{ \lambda\cdot v_\lambda \} $ is square-integrable.  But if $f\in  C_c^\infty (\R^3 _0)$, then 
\[
\begin{aligned}
\int_{\R^3} \overline{f(x)} \cdot g(x) \, dx 
	& = \int_{\operatorname{Spec}(T)} \langle \varepsilon_\lambda(f), v_\lambda\rangle \, d\mu (\lambda) 
\\
	& = \int_{\operatorname{Spec}(T)} \left ( \int _{\R^3 _0}   \overline{f} 
	\cdot  \rho_\lambda(v_\lambda) \,\right )  d\mu (\lambda) 
\\
	& = \int_{\operatorname{Spec}(T)} \left ( \int _{\R^3}   \overline{f(x)} 
	\cdot \bigl ( a_\lambda F_{0,\lambda} (x) + b_\lambda G_{0,\lambda} (x)\bigr )  \, dx \,\right )  d\mu (\lambda) 
\\
	& = \int _{\R^3}   \overline{f(x)} 
	\cdot \left ( \int_{\operatorname{Spec}(T)}  
	\bigl  ( a_\lambda F_{0,\lambda} (x) + b_\lambda G_{0,\lambda} (x)\bigr ) \,   d\mu (\lambda) \right )  \, dx .
	\end{aligned}
\]
This shows that the function $g$ is equal almost everywhere to the function 
\[
x \longmapsto \int_{\operatorname{Spec}(T)}  \bigl  ( a_\lambda F_{0,\lambda} (x) + b_\lambda G_{0,\lambda} (x)\bigr ) \,   d\mu (\lambda)  .
\]
But this is not an $\mathcal{L}^\infty$-function on $\R^3$, which contradicts Lemma~\ref{lem-sobolev}. \end{proof}

\subsection{The spectral measure}
\label{subsec-spectral-measure}
We conclude by   recalling for completeness  Kodaira's method   \cite{Kodaira49} for computing   the measure $\mu$ in the direct integral of Theorem~\ref{thm-dir-integral}.  We shall be brief; see the appendix of   \cite{HigsonTan20} for a more leisurely summary.

The starting point is the following formula of Kodaira and Titchmarsh for the spectral projection associated to any self-adjoint operator $T$ and any suitable interval $(\alpha,\beta)\subseteq \R$. 
\begin{equation}
  \label{eq-spectral-projection}
  P_{(\alpha,\beta)} 
  = \lim_{\varepsilon \to 0+} \frac{1}{2 \pi i }
  \int _{\alpha }^{\beta } 
  \, (T {-} \lambda {-} i\varepsilon )^{-1}   -  (T {-} \lambda  {+} i\varepsilon )^{-1}    
  \, d\lambda  .
\end{equation}
 This is valid  as long as neither $\alpha$ nor $\beta$ are   eigenvalues of $T$. 
The limit may be computed in our case using the following formula, which uses Kodaira's eigenfunctions $U_k$ from Theorem~\ref{thm-kodaira-asymptotics}: 

\begin{lemma}
\label{lem-greens-function}
Assume that  $\lambda\notin \Spec(T)$, and that $\lambda = k^2$, with $\operatorname{Im}(k)>0$.  If $f$ is any  smooth and compactly supported $K$-invariant function  on $\R^3_0$,   then 
\begin{multline*}
\Wr(F_{0,\lambda},U_{k})\cdot \bigl ( (T {-}\lambda) ^{-1} f\bigr )(r )    \\
	= F_{0,\lambda}(r) \int _r ^\infty U_{k} (s) f (s )\, ds + U_{k}  (r) \int_0^r F_{0,\lambda}   (s) f(s) \, ds .
\end{multline*}
\end{lemma}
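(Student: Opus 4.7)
The plan is to show that the function $u(r)$ on the right-hand side of the displayed equation lies in the Kato-Rellich domain $\mathcal{H}^2(\R^3)$ and satisfies $(T-\lambda)u = f$; since $\lambda$ is in the resolvent set, uniqueness of $(T-\lambda)^{-1}$ on the dense subspace of test functions then forces $u = (T-\lambda)^{-1}f$. Because $T$ commutes with the $K$-action and $f$ is $K$-invariant, both sides of the proposed identity are radial, so by \eqref{eq-radial-fmla-for-T} it suffices to verify the one-dimensional radial equation
$$
-u'' - \tfrac{2}{r}u' - \tfrac{2\kappaconstant}{r}u - \lambda u = f
$$
on $(0,\infty)$, along with the correct boundary behavior near $0$ and $\infty$.

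For the ODE, the formula is just the standard Green's function produced by variation of parameters using the pair $F_{0,\lambda}$, $U_k$ of radial solutions of the homogeneous equation. I would write the ansatz $u = F_{0,\lambda}\cdot A + U_k\cdot B$ with the gauge condition $F_{0,\lambda} A' + U_k B' = 0$; the inhomogeneous equation then reduces to a $2\times 2$ linear system for $A'$ and $B'$ whose coefficient determinant is, up to the $r^2$ built into the definition of $\wr$, the modified Wronskian $W = \Wr(F_{0,\lambda},U_k)$. The crucial observation is that $W$ is independent of $r$, which one checks by differentiating and invoking the radial equation \eqref{equa3.1} for $F_{0,\lambda}$ and $U_k$. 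Solving the system and integrating with the boundary choices $A(\infty) = 0$ and $B(0) = 0$ reproduces, after division by $W$, the two integrals in the statement of the lemma; satisfaction of the radial equation then follows by direct substitution.

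The substantive step is the verification that $u \in \mathcal{H}^2(\R^3)$, and I would exploit the compact support of $f$ in $\R^3_0$ in two ways. Near $r = 0$: the inner integral $\int_0^r F_{0,\lambda}(s)f(s)\,ds$ vanishes identically for $r$ smaller than the distance from $\operatorname{supp} f$ to the origin, so the $1/r$ singularity of $U_k$ at $0$ does not contribute, while the first term is bounded there because $F_{0,\lambda}$ is entire. Near infinity: the outer integral $\int_r^\infty U_k(s)f(s)\,ds$ vanishes once $r$ exceeds $\operatorname{supp} f$, and the remaining piece is a constant multiple of $U_k$, which decays exponentially by Theorem~\ref{thm-kodaira-asymptotics} together with the hypothesis $\operatorname{Im}(k) > 0$. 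The main obstacle is precisely this domain check: one must confirm that the cancellation of the singularity of $U_k$ at the origin yields not merely a pointwise bounded function but an element of the Sobolev space $\mathcal{H}^2$. This can be secured by first observing that the construction gives $u \in L^2(\R^3)$ (from the two asymptotic statements above) and then applying elliptic regularity to the equation $(T-\lambda)u = f$ in a neighborhood of the origin, whereupon uniqueness of the resolvent completes the identification.
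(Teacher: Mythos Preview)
Your approach is essentially the same as the paper's: define the right-hand side, verify by direct computation that $(T-\lambda)$ applied to it gives $\Wr(F_{0,\lambda},U_k)\cdot f$, check that it lies in $\operatorname{dom}(T)$, and conclude by uniqueness of the resolvent. The paper compresses all of this into three lines and simply asserts the domain membership; you supply the boundary analysis that justifies it.

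One small point: your appeal to ``elliptic regularity in a neighborhood of the origin'' is not quite the right tool, since the Coulomb potential is singular there. The cleaner argument is already implicit in what you wrote: for $r$ below the support of $f$ the function $u$ is an exact scalar multiple of $F_{0,\lambda}$, and one checks directly that a cutoff of $F_{0,\lambda}$ lies in $\mathcal{H}^2(\R^3)$ (the Taylor expansion $1+c_1 r+\cdots$ shows $\Delta F_{0,\lambda}$ behaves like $2c_1/r$ near the origin, which is square-integrable in three dimensions). The variation-of-parameters derivation is also unnecessary, since the formula is given and need only be verified, but it does no harm.
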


\begin{proof}
If we denote by $(K_{\lambda}  f)(r)$ the right-hand side of the formula in the statement of the lemma, then the formula defines a linear operator $K_{\lambda} $ from smooth compactly supported functions on $(0,\infty)$ to  the vector space of smooth functions in $\operatorname{dom} (T)$.  We compute directly from the definition of the Wronskian that
\begin{equation*}
(  T -\lambda )K_{\lambda}  f   = \Wr(F_{0,\lambda},U_{k}) f  ,
\end{equation*}
It follows  that 
 $ K_{\lambda}  f 
 =\Wr(F_{0,\lambda},U_{k})\cdot (T {-} \lambda )^{-1} f $,  as required.
\end{proof}

 Now we apply the lemma with $\lambda \pm i\varepsilon$ in place of $\lambda$, with $\lambda >0$.
Using our previous notation $a(k)$ for the Wronskian in Lemma~\ref{lem-greens-function} we obtain 
 \begin{multline*}
\lim_{\varepsilon\to 0+} \bigl ( 
(T {-} \lambda {-} i\varepsilon )^{-1} f  -  (T {-} \lambda  {+} i\varepsilon )^{-1}f 
\bigr ) 
\\
\begin{aligned}
& = F_{0,\lambda}(r) \int _r ^\infty \tfrac{1}{a(k)}U_{k} (s) f (s )\, ds + \tfrac{1}{a(k)}U_{k}  (r) \int_0^r F_{0,\lambda}   (s) f(s) \, ds 
\\
& \quad +  F_{0,\lambda}(r) \int _r ^\infty \tfrac{1}{a(-k)}U_{-k} (s) f (s )\, ds + \tfrac{1}{a(-k)}U_{-k}  (r) \int_0^r F_{0,\lambda}   (s) f(s) \, ds ,
\end{aligned}
 \end{multline*}
where $k$ is the positive square root of $\lambda$. Taking into account Lemma~\ref{lem-kodaira-dmla-for-f} we obtain 
 \begin{equation*}
\lim_{\varepsilon\to 0+} \bigl ( 
(T {-} \lambda {-} i\varepsilon )^{-1} f  -  (T {-} \lambda  {+} i\varepsilon )^{-1}f 
\bigr ) 
= 
\frac{2i\sqrt{\lambda}\,\,} {|a(\sqrt{\lambda})|^2} F_{0,\lambda}(r) \int _0 ^\infty F_{0,\lambda}(s) f (s )\, ds .
\end{equation*}
We find that on spherical functions that are smooth and compactly supported in $\R^3_0$, 
\begin{equation*}
  \label{eq-spectral-projection2}
  ( P_{(\alpha,\beta)} f)(r)
  =   \frac{1}{ \pi  }
  \int _{\alpha }^{\beta } \int_0^\infty F_{0,\lambda}(r)  F_{0,\lambda}(s) f (s )\, ds \frac{\sqrt{\lambda}\,\, }{|a(\sqrt{\lambda})|^2}d\lambda .
\end{equation*}
This computation is equivalent to the following result:

\begin{theorem}[Compare {\cite[Theorem 4.1]{Kodaira49}}]
\label{thm-determination-of-spectral-measure}
If the Hilbert spaces $\H_\lambda$ in \textup{Theorem~\ref{thm-dir-integral}} are normed so that $\|F_{0,\lambda}\| = 1$ for all $\lambda\in \Spec (T)$, then 
on $(0,\infty)$ the measure $\mu$ in \textup{Theorem~\ref{thm-dir-integral}} is given by the formula
\begin{equation*}
 2\pi i \cdot d\mu (\lambda ) = w(\lambda) d\lambda ,      
\end{equation*}
with $w(\lambda)$ as in \eqref{eq-def-w-lambda}.\qed
\end{theorem}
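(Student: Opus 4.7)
The plan is essentially a matter of comparing two expressions for the spectral projection $P_{(\alpha,\beta)}$: the explicit one derived in the preceding subsection via the Kodaira--Titchmarsh formula, and the abstract one coming from the direct integral of Theorem~\ref{thm-dir-integral}. The former has already been computed in the discussion leading up to the theorem statement: for any interval $(\alpha,\beta) \subset (0,\infty)$ whose endpoints lie in the resolvent set of $T$ and any smooth, compactly supported spherical function $f$,
\[
(P_{(\alpha,\beta)}f)(r) \;=\; \frac{1}{2\pi i} \int_\alpha^\beta F_{0,\lambda}(r)\, \bigl\langle f,\, F_{0,\lambda}\bigr\rangle_{\mathcal{L}^2(\R^3)}\, w(\lambda)\, d\lambda,
\]
once one recognises $w(\lambda) = 2i\sqrt{\lambda}/|a(\sqrt{\lambda})|^2$ from \eqref{eq-def-w-lambda}. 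So the plan is to pair this against the abstract identity
\[
\bigl\langle P_{(\alpha,\beta)}f,\, g\bigr\rangle_{\mathcal{L}^2(\R^3)} \;=\; \int_\alpha^\beta \bigl\langle \varepsilon_\lambda(f),\, \varepsilon_\lambda(g)\bigr\rangle_{\mathcal{H}_\lambda}\, d\mu(\lambda)
\]
for spherical test functions $f,g$, and read off $d\mu$.

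To make the comparison, first I would use Theorem~\ref{th7}, together with the fact that $\PhysSol(\lambda) = \RegSol(\lambda)$ for $\lambda > 0$, to conclude that for almost every $\lambda$ in $(0,\infty)$ the spherical isotype $\mathcal{H}_\lambda^0$ is one-dimensional; under the normalization $\|F_{0,\lambda}\|_{\mathcal{H}_\lambda} = 1$ specified in the theorem, it is spanned by the unit vector corresponding to $F_{0,\lambda}$ under the realization $\rho_\lambda$. Next, Definition~\ref{def-realization} gives $\varepsilon_\lambda(f) = \langle f, F_{0,\lambda}\rangle_{\mathcal{L}^2}\, F_{0,\lambda}$ on this isotype (and similarly for $g$), so the direct-integral identity reduces on spherical pairs to
\[
\bigl\langle P_{(\alpha,\beta)}f,\, g\bigr\rangle \;=\; \int_\alpha^\beta \bigl\langle f, F_{0,\lambda}\bigr\rangle \,\overline{\bigl\langle g, F_{0,\lambda}\bigr\rangle}\, d\mu(\lambda).
\]
On the other hand, polarising the explicit formula above gives the same expression, but with $d\mu(\lambda)$ replaced by $w(\lambda)\,d\lambda/(2\pi i)$. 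Since the spherical functions are dense in $\mathcal{L}^2(\R^3)^{K}$ and $\mu$ is a single scalar Borel measure underlying the whole direct integral (the normalization of the fibres has been fixed), matching these two identities for varying $\alpha,\beta$ and polarising in $f,g$ forces $2\pi i\, d\mu(\lambda) = w(\lambda)\, d\lambda$ on $(0,\infty)$, as claimed.

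The main technical difficulty is not in the present comparison---which is a routine bookkeeping step---but rather in the derivation of the explicit formula that it rests on: namely, justifying the interchange of the limit $\varepsilon \to 0+$ with the integral $\int_\alpha^\beta d\lambda$ in the Kodaira--Titchmarsh formula \eqref{eq-spectral-projection}. This is handled by dominated convergence, using the joint continuity of $(k,r) \mapsto U_k(r)$ on $\{\operatorname{Im}(k) \ge 0\} \times \R^3_0$ from Theorem~\ref{thm-kodaira-asymptotics}, the non-vanishing of $a(k)$ guaranteed by Lemmas~\ref{lem-kodaira-a-is-nonzero1} and \ref{lem-kodaira-a-is-nonzero2} (so that $1/a(k)$ stays bounded on compact subsets of the closed upper half-plane that avoid bound-state energies), and the compact support of $f$ away from $r = 0$ and $r = \infty$, which localises the Green's function integrals of Lemma~\ref{lem-greens-function}.
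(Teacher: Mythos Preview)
Your proposal is correct and follows essentially the same route as the paper. The paper treats the preceding computation of $P_{(\alpha,\beta)}$ via the Kodaira--Titchmarsh formula and Lemma~\ref{lem-greens-function} as the proof, ending with ``This computation is equivalent to the following result'' and a \qed; you have spelled out precisely what that equivalence consists of---matching the explicit integral kernel against the abstract direct-integral expression $\langle P_{(\alpha,\beta)}f,g\rangle = \int_\alpha^\beta \langle\varepsilon_\lambda(f),\varepsilon_\lambda(g)\rangle\,d\mu(\lambda)$ on spherical test functions---and also flagged the dominated-convergence justification for passing the $\varepsilon\to 0+$ limit inside the $\lambda$-integral, which the paper leaves implicit.

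One small point of care: the integral $\int_0^\infty F_{0,\lambda}(s)f(s)\,ds$ appearing in the paper's explicit formula is written without the $s^2$ weight from the $\mathcal{L}^2(\R^3)$ volume element, so when you identify it with $\langle f,F_{0,\lambda}\rangle_{\mathcal{L}^2(\R^3)}$ you should track the radial measure consistently on both sides of the comparison; this does not affect the conclusion but is worth keeping straight in the write-up.
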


\begin{remark}
It is possible to   compute $w(\lambda)$ by determining the asymptotic behavior of  the  solutions $F_{0,\lambda}$; compare \cite[(36.27) \& (36.28)]{LandauLifshitz}.  The result is 
\[
w(\lambda) = 4 \pi i \kappaconstant (1 - e^{2 \pi \kappaconstant/\sqrt{\lambda}})^{-1}  \qquad (\lambda > 0).
\]
Observe this has singularities when $\lambda$ is one of the discrete spectral values $ - \kappaconstant^2 / n^2$ for $T$. The singularities come from zeros of $a (\sqrt{\lambda})$, as predicted by a formula discovered by Heisenberg  and  proved by Kodaira in \cite[Sec.~6]{Kodaira49}. Compare Lemma~\ref{lem-kodaira-a-is-nonzero1}.  There is a further essential singularity at $0$, even when $w$ is considered as a function of $k=\sqrt{\lambda}$.  This seems to be an inescapable   feature in scattering theory, and so  it is interesting that in the algebraic approach there is no kind of singularity at $0$ at all.
\end{remark}

\bibliography{references}
\bibliographystyle{alpha}

\end{document}